\renewcommand*\backref[1]{\ifx#1\relax \else Cited on page(s) #1.\fi}
\theoremstyle{definition}
\newtheorem{construction}[subsection]{Construction}
\newtheorem{definition}[subsection]{Definition}
\newtheorem{notation}[subsection]{Notation}
\newtheorem{example}[subsection]{Example}
\newtheorem{remark}[subsection]{Remark}
\theoremstyle{plain}
\newtheorem{lemma}[subsection]{Lemma}
\newtheorem{prop}[subsection]{Proposition}
\newtheorem{theorem}[subsection]{Theorem}
\newtheorem{cor}[subsection]{Corollary}
\DeclareMathOperator{\Spec}{Spec}
\DeclareMathOperator{\Aut}{Aut}
\DeclareMathOperator{\Ker}{Ker}
\DeclareMathOperator{\Img}{Im}
\DeclareMathOperator{\rk}{rk}
\newcommand{\ie}{i.e.\ }
\newcommand{\loccit}{\emph{loc.\ cit.}}
\newcommand{\et}{{\mathrm{et}}}
\newcommand{\cd}{\mathrm{cd}}
\newcommand{\tr}{\mathrm{tr}}
\newcommand{\from}{\leftarrow}
\newcommand{\xto}[2][]{\xrightarrow[#1]{#2}}
\newcommand{\simto}{\xto{\sim}}
\newcommand{\Ar}{\mathrm{Ar}}
\newcommand{\Arm}{\Ar^-}
\newcommand{\Ad}{\mathrm{Ad}}
\newcommand{\cHom}{\mathcal{H}om}
\newcommand{\SG}{\mathfrak{S}}
\newcommand{\GL}{\mathrm{GL}}
\newcommand{\DD}{\mathrm{DD}}
\newcommand{\PS}{\mathrm{PS}}
\newcommand{\Hom}{\mathrm{Hom}}
\newcommand{\Norm}{\mathrm{Norm}}
\newcommand{\Lie}{\mathrm{Lie}}
\newcommand{\Mod}{\mathrm{Mod}}
\newcommand{\End}{\mathrm{End}}
\newcommand{\BC}{\mathrm{BC}}
\newcommand{\PF}{\mathrm{PF}}
\newcommand{\Grpd}{\mathrm{Grpd}}
\newcommand{\Point}{\mathrm{Point}}
\newcommand{\Faith}[1]{\St^{\mathrm{faith}}_{/#1}}
\newcommand{\St}{\mathrm{Stack}}
\newcommand{\PreSt}{\mathrm{PreStack}}
\newcommand{\Rep}[1]{\St^{\mathrm{rep}}_{/#1}}
\newcommand{\SmSp}[1]{\Sp^{\mathrm{sm}}_{/#1}}
\newcommand{\Sp}{\mathrm{Sp}}
\newcommand{\SpSeq}{\mathrm{SpSeq}}
\newcommand{\AlgSp}{\mathrm{AlgSp}}
\newcommand{\Eq}{\mathrm{Eq}}
\newcommand{\Sh}{\mathrm{Sh}}
\newcommand{\Flag}{\mathcal{F}\mathit{lag}}
\newcommand{\Sect}{\mathcal{S}\mathit{ect}}
\newcommand{\Sym}{\mathrm{S}}
\newcommand{\Trans}{\mathrm{Trans}}
\newcommand{\Cent}{\mathrm{Cent}}
\newcommand{\SpOb}{\mathrm{SpOb}}
\newcommand{\cosk}{\mathrm{cosk}}
\newcommand{\Ner}{\mathrm{Ner}}
\newcommand{\group}{\mathrm{group}}
\newcommand{\op}{\mathrm{op}}
\newcommand{\id}{\mathrm{id}}
\newcommand{\pr}{\mathrm{pr}}
\newcommand{\gr}{\mathrm{gr}}
\newcommand{\sgn}{\mathrm{sgn}}
\newcommand{\Ob}{\mathrm{Ob}}
\newcommand{\red}{\mathrm{red}}
\newcommand{\sm}{\mathrm{sm}}
\newcommand{\cart}{\mathrm{cart}}
\newcommand\cA{{\mathcal A}}
\newcommand\cB{{\mathcal B}}
\newcommand\cC{{\mathcal C}}
\newcommand\cD{{\mathcal D}}
\newcommand\cE{{\mathcal E}}
\newcommand\cF{{\mathcal F}}
\newcommand\cG{{\mathcal G}}
\newcommand\cH{{\mathcal H}}
\newcommand\cL{{\mathcal L}}
\newcommand\cO{{\mathcal O}}
\newcommand\cP{{\mathcal P}}
\newcommand\cR{{\mathcal R}}
\newcommand\cS{{\mathcal S}}
\newcommand\cT{{\mathcal T}}
\newcommand\cU{{\mathcal U}}
\newcommand\cW{{\mathcal W}}
\newcommand\cX{{\mathcal X}}
\newcommand\cY{{\mathcal Y}}
\newcommand\cZ{{\mathcal Z}}
\newcommand\fp{{\mathfrak p}}
\newcommand\fq{{\mathfrak q}}
\newcommand\X{{\mathrm X}}
\newcommand\F{{\mathbb F}}
\newcommand\G{{\mathbb G}}
\newcommand\Z{{\mathbb Z}}
\newcommand\bP{{\mathbb P}}
\newcommand\R{{\mathbb R}}
\newcommand\N{{\mathbb N}}
\newcommand\Gm{{\mathbb G}_{\mathrm{m}}}
\newcommand\bone{{\mathbf 1}}
\newcommand\PP{\mathrm{PP}}
\newcommand\aff{\mathrm{aff}}
\newcommand\Supp{\mathrm{Supp}}
\newcommand\grvec{\mathrm{GrVec}}
\newcommand\grmod{\mathrm{GrMod}}
\newcommand\FTSch[1]{\mathrm{Sch}^{\mathrm{ft}}_{/#1}}
\newcommand{\res}{\mathbin{|}}
\numberwithin{equation}{subsection}
\setlist[enumerate,1]{(a)}
\setlist[enumerate,2]{(i)}
\setlist{nosep}
\begin{document}
\title{Quotient stacks and equivariant \'etale cohomology algebras:
Quillen's theory revisited}
\author{Luc Illusie\and Weizhe Zheng}
\date{}
\maketitle

\begin{flushright}
\emph{To the memory of Daniel Quillen}
\end{flushright}

\begin{abstract}
Let $k$ be an algebraically closed field. Let $\Lambda$ be a noetherian
commutative ring annihilated by an integer invertible in $k$ and let
$\ell$ be a prime number different from the characteristic of~$k$. We
prove that if $X$ is a separated algebraic space of finite type over~$k$
endowed with an action of a $k$-algebraic group $G$, the equivariant
\'etale cohomology algebra $H^*([X/G],\Lambda)$, where $[X/G]$ is the
quotient stack of $X$ by $G$, is finitely generated over $\Lambda$.
Moreover, for coefficients $K \in D^+_c([X/G],\F_{\ell})$ endowed with a
commutative multiplicative structure, we establish a structure theorem for
$H^*([X/G],K)$, involving fixed points of elementary abelian
$\ell$-subgroups of $G$, which is similar to Quillen's theorem
\cite[Theorem 6.2]{Quillen1} in the case $K = \F_{\ell}$. One key
ingredient in our proof of the structure theorem is an analysis of
specialization of points of the quotient stack. We also discuss variants
and generalizations for certain Artin stacks.
\end{abstract}

\section*{Introduction}

In \cite{Quillen1}, Quillen developed a theory for mod $\ell$ equivariant
cohomology algebras $H^*_G(X,\F_\ell)$, where $\ell$ is a prime number, $G$
is a compact Lie group, and $X$ is a topological space endowed with an
action of $G$. Recall that, for $r \in {\N}$, an \emph{elementary abelian
$\ell$-group of rank $r$} is defined to be a group isomorphic to the direct
product of $r$ cyclic groups of order~$\ell$ \cite[Section~4]{Quillen1}.
Quillen showed that $H^*_G(X,\Lambda)$ is a finitely generated
$\Lambda$-algebra for any noetherian commutative ring $\Lambda$
\cite[Corollary 2.2]{Quillen1} and established structure theorems
(\cite[Theorem 6.2]{Quillen1}, \cite[Theorem 8.10]{Quillen2}) relating the
ring structure of $H^*_G(X,\F_\ell)$ to the elementary abelian
$\ell$-subgroups $A$ of $G$ and the components of the fixed points set
$X^A$. We refer the reader to \cite[Section~1]{IllQuillen} for a summary of
Quillen's theory.

In this article, we establish an algebraic analogue. Let $k$ be an
algebraically closed field of characteristic $\neq \ell$ and let $\Lambda$
be noetherian commutative ring annihilated by an integer invertible in $k$.
Let $G$ be an algebraic group over $k$ (\emph{not} necessarily affine) and
let $X$ be a separated algebraic space of finite type over $k$ endowed with
an action of $G$. We consider the \'etale cohomology ring
$H^*([X/G],\Lambda)$ of the quotient stack $[X/G]$. One of our main results
is that this ring is a finitely generated $\Lambda$-algebra (Theorem
\ref{t.finite}) and the ring homomorphism
\[H^*([X/G],\F_\ell)\to \varprojlim_{\cA} H^*(BA, \F_\ell)\]
given by restriction maps is a uniform $F$-isomorphism (Theorem
\ref{c.main}), \ie has kernel and cokernel killed by a power of $F\colon
a\mapsto a^\ell$ (see Definition \ref{s.grvec} for a review of this notion
introduced by Quillen \cite[Section~3]{Quillen1}). Here $\cA$ is the
category of pairs $(A,C)$, where $A$ is an elementary abelian
$\ell$-subgroup of $G$ and $C$ is a connected component of $X^A$. The
morphisms $(A,C)\to (A',C')$ of $\cA$ are given by elements $g\in G$ such
that $Cg\supset C'$ and $g^{-1}Ag\subset A'$. We also establish a
generalization (Theorem \ref{t.main2}) for $H^*([X/G],K)$, where $K\in
D^+_c([X/G],\F_\ell)$ is a constructible complex of sheaves on $[X/G]$
endowed with a commutative ring structure.

A key ingredient in Quillen's original proofs is the continuity property
\cite[Proposition 5.6]{Quillen1}. In the algebraic setting, this property is
replaced by an analysis of the specialization of points of the quotient
stack $[X/G]$. In order to make sense of this, we introduce the notions of
geometric points and of $\ell$-elementary points of Artin stacks. Our
structure theorems for equivariant cohomology algebras are consequences of
the following general structure theorem (Theorem \ref{p.str}): if
$\cX=[X/G]$ or $\cX$ is a Deligne-Mumford stack of finite presentation and
finite inertia over $k$, and if $K\in D^+_c(\cX,\F_\ell)$ is endowed with a
commutative ring structure, then the ring homomorphism
\[H^*(\cX,K)\to \varprojlim_{x\colon \cS\to \cX} H^*(\cS,x^*K)\]
given by restriction maps is a uniform $F$-isomorphism. Here the limit is
taken over the category of $\ell$-elementary points of $\cX$.

In \cite{IllZheng} we established an algebraic analogue \cite[Theorem
8.1]{IllZheng} of a localization theorem of Quillen \cite[Theorem
4.2]{Quillen1}, which he had deduced from his structure theorems for
equivariant cohomology algebras. This was one of the motivations for us to
investigate algebraic analogues of these theorems. We refer the reader to
\cite{report} for a report on the present article and on some results of
\cite{IllZheng}.

In Part \ref{p.1} we review background material on quotient and classifying
stacks (Section \ref{s.1}), and collect results on the cohomology of Artin
stacks (Section \ref{s.1bis}) that are used at different places in this
article. The ring structures of the cohomology algebras we are considering
reflect ring structures on objects of derived categories. We discuss this in
Section~\ref{s.8}.

The reader familiar with the general nonsense recalled in Part~\ref{p.1}
could skip it and move directly to Part~\ref{p.2}, which contains the main
results of the paper. In Section~\ref{s.2}, we prove the above-mentioned
finiteness theorem (Theorem \ref{t.finite}) for equivariant cohomology
algebras. One key step of the proof amounts to replacing an abelian variety
by its $\ell$-divisible group, which was communicated to us by Deligne. In
Section~\ref{s.3}, we present a crucial result on the finiteness of orbit
types, which is an analogue of \cite[Lemma 6.3]{Quillen1} and was
communicated to us by Serre.

In Section~\ref{s.4}, we state the above-mentioned structure theorems
(Theorems \ref{c.main}, \ref{t.main2}) for equivariant cohomology algebras.
In Section~\ref{s.5}, we introduce and discuss the notions of geometric
points and of $\ell$-elementary points of Artin stacks. Using them we state
in Section~\ref{s.6} the main result of this paper, the structure theorem
(Theorem \ref{p.str}) for cohomology algebras of certain Artin stacks, and
show that it implies the structure theorems of the equivariant case. In
Section~\ref{s.Kunneth}, we establish some K\"unneth formulas needed in the
proof of Theorem \ref{p.str}, which is given in Section~\ref{s.7}. Finally,
in Section~\ref{s.strat} we prove an analogue of Quillen's stratification
theorem \cite[Theorems 10.2, 12.1]{Quillen2} for the reduced spectrum of mod
$\ell$ \'etale equivariant cohomology algebras.

The results of this paper have applications to the structure of varieties of
supports. We hope to return to this in a future article.

\subsection*{Acknowledgments}
We thank Pierre Deligne for the proof of the finiteness theorem (Theorem
\ref{t.finite}) in the general case and Jean-Pierre Serre for communicating
to us the results of Section~\ref{s.3}. We are grateful to Michel Brion for
discussions on the cohomology of classifying spaces and Michel Raynaud for
discussions on separation issues. The second author thanks Ching-Li Chai,
Johan de Jong, Yifeng Liu, Martin Olsson, David Rydh, and Yichao Tian for
useful conversations. We thank the referees for their careful reading of the
manuscript and many helpful comments.

Part of this paper was written during a visit of both authors to the Korea
Institute for Advanced Study in Seoul in January 2013 and a visit of the
first author to the Morningside Center of Mathematics, Chinese Academy of
Sciences in Beijing in February and March 2013. Warm thanks are addressed to
these institutes for their hospitality and support.

The second author was partially supported by China's Recruitment Program of
Global Experts; National Natural Science Foundation of China Grant 11321101;
Hua Loo-Keng Key Laboratory of Mathematics, Chinese Academy of Sciences;
National Center for Mathematics and Interdisciplinary Sciences, Chinese
Academy of Sciences.

\subsection*{Conventions}
We fix a universe $\cU$, which we will occasionally enlarge. We say
``small'' instead of ``$\cU$-small'' when there is no ambiguity. We say that
a category is \emph{essentially small} (resp.\ \emph{essentially finite}) if
it is equivalent to a small (resp.\ finite) category. Schemes are assumed to
be small. Presheaves take values in the category of $\cU$-sets. For any
category $\cC$, we denote by $\widehat{\cC}$ the category of presheaves
on~$\cC$, which is a $\cU$-topos if $\cC$ is essentially small. If $f\colon
\cC\to \cD$ is a fibered category, we denote by $\cC(U)$ (or sometimes
$\cC_U$) the fiber category of $f$ over an object $U$ of $\cD$.

By a \emph{stack} over a $\cU$-site $C$ we mean a stack in groupoids over
$C$ \cite[02ZI]{stacks} whose fiber categories are essentially
small.\footnote{The fiber categories of prestacks over $C$ are also assumed
to be essentially small.} By a stack, we mean a stack over the big fppf site
of $\Spec(\Z)$. Unlike \cite{LMB}, we do not assume algebraic spaces and
Artin stacks to be quasi-separated. We say that a morphism $f\colon \cX\to
\cY$ of stacks is \emph{representable} (this property is called
``representable by an algebraic space'' in \cite[02ZW]{stacks}) if for every
scheme $U$ and every morphism $y\colon U\to \cY$, the 2-fiber product
$U\times_{y,\cY,f}\cX$ is representable by an algebraic space. By an
\emph{Artin stack} (resp.\ \emph{Deligne-Mumford stack}), we mean an
``algebraic stack'' (resp.\ Deligne-Mumford stack) over $\Spec(\Z)$ in the
sense of \cite[026O]{stacks} (resp.\ \cite[03YO]{stacks}), namely a stack
$\cX$ such that the diagonal $\Delta_{\cX}\colon\cX\to\cX \times \cX$ is
representable and such that there exists an algebraic space $X$ and a smooth
(resp.\ \'etale) surjective morphism $X\to \cX$.

By an \emph{algebraic group} over a field $k$, we mean a group scheme over
$k$ of finite type. Unless otherwise stated, groups act on the right.

\tableofcontents

\part{Preliminaries}\label{p.1}
\section{Groupoids and quotient stacks}\label{s.1}

Classically, if $G$ is a compact Lie group, a classifying space $BG$ for $G$
is the base of a contractible (right) $G$-torsor $PG$. Such a classifying
space exists and is essentially unique (up to homotopy equivalence). If $X$
is a $G$-space (i.e.\ a topological space endowed with a continuous (right)
action of $G$), one can twist $X$ by $PG$ and get a space $PG \wedge^G X$,
defined as the quotient of $PG \times X$ by the diagonal action of $G$,
$((p,x),g) \mapsto(pg,xg)$. This space $PG \wedge^G X$ is a fiber bundle
over $BG$ of fiber~$X$, and $PG \times X$ is a $G$-torsor over $PG \wedge^G
X$. If $\Lambda$ is a ring, the equivariant cohomology of~$X$ with value in
$\Lambda$ is defined by
\[
H^*_G(X,\Lambda) \coloneqq H^*(PG \wedge^G X,\Lambda) \simeq H^*(BG,R\pi_*\Lambda)
\]
where $\pi\colon PG\wedge^G X \to BG$ is the projection. The functorial
properties of this cohomology, introduced by Borel, are discussed by Quillen
in \cite[Section~1]{Quillen1}.

A well-known similar formalism exists in algebraic geometry, with
classifying spaces replaced by classifying stacks. We review this formalism
in this section.

\begin{construction}\label{s.Eq}
Let $C$ be a category in which finite limits are representable. We define
the category
\[\Eq(C)\]
of equivariant objects in $C$ as follows. The objects of $\Eq(C)$ are pairs
$(X,G)$ consisting of a group object $G$ of~$C$ and an object $X$ of $C$
endowed with an action of~$G$, namely a morphism $X\times G\to X$ satisfying
the usual axioms for composition and identity. A morphism $(X,G)\to (Y,H)$
in $\Eq(C)$ is a pair $(f,u)$ consisting of a homomorphism $u\colon G\to H$
and a $u$-equivariant morphism $f\colon X\to Y$. Here the $u$-equivariance
of $f$ is the commutativity of the following diagram in $C$:
\[\xymatrix{X\times G\ar[r]\ar[d]_{f\times u}& X\ar[d]^{f}\\
Y\times H\ar[r] & Y.}\]

While $\Eq(C)$ is a category, groupoids in $C$ form a
(2,1)-category\footnote{A (2,1)-category is a 2-category whose 2-morphisms
are invertible.}
\[\Grpd(C).\]
We regard groupoids $X_\bullet$ in $C$ as internal categories, consisting of
two objects $X_0$ and $X_1$ of $C$, called respectively the object of
objects and the object of morphisms, together with four morphisms in $C$,
\[e\colon X_0\to X_1,\quad s,t\colon X_1\to X_0,\quad m\colon
X_1\times_{s_X,X_0,t_X}X_1\to X_1,
\]
called respectively identity, source, target, and composition. A 1-morphism
of groupoids $f_{\bullet} \colon X_{\bullet} \to Y_{\bullet}$ is an internal
functor between the underlying internal categories, namely a pair of
morphisms $f_0\colon X_0\to Y_0$, $f_1\colon X_1\to Y_1$, compatible with
$e$, $s$, $t$, $m$. For 1-morphisms of groupoids $f_{\bullet},g_\bullet
\colon X_{\bullet} \to Y_{\bullet}$, a 2-morphism $f_{\bullet} \to
g_{\bullet}$ is an internal natural isomorphism, namely, a morphism $r\colon
X_0 \to Y_1$ of $C$ such that $s_Y r=f_0$, $t_Y r=g_0$, and $m_Y(g_1,rs_X) =
m_Y(rt_X,f_1)$. The last identity can be stated informally as for any $(u
\colon a \to b) \in X_1$, $g_1(u)r(a) = r(b)f_1(u)$.

We define a functor
\begin{equation}\label{e.EqGrpd}
\Eq(C) \to \Grpd(C).
\end{equation}
as follows.  To an object $(X,G)$ of $\Eq(C)$, we assign a groupoid in $C$
\[
(X,G)_{\bullet}
\]
with $(X,G)_0 = X$, $(X,G)_1 = X \times G$, $e(x)=(x,1)$, $s(x,g)=xg$,
$t(x,g)=x$, and composition given by $(x,g)(xg,h) = (x,gh)$.  The
inverse-assigning morphism is $(x,g) \mapsto (xg,g^{-1})$. Here we follow
the conventions of \cite[3.4.3]{LMB} (see also \cite[0444]{stacks} where
groups act on the left). A morphism $(f,u)\colon (X,G)\to (Y,H)$ in $\Eq(C)$
gives a morphism of groupoids $(f,u)_{\bullet} \colon (X,G)_{\bullet} \to
(Y,H)_{\bullet}$, $(f,u)_0=f$, $(f,u)_1=f\times u\colon (x,g) \mapsto
(f(x),u(g))$.
\end{construction}

The functor \eqref{e.EqGrpd} is faithful, but not fully faithful. The
maximal 2-subcategory $\Grpd^{\Eq}(C)$ of $\Grpd(C)$ spanned by the objects
in the image of \eqref{e.EqGrpd} can be described as follows.

\begin{prop}\label{p.eq}
Let $(X,G)$, $(Y,H)$, and $(Z,I)$ be objects of $\Eq(C)$.
\begin{enumerate}
\item For any morphism of groupoids $\varphi  =
    (\varphi_0,\varphi_1)\colon (X,G)_{\bullet} \to (Y,H)_{\bullet}$,
    there exist a unique pair of morphisms $f\colon X \to Y$, $u\colon X
    \times G \to H$ such that $\varphi_1(x,g)  = (f(x),u(x,g))$, and the
    pair $(f,u)$ satisfies the following relations:
\begin{enumerate}
\item $f$ is \emph{$u$-equivariant}, \ie $f(xg) = f(x)u(x,g)$,

\item $u(x,g)u(xg,g') = u(x,gg')$.
\end{enumerate}
Conversely, any pair $(f,u)$ satisfying (i), (ii) defines a morphism of
groupoids $\varphi_\bullet$. Moreover, if $(a,u)\colon (X,G)_\bullet \to
(Y,H)_\bullet$ and $(b,v)\colon (Y,H)_\bullet\to (Z,I)_\bullet$ are
morphisms of groupoids, the composition is given by $(ba,w)$, where
$w\colon X\times G\to I$ is given by $w(x,g)=v(a(x),u(x,g))$.

\item Let $\varphi_i = (f_i,u_i)\colon (X,G)_{\bullet} \to
    (Y,H)_{\bullet}$ ($i = 1, 2$) be 1-morphisms of groupoids. Then a
    2-morphism from $\varphi_1$ to $\varphi_2$ is a morphism $r\colon X
    \to H$ satisfying the relations
\begin{enumerate}
\item $f_1(x) = f_2(x)r(x)$,

\item $r(x)u_1(x,g)=u_2(x,g)r(xg)$.
\end{enumerate}
Composition of 2-morphisms is given by multiplication in $H$.
\end{enumerate}
\end{prop}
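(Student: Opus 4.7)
The plan is to unwind the definitions of internal functor and internal natural isomorphism from Construction~\ref{s.Eq}; once the convention $s(x,g)=xg$, $t(x,g)=x$ is fixed, every claim reduces to a one-line verification.

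For part (a): uniqueness of the decomposition $\varphi_1(x,g)=(f(x),u(x,g))$ is immediate from projecting $Y\times H$ onto its two factors. For existence, the compatibility of $\varphi$ with $t$ (together with $t_X(x,g)=x$ and $t_Y(y,h)=y$) forces the first component of $\varphi_1$ to depend only on $x$ and to equal $f\coloneqq\varphi_0$; the second component defines $u$. I then translate the remaining functor axioms. The source compatibility $s_Y\varphi_1=\varphi_0 s_X$ becomes $f(xg)=f(x)u(x,g)$, which is (i). The compatibility with $m$, applied to the composable pair $((x,g),(xg,g'))$ and using the formula $m_Y((y,h),(yh,h'))=(y,hh')$, becomes $u(x,gg')=u(x,g)u(xg,g')$, which is (ii). The identity axiom $u(x,1)=1$ is then implied by (ii) specialised to $g=g'=1$. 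Conversely, any pair $(f,u)$ satisfying (i) and (ii) visibly defines an internal functor by the same formulas, and the composition formula drops out of the computation $\psi_1\varphi_1(x,g)=\psi_1(a(x),u(x,g))=(b(a(x)),v(a(x),u(x,g)))$.

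For part (b): a morphism $\rho\colon X\to Y\times H$ realising a 2-morphism $\varphi_1\to\varphi_2$ has its first component fixed by the axiom $t_Y\rho=f_2$ to be $\rho_0=f_2$, so $\rho$ is entirely determined by $r\coloneqq\rho_1\colon X\to H$. The axiom $s_Y\rho=f_1$ becomes $f_2(x)r(x)=f_1(x)$, which is (i). The naturality square $m_Y(\varphi_{2,1},\rho s_X)=m_Y(\rho t_X,\varphi_{1,1})$ evaluated on $(x,g)\in X\times G$ yields $(f_2(x),u_2(x,g)r(xg))=(f_2(x),r(x)u_1(x,g))$, giving (ii). Vertical composition is the analogous calculation $m_Y((f_3(x),r_2(x)),(f_2(x),r_1(x)))=(f_3(x),r_2(x)r_1(x))$, so 2-morphisms compose by pointwise multiplication in $H$.

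The only real obstacle is notational discipline: the convention that the ``target'' of $(x,g)$ is $x$ while the ``source'' is $xg$ runs against the usual orientation of arrows, so one must be careful not to flip sides in (i) or (ii) at any stage. Beyond that bookkeeping, the proposition is entirely formal and no nontrivial input about $C$ (beyond the existence of the finite limits used in Construction~\ref{s.Eq}) is needed.
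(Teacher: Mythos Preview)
Your proof is correct and follows exactly the same approach as the paper's: the paper's proof is a one-liner noting that uniqueness is clear and that existence, (i), and (ii) express compatibility of $\varphi$ with the target, source, and composition morphisms respectively, with the rest declared straightforward. You simply spell out these verifications (and the 2-morphism case) in detail, with the correct bookkeeping for the $s(x,g)=xg$, $t(x,g)=x$ convention.
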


We will sometimes call a morphism $u\colon X \times G \to H$ satisfying (a)
(ii) a \textit{crossed homomorphism}.

\begin{proof}
In (a), the uniqueness of $(f,u)$ are clear, while the existence (resp.\
(i), resp.\ (ii)) expresses the compatibility of $\varphi$ with the target
(resp.\ source, resp.\ composition) morphism. The other statements are
straightforward.
\end{proof}

\begin{definition}\label{s.faithfun}
We say that a pseudofunctor $F\colon \cC\to \cD$ between 2-categories is
\emph{faithful} (resp.\ \emph{fully faithful}) if for every pair of objects
$X$ and $Y$ in $\cC$, the functor $\Hom_\cC(X,Y)\to \Hom_\cD(FX,FY)$ induced
by $F$ is fully faithful (resp.\ an equivalence of categories). We say that
$F$ is \emph{essentially surjective} if for every object $Y$ of $\cD$, there
exists an object $X$ of $\cC$ and an equivalence $FX\simeq Y$ in~$\cD$.
\end{definition}

\begin{construction}\label{s.GrpdSt}
Let $E$ be a $\cU$-topos (we will be mostly interested in the case where $E$
is the topos of fppf sheaves on some algebraic space), endowed with its
canonical topology. A groupoid $X_\bullet$ in $E$ defines a category
$[X_\bullet]'$ fibered in groupoids over~$E$ whose fiber at $U$ is
$X_\bullet(U)$. This is an $E$-prestack, and, as in \cite[3.4.3]{LMB}, we
denote the associated $E$-stack \cite[02ZP]{stacks} by $[X_\bullet]$. If
$\pi$ denotes the canonical composite morphism
\[\pi\colon X_0\to [X_\bullet]'\to [X_\bullet],\]
the groupoid can be recovered from $\pi$: there is a natural isomorphism
\begin{equation}\label{e.1.4.1}
X_1 \simto X_0 \times_{[X_\bullet]} X_0
\end{equation}
identifying the projections $p_1,p_2\colon X_0 \times_{[X_\bullet]} X_0\to
X_0$ with $s$ and $t$, and identifying the second projection $\id\times
\pi\times \id\colon X_0 \times_{[X_\bullet]} X_0 \times_{[X_\bullet]} X_0\to
X_0 \times_{[X_\bullet]} X_0$ with $m$. Here $X_0\times_{[X_\bullet]} X_0$
denotes the sheaf carrying $U$ to the set of isomorphism classes of triples
$(x,y,\alpha)$, $x,y\in X_0(U)$, $\alpha\colon \pi(x)\simeq \pi(y)$. More
generally, there is a natural isomorphism of simplicial objects
\begin{equation}\label{e.1.4.2}
\Ner(X_\bullet) \simto \cosk_0(\pi)
\end{equation}
between the nerve of the groupoid $X_\bullet$ and the $0$-th coskeleton of
$\pi$.

We denote by $\St(E)$ (resp.\ $\PreSt(E)$) the (2,1)-category of $E$-stacks
($E$-prestacks). The pseudofunctor $\Grpd(E)\to\PreSt(E)$ sending
$X_\bullet$ to $[X_\bullet]'$ is fully faithful and the pseudofunctor
$\PreSt(E)\to\St(E)$ sending an $E$-prestack to its associated $E$-stack is
faithful. Therefore, the composite pseudofunctor
\begin{equation}\label{e.GrpdSt}
  \Grpd(E)\to \St(E)
\end{equation}
sending $X_\bullet$ to its associated $E$-stack $[X_\bullet]$ is faithful.
In other words, if $X_\bullet$, $Y_\bullet$ are groupoids in $E$, and
$\varphi_i\colon X_\bullet \to Y_\bullet$ ($i=1,2$) is a morphism of
groupoids, then the natural map
\[\Hom(\varphi_1,\varphi_2)\to \Hom([\varphi_1],[\varphi_2])\]
is bijective. However, in general, not every morphism $f \colon
[X_{\bullet}] \to [Y_{\bullet}]$ is of the form $[\varphi]$ for a morphism
of groupoids $\varphi \colon X_{\bullet} \to Y_{\bullet}$ (see Remark
\ref{r.notff} below). On the other hand, \eqref{e.GrpdSt} is essentially
surjective.
\end{construction}

\begin{notation}\label{s.wedge}
In the case of the groupoid $(X,G)_{\bullet}$ associated with a $G$-object
$X$ of $E$, the stack $[(X,G)_{\bullet}]$ is denoted by
\begin{equation}
[X/G]
\end{equation}
and called the \textit{quotient stack} of $X$ by $G$. For $X=e$ the final
object of $E$ (with the trivial action of $G$), it is called the
\textit{classifying stack} of $G$ and denoted by
\begin{equation}\label{e.1.5.2}
BG \coloneqq [e/G].
\end{equation}

Recall (\cite[2.4.2]{LMB}, \cite[04WM]{stacks}) that the projection $X \to
[X/G]$ makes $X$ into a universal $G$-torsor over $[X/G]$, i.e. for $U$ in
$E$, the groupoid $[X/G](U)$ is canonically equivalent to the category of
pairs $(P,a)$, where $P$ is a right $G_U$-torsor and $a$ is a
$G$-equivariant morphism from $P$ to $X$; morphisms from $(P,a)$ to $(Q,b)$
are $G$-equivariant morphisms $c\colon P\to Q$ such that $a=bc$.

The action of $G$ on $X$ is recovered from $\pi$: the isomorphism
\eqref{e.1.4.1} takes the form
\begin{equation}\label{e.XGX}
X \times G \simto X \times_{[X/G]} X,
\end{equation}
identifying the projections $p_1$, $p_2$ with $(x,g) \mapsto xg$, $(x,g)
\mapsto x$.

For $X = e$,  $BG(U)$ is the groupoid of $G$-torsors on $U$ for $U$ in $E$,
which justifies the terminology ``classifying stack''. For general $X$, the
projection $[X/G] \to BG$ induces $X \to e$ by the base change $B\{1\} \to
BG$, so that one can think of $[X/G] \to BG$ as a ``fibration'' with fiber
$X$. In other words, $[X/G]$ plays the role of the object $PG \wedge^G X$
recalled at the beginning of Section~\ref{s.1}.

In order to describe morphisms from $[X/G]$ to $[Y/H]$ associated to
morphisms of groupoids from $(X,G)_{\bullet}$ to $(Y,H)_{\bullet}$, we need
to introduce the following notation. Let $(X,G)$ be an object of $\Eq(E)$,
and let $u \colon X \times G \to H$ be a crossed homomorphism (Proposition
\ref{p.eq}). We denote by
\begin{equation}
X \wedge^{G,u} H
\end{equation}
the quotient of $X \times H$ by $G$ acting by $(x,h)g = (xg,u(x,g)^{-1}h)$.
This is an $H$-object of~$E$, the action of $H$ on it being deduced from its
action by right translations on $X \times H$. For any $H$-object $Y$ of $E$,
the map
\begin{equation}\label{e.eq0}
\Hom_u(X,Y) \to \Hom_H(X \wedge^{G,u} H,Y)
\end{equation}
sending a $u$-equivariant morphism $f$ (Proposition \ref{p.eq} (a) (i)) to
the morphism $f^u\colon (x,h) \mapsto f(x)h$ is bijective.

When $u \colon X \times G \to H$ is defined by $u(x,g) = u_0(g)$ for a group
homomorphism $u_0 \colon G \to H$, $X \wedge^{G,u} H$ coincides with the
usual contracted product \cite[D\'efinition III.1.3.1]{Giraud}, i.e.\ the
quotient of $X \times H$ by the diagonal action of $G$, $(x,h)g \coloneqq
(xg,u_0(g)^{-1}h)$.
\end{notation}

The following proposition, whose verification is straightforward, describes
the restriction of \eqref{e.GrpdSt} to $\Grpd^{\Eq}(E)$.

\begin{prop}\label{p.EqE}
Let $(X,G)$ and $(Y,H)$ be objects of $\Eq(E)$.
\begin{enumerate}
\item Let $(f,u) \colon (X,G)_{\bullet} \to (Y,H)_{\bullet}$ be a morphism
    of groupoids (Proposition \ref{p.eq}), and let
    \[[f/u] \colon [X/G] \to [Y/H]\]
    be the associated morphism of stacks. For $(P,a) \in [X/G](U)$,
    $[f/u](P,a)$ is the pair consisting of the $H$-torsor $P \wedge^{G,v}
    H$ (where $v$ is the composition of $a \times \id_G \colon P \times G
    \to X \times G$ and $u$) and the $H$-equivariant morphism $a^v\colon P
    \wedge^{G,v} H \to Y$ defined by $a$ via \eqref{e.eq0}.

\item Let $\varphi_1, \varphi_2, r$ be as in Proposition \ref{p.eq} (b).
    Then the 2-morphism $[r]\colon [f_1/u_1] \to [f_2/u_2]$ induced by $r$
    is given by the $Y$-morphism $P \wedge^{G,v_1} H \to P \wedge^{G,v_2}
    H$ sending $(p,h)$ to $(p,r(a(p))^{-1}h)$.
\end{enumerate}
\end{prop}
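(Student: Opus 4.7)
The plan is a direct verification using the universal description of $[X/G](U)$ as the groupoid of pairs $(P, a)$ with $P$ a right $G_U$-torsor and $a\colon P \to X$ a $G$-equivariant morphism (Notation \ref{s.wedge}). Since the pseudofunctor \eqref{e.GrpdSt} is faithful, both the morphism $[f/u]$ and the 2-morphism $[r]$ are determined, up to unique isomorphism, by what they do at the prestack level on the fiber $[(X,G)_\bullet]'(U) = X(U)$. I would therefore exhibit the formulas of the statement as an explicit natural construction on torsors, and then check that on trivial torsors this construction reduces to the prestack-level data provided by $(f, u)$ and $r$.

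For part (a), starting from $(P, a) \in [X/G](U)$ I would first verify that $v = u \circ (a \times \id_G) \colon P \times G \to H$ is a crossed homomorphism: the cocycle identity $v(p, g) v(pg, g') = v(p, gg')$ follows from the $G$-equivariance of $a$ together with relation (a)(ii) of Proposition \ref{p.eq}. This allows one to form $P \wedge^{G, v} H$, which is an $H_U$-torsor because $P \to U$ is a $G$-torsor. Next, the composite $f \circ a\colon P \to Y$ satisfies $f(a(pg)) = f(a(p) g) = f(a(p)) u(a(p), g) = f(a(p)) v(p, g)$, hence is $v$-equivariant in the sense of \eqref{e.eq0}, and so extends to an $H$-equivariant map $a^v \colon P \wedge^{G, v} H \to Y$, $(p, h) \mapsto f(a(p)) h$. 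When $P = U \times G$ is the trivial torsor with $a(u, g) = x(u) g$ for some $x \colon U \to X$, this construction produces the trivial $H$-torsor on $U$ together with the map coming from $f \circ x$, which recovers the image under $(f, u)_\bullet$ of $x$ viewed as an object of $[(X, G)_\bullet]'(U)$. Functoriality in $(P, a)$ is then a routine check.

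For part (b), I would verify that the formula $(p, h) \mapsto (p, r(a(p))^{-1} h)$ descends from $P \times H$ to a well-defined morphism of $H$-torsors $P \wedge^{G, v_1} H \to P \wedge^{G, v_2} H$. This is precisely where relation (b)(ii) of Proposition \ref{p.eq}, $r(x) u_1(x, g) = u_2(x, g) r(xg)$, is used. Compatibility with the maps to $Y$ then follows from (b)(i), $f_1(x) = f_2(x) r(x)$. Restricting to a trivial torsor as above, this 2-morphism specializes to the morphism $r \circ x \colon U \to H$, which is exactly the prestack-level action of $r$; faithfulness of \eqref{e.GrpdSt} then forces it to be $[r]$.

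The main obstacle is not conceptual but bookkeeping: one must carefully track right actions, the contracted product convention $(x, h) g = (x g, u(x, g)^{-1} h)$, and the precise form of the bijection \eqref{e.eq0}. Once actions and signs are aligned, each claim reduces to one of the identities already recorded in Proposition \ref{p.eq}, which is why the authors are content to call the verification straightforward.
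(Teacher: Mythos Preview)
Your approach is exactly the ``straightforward verification'' the paper declines to write out: pass to the torsor description of $[X/G](U)$, build the contracted product and the $H$-equivariant map from the crossed homomorphism $v=u\circ(a\times\id_G)$, and then pin everything down by restricting to trivial torsors, where the prestack-level data of $(f,u)$ and $r$ are visible. There is nothing to add to the strategy.

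One caution for part (b): when you actually carry out the two checks you describe, the identities (b)(i) and (b)(ii) of Proposition~\ref{p.eq} produce the map $(p,h)\mapsto(p,\,r(a(p))\,h)$ rather than $(p,h)\mapsto(p,\,r(a(p))^{-1}h)$. Indeed, $G$-equivariance between the two contracted-product actions amounts to $u_2(x,g)\,r(xg)=r(x)\,u_1(x,g)$ with $x=a(p)$, which is exactly (b)(ii), while compatibility with $a^{v_1},a^{v_2}$ reduces to $f_1(x)=f_2(x)r(x)$, which is (b)(i). The formula with the inverse would instead require these relations with $u_1,u_2$ (equivalently $\varphi_1,\varphi_2$) interchanged. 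So either the exponent $-1$ in the displayed formula is a typo, or there is a convention reversal you should track down before declaring the verification complete; your outline is correct, but the honest execution will force you to confront this sign.
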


For a crossed homomorphism $u \colon X \times G \to H$, the unit section of
$H$ defines a $u$-equivariant morphism
\begin{equation}\label{e.eq}
X \to X \wedge^{G,u} H.
\end{equation}
The morphism of $E$-stacks
\begin{equation}\label{e.eq2}
  [X/G]\to [(X\wedge^{G,u} H)/H]
\end{equation}
induced by \eqref{e.eq} sends $T\to X$ to $T\wedge^{G,u} H \to X\wedge^{G,u}
H$.

\begin{remark}\label{r.notff}
The restriction of \eqref{e.GrpdSt} to $\Grpd^{\Eq}(E)$ is not fully
faithful in general. In other words, for objects $(X,G)$, $(Y,H)$ of
$\Eq(E)$, a morphism of stacks $[X/G]\to [Y/H]$ does not necessarily come
from a morphism of groupoids $(X,G)_\bullet\to (Y,H)_\bullet$. In fact, in
the case $G=\{1\}$ and $Y$ is a nontrivial $H$-torsor over $X$, any
quasi-inverse of the equivalence $[Y/H]\to X$ does not come from a morphism
of groupoids. See Proposition \ref{p.eqstr} for a useful criterion. See also
\cite[Proposition 5.1]{Zheng} for a calculus of fractions for the composite
functor $\Eq(E)\to \St(E)$ of \eqref{e.EqGrpd} and \eqref{e.GrpdSt}.
\end{remark}

\begin{definition}\label{s.faith}
We say that a morphism $X\to Y$ in a 2-category $\cC$ is \emph{faithful}
(resp.\ a \emph{monomorphism}) if for every object $U$ of $\cC$, the functor
$\Hom(U,X)\to \Hom(U,Y)$ is faithful (resp.\ fully faithful).
\end{definition}

In a 2-category, we need to distinguish between 2-limits \cite[Definition
1.4.26]{GR} and strict 2-limits (called ``2-limits'' in \cite[Definition
7.4.1]{Borceux}). Strict 2-products are 2-products. If a diagram $X\to
Y\from X'$ in $\cC$ admits a 2-fiber product $X\times_Y X'$ and a strict
2-fiber product $Z$, the canonical morphism $Z\to X\times_Y X'$ is a
monomorphism.

In a (2,1)-category $\cC$ admitting 2-fiber products, a morphism $X\to Y$ is
faithful (resp.\ a monomorphism) if and only if its diagonal morphism $X\to
X\times_Y X$ is a monomorphism (resp.\ an equivalence).

A morphism of $E$-prestacks $\cX\to \cY$ is faithful (resp.\ a monomorphism,
resp.\ an equivalence) if and only if $\cX(U)\to \cY(U)$ is a faithful
functor (resp.\ a fully faithful functor, resp.\ an equivalence of
categories) for every $U$ in $E$. If $\cX'$ is an $E$-prestack and $\cX$ is
its associated $E$-stack, then the canonical morphism $\cX'\to \cX$ is a
monomorphism.

Let $(f,u)\colon (X,G)\to (Y,H)$ be a morphism of $\Eq(E)$. If $u$ is a
monomorphism, then $[f/u]\colon [X/G]\to [Y/H]$ is faithful.

\begin{remark}\label{r.EqGrpd}
The category $\Eq(C)$ admits finite limits, whose formation commutes with
the projection functors $(X,G) \mapsto X$ and $(X,G) \mapsto G$ from
$\Eq(C)$ to $C$ and to the category of group objects of $C$, respectively.
The 2-category $\Grpd(C)$ admits finite strict 2-limits, whose formation
commutes with the projection 2-functors $X_\bullet\mapsto X_0$ and
$X_\bullet\mapsto X_1$ from $\Grpd(C)$ to~$C$. The functor
$\Eq(C)\to\Grpd(C)$ \eqref{e.EqGrpd} sending $(X,G)$ to $(X,G)_\bullet$
carries finite limits to finite \emph{strict} 2-limits.

The 2-category $\Grpd(C)$ admits finite 2-limits as well. The 2-fiber
product of a diagram $X_\bullet\xrightarrow{f} Y_\bullet \xleftarrow{g}
Y'_\bullet$ in $\Grpd(C)$ is the groupoid $W_\bullet$ of triples
$(x,y,\alpha)$, where $x\in X_0$, $y\in Y'_0$, and $(\alpha\colon f(x)\simto
g(y))\in Y_1$. More formally, $W_0=X_0\times_{Y_0,s_Y} Y_1\times_{t_Y,Y_0}
Y'_0$ and $W_1$ is the limit of the diagram
\[X_1\to Y_1\xleftarrow{p_1} Y_1\times_{Y_0} Y_1 \xrightarrow{m} Y_1
\xleftarrow{m} Y_1\times_{Y_0} Y_1\xrightarrow{p_2} Y_1 \from Y'_1.
\]
A morphism $X_\bullet\to Y_\bullet$ in $\Grpd(C)$ is faithful (resp.\ a
monomorphism) if and only if the morphism $X_1\to (X_0\times
X_0)\times_{Y_0\times Y_0, (s_Y,t_Y)} Y_1$ is a monomorphism (resp.\
isomorphism).

The category $\St(E)$ admits small 2-limits. The pseudofunctor $\Grpd(E)\to
\St(E)$ \eqref{e.GrpdSt} preserves finite 2-limits and thus preserves
faithful morphisms and monomorphisms.
\end{remark}

\begin{remark}
A commutative square in $\Eq(E)$,
\begin{equation}\label{e.Eqsquare}
\xymatrix{(X',G') \ar[r]^{(f',\gamma')} \ar[d]_{(p,u)} & (Y',H') \ar[d]^{(q,v)}\\
(X,G) \ar[r]^{(f,\gamma)} & (Y,H)}
\end{equation}
induces a 2-commutative square of $E$-stacks
\begin{equation}\label{e.Stsquare}
\xymatrix{[X'/G'] \ar[r] \ar[d] & [Y'/H'] \ar[d] \\
[X/G] \ar[r] & [Y/H].}
\end{equation}
It is not true in general that if \eqref{e.Eqsquare} is cartesian,
\eqref{e.Stsquare} is 2-cartesian, as \eqref{e.XGX} already shows. However,
we have the following result, which is a partial generalization of
\cite[Proposition 5.4]{Zheng}.
\end{remark}

\begin{prop}\label{p.cart}
Consider a cartesian square \eqref{e.Eqsquare} in $\Eq(E)$. If the morphism
    \begin{equation}\label{e.groups}
    H'\times G \to H
    \end{equation}
in $E$ given by $(h,g)\mapsto v(h)\gamma(g)$ is an epimorphism, then
\eqref{e.Stsquare} is 2-cartesian.
\end{prop}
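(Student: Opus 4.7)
The plan is to reduce the assertion to a direct verification at the level of groupoids. By Remark \ref{r.EqGrpd}, the functor $\Eq(E)\to\Grpd(E)$ carries the cartesian square \eqref{e.Eqsquare} to a strict 2-cartesian square of groupoids, which in particular produces a canonical morphism $(X',G')_\bullet\to W_\bullet$ in $\Grpd(E)$, where $W_\bullet$ denotes the 2-fiber product of $(X,G)_\bullet\to(Y,H)_\bullet\leftarrow(Y',H')_\bullet$ described explicitly in Remark \ref{r.EqGrpd}. Since the pseudofunctor \eqref{e.GrpdSt} preserves finite 2-limits, there is a canonical equivalence $[W_\bullet]\simeq[X/G]\times_{[Y/H]}[Y'/H']$, so it suffices to show that $[(X',G')_\bullet]\to[W_\bullet]$ is an equivalence of $E$-stacks.

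For this, I invoke the standard criterion that a morphism of groupoids $\psi_\bullet\colon Z_\bullet\to W_\bullet$ in $\Grpd(E)$ induces an equivalence of associated stacks if and only if (F) the natural square with vertical arrows $(s,t)\colon Z_1\to Z_0\times Z_0$ and $W_1\to W_0\times W_0$ is cartesian in $E$, and (S) the map $Z_0\times_{W_0,s_W} W_1\to W_0$ induced by $t_W$ is an epimorphism in $E$. From the cartesianness of \eqref{e.Eqsquare} in $\Eq(E)$ one has $X'=X\times_Y Y'$ and $G'=G\times_H H'$, and from Remark \ref{r.EqGrpd} one obtains the description $W_0=\{(x,y',h)\in X\times Y'\times H:f(x)=g(y')h\}$, while a morphism in $W_\bullet$ from $(x_1,y'_1,h_1)$ to $(x_2,y'_2,h_2)$ is encoded by a pair $(h_X,h_{Y'})\in G\times H'$ satisfying $x_2h_X=x_1$, $y'_2h_{Y'}=y'_1$, and $v(h_{Y'})h_1=h_2\gamma(h_X)$. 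The canonical morphism sends $(x,y')\in X'_0$ to $(x,y',1)\in W_0$.

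With these descriptions condition (F) is immediate: a morphism in $W_\bullet$ between two such images is a pair $(h_X,h_{Y'})\in G\times H'$ with $v(h_{Y'})=\gamma(h_X)$, i.e., precisely an element of $G'=G\times_H H'$, matching $X'_1=X'\times G'$. For condition (S), an unwinding shows that the map $X'_0\times_{W_0,s_W} W_1\to W_0$ sends $(x,y',h_X,h_{Y'})$ (with $f(x)=g(y')$) to $(xh_X^{-1},y'h_{Y'}^{-1},v(h_{Y'})\gamma(h_X)^{-1})$. Locally lifting an arbitrary $(x'',y'',h'')\in W_0$ therefore amounts to writing $h''$ as $v(h_{Y'})\gamma(h_X)^{-1}$ for $h_{Y'}\in H'$ and $h_X\in G$; once such a factorization is chosen, setting $x=x''h_X$ and $y'=y''h_{Y'}$ automatically yields $f(x)=g(y')$ from $f(x'')=g(y'')h''$. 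This local factorization of elements of $H$ is precisely the hypothesis that $H'\times G\to H$, $(h,g)\mapsto hg$, is an epimorphism, so condition (S) is the only substantive step and the unique place where the hypothesis enters.
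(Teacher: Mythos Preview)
Your proof is correct. The approach is essentially parallel to the paper's but packaged differently: the paper shows that the comparison morphism $\alpha\colon[X'/G']\to[X/G]\times_{[Y/H]}[Y'/H']$ is a monomorphism by the abstract remark that strict $2$-fiber products map monomorphically to $2$-fiber products (Remark~\ref{r.EqGrpd}), then proves essential surjectivity by working with the torsor description of the fiber categories, trivializing the torsors on a cover, and using the epimorphism hypothesis to adjust the gluing datum $h_i=h'_ig_i$. You instead make the $2$-fiber product groupoid $W_\bullet$ explicit and invoke the (correct) criterion that $[\psi_\bullet]$ is an equivalence iff (F) $Z_1\simeq(Z_0\times Z_0)\times_{W_0\times W_0}W_1$ and (S) $Z_0\times_{W_0,s_W}W_1\to W_0$ is an epimorphism; your (F) is exactly the paper's monomorphism step unwound, and your (S) computation is the groupoid-level analogue of the paper's torsor trivialization argument. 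Your route has the advantage of avoiding the torsor description entirely and making the role of the hypothesis transparent as the single step needed for (S); the paper's route has the advantage of not needing to write out $W_\bullet$ explicitly. One minor point: the map you need surjective in (S) is $(h_{Y'},h_X)\mapsto v(h_{Y'})\gamma(h_X)^{-1}$, which differs from the hypothesis map $(h,g)\mapsto v(h)\gamma(g)$ by precomposition with the automorphism $(h,g)\mapsto(h,g^{-1})$ of $H'\times G$, so the equivalence is immediate---but it is worth saying.
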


\begin{proof}
Let
\[\alpha\colon [X'/G']\to \cX \coloneqq [X/G]\times_{[Y/H]}[Y'/H']\]
be the induced morphism of $E$-stacks. By Remark \ref{r.EqGrpd} and the
remark following Definition \ref{s.faith}, $\alpha$ is a monomorphism. We
need to show that for any object $V$ of~$E$, the functor $\alpha_V \colon
[X'/G']_V \to \mathcal{X}_V$ is essentially surjective. By definition,
$\cX_V$ is the category of triples $((T,t),(T',t'),s)$, where $(T,t\colon
T\to X)$ is an object of $[X/G]_V$, $(T',t'\colon T'\to Y')$ is an object of
$[Y'/H']_V$, and $s\colon [f/u]_V(T,t)\to [q/v]_V(T',t')$ is an isomorphism.
In other words (Proposition \ref{p.EqE} (b)), $s\colon T\wedge^{G,\gamma}
H\to T'\wedge^{H',v} H$ is an isomorphism of $H$-torsors over $V$,
compatible with the morphisms to~$Y$ (induced by $qt'$ and $ft$). The
functor $\alpha_V$ sends an object $(P,w)$ of $[X'/G']_V$ to
$([p/u]_V(P,w),[f'/\gamma']_V(P,w),\sigma)$, where $\sigma\colon [fp/\gamma
u]_V (P,w)\to [qf'/v\gamma']_V (P,w)$ is the obvious isomorphism. Let
$a=((T,t), (T',t'), s)$ be an object of $\cX_V$. It remains to show that
there exist a cover $(V_i\to V)_{i\in I}$ and, for every $i\in I$, an object
$(P_i,w_i)$ of $[X'/G']_{V_i}$ such that $\alpha(P_i,w_i)\simeq a_{V_i}$.
Take a cover $(V_i\to V)_{i\in I}$ such that for every $i$, $T_{V_i}$ and
$T'_{V_i}$ are both trivial and choose trivializations of them. Then
$s_{V_i}$ is represented by the left multiplication by some $h_i\in H(V_i)$.
By the assumption on \eqref{e.groups}, we may assume
$h_i=v(h'_i)\gamma(g_i)$, $h'_i\in H'(V_i)$, $g_i\in G(V_i)$. In this case,
the square
\begin{equation}\label{e.cart}
  \xymatrix{H_{V_i}\ar[r]^{s_{V_i}}\ar[d]_{\lambda_{\gamma(g_i)}} & H_{V_i}\ar[d]^{\lambda_{v(h'_i)}^{-1}}\\
  H_{V_i}\ar[r]^{1} & H_{V_i}}
\end{equation}
commutes, where $\lambda_{h}$ is the left multiplication by $h$. Thus
\eqref{e.cart} gives an isomorphism $a_{V_i}\simeq b_i$, where
$b_i=((G_{V_i},t\lambda_{g_i}^{-1}),(H'_{V_i},t'\lambda_{h'_i}),1)$. Taking
the product of $(G_{V_i},t\lambda_{g_i}^{-1})$ and
$(H'_{V_i},t'\lambda_{h'_i})$ over $(H_{V_i}, (t\lambda_{g_i}^{-1})^{\gamma}
= (t'\lambda_{h'_i})^{v})$ gives us an element $(P_i,w_i)$ of $[X/G]_{V_i}$
whose image under $\alpha$ is $b_i$.
\end{proof}

\begin{cor}
Suppose $u \colon G \to Q$ is an epimorphism of groups of $E$, with
kernel~$K$. Then the natural morphism
\[
BK\simto e \times_{BQ} BG
\]
is an equivalence.
\end{cor}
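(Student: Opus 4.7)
The strategy is to exhibit the equivalence as a direct instance of Proposition \ref{p.cart}. Consider the square in $\Eq(E)$
\[
\xymatrix{(e, K) \ar[r] \ar[d] & (e, G) \ar[d]^{(\id,u)} \\
(e, e) \ar[r] & (e, Q)}
\]
where the horizontal maps on groups are the inclusion $K\hookrightarrow G$ and the trivial homomorphism $e\to Q$, and all morphisms on the underlying $E$-objects are $\id_e$.

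First I would check that this square is cartesian in $\Eq(E)$. By Remark \ref{r.EqGrpd}, finite limits in $\Eq(E)$ are computed component-wise on the underlying object and on the group object, so the fiber product of $(e,G)\to(e,Q)\from(e,e)$ is $(e\times_e e,\;G\times_Q e) = (e,K)$, by the very definition of $K$ as the kernel of $u$.

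Next I would verify the hypothesis of Proposition \ref{p.cart}. In the notation of that proposition, the group $H'$ is $G$, the group $G$ is the trivial group $e$, and the group $H$ is $Q$; so the morphism $H'\times G\to H$ of \eqref{e.groups} becomes $G\times e\simeq G\xto{u} Q$, which is an epimorphism by hypothesis. Proposition \ref{p.cart} therefore yields that the induced 2-commutative square of $E$-stacks
\[
\xymatrix{[e/K] \ar[r] \ar[d] & [e/G] \ar[d] \\
[e/e] \ar[r] & [e/Q]}
\]
is 2-cartesian. Identifying $[e/K]=BK$, $[e/G]=BG$, $[e/Q]=BQ$, and $[e/e]\simeq e$ (the classifying stack of the trivial group is the final object of $\St(E)$, since every torsor under the trivial group is canonically trivial), this is exactly the desired equivalence $BK\simto e\times_{BQ} BG$.

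There is essentially no obstacle: the entire argument is assembly. The only point requiring a moment of translation is matching the hypothesis of Proposition \ref{p.cart}, phrased in terms of the auxiliary multiplication map $H'\times G\to H$, with the given epimorphism $u\colon G\to Q$, which is immediate once the trivial group is placed on the correct side of the square.
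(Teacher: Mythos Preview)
Your proof is correct and is precisely the intended application of Proposition~\ref{p.cart}: the paper places this corollary immediately after that proposition without further argument, and your choice of cartesian square in $\Eq(E)$ together with the identification of the map \eqref{e.groups} with $u$ is exactly what is implicit there.
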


In other words, we can view $Bu \colon BG \to BQ$ as a fibration of fiber
$BK$.

\begin{definition}
We say that a groupoid $X_\bullet$ in $E$ is an \emph{equivalence relation}
if $(s_X,t_X)\colon X_1\to X_0\times X_0$ is a monomorphism. In this case,
the associated $E$-stack $[X_\bullet]$ is represented by the quotient sheaf
in $E$. We say that the action of $G$ on $X$ is \emph{free} if the
associated groupoid $(X,G)_\bullet$ is an equivalence relation. In this
case, $[X/G]$ is represented by the sheaf $X/G$.
\end{definition}

\begin{prop}\label{c.quot}
Let $(X,G)$ be an object in $\Eq(E)$, and let $K$ be a normal subgroup of
$G$ acting freely on $X$. Then the morphism $f\colon [X/G]\to [(X/K)/(G/K)]$
is an equivalence.
\end{prop}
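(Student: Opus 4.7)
My plan is to apply Proposition~\ref{p.cart} to an appropriate cartesian square and then descend along the resulting quotient torsor. The morphism $f=[p/u]$ arises from the morphism $(p,u)\colon (X,G)\to (X/K,G/K)$ in $\Eq(E)$, where $p\colon X\to X/K$ is the quotient sheaf (a $K$-torsor since $K$ acts freely) and $u\colon G\to G/K$ is the quotient homomorphism, which is an epimorphism in~$E$ by construction of $G/K$.

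Consider the commutative square in $\Eq(E)$
\[\xymatrix{(X, K) \ar[r] \ar[d] & (X/K, \{1\}) \ar[d] \\ (X, G) \ar[r]^-{(p,u)} & (X/K, G/K)}\]
whose right vertical morphism is $(\id_{X/K},\{1\}\hookrightarrow G/K)$, left vertical morphism is $(\id_X,K\hookrightarrow G)$, and top horizontal morphism is $(p,K\to \{1\})$. Because finite limits in $\Eq(E)$ are computed componentwise (Remark~\ref{r.EqGrpd}), and since $X\times_{X/K}(X/K)=X$, $G\times_{G/K}\{1\}=K$, this square is cartesian. The hypothesis of Proposition~\ref{p.cart}, namely that $\{1\}\times G\to G/K$, $(1,g)\mapsto u(g)$ be an epimorphism, is exactly the surjectivity of $u$.

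Proposition~\ref{p.cart} therefore produces a 2-cartesian square of $E$-stacks
\[\xymatrix{[X/K] \ar[r]^-{\alpha} \ar[d]_\beta & X/K \ar[d]^{q} \\ [X/G] \ar[r]^-{f} & [(X/K)/(G/K)]}\]
where $[X/K/\{1\}]$ has been identified with $X/K$. Since $K$ acts freely on $X$, the stack $[X/K]$ is represented by the sheaf $X/K$ and $\alpha$ is an equivalence, while the right vertical morphism $q$ is the universal $(G/K)$-torsor, hence an effective epimorphism in $\St(E)$.

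It follows that $f$ itself is an equivalence: its pullback along the epimorphism $q$ is $\alpha$, and being an equivalence of $E$-stacks can be verified locally on the target, i.e., after pullback along an epimorphism. This last descent step is the only nontrivial point; it is standard, and can be spelled out by separately checking essential surjectivity and the monomorphism property locally. Alternatively, one can bypass descent by writing down an explicit quasi-inverse, sending a $(G/K)$-torsor $Q$ over $U$ with equivariant morphism $b\colon Q\to X/K$ to the fiber product $Q\times_{b,X/K,p}X$, equipped with its natural right $G$-action $((q,x)\cdot g=(q\cdot u(g),xg))$ and the second projection to $X$; free action of $K$ ensures this pair is a $G$-torsor over $U$, and direct verification of the unit and counit completes the proof.
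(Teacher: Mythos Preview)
Your proof is correct. Your primary route is more indirect than the paper's: you invoke Proposition~\ref{p.cart} on the cartesian square $(X,K)\to(X/K,\{1\})$ over $(X,G)\to(X/K,G/K)$ to obtain a 2-cartesian square of stacks, observe that the top arrow $[X/K]\to X/K$ is an equivalence because $K$ acts freely, and then descend along the universal $(G/K)$-torsor. The paper instead proceeds in one line by writing down the explicit quasi-inverse on fiber categories: send a pair $(T,\alpha\colon T\to X/K)$ to its base change along $p\colon X\to X/K$. This is precisely the construction you sketch in your final sentence as an ``alternative'', so you have in fact recovered the paper's argument there. Your main approach has the virtue of illustrating how Proposition~\ref{p.cart} already encodes this result; the paper's direct approach is shorter and avoids appealing to the descent principle for equivalences.
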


\begin{proof}
Indeed, for every $U$ in $E$, $[(X/K)/(G/K)]_U$ is the category of pairs
$(T,\alpha)$, where $T$ is a $G/K$-torsor and $\alpha\colon T\to X/K$ is a
$G$-equivariant map, and the functor $f_U$ admits a quasi-inverse carrying
$(T,\alpha)$ to its base change by the projection $X\to X/K$.
\end{proof}

The following \emph{induction formula} will be useful later in the
calculation of equivariant cohomology groups (cf.\ \cite[(1.7)]{Quillen1}).

\begin{cor}\label{p.wedge}
Let $(X,G)$ be an object of $\Eq(E)$ and let $u\colon X\times G\to H$ be a
crossed homomorphism. Assume that the action of $G$ on $X\times H$, as
defined in Notation \ref{s.wedge}, is free (Definition \ref{s.faith}). Then
$f\colon [X/G]\to [X\wedge^{G,u} H/H]$ \eqref{e.eq2} is an equivalence.
\end{cor}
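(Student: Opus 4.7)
The plan is to realize $f$ as a composition of two equivalences, each produced by Proposition \ref{c.quot}. The key object to introduce is $(X \times H, G \times H) \in \Eq(E)$, where $G \times H$ acts on $X \times H$ via two commuting actions: $G = G \times \{1\}$ acts by $(x,h) \cdot g = (xg, u(x,g)^{-1}h)$ (the action assumed free in the hypothesis), and $H = \{1\} \times H$ acts by right translation $(x,h) \cdot h' = (x, hh')$. A direct calculation shows that these two actions commute, so they assemble into a genuine right action of $G \times H$.

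I then apply Proposition \ref{c.quot} twice to $(X \times H, G \times H)$. First, taking the normal subgroup $\{1\} \times H$, which acts freely with quotient $X$ (the residual $G$-action on $X$ being the original one), I obtain an equivalence
\[
\alpha \colon [(X \times H)/(G \times H)] \simto [X/G].
\]
Second, taking the normal subgroup $G \times \{1\}$, which acts freely by hypothesis with quotient $X \wedge^{G,u} H$ (the residual $H$-action being the one of Notation \ref{s.wedge}), I obtain an equivalence
\[
\beta \colon [(X \times H)/(G \times H)] \simto [(X \wedge^{G,u} H)/H].
\]

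To finish, I need to identify $\beta \circ \alpha^{-1}$ with $f$. Following the recipe from the proof of Proposition \ref{c.quot}, a quasi-inverse of $\alpha$ sends a $G$-torsor $(P, a) \in [X/G](U)$ to the base change $P \times_X (X \times H) \simeq P \times H$, viewed as a $(G \times H)$-torsor over $U$ with its natural map to $X \times H$. Applying $\beta$ then produces the $G$-quotient $(P \times H)/G = P \wedge^{G,u} H$, equipped with the $H$-equivariant morphism to $X \wedge^{G,u} H$ induced from $a$ via the bijection \eqref{e.eq0}. Comparing with Proposition \ref{p.EqE}(a), this is exactly $f(P, a)$.

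I do not anticipate any serious obstacle; the argument is essentially bookkeeping around Proposition \ref{c.quot}. The one step that requires a little care is the identification of the composite equivalence with $f$, where one must juggle both the quasi-inverse recipe in the proof of Proposition \ref{c.quot} and the description of $X \wedge^{G,u} H$ as a contracted product.
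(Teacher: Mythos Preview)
Your proposal is correct and follows essentially the same approach as the paper: both introduce the intermediate object $(X\times H, G\times H)$ and invoke Proposition~\ref{c.quot} twice, once for each normal factor. The only organizational difference is that the paper writes $f$ directly as a composite $[X/G]\xrightarrow{\alpha}[X\times H/G\times H]\xrightarrow{\beta}[X\wedge^{G,u}H/H]$, with $\alpha$ defined explicitly as the morphism of groupoids induced by $x\mapsto(x,1)$ and $(x,g)\mapsto(g,u(x,g))$, and then observes that $\alpha$ is a 2-section of the equivalence $[X\times H/G\times H]\to[X/G]$ coming from Proposition~\ref{c.quot}; this spares the object-level chase you carry out at the end to identify $\beta\circ\alpha^{-1}$ with $f$.
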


\begin{proof}
The morphism $f$ can be decomposed as
\[[X/G]\xrightarrow{\alpha} [X\times H/G\times H]\xrightarrow{\beta} [X\wedge^{G,u} H/H],\]
where $\beta$ is an equivalence by Proposition \ref{c.quot}, and $\alpha$ is
induced by the morphism $X\to X\times H$ given by the unit section of $H$
and the crossed homomorphism $X\times G\to G\times H$ sending $(x,g)$ to
$(g,u(x,g))$. Since $\alpha$ is a 2-section of the morphism $[X\times
H/G\times H]\to [X/G]$, which is an equivalence by Proposition \ref{c.quot},
$\alpha$ is also an equivalence.
\end{proof}

\begin{cor}\label{p.quot}
Let $u\colon H\hookrightarrow G$ be a monomorphism of group objects in $E$.
Then
\begin{enumerate}
\item The morphism of stacks $BH \to [(H\backslash G)/G]$ is an
    equivalence.

\item The natural morphism $H \backslash G \to e \times_{BG} BH$ is an
    isomorphism.
\end{enumerate}
\end{cor}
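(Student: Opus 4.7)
The plan is to deduce both statements from Corollary \ref{p.wedge} (the induction formula), supplemented by the fact that Proposition \ref{p.cart} exhibits $X$ as the fiber of $[X/G]\to BG$ over the base point.

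For part (a), I would apply Corollary \ref{p.wedge} in the degenerate case $X=e$, with the two groups appearing in that corollary played respectively by $H$ (in the role of ``$G$'') and by $G$ (in the role of ``$H$''), the crossed homomorphism $e\times H \to G$ being induced by $u$. Unwinding Notation \ref{s.wedge}, $e\wedge^{H,u} G$ is the quotient of $e\times G = G$ by the $H$-action $(e,g)\cdot h = (e, u(h)^{-1}g)$, namely $H\backslash G$ endowed with its residual right $G$-action. The freeness hypothesis translates into the requirement that $G\times H\to G\times G$, $(g,h)\mapsto (u(h)^{-1}g, g)$, be a monomorphism, which holds precisely because $u$ does. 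Corollary \ref{p.wedge} then delivers the equivalence $BH=[e/H]\simto [(H\backslash G)/G]$.

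For part (b), the key general input is the 2-cartesian square
\begin{equation*}
\xymatrix{X \ar[r]\ar[d] & [X/G]\ar[d]\\ e\ar[r] & BG,}
\end{equation*}
valid for any $G$-object $X$ of $E$. This is obtained from Proposition \ref{p.cart} applied to the (obviously cartesian) square in $\Eq(E)$
\begin{equation*}
\xymatrix{(X,\{1\})\ar[r]\ar[d] & (X,G)\ar[d]\\ (e,\{1\})\ar[r] & (e,G),}
\end{equation*}
whose epimorphism hypothesis on $G\times \{1\}\to G$ is tautological. Taking $X = H\backslash G$ and combining with part (a) yields $H\backslash G \simto e\times_{BG}[(H\backslash G)/G]\simto e\times_{BG} BH$.

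The only point requiring attention is to verify that, under the equivalence of part (a), the natural projection $[(H\backslash G)/G]\to BG$ is identified with the morphism $BH\to BG$ induced by $u$; only with this compatibility does the composite isomorphism produced above coincide with the ``natural morphism'' in the statement. This check is immediate from the explicit description in Proposition \ref{p.EqE} (a): the equivalence sends an $H$-torsor $P$ to the $G$-torsor $P\wedge^{H,u} G$ equipped with its structural map to $H\backslash G$, and the forgetful assignment $(P\wedge^{H,u} G, \text{map to }H\backslash G)\mapsto P\wedge^{H,u} G$ is, by construction, the morphism $BH\to BG$ induced by $u$. I do not anticipate any substantive obstacle beyond this bookkeeping.
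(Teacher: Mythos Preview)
Your proof is correct and follows the same route as the paper: part (a) is deduced from Corollary \ref{p.wedge} with $X=e$, and part (b) from Proposition \ref{p.cart} applied to the cartesian square relating $(H\backslash G,\{1\})$, $(H\backslash G,G)$, $(e,\{1\})$, $(e,G)$, combined with (a). Your additional compatibility check (that the equivalence of (a) lies over $BG$ via $Bu$) is a welcome piece of bookkeeping that the paper leaves implicit.
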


In other words, (a) says that, for any homogeneous space $X$ of group $G$,
if $H$ is the stabilizer of a section $x$ of $X$, then the morphism $BH \to
[X/G]$ given by $x \colon e \to X$ is an equivalence, while (b) can be
thought as saying that $BH \to BG$ is a fibration of fiber $H\backslash G$.

\begin{proof}
  Assertion (a) follows from Corollary \ref{p.wedge}. Assertion (b)
  follows from Proposition \ref{p.cart} applied to the cartesian square
  \[\xymatrix{(H\backslash G, \{1\})\ar[r]\ar[d] & (e, \{1\})\ar[d]\\
  (H\backslash G,G)\ar[r] & (e,G)}\]
  (cf.\ the paragraph following \eqref{e.XGX}) and from (a).
\end{proof}

\begin{construction}\label{s.EqX}
We will apply the above formalism to a relative situation, which we now
describe. Let $\cX$ be an $E$-stack. We denote by $\St_{/\cX}$ the
(2,1)-category of $E$-stacks over $\cX$. An object of $\St_{/\cX}$ is a pair
$(\cY,y)$, where $\cY$ is an $E$-stack and $y\colon \cY\to \cX$ is a
morphism of $E$-stacks. A morphism in $\St_{/\cX}$ from $(\cY,y)$ to
$(\cZ,z)$ is a pair $(f,\alpha)$, where $f\colon \cY\to \cZ$ is a morphism
of $E$-stacks and $\alpha\colon y\to zf$ is a 2-morphism:
\begin{equation}\label{e.tri}
  \xymatrix{\cY\ar[r]^{f}\ar[dr]_{y} & \cZ\ar[d]^z\\& \cX.\ultwocell\omit{<2>}}
\end{equation}
A 2-morphism $(f,\alpha)\to (g,\beta)$ in $\St_{/\cX}$ is a 2-morphism
$\eta\colon f\to g$ in the (2,1)-category $\St(E)$ such that $\beta=(z*\eta)
\circ \alpha$.

A morphism $y\colon \cY\to \cX$ of $E$-stacks is faithful (Definition
\ref{s.faith}) if and only if for any object $U$ of $E$ and any morphism
$x\colon U\to \cX$, the 2-fiber product $U\times_{x,\cX,y} \cY$ is
isomorphic to a sheaf. Consider the 2-subcategory $\cS$ of $\St_{/\cX}$
spanned by objects $(\cY,y)$ with $y$ faithful. For any morphism
$(f,\alpha)\colon (\cY,y)\to (\cZ,z)$ in $\cS$, $f$ is necessarily faithful.
A 2-morphism $\eta\colon (f,\alpha)\to (g,\beta)$ in $\cS$, if it exists, is
uniquely determined by $(f,\alpha)$ and $(g,\beta)$. In other words, if we
denote by $\Faith{\cX}$ the category obtained from $\cS$ by identifying
isomorphic morphisms, then the 2-functor $\cS\to \Faith{\cX}$ is a
2-equivalence.

For any morphism $\phi\colon \cX\to \cY$ of $E$-stacks, base change by
$\phi$ induces a functor $\Faith{\cY}\to \Faith{\cX}$. If $S$ is an object
of $E$, $\Faith{S}$ is equivalent to $E_{/S}$. More generally, if
$U_\bullet$ is a groupoid in $E$, $\Faith{[U_\bullet]}$ is equivalent to the
category of descent data relative to $U_\bullet$. In particular, if $(X,G)$
is an object of $\Eq(E)$, $\Faith{[X/G]}$ is equivalent to the category of
$G$-objects of $E$, equivariant over $X$. For example, $\Faith{BG}$ is
equivalent to the topos $B_G$ of Grothendieck.
\end{construction}

\begin{prop}\label{p.stacktopos}\leavevmode
\begin{enumerate}
\item The category $\Faith{\cX}$ is a $\cU$-topos.

\item Let $\mathcal{X}$ be a stack. For any stack $\mathcal{Y}$ over
    $\mathcal{X}$, associating to any stack $\mathcal{Z}$ faithful over
    $\mathcal{X}$ the groupoid
    $\Hom_{\mathcal{X}}(\mathcal{Z},\mathcal{Y})$ defines a stack
    $\underline{\mathcal{Y}}$ over $\Faith{\cX}$. The 2-functor
 \begin{equation}\label{e.stacktopos}
\St_{/\cX}\to \St(\Faith{\cX}), \quad \mathcal{Y} \mapsto \underline{\mathcal{Y}}
\end{equation}
is a 2-equivalence.
\end{enumerate}
\end{prop}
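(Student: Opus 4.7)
For part (a), I would use essential surjectivity of \eqref{e.GrpdSt} to pick an equivalence $\cX \simeq [U_\bullet]$ for some groupoid $U_\bullet$ in $E$. By the identification recalled at the end of Construction~\ref{s.EqX}, pullback along $U_0 \to \cX$ makes $\Faith{\cX}$ equivalent to the category of descent data relative to $U_\bullet$, i.e.\ pairs $(T,\sigma)$ with $T \in E_{/U_0}$ and $\sigma\colon s^*T \simto t^*T$ a cocycle over $U_1$. This descent category is a $\cU$-topos, either viewed as the 2-limit in $\cU$-topoi of the cosimplicial diagram $[n]\mapsto E_{/U_n}$ attached to the nerve \eqref{e.1.4.2}, or proved directly by Giraud's axioms: limits, colimits, and a small generating family transport from $E_{/U_0}$ to descent data termwise, while disjointness of coproducts and effectivity of equivalence relations are inherited.

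For part (b), the functor $\cY\mapsto\underline{\cY}$ lands in stacks because for any covering family $\{\cZ_i\to\cZ\}$ in $\Faith{\cX}$ the stack condition on $\cY$ makes $\Hom_\cX(\cZ,\cY)\to\prod_i\Hom_\cX(\cZ_i,\cY)$ the equalizer of the two maps to $\prod_{i,j}\Hom_\cX(\cZ_i\times_{\cZ}\cZ_j,\cY)$. I would construct a quasi-inverse $\Psi\colon\St(\Faith{\cX})\to\St_{/\cX}$ by sending a stack $\mathcal{F}$ on $\Faith{\cX}$ to the fibered category $\Psi(\mathcal{F})$ over $E$ whose fiber at $U\in E$ has objects the pairs $(x,t)$ with $x\colon U\to\cX$ and $t\in\mathcal{F}(U\xrightarrow{x}\cX)$; the morphism $U\to\cX$ is automatically faithful since $U$ is a sheaf, by the remark following Definition~\ref{s.faith}. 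The projection $(x,t)\mapsto x$ supplies the structure morphism $\Psi(\mathcal{F})\to\cX$.

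The remaining work is the stack property of $\Psi(\mathcal{F})$ and the two natural equivalences $\Phi\Psi\simeq\id$, $\Psi\Phi\simeq\id$. For this I would identify the full subcategory $C_\cX\subset\Faith{\cX}$ of representable objects $(U,x)$ with $U\in E$, equipped with the topology pulled back from $E$, as a site whose associated topos is $\Faith{\cX}$; a standard comparison lemma then makes $\St(\Faith{\cX})$ equivalent to $\St(C_\cX)$, on which $\Phi$ and $\Psi$ are mutually inverse essentially by construction, since both $\underline{\cY}(U,x)$ and the fiber of $\Psi(\mathcal{F})$ at $(U,x)$ are tautologically the prescribed groupoids. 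Descent for $\Psi(\mathcal{F})$ over $E$ reduces to descent for $\mathcal{F}$ along covers of $C_\cX$, a special case of descent on $\Faith{\cX}$.

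I expect the principal obstacle to be verifying that $C_\cX$ generates $\Faith{\cX}$, i.e.\ that every faithful $\cZ\to\cX$ admits a covering in $\Faith{\cX}$ by objects of $C_\cX$. Via the presentation $\cX\simeq[U_\bullet]$ from part (a), such a $\cZ$ corresponds to a descent datum $(T,\sigma)$, and any cover of $T$ by representables in $E_{/U_0}$ assembles through $\sigma$ into the desired cover of $\cZ$ by objects of $C_\cX$. Once this generation statement is in hand, the comparison lemma and the tautological description of $\Phi$ and $\Psi$ on $C_\cX$ yield the two natural equivalences and complete the 2-equivalence.
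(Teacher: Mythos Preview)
Your proposal is correct. For part (b), your quasi-inverse $\Psi$ coincides with the paper's construction: the paper builds the same fibered category of pairs $(x,s)$ with $x\in\cX(U)$ and $s\in\cC(x)$ (with morphisms $(x,s)\to(y,t)$ given by $\alpha\colon x\to y$ in $\cX(U)$ together with $\beta\colon\alpha^*t\to s$), and simply asserts it is a 2-quasi-inverse. You instead route the verification through a comparison lemma for the full subcategory $C_\cX$ of representable objects; the generation statement you flag as the ``principal obstacle'' is precisely what the paper records when it exhibits $\coprod_{U\in\cT}\Ob(\cX(U))$ as a generating family in its proof of (a), so nothing is missing.

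For part (a) the approaches genuinely differ. The paper applies Giraud's criterion directly to $\Faith{\cX}$: for a sheaf $\cF$ on $\Faith{\cX}$ in the canonical topology it builds an $E$-stack $\cX'$ of pairs $(x,s)$ with $x\in\cX(U)$ and $s\in\Gamma(x,\cF)$, and observes that the faithful projection $\cX'\to\cX$ represents $\cF$. You instead choose a groupoid presentation $\cX\simeq[U_\bullet]$ and identify $\Faith{\cX}$ with the category of descent data over $U_\bullet$, then argue that this descent category is a $\cU$-topos (as a 2-limit of slice topoi, or by transporting Giraud's axioms from $E_{/U_0}$). The paper's argument is presentation-free and self-contained; yours is more concrete and reuses the identification already set up at the end of Construction~\ref{s.EqX}. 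Both are standard and valid.
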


\begin{proof}
(a) We apply Giraud's criterion \cite[IV Th\'eor\`eme 1.2]{SGA4}. If $\cT$
is a small generating family of~$E$, then $\coprod_{U\in \cT}\Ob(\cX(U))$ is
an essentially small generating family of $\Faith{\cX}$. Let us now show
that every sheaf $\cF$ on $\Faith{\cX}$ for the canonical topology is
representable. Consider, for every object $U$ of $E$, the category of pairs
$(x,s)$ consisting of $x\in \cX(U)$ and $s\in \Gamma(x,\cF)$, where the last
occurrence of $x$ is to be understood as the object $x\colon U\to \cX$ in
$\Faith{\cX}$. A morphism $(x,s)\mapsto (y,t)$ is a morphism $\alpha\colon
x\to y$ in $\cX(U)$ such that $\alpha^* t=s$. This defines an $E$-stack
$\cX'$. The faithful morphism $\cX'\to \cX$ of $E$-stacks defined by the
first projection $(x,s)\mapsto x$ represents $\cF$. The other conditions in
Giraud's criterion are trivially satisfied. Thus $\Faith{\cX}$ is a
$\mathcal{U}$-topos.

(b) We construct a 2-quasi-inverse to \eqref{e.stacktopos} as follows. Let
$\cC$ be a stack over $\Faith{\cX}$. For every object $U$ of $E$, consider
the category of pairs $(x,s)$ consisting of $x\in \cX(U)$ and $s\in \cC(x)$.
A morphism $(x,s)\to (y,t)$ is a pair $(\alpha,\beta)$ consisting of a
morphism $\alpha\colon x\to y$ in $\cX(U)$ and a morphism $\beta\colon
\alpha^*t\to s$ in $\cC(x)$. This defines an $E$-stack $\cY$. The first
projection $(x,s)\mapsto x$ defines a morphism $\cY\to \cX$ of $E$-stacks.
The construction $\cC\mapsto (\cY\to \cX)$ defines a pseudofunctor
\begin{equation}\label{e.StFaith}
\St(\Faith{\cX})\to \St_{/\cX},
\end{equation}
which is a 2-quasi-inverse to \eqref{e.stacktopos}.
\end{proof}

The composition of \eqref{e.GrpdSt} and \eqref{e.StFaith} is a faithful and
essentially surjective (Definition \ref{s.faithfun}) pseudofunctor
\begin{equation}\label{e.GrpdSt2}
  \Grpd(\Faith{\cX})\to \St_{/\cX}.
\end{equation}
We denote the image of a groupoid $X_\bullet$ in $\Faith{\cX}$ under
\eqref{e.GrpdSt2} by $[X_\bullet/\cX]$, and the image of a morphism
$f_\bullet$ of groupoids under \eqref{e.GrpdSt2} by $[f_\bullet/\cX]$. For
$(X,G)$ in $\Eq(\Faith{\cX})$, we denote the image of $(X,G)_\bullet$ under
\eqref{e.GrpdSt2} by $[X/G/\cX]$. For $(f,u)\colon (X,G)_\bullet \to
(Y,H)_\bullet$, we denote the image under \eqref{e.GrpdSt2} by $[f/u/\cX]$.

We now apply the above formalism to the big fppf topoi of algebraic spaces.
Recall that a stack is a stack over the big fppf site of $\Spec \Z$. The
following result will be useful in Sections \ref{s.5} and \ref{s.6}.

\begin{prop}\label{p.eqstr}
Let $\cX$ be a stack, and let $X_\bullet$, $Y_\bullet$ be objects in
$\Grpd(\Faith{\cX})$. Assume that $X_0$ is a strictly local scheme and the
morphisms $Y_1\rightrightarrows Y_0$ are representable and smooth. Then the
functor induced by \eqref{e.GrpdSt2}:
  \[F\colon \Hom_{\Grpd(\Faith{\cX})}(X_\bullet,Y_\bullet)\to \Hom_{\St_{/\cX}}([X_\bullet/\cX],[Y_\bullet/\cX])\]
  is an equivalence of categories.
\end{prop}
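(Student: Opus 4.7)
The plan is to verify the two aspects of the equivalence separately: that $F$ is fully faithful on 2-morphism sets, which is essentially formal, and that $F$ is essentially surjective on 1-morphisms, where the hypotheses on $X_0$ and $Y_1\rightrightarrows Y_0$ do real work.

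Full faithfulness of $F$ on 2-morphism sets is immediate from the faithfulness of the pseudofunctor \eqref{e.GrpdSt2} (recorded just before the proposition) applied to the pair $(X_\bullet,Y_\bullet)$, since by construction $F$ is the functor between Hom-categories induced by that pseudofunctor.

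For essential surjectivity, let $f\colon [X_\bullet/\cX]\to[Y_\bullet/\cX]$ be a 1-morphism in $\St_{/\cX}$; the task is to produce a morphism $\varphi_\bullet\colon X_\bullet\to Y_\bullet$ of groupoids in $\Faith{\cX}$ with $[\varphi_\bullet/\cX]\simeq f$. Write $\pi_X\colon X_0\to[X_\bullet/\cX]$ and $\pi_Y\colon Y_0\to[Y_\bullet/\cX]$ for the canonical projections. First I would construct $\varphi_0$ as follows: form the 2-pullback $P=X_0\times_{f\pi_X,[Y_\bullet/\cX],\pi_Y}Y_0$. The isomorphism $Y_1\simto Y_0\times_{[Y_\bullet/\cX]}Y_0$ from \eqref{e.1.4.1} identifies the base change of $\pi_Y$ along itself with $s_Y\colon Y_1\to Y_0$; since $s_Y$ is representable and smooth by hypothesis, standard descent shows $\pi_Y$ itself is representable, smooth, and surjective, and hence so is $P\to X_0$. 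Because $X_0$ is strictly local, every smooth surjection onto it admits a section (take a geometric point in the fiber over the closed point and lift it by strict henselianness). Composing such a section with the projection $P\to Y_0$ yields $\varphi_0\colon X_0\to Y_0$ together with a 2-isomorphism $\beta\colon f\pi_X\simto\pi_Y\varphi_0$.

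To construct $\varphi_1\colon X_1\to Y_1$, I would precompose $\beta$ with $s_X,t_X\colon X_1\to X_0$ and combine the result with the tautological 2-isomorphism $\pi_X s_X\simto\pi_X t_X$ arising from $X_1\simto X_0\times_{[X_\bullet/\cX]}X_0$. This gives a 2-isomorphism $\pi_Y\varphi_0 s_X\simto\pi_Y\varphi_0 t_X$ over $X_1$, which via $Y_1\simto Y_0\times_{[Y_\bullet/\cX]}Y_0$ is classified by a unique morphism $\varphi_1$ with $s_Y\varphi_1=\varphi_0 s_X$ and $t_Y\varphi_1=\varphi_0 t_X$. Compatibility of $(\varphi_0,\varphi_1)$ with identities and composition then follows from this uniqueness clause applied to the groupoid axioms of $X_\bullet$ and the functoriality of $\beta$. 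Finally, $\beta$ together with the compatibility encoded by $\varphi_1$ constitutes descent data along the cover $\pi_X$, producing the desired 2-isomorphism $[\varphi_\bullet/\cX]\simto f$.

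The main obstacle is the construction of $\varphi_0$: both the strict henselianness of $X_0$ and the smoothness of $Y_1\rightrightarrows Y_0$ are essential there, since otherwise the section $\sigma$ of $P\to X_0$ need not exist and there is no canonical way to rigidify $f\pi_X$ into a map landing in $Y_0$. Everything downstream is a bookkeeping exercise in universal properties of 2-pullbacks and descent of 2-morphisms, using only the already-established faithfulness of \eqref{e.GrpdSt2}.
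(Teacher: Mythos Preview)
Your proposal is correct and follows essentially the same route as the paper's proof. The paper also reduces to essential surjectivity (full faithfulness being the content of the faithfulness of \eqref{e.GrpdSt2}), forms the same 2-fiber product $X'_0 = X_0 \times_{[Y_\bullet/\cX]} Y_0$, invokes the section over a strictly local base via \cite[17.16.3 (ii), 18.8.1]{EGAIV}, and then defines $f_1 = f_0 \times_\phi f_0$ in one line using \eqref{e.1.4.1}; your more explicit unpacking of $\varphi_1$ via the tautological 2-isomorphism and your descent paragraph are just a spelled-out version of that last step.
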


\begin{proof}
It remains to show that $F$ is essentially surjective. Let $\phi\colon
[X_\bullet/\cX]\to [Y_\bullet/\cX]$ be a morphism in $\St_{/\cX}$. For the
2-cartesian square
  \[\xymatrix{X'_0\ar[rr]\ar[d]&& Y_0\ar[d]\\
  X_0\ar[r]&[X_\bullet/\cX]\ar[r]^\phi &[Y_\bullet/\cX].}\]
Since $X'_0$ is representable and smooth over $X_0$, it admits a section by
\cite[Corollaire 17.16.3 (ii), Proposition 18.8.1]{EGAIV}, which induces a
2-commutative square
  \[\xymatrix{X_0\ar[d]\ar[r]^{f_0} & Y_0\ar[d]\\
  [X_\bullet/\cX]\ar[r]^\phi & [Y_\bullet/\cX].}\]
Let $f_1=f_0\times_\phi f_0\colon X_1\to Y_1$. Then $f_\bullet\colon
X_\bullet \to Y_\bullet$ is a morphism of groupoids in $\Faith{\cX}$ and
$\phi\simeq [f_\bullet/\cX]$.
\end{proof}

\begin{remark}\label{s.rep}
Let $\cX$ be a stack. We denote by $\Rep{\cX}$ the full subcategory of
$\Faith{\cX}$ consisting of representable morphisms $X\to \cX$.  A morphism
in this category from $X\to \cX$ to $Y\to \cX$ is an isomorphism class of
pairs $(f,\alpha)$ \eqref{e.tri}. The morphisms $f\colon X\to Y$ are
necessarily representable. Assume that $\cX$ is an Artin stack. For any
object $X\to \cX$ of $\Rep{\cX}$, $X$ is necessarily an Artin stack. For any
object $X_\bullet$ in $\Grpd(\Rep{\cX})$, if $s_X$ and $t_X$ are flat and
locally of finite presentation, then $[X_\bullet/\cX]$ is an Artin stack. In
particular, for any object $(X,G)$ in $\Eq(\Rep{\cX})$ with $G$ flat of and
locally of finite presentation over $\cX$, $[X/G/\cX]$ is an Artin stack.
\end{remark}

\section{Miscellany on the \'etale cohomology of Artin stacks}\label{s.1bis}

\begin{notation}\label{s.smtop}
Let $\cX$ be an Artin stack. We denote by $\AlgSp_{/\cX}$ the full
subcategory of $\Rep{\cX}$ (Remark \ref{s.rep}) consisting of morphisms
$U\to \cX$ with $U$ an algebraic space. We let $\SmSp{\cX}$ denote the full
subcategory of $\AlgSp_{/\cX}$ spanned by smooth morphisms $U\to \cX$. The
covering families of the smooth pretopology on $\SmSp{\cX}$ are those
$(U_i\to U)_{i\in I}$ such that $\coprod_{i\in I}U_i\to U$ is smooth and
surjective. The covering families for the \'etale pretopology on
$\SmSp{\cX}$ are those $(U_i\to U)_{i\in I}$ such that $\coprod_{i\in
I}U_i\to U$ is \'etale and surjective. Since every smooth cover in
$\SmSp{\cX}$ has an \'etale refinement by \cite[Corollaire 17.16.3
(ii)]{EGAIV}, the smooth pretopology and the \'etale pretopology generate
the same topology on $\SmSp{\cX}$ (cf.\ \cite[D\'efinition 12.1]{LMB}). We
let $\cX_{\sm}$ denote the associated topos, and call it the \emph{smooth
topos} of $\cX$.
\end{notation}

\begin{notation}\label{s.cart}
The category of sheaves in $\cX_\sm$ is equivalent to the category of
systems $(\cF_u,\theta_\phi)$, where $u\colon U\to \cX$ runs through objects
of $\SmSp{\cX}$, $\phi\colon u\to v$ runs through morphisms of $\SmSp{\cX}$,
$\cF_u$ is an \'etale sheaf on $U$, and $\theta_\phi\colon \phi^*\cF_v\to
\cF_u$, satisfying a cocycle condition \cite[12.2]{LMB} and such that
$\theta_\phi$ is an isomorphism for $\phi$ \'etale. Following
\cite[D\'efinition 12.3]{LMB}, we say that a sheaf $\cF$ on $\cX$ is
\emph{cartesian} if $\theta_\phi$ is an isomorphism for all $\phi$, or,
equivalently, for all $\phi$ smooth (cf. \cite[Lemma 3.8]{OlssonSh}). We
denote by $\Sh_\cart(\cX)$ the full subcategory of $\Sh(\cX_{\sm})$
consisting of cartesian sheaves.

Let $\Lambda$ be a commutative ring. Following \cite[D\'efinition
18.1.4]{LMB}, we say, if $\Lambda$ is noetherian, that a sheaf $\cF$ of
$\Lambda$-modules on $\cX$ is \emph{constructible} if $\cF$ is cartesian and
if $\cF_u$ is constructible for some smooth atlas $u\colon U\to\cX$, or
equivalently, for every smooth atlas $u\colon U\to \cX$. We denote by
$\Mod_\cart(\cX,\Lambda)$ (resp.\ $\Mod_c(\cX,\Lambda)$) the full
subcategory of $\Mod(\cX_\sm,\Lambda)$ consisting of cartesian (resp.\
constructible) sheaves.

We denote by $D_\cart(\cX,\Lambda)$ (resp.\ $D_c(\cX,\Lambda)$) the full
subcategory of $D(\cX_\sm,\Lambda)$ consisting of complexes with cartesian
(resp.\ constructible) cohomology sheaves. We have $D_c(\cX,\Lambda)\subset
D_\cart(\cX,\Lambda)$. We will work exclusively with $D_\cart(\cX,\Lambda)$
rather than $D(\cX_\sm,\Lambda)$. We have functors
\[\otimes^L_\Lambda\colon D_\cart(\cX,\Lambda)\times D_\cart(\cX,\Lambda)\to D_\cart(\cX,\Lambda),\quad R\cHom\colon D_\cart(\cX,\Lambda)^{\op}\times D_\cart(\cX,\Lambda)\to D_\cart(\cX,\Lambda)\]
defined on unbounded derived categories.

If $\cX$ is a Deligne-Mumford stack, we denote by $\cX_\et$ or simply $\cX$
its \'etale topos. The inclusion of the \'etale site in the smooth site
induces a morphism of topoi $(\epsilon_*,\epsilon^*)\colon \cX_{\sm}\to
\cX_\et$. Note that $\epsilon_*$ is exact and $\epsilon^*$ induces an
equivalence from $\cX_\et$ to $\Sh_\cart(\cX_\sm)$. For any commutative ring
$\Lambda$, $\epsilon^*$ induces $D(\cX,\Lambda)\simto D_\cart(\cX,\Lambda)$.
\end{notation}

\begin{notation}\label{s.fCart}
Let $f\colon \cX\to \cY$ be a morphism of Artin stacks and let $\Lambda$ be
a commutative ring. Although the smooth topos is not functorial, we have a
pair of adjoint functors
\[f^*\colon \Sh_\cart(\cY)\to \Sh_\cart(\cX),\quad f_*\colon \Sh_\cart(\cX)\to \Sh_\cart(\cY).\]
and a pair of adjoint functors \cite{LiuZheng}
\[f^*\colon D_\cart(\cY,\Lambda)\to D_\cart(\cX,\Lambda),\quad Rf_*\colon D_\cart(\cX,\Lambda)\to D_\cart(\cY,\Lambda),\]
where $f^*$ is $t$-exact and $Rf_*$ is left $t$-exact for the canonical
$t$-structures. Note that $Rf_*$ is defined on the whole category $D_\cart$,
not just on $D_\cart^+$. For $M,N\in D_\cart(\cY,\Lambda)$, we have a
natural isomorphism
\[f^*(M\otimes_\Lambda^L N)\simto f^*M\otimes_\Lambda^L f^*N.\]
If $f$ is a surjective morphism, then the functors $f^*$ are conservative
and the functor $f^*\colon \Sh_\cart(\cY)\to \Sh_\cart(\cX)$ is faithful.

A 2-morphism $\alpha\colon f\to g$ of morphisms of Artin stacks $\cX\to \cY$
induces natural isomorphisms $\alpha^*\colon g^*\to f^*$ and
$R\alpha_*\colon Rf_*\to Rg_*$. The following squares commute
\[\xymatrix{\bone_{D_\cart(\cY,\Lambda)}\ar[r]\ar[d] & Rf_*f^*\ar[d]^{R\alpha_*} & g^*Rf_*\ar[d]_{\alpha^*}\ar[r]^{R\alpha_*} & g^*Rg_*\ar[d]\\
Rg_*g^*\ar[r]^{\alpha^*} & Rg_*f^* & f^*Rf_*\ar[r] & \bone_{D_\cart(\cX,\Lambda)}.}\]
\end{notation}

Recall that a morphism of Artin stacks $f\colon \cX\to \cY$ is
\emph{universally submersive} \cite[06U6]{stacks} if for every morphism of
Artin stacks $\cY'\to \cY$, the base change $\cY'\times_\cY \cX\to \cY'$ is
submersive (on the underlying topological spaces).

\begin{prop}\label{p.descent}
Let $f\colon \cX\to \cY$ be a morphism of Artin stacks. Assume that $f$ is
universally submersive (resp.\ faithfully flat and locally of finite
presentation). Then $f$ is of descent (resp.\ effective descent) for
cartesian sheaves.
\end{prop}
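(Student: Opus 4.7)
The plan is to reduce both statements to the known descent theorems for morphisms of algebraic spaces by choosing smooth atlases.

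First I would pick a smooth surjection $v\colon V\to \cY$ with $V$ an algebraic space, form the base change $\cX_V = V\times_\cY \cX$ (an Artin stack, not necessarily an algebraic space since $f$ need not be representable), and then pick a smooth surjection $V'\to \cX_V$ with $V'$ an algebraic space. Let $f'\colon V'\to V$ be the resulting morphism of algebraic spaces. Since universal submersivity is stable under composition and base change (and likewise for faithful flatness locally of finite presentation), and since smooth surjections have both properties, $f'$ inherits from $f$ the relevant hypothesis. A parallel statement holds for the nerves $V_\bullet = \cosk_0(V/\cY)$ and $V'_\bullet = \cosk_0(V'/\cX)$, which are simplicial algebraic spaces with smooth transition maps.

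Next I would use the fact that a cartesian sheaf on $\cY$ (resp.\ $\cX$) is the same datum as an \'etale sheaf on $V$ (resp.\ $V'$) equipped with equivariance data relative to the smooth groupoid $V_\bullet$ (resp.\ $V'_\bullet$); this is essentially the content of Notation \ref{s.cart} together with smooth descent for \'etale sheaves on algebraic spaces. Through this identification, the pullback $f^*\colon \Sh_\cart(\cY)\to \Sh_\cart(\cX)$ translates into the pullback of equivariant \'etale sheaves along the morphism of simplicial algebraic spaces $V'_\bullet\to V_\bullet$, and a descent datum for $\cF\in \Sh_\cart(\cX)$ relative to $f$ induces, after restriction to $V'$, a descent datum for the underlying \'etale sheaf $\cF_{V'}$ relative to $f'$.

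The core input is then the analogous descent result for morphisms of algebraic spaces: for the fppf case one invokes classical fppf descent for \'etale sheaves, while for the universally submersive case one appeals to Rydh's theorem that universally submersive morphisms of algebraic spaces are of universal descent for the \'etale topos. Applied to $f'$ together with its base changes $V'\times_V V'\to V\times_V V$, these yield fully faithfulness (resp.\ essential surjectivity) at the level of \'etale sheaves on the atlases. Combining this with the simplicial presentation of $\cY$ by $V_\bullet$ produces the required (effective) descent for $f$ on cartesian sheaves.

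The main technical obstacle I anticipate is coherently matching the two layers of descent data at play --- descent along $V'_\bullet\to V_\bullet$ on the one hand, and equivariance under $V_\bullet$ on the other --- so that a datum compatible with $f$ at the stack level really assembles into a datum compatible with $f'$ at the algebraic space level, and conversely so that the \'etale sheaf on $V$ produced by effective descent inherits the $V_\bullet$-equivariance needed to define an object of $\Sh_\cart(\cY)$. This is a diagram chase whose only subtlety is uniqueness of descent, which guarantees that all the naturally arising isomorphisms are compatible.
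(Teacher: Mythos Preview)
Your approach is correct in outline and rests on the same two inputs as the paper's proof: effective descent for smooth atlases, and the classical SGA4~VIII results on (effective) descent for universally submersive (resp.\ fppf) morphisms of schemes. The difference is purely organizational. The paper does not attempt to match the ``two layers of descent data'' by hand; instead it invokes the abstract descent calculus of Giraud \cite[Proposition~6.25]{GiraudDesc}, which says in particular that if $g$ is of effective descent and $gf$ is of (effective) descent then so is $f$. This immediately reduces the problem to the single claim that a smooth atlas $X\to\cY$ with $X$ an algebraic space is of effective descent for cartesian sheaves, and the paper then writes down an explicit quasi-inverse in that case. Your plan amounts to unpacking Giraud's calculus by hand via the simplicial atlases $V'_\bullet\to V_\bullet$; this works, but the coherence check you flag (that a descent datum on $\cX$ relative to $f$ really corresponds to a $V_\bullet$-equivariant descent datum on $V'$ relative to $f'$) is exactly what Giraud's formalism packages for free. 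A minor point: the paper cites only the scheme case \cite[VIII~9.1,~9.4]{SGA4} rather than any statement for algebraic spaces---the passage from algebraic spaces to schemes is absorbed into the same Giraud reduction, since \'etale atlases are again of effective descent.
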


Here effective descent means $f^*$ induces an equivalence
$\Sh_\cart(\cY)\simto \DD(f)$ to the category of descent data, whose objects
are cartesian sheaves $\cF$ on $\cX$ endowed with an isomorphism $p_1^*\cF
\to p_2^* \cF$ satisfying the cocycle condition, where $p_1,p_2\colon
\cX\times_\cY \cX \to \cX$ are the two projections.

\begin{proof}
By general properties of descent \cite[Proposition 6.25, Th\'eor\`eme
10.4]{GiraudDesc} and the case of schemes \cite[VIII Proposition 9.1]{SGA4}
(resp.\ \cite[VIII Th\'eor\`eme 9.4]{SGA4}), it suffices to show that smooth
atlases are of effective descent for cartesian sheaves. In other words we
may assume $f$ is smooth and $\cX$ is an algebraic space. In this case, we
construct a quasi-inverse $F$ of $\Sh_\cart(\cY)\to\DD(f)$ as follows. Let
$A$ be a descent datum for $f$. For every object $u\colon U\to \cY$ of
$\Sp^\sm_{/\cY}$, $A$ induces a descent datum $A_u$ for \'etale sheaves for
the base change $f_u\colon \cX\times_{\cY} U\to U$ of $f$ by $u$, and we
take $(FA)_u$ to be the corresponding \'etale sheaf on $U$. For a morphism
$\phi\colon u\to v$ in $\Sp^\sm_{/\cY}$, we take $\phi^*(FA)_v\to(FA)_u$ to
be the isomorphism induced by the isomorphism of descent data $\phi^*A_v\to
A_u$ for \'etale sheaves for $f_u$.
\end{proof}

\begin{cor}\label{c.XG}
Let $S$ be an algebraic space, let $G$ be a flat group algebraic $S$-space
locally of finite presentation, and let $X$ be an algebraic space over $S$,
endowed with an action of~$G$. Denote by $\alpha\colon G\times_S X\to X$ the
action and by $p\colon G\times_S X\to X$ the projection, and let $f\colon
X\to [X/G]$ be the canonical morphism. Then $f^*$ induces an equivalence of
categories from $\Mod_\cart([X/G])$ to the category of pairs $(\cF,a)$,
where $\cF\in \Sh(X)$ and $a\colon \alpha^* \cF\to p^*\cF$ is a map
satisfying the usual cocycle condition.
\end{cor}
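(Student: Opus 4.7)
The plan is to apply Proposition \ref{p.descent} to the canonical morphism $f\colon X \to [X/G]$. This morphism is faithfully flat and locally of finite presentation because it exhibits $X$ as a $G$-torsor over $[X/G]$ (Notation \ref{s.wedge}) and $G$ is flat and locally of finite presentation over $S$ by hypothesis. Hence $f^*$ induces an equivalence between $\Sh_\cart([X/G])$ and the category $\DD(f)$ of descent data for cartesian sheaves along $f$.

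The rest of the argument is just a matter of unwinding $\DD(f)$ into the stated form. First, since $X$ is an algebraic space, $\Sh_\cart(X)$ is equivalent via $\epsilon^*$ to $\Sh(X)$ (last paragraph of Notation \ref{s.cart}), so the underlying sheaf of a descent datum may be identified with a sheaf $\cF\in\Sh(X)$. Second, \eqref{e.XGX} provides a canonical isomorphism $X\times_S G\simto X\times_{[X/G]} X$ under which the two projections $p_1, p_2$ become $\alpha, p$ respectively; consequently the descent isomorphism $p_1^*\cF\simto p_2^*\cF$ becomes a morphism $a\colon\alpha^*\cF\to p^*\cF$. Third, using \eqref{e.1.4.2} applied to the groupoid $(X,G)_\bullet$, the triple fibered product $X\times_{[X/G]}X\times_{[X/G]}X$ is identified with $X\times_S G\times_S G$ in such a way that the three face maps go over to the three standard maps given by the group law and the action; this translates the cocycle condition of $\DD(f)$ into the usual equivariance cocycle on $(\cF,a)$.

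The only slightly delicate point is purely bookkeeping: matching orientations so that the right projection of $X\times_{[X/G]} X$ corresponds to $\alpha$ and the other to $p$, and writing the cocycle in the order imposed by that choice, following the conventions of Construction \ref{s.Eq} and Notation \ref{s.wedge}. One may also note that the statement phrasing $a$ as a ``map'' is consistent with the descent picture: the cocycle condition, together with its pullback along the unit section $X\to X\times_S G$, forces $a$ to restrict to the identity there, and then the existence of inverses in $G$ forces $a$ to be an isomorphism. No essential mathematical input beyond Proposition \ref{p.descent} is required.
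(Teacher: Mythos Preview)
Your proof is correct and follows the same approach as the paper: apply Proposition~\ref{p.descent} to the faithfully flat, locally of finite presentation morphism $f\colon X\to[X/G]$, then identify $\DD(f)$ with $G$-equivariant sheaves via \eqref{e.XGX}. The paper's own proof is a single sentence invoking exactly these two ingredients; you have simply spelled out the unwinding of the descent datum in more detail.
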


Such pairs are called $G$-equivariant sheaves on $X$. The cocycle condition
implies that $i^*a\colon \cF\to \cF$ is the identity, where $i\colon X\to
G\times_S X$ is the morphism induced by the unit section of $G$.

\begin{proof}
This follows from Proposition \ref{p.descent} and the fact that $f$ is
faithfully flat of finite presentation.
\end{proof}

\begin{cor}\label{l.BG}
Let $S$ and $G$ be as in Corollary \ref{c.XG}. Assume that $G$ has
\emph{connected} geometric fibers. Let $f\colon S\to BG$ be a morphism
corresponding to a $G$-torsor $T$ on $S$. Then the functor
\[f^*\colon\Sh_\cart(BG)\to \Sh(S),\]
is an equivalence.
\end{cor}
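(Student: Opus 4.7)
The plan is to show that the structure morphism $p\colon BG\to S$ induces an equivalence $p^*\colon\Sh(S)\to\Sh_\cart(BG)$; since $f$ is a section of $p$ (indeed $p\circ f = \id_S$, as $f$ classifies a torsor on $S$ and $p$ forgets it), this identifies $f^*$ as a quasi-inverse of $p^*$ and hence as an equivalence. The starting point is Corollary~\ref{c.XG}, applied to $X = S$ carrying the trivial $G$-action: this identifies $BG$ with $[S/G]$ as $S$-stacks (with canonical atlas the trivial-torsor classifying map $s\colon S\to BG$) and describes $\Sh_\cart(BG)$ as the category of pairs $(\cF,a)$, where $\cF\in\Sh(S)$ and $a\in\Aut(\pi_G^*\cF)(G)$ satisfies a cocycle condition on $G\times_S G$; here $\pi_G\colon G\to S$ is the structure map. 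Under this identification, $p^*$ sends $\cF$ to the pair $(\cF,\id)$ with trivial cocycle.

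The central step is to show that the connected-fibers hypothesis forces $a = \id$ for every such pair. Since $\pi_G$ has geometrically connected fibers, the push-pull formula $\pi_{G*}\pi_G^*\simeq\id_{\Sh(S)}$ holds: for any sheaf $\cG$ on $S$ and any geometric point $\bar s$, the stalk $(\pi_{G*}\pi_G^*\cG)_{\bar s}$ is computed as global sections on the connected scheme $G_{\bar s}$ of the constant sheaf $\cG_{\bar s}$, which equals $\cG_{\bar s}$. By adjunction, any $a\in\Aut(\pi_G^*\cF)(G)$ is of the form $\pi_G^*\tilde a$ for a unique $\tilde a\in\Aut(\cF)(S)$, namely $\tilde a = e^*a$ with $e\colon S\to G$ the identity section. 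The cocycle identity $m^*a = p_1^*a\circ p_2^*a$ on $G\times_S G$ then becomes $\pi_{G\times_S G}^*\tilde a = \pi_{G\times_S G}^*(\tilde a\circ\tilde a)$, and faithfulness of the fppf pullback $\pi_{G\times_S G}^*$ forces $\tilde a = \tilde a\circ\tilde a$; invertibility of $\tilde a$ then gives $\tilde a = \id$, whence $a = \id$.

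This establishes essential surjectivity of $p^*$. Full faithfulness follows from the relation $f^*\circ p^* = (p\circ f)^* = \id_{\Sh(S)}$, which exhibits $f^*$ as a retraction of $p^*$. Hence $p^*$ is an equivalence with $f^*$ as quasi-inverse, completing the proof. The main technical obstacle is the push-pull formula $\pi_{G*}\pi_G^*\simeq \id$ for arbitrary sheaves on $S$, which is precisely where the connectedness of the geometric fibers of $G$ is used.
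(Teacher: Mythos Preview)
Your approach is essentially the same as the paper's: both reduce the statement to showing that a cocycle/descent datum on $S$ is automatically trivial, and both invoke the connectedness of the geometric fibers of $G$ at exactly this step. The paper works with the given section $f$ and its self-fiber-product $H=\underline{\Aut}_G(T)$, while you pass through the projection $p\colon BG\to S$ and the trivial-torsor atlas; this is a harmless repackaging.

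There is, however, a gap in your justification of the push-pull formula $\pi_{G*}\pi_G^*\simeq\id$. You assert that $(\pi_{G*}\pi_G^*\cG)_{\bar s}$ ``is computed as global sections on the connected scheme $G_{\bar s}$ of the constant sheaf $\cG_{\bar s}$'', but identifying the stalk of $\pi_{G*}$ with sections over the geometric fiber requires a base-change theorem (proper or smooth base change), neither of which applies to $\bar s\to S$ and $\pi_G$ in general. What one actually has is $(\pi_{G*}\pi_G^*\cG)_{\bar s}=\Gamma(G_{S^{sh}_{\bar s}},\pi_G^*\cG)$, and it is not obvious a priori that this agrees with $\Gamma(G_{\bar s},\underline{\cG_{\bar s}})$. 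The paper sidesteps this: to check that the automorphism $a$ of $\pi_G^*\cF$ is the identity, one checks on stalks at geometric points $\bar g$ of $G$ lying over $\bar s$; the restriction $a|_{G_{\bar s}}$ is a morphism of constant sheaves on the connected scheme $G_{\bar s}$, hence is determined by its value at $e(\bar s)$, which is $\id$ by the cocycle condition (as noted right after Corollary~\ref{c.XG}). Substituting this stalk argument for your push-pull claim closes the gap; it also makes the detour through the idempotency $\tilde a=\tilde a\circ\tilde a$ unnecessary, since $\tilde a=e^*a=\id$ directly.
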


\begin{proof}
By Proposition \ref{p.descent}, since $f$ is faithfully flat locally of
finite presentation, $f^*$ induces an equivalence of categories from
$\Sh_\cart(BG)$ to the category of pairs $(\mathcal{F},a)$, where $\cF$ is a
sheaf on $S$ and $a\colon p^*\cF\to p^*\cF$ is a descent datum with respect
to $f$. As $S\times_{f,BG,f} S$ is the sheaf $H$ on $S$ of $G$-automorphisms
of $T$, and $p_1 = p_2$ is the projection $p \colon H \to S$, $a$
corresponds to an action of $H$ on $\cF$. This action is trivial. Indeed,
this can be checked over geometric points $s \to S$, so we may assume that
$S$ is the spectrum of an algebraically closed field. In this case, $H\simeq
G$. As $p^*F$ is constant and $G$ is connected, and as the restriction of
$a$ to the unit section is the identity, $a$ is the identity.
\end{proof}

\begin{remark}\label{r.BG}
Corollary \ref{l.BG} implies that $f^*$ and $f_*$ are quasi-inverse to each
other and the natural transformations $\id_{\Sh_\cart(BG)} \to f_*f^*$,
$f^*f_*\to \id_{\Sh(S)}$ are natural isomorphisms. Since $f$ is a 2-section
of the projection $\pi \colon BG \to S$, we get natural isomorphisms
\[
\pi_*\simeq \pi_*f_*f^* \simeq f^*, \quad \pi^*\simeq f_*f^*\pi^* \simeq f_*.
\]
In particular, we have natural isomorphisms $f_*\pi_*\simeq \id$ and
$\pi^*f^* \simeq \id$.
\end{remark}

\begin{lemma}\label{l.tor}
Let $\cX$ be an Artin stack, let $\Lambda$ be a commutative ring, and let
$I\subset \Z$ be an interval. For $M\in D_\cart(\cX,\Lambda)$, the following
conditions are equivalent:
\begin{enumerate}
  \item For every $N\in \Mod_\cart(\cX,\Lambda)$,
      $\cH^q(M\otimes^L_\Lambda N)=0$ for all $q\in \Z-I$.

  \item For every finitely presented $\Lambda$-module $N$,
      $\cH^q(M\otimes^L_\Lambda N)=0$ for all $q\in \Z-I$.

  \item For every geometric point $i\colon x\to X$, $i^* M$ as an element
      of $D(x,\Lambda)$ is of tor-amplitude contained in $I$.
\end{enumerate}
\end{lemma}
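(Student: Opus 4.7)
The plan is to run the cycle $(a) \Rightarrow (b) \Rightarrow (c) \Rightarrow (a)$, using two properties of pullback along a geometric point $i\colon x \to \cX$ recalled in Notation \ref{s.fCart}: the functor $i^*$ is $t$-exact on $D_\cart$, and, being a left adjoint, it commutes with $\otimes^L_\Lambda$. Since $x$ is the spectrum of an algebraically closed field, $D(x,\Lambda)$ identifies with the derived category of $\Lambda$-modules, so tor-amplitude there has its classical meaning.

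The implication $(a) \Rightarrow (b)$ is immediate, since for any $\Lambda$-module $N$ the constant sheaf on $\cX_\sm$ with value $N$ is cartesian. For $(b) \Rightarrow (c)$, I fix $i\colon x \to \cX$ and a finitely presented $\Lambda$-module $N$, viewed also as a constant sheaf on $\cX_\sm$. Combining $t$-exactness and compatibility with tensor yields
\[\cH^q(i^*M \otimes^L_\Lambda N) \simeq i^*\cH^q(M \otimes^L_\Lambda N) = 0 \quad\text{for } q \notin I,\]
by (b). Since every $\Lambda$-module is a filtered colimit of finitely presented ones, and both $\otimes^L_\Lambda$ and $\cH^q$ commute with filtered colimits, this establishes (c).

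For $(c) \Rightarrow (a)$, given $N \in \Mod_\cart(\cX,\Lambda)$ and a geometric point $i$, the same compatibilities give
\[i^* \cH^q(M \otimes^L_\Lambda N) \simeq \cH^q(i^*M \otimes^L_\Lambda i^*N) = 0 \quad\text{for } q \notin I,\]
where the last equality uses that $i^*N$ is a $\Lambda$-module and that $i^*M$ has tor-amplitude in $I$. To deduce the vanishing of $\cH^q(M \otimes^L_\Lambda N)$ itself, I need conservativity of the family of pullbacks $\{i^*\}$ along geometric points on $\Mod_\cart(\cX,\Lambda)$; this reduces via a smooth atlas $p\colon U \to \cX$ (on which $p^*$ is conservative for cartesian sheaves) to the classical conservativity of geometric points on the \'etale topos of an algebraic space. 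This conservativity is the main technical input; everything else is formal manipulation with the $t$-exact, tensor-compatible pullback $i^*$.
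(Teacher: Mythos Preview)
Your proof is correct and follows essentially the same route as the paper: the cycle $(a)\Rightarrow(b)\Rightarrow(c)\Rightarrow(a)$, with conservativity of the family of pullbacks to geometric points as the key input for $(c)\Rightarrow(a)$. The paper's proof is terse (``Obviously (a) implies (b) and (b) implies (c)''), whereas you have spelled out the filtered-colimit reduction for $(b)\Rightarrow(c)$ and the $t$-exactness/tensor compatibility of $i^*$ that underlie both nontrivial implications; one minor point is that geometric points here are spectra of separably closed (not necessarily algebraically closed) fields, but this does not affect the identification $D(x,\Lambda)\simeq D(\Lambda)$.
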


If the conditions of the lemma are satisfied, we say $M$ is of
\emph{cartesian tor-amplitude} contained in $I$. If $M\in
D_{\cart}(\cX,\Lambda)$ has cartesian tor-amplitude contained in
$[a,+\infty)$ and $N\in D_\cart^{\ge b}(\cX,\Lambda)$, then
$M\otimes^L_\Lambda N$ is in $D^{\ge a+b}_\cart(\cX,\Lambda)$.
\begin{proof}
Obviously (a) implies (b) and (b) implies (c). Since the family of functors
$i^*\colon D_\cart(\cX,\Lambda)\to D(x,\Lambda)$ is conservative, where $i$
runs through all geometric points of $\cX$, (c) implies (a).
\end{proof}

\begin{prop}[Projection formula]\label{l.pf}
Let $f\colon \cX\to \cY$ be a morphism of Artin stacks and let $\Lambda$ be
a commutative ring. Let $L\in D_\cart(\cX,\Lambda)$, and let $K\in
D_\cart(\cY,\Lambda)$ such that $\cH^q K$ is constant for all $q$. Assume
one of the following:
\begin{enumerate}
\item $\Lambda$ is noetherian regular and $K\in D^+_c$, $L\in D^+$.

\item $\Lambda$ is noetherian and $K\in D^b_c(\Lambda)$ has finite
    cartesian tor-amplitude.

\item $Rf_*\colon D_{\cart}(\cX,\Lambda)\to D_{\cart}(\cY,\Lambda)$ has
    finite cohomological amplitude, $\Lambda$ is noetherian, $K\in D_c$,
    and either $K,L\in D^-$ or $L$ has finite cartesian tor-amplitude.

\item $f$ is quasi-compact quasi-separated, $\Lambda$ is annihilated by an
    integer invertible on $\cY$, $K\in D^+$, $L\in D^+$, and either
    $\Lambda$ is noetherian regular or $K$ has finite cartesian
    tor-amplitude.

\item $f$ is quasi-compact quasi-separated, $\Lambda$ is annihilated by an
    integer invertible on $\cY$, and $Rf_*\colon D_{\cart}(\cX,\Lambda)\to
    D_{\cart}(\cY,\Lambda)$ has finite cohomological amplitude.
\end{enumerate}
Then the map
\[K\otimes^L_\Lambda Rf_*L \to Rf_* (f^*K\otimes^L_\Lambda L)\]
induced by the composite map
\[f^*(K\otimes^L_\Lambda Rf_* L)\simto f^*K\otimes^L_\Lambda f^*Rf_*L\to f^*K\otimes^L_\Lambda L\]
is an isomorphism.
\end{prop}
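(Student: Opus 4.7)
The plan is to reduce, by devissage in $K$, to the case $K=\underline{\Lambda}$, where the projection formula map is tautologically an isomorphism via the identifications $f^*\underline{\Lambda}\simeq \underline{\Lambda}$ and $\underline{\Lambda}\otimes^L_\Lambda -\simeq \id$. Both source and target of the projection formula map are exact triangulated functors of $K$, so a five-lemma argument shows that if the map is an isomorphism at two vertices of a distinguished triangle in $K$, it is at the third.

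The first step is to reduce to $K$ concentrated in a single degree. Since $\cH^q K$ is constant for all $q$ by hypothesis, the canonical truncation triangles $\tau_{\le n-1}K\to \tau_{\le n}K\to \cH^n K[-n]$ allow one to pass from $\tau_{\le n}K$ to the sheaf $\cH^n K$. The amplitude hypotheses in each of (a)--(e) are what make this devissage converge: boundedness of $K$ below (plus of $L$) in cases (a), (b), (d); bounded-below $L$ or its tor-amplitude combined with finite cohomological amplitude of $Rf_*$ in case (c); and in case (e) the additional input from finite \'etale cohomological dimension for torsion coefficients invertible on $\cY$ with $f$ quasi-compact quasi-separated, via which $Rf_*$ interacts well with the needed (co)limits on $D_\cart$. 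Thus we may assume $K=\underline{M}$ for a single $\Lambda$-module $M$.

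The second step is to reduce from $K=\underline{M}$ to $K=\underline{\Lambda}$ via a free resolution $P_\bullet\to M$. In case (a), $\Lambda$ regular and $M$ finitely generated yield a finite resolution by finitely generated free modules. In case (b), finite cartesian tor-amplitude of $K$ forces finite $\Tor$-dimension of $M$ over $\Lambda$, again giving a finite finitely generated free resolution. In cases (c), (d), (e), the $P_i$ may be infinitely generated free modules $\Lambda^{(I)}$ and the resolution may be unbounded; here one uses that both sides of the projection formula commute with the infinite direct sums and with the relevant truncations. The commutation of $Rf_*$ with arbitrary direct sums follows, for $f$ qcqs with $\Lambda$ annihilated by an integer invertible on $\cY$, from the case of schemes (SGA 4) together with the descent afforded by Proposition \ref{p.descent}. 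Once $K=\underline{\Lambda}$, the map is the identity, and the proof concludes.

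The main obstacle will be the bookkeeping in Steps 1 and 2 across the five overlapping hypothesis packages, particularly in (d) and (e) where neither $K$ nor its resolution need be bounded: one must combine the finite cohomological amplitude of $Rf_*$, the compatibility of $Rf_*$ with small direct sums, and the $t$-structure on $D_\cart(\cY,\Lambda)$ to ensure convergence of all the Postnikov-tower/spectral-sequence arguments. The framework of Notation \ref{s.fCart}, together with the classical projection formula for algebraic spaces, reduces every case to \'etale-cohomological inputs already available in the literature.
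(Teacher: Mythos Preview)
Your two-step devissage (first pass to a single cohomology sheaf $\cH^qK=\underline{M}$, then resolve $M$) breaks down in case (b). The hypothesis there is that the \emph{complex} $K\in D^b_c$ has finite cartesian tor-amplitude; this does \emph{not} imply that each $\cH^qK$ has finite $\Tor$-dimension over $\Lambda$. For example, with $\Lambda=k[x]/(x^2)$ the two-term complex $[\Lambda\xrightarrow{x}\Lambda]$ is perfect, but $\cH^0$ and $\cH^{-1}$ are both $k$, which has infinite $\Tor$-dimension over $\Lambda$. So after Step~1 you are left with $K=\underline{k}$, and in Step~2 you cannot produce a finite free resolution. Nor can you fall back on an infinite resolution: case (b) carries no hypothesis that $f$ is qcqs, no torsion hypothesis on $\Lambda$, and no cohomological-amplitude bound on $Rf_*$, so there is nothing to make an unbounded resolution converge or to make $Rf_*$ commute with the relevant direct sums.

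The paper avoids this by reversing the order of devissage in (b): it does \emph{not} pass to cohomology sheaves, but (after localizing $\Lambda$) uses directly that $K\in D^b_c$ of finite tor-amplitude is perfect, hence represented by a bounded complex of finite free modules; the projection formula is then trivial term by term. The same issue lurks in the second sub-case of (d), where again only $K$ (not its cohomology) has finite tor-amplitude; the paper handles this by reducing to $K$ a single \emph{flat} module and then invoking Lazard's theorem together with the fact that $R^qf_*$ commutes with filtered colimits under the qcqs/torsion hypotheses. Your outline should be repaired by resolving the complex $K$ directly (by finite projectives in (b), by flats in (d)) rather than first peeling off cohomology sheaves.
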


\begin{proof}
In case (a), we may assume that $K$ is a (constant) $\Lambda$-module and we
are then in case (b). In case (b), we may assume that $\Lambda$ is local and
it then suffices to take a finite resolution of $K$ by finite projective
$\Lambda$-modules. In the first case of (c), we may assume $K$ is a constant
$\Lambda$-module. It then suffices to take a resolution of $K$ by finite
free $\Lambda$-modules. In the second case of (c), we reduce to the first
case of (c) using Corollary \ref{l.finitetor} below of the first case of
(c). In the first case of (d), we may assume $K\in D^b_c$ and we are in the
second case of (d). In the second case of (d), we may assume that $K$ is a
flat $\Lambda$-module, thus a filtered colimit of finite free
$\Lambda$-modules. Since $R^qf_*$ commutes with filtered colimits, we are
reduced to the trivial case where $K$ is a finite free $\Lambda$-module. In
case (e), since $Rf_*$ preserves small coproducts, we may assume that $L\in
D^-$ and $K$ is represented by a complex in $C^-(\Lambda)$ of flat
$\Lambda$-modules. We may further assume that $L\in D^b$ and $K$ is a flat
$\Lambda$-module. We are thus reduced to the second case of (d).
\end{proof}

\begin{cor}\label{l.finitetor}
Let $f\colon \cX\to \cY$ be a morphism of Artin stacks and let $\Lambda$ be
a noetherian commutative ring. Assume that the functor $Rf_*\colon
D_{\cart}(\cX ,\Lambda)\to D_\cart(\cY,\Lambda)$ has finite cohomological
amplitude. Then, for every $L\in D^-_\cart(\cX,\Lambda)$ of cartesian
tor-amplitude contained in $[a,+\infty)$, $Rf_*L$ has cartesian
tor-amplitude contained in $[a,+\infty)$.
\end{cor}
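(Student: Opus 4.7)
The plan is to verify the condition defining cartesian tor-amplitude via finitely presented $\Lambda$-modules (criterion (b) of Lemma \ref{l.tor}), and to move this tensor product across $Rf_*$ using the projection formula already established in Proposition \ref{l.pf}. Concretely, let $N$ be an arbitrary finitely presented $\Lambda$-module; it suffices to show that $\cH^q(N\otimes^L_\Lambda Rf_*L) = 0$ for all $q < a$.

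The hypothesis that $Rf_*$ has finite cohomological amplitude, together with $N\in D^b_c(\Lambda)$ (viewed as a constant sheaf on $\cY$) and $K=N$, $L$ both in $D^-$, places us exactly in the first case of Proposition \ref{l.pf}(c). This yields a natural isomorphism
\[
N\otimes^L_\Lambda Rf_*L \simeq Rf_*\bigl(f^*N\otimes^L_\Lambda L\bigr).
\]
Now $f^*N$ is the constant cartesian sheaf on $\cX$ associated to $N$, so it lies in $\Mod_\cart(\cX,\Lambda)$. Since $L$ has cartesian tor-amplitude contained in $[a,+\infty)$, criterion (a) of Lemma \ref{l.tor} gives $\cH^q(f^*N\otimes^L_\Lambda L) = 0$ for $q < a$, i.e.\ $f^*N\otimes^L_\Lambda L \in D^{\ge a}_\cart(\cX,\Lambda)$.

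Finally, $Rf_*$ is left $t$-exact for the canonical $t$-structures on $D_\cart$ (Notation \ref{s.fCart}), so $Rf_*(f^*N\otimes^L_\Lambda L)\in D^{\ge a}_\cart(\cY,\Lambda)$. Combining this with the projection formula isomorphism, $\cH^q(N\otimes^L_\Lambda Rf_*L) = 0$ for $q < a$, which is what we needed. There is no real obstacle: all the inputs (projection formula, left $t$-exactness of $Rf_*$, equivalence of criteria for tor-amplitude) are already in place, and the hypothesis of finite cohomological amplitude of $Rf_*$ enters only to license the application of Proposition \ref{l.pf}(c).
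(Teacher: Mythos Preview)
Your proof is correct and takes essentially the same approach as the paper, which simply states that the corollary follows immediately from the first case of Proposition \ref{l.pf}(c) and Lemma \ref{l.tor}. You have spelled out precisely the details implicit in that one-line proof.
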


\begin{proof}
This follows immediately from the first case of Proposition \ref{l.pf} (c)
and Lemma \ref{l.tor}.
\end{proof}

The following statement on generic constructibility and generic base change
generalizes \cite[Theorem 9.10]{OlssonSh}.

\begin{prop}\label{l.gbc}
Let $\cZ$ be an Artin stack and let $f\colon \cX\to \cY$ be a morphism of
Artin stacks of finite type over $\cZ$. Let $\Lambda$ be a noetherian
commutative ring annihilated by an integer invertible on~$\cZ$, and let
$L\in D^+_c(\cX,\Lambda)$. Then for every integer $i$ there exists a dense
open substack $\cZ^\circ$ of $\cZ$ such that
\begin{enumerate}
  \item The restriction of $R^if_*L$ to $\cZ^\circ\times_\cZ \cY\subset
      \cY$ is constructible.

  \item $R^if_*L$ is compatible with arbitrary base change of Artin stacks
      $\cZ'\to\cZ^\circ\subset \cZ$.
\end{enumerate}
\end{prop}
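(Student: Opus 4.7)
The plan is to reduce successively to the case of a morphism of schemes, where the classical generic constructibility and base change theorem of Deligne (SGA~$4\tfrac12$, \emph{Th.~finitude}) applies.

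I first observe that both conclusions are smooth-local on $\cZ$ and on $\cY$: constructibility of a cartesian sheaf is smooth-local on the base, $R^if_*L$ commutes with smooth base change on $\cY$ by smooth base change, and density of an open substack is both preserved and reflected by smooth surjective morphisms (smooth morphisms are open). Choosing smooth atlases $Z\to\cZ$ and $Y\to\cY$ by schemes, I therefore reduce to the situation where $\cZ=Z$ and $\cY=Y$ are schemes and $f\colon\cX\to Y$ is a morphism over $Z$, with $\cX$ an Artin stack of finite type over $Z$.

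To handle the stacky source, I choose a smooth presentation $a_0\colon X_0\to\cX$ with $X_0$ a scheme and form its \v Cech nerve $a_\bullet\colon X_\bullet\to\cX$. Each $X_p$ is an algebraic space of finite type over $Z$, and $g_p\coloneqq f\circ a_p\colon X_p\to Y$ is a morphism of algebraic spaces of finite type over $Z$. Smooth cohomological descent (via the $\infty$-categorical formalism of \cite{LiuZheng}) yields a convergent spectral sequence
\[
E_1^{p,q}=R^qg_{p*}(a_p^*L)\;\Longrightarrow\;R^{p+q}f_*L,
\]
whose formation commutes with arbitrary base change $Z'\to Z$ as soon as each relevant $E_1$-term does. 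Since $L\in D^+_c$ is bounded below, on each diagonal $p+q=n$ only finitely many $E_1^{p,q}$ can be nonzero, so only a finite set $S_i$ of entries $E_1^{p,q}$ with $p+q\in\{i-1,i,i+1\}$ controls $R^if_*L$ (the two extra diagonals account for the sources and targets of all higher differentials meeting $p+q=i$).

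For each $(p,q)\in S_i$, I invoke the generic constructibility and base change theorem for a morphism of algebraic spaces of finite type (itself reduced by étale atlases of source and target to the classical scheme case): there is a dense open $Z^\circ_{p,q}\subset Z$ over which $R^qg_{p*}(a_p^*L)$ is constructible and compatible with arbitrary base change of Artin stacks. The finite intersection $Z^\circ\coloneqq\bigcap_{(p,q)\in S_i}Z^\circ_{p,q}$ is still dense and open in $Z$; over it every relevant $E_1$-term is well behaved, hence so are all differentials, all pages $E_r$ on these three diagonals, and the finite $E_\infty$-filtration of $R^if_*L$. Constructibility of the abutment follows because extensions of constructible $\Lambda$-sheaves are constructible ($\Lambda$ noetherian), and base-change compatibility propagates by the five-lemma through the successive subquotients.

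The main obstacle I anticipate is the careful bookkeeping of the spectral sequence in the Artin setting, where $Rf_*$ is defined via the Laszlo--Olsson / Liu--Zheng machinery rather than through ordinary \v Cech cohomology; once the bounded-below hypothesis on $L$ is used to truncate each diagonal to a finite set of columns, the remainder is a routine finite diagram chase, with density preserved under finite intersections and, via the smooth-local argument of the first paragraph, propagated back from $Z$ to the original $\cZ$.
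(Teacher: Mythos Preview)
Your approach is essentially the paper's: reduce to the scheme case via cohomological descent and Deligne's generic base change theorem, then bootstrap back to an Artin base using a smooth atlas. Two points need correction.

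First, your claim that only the three diagonals $p+q\in\{i-1,i,i+1\}$ of the $E_1$-page control $R^if_*L$ is false. To compute $E_3^{p,q}$ on diagonal $i$ you need $E_2^{p+2,q-1}$ on diagonal $i+1$, which in turn requires $E_1$ on diagonal $i+2$, and so on; in general $E_r$ on diagonal $i$ needs $E_1$ on diagonals $i-(r-1),\dots,i+(r-1)$. What saves you is that the spectral sequence degenerates on diagonal $i$ at a finite page (since $p\ge 0$ and $q\ge m$ where $L\in D^{\ge m}$), so you still only need a finite set of $E_1$-terms---just a larger one than you wrote. The rest of your argument then goes through unchanged.

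Second, your first paragraph asserts that the problem is ``smooth-local on $\cZ$'' and immediately replaces $\cZ$ by a scheme $Z$. This is exactly the step the paper treats most carefully: once you have a dense open $Z^\circ\subset Z$, you must set $\cZ^\circ=p(Z^\circ)$ and then argue, using the compatibility of base-change maps under composition (the paper's diagram \eqref{e.bcdiag}), that the base-change isomorphism for any $\cZ'\to\cZ^\circ$ can be checked after pulling back along the smooth surjection $Z^\circ\times_{\cZ^\circ}\cZ'\to\cZ'$. Your sketch gestures at this but does not supply it; filling it in is routine but should be made explicit.
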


\begin{proof}
  Recall first that for any 2-commutative diagram of Artin stacks of the form
  \[\xymatrix{\cX''\ar[r]^{h'}\ar[d]_{f''} & \cX'\ar[r]^{g'}\ar[d]^{f'} & \cX\ar[d]^f\\
  \cY''\ar[r]^{h} & \cY'\ar[r]^g & \cY}\]
  the following diagram commutes:
  \begin{equation}\label{e.bcdiag}
  \xymatrix{(gh)^* R f_* L\ar[rr]^{b_{gh}}\ar[d]_\simeq && Rf''_*(g'h')^* L\ar[d]^\simeq\\
  h^*g^*Rf_*L \ar[r]^{h^*b_{g}} & h^* Rf'_* {g'}^* L\ar[r]^{b_h} & Rf''_* {h'}^* {g'}^* L}
  \end{equation}
where $b_{gh}$, $b_g$, $b_h$ are base change maps.

If $\cZ$ is a scheme, then, as in \cite[Theorem 9.10]{OlssonSh},
cohomological descent and the case of schemes \cite[Th.\ finitude
1.9]{SGA4d} imply that there exists a dense open subscheme $\cZ^\circ$ of
$\cZ$ such that (a) holds and that $R^if_*L$ is compatible with arbitrary
base change of schemes $Z'\to \cZ^\circ\subset \cZ$. This implies (b). In
fact, for any base change of Artin stacks $g\colon \cZ'\to \cZ^\circ
\subset \cZ$, take a smooth atlas $p\colon Z'\to \cZ'$ where $Z'$ is a
scheme. Then $b_{p}$ is an isomorphism and $b_{gp}$ is an isomorphism by
assumption. It follows that $p^*b_g$ and hence $b_g$ are isomorphisms.

In the general case, let $p\colon Z\to \cZ$ be a smooth atlas. By the
preceding case, there exists a dense open subscheme $Z^\circ \subset Z$ such
that after forming the 2-commutative diagram with 2-cartesian squares
  \[\xymatrix{\cX_Z\ar[d]_{p_\cX}\ar[r]^{f_Z} & \cY_Z\ar[d]^{p_\cY}\ar[r] &Z\ar[d]^p\\
  \cX\ar[r]^f & \cY\ar[r] & \cZ}
  \]
the restriction of $R^i f_{Z*} p_\cX^* L$ to $Z^\circ\times_Z \cY_Z$ is
constructible and that $R^i f_{Z*} p_\cX^* L$ commutes with arbitrary base
change of Artin stacks $\cW\to Z^\circ\subset Z$. We claim that
$\cZ^\circ=p(Z^\circ)$ satisfies (a) and (b). To see this, let $p^\circ
\colon Z^\circ \to \cZ^\circ$ be the restriction of $p$. By definition
$p^\circ$ is surjective. Then (a) follows from the fact that
  \[p_\cY^{\circ*}(R^if_*L|\cZ^\circ\times_\cZ \cY)\simeq R^i f_{Z*} p_\cX^* L | Z^\circ\times_Z \cY_Z\]
  is constructible. For any base change of Artin stacks $\cZ'\to \cZ^\circ$, form the following 2-cartesian square:
  \[\xymatrix{Z'\ar[r]^{h}\ar[d]_{p'} & Z^\circ\ar[d]^{p^\circ} \\
  \cZ'\ar[r]^g & \cZ^\circ.}
  \]
By \eqref{e.bcdiag}, $b_{p'}({p'}^* b_g)$ can be identified with $b_{h} (h^*
b_{p^\circ})$. Since $p^\circ$ and $p'$ are smooth, $b_{p^\circ}$ and
$b_{p'}$ are isomorphisms. By the construction of $p^\circ$, $b_h$ is an
isomorphism. It follows that ${p'}^* b_g$ and hence $b_g$ are isomorphisms.
\end{proof}

\begin{remark}\label{r.gbc}
For $\cZ=BG$, where $G$ is an algebraic group over a field $k$, $f\colon
\cX\to \cY$ a quasi-compact and quasi-separated morphism of Artin stacks
over $\cZ$, and $\Lambda$ is a commutative ring annihilated by an integer
invertible in $k$, the above proof combined with the remark following
\cite[Th.\ finitude 1.9]{SGA4d} shows that $R f_* \colon
D^+_\cart(\cX,\Lambda)\to D^+_\cart(\cY,\Lambda)$ commutes with arbitrary
base change of Artin stacks $\cZ'\to \cZ$.
\end{remark}

\section{Multiplicative structures in derived categories}\label{s.8}

\begin{definition}
For us, a $\otimes$-\emph{category} is a symmetric monoidal category
\cite[Section VII.7]{MacLane}, that is, a category $\cT$ endowed with a
bifunctor $\otimes\colon \cT\times \cT \to \cT$, a unit object $\bone$ and
functorial isomorphisms
\begin{gather*}
  a_{LMN}\colon L\otimes(M\otimes N) \to (L\otimes M) \otimes N,\\
  c_{MN}\colon M\otimes N \to N\otimes M,\\
  u_M\colon M\otimes \bone \to M, \quad v_M \colon \bone \otimes M \to M,
\end{gather*}
satisfying the axioms of \loccit. We define a \emph{pseudo-ring} in~$\cT$ to
be an object $K$ of $\cT$ endowed with a morphism $m\colon K\otimes K \to K$
such that the following associativity diagram commutes:
\[
\xymatrix{& K \otimes (K\otimes K) \ar[d]_{a_{KKK}}\ar[r]^-{\id_K \otimes m} & K\otimes K \ar[rd]^{m}\\
 & (K\otimes K)\otimes K \ar[r]^-{m\otimes \id_K} & K\otimes K\ar[r]^{m} &K.}
\]
A pseudo-ring $(K,m)$ is called \emph{commutative} if the following diagram
commutes
\[
\xymatrix{K\otimes K\ar[rd]^{m}\ar[d]_{c_{KK}} \\
K\otimes K\ar[r]^{m} & K.}
\]
A \emph{homomorphism of pseudo-rings} $(K, m) \to (K',m')$ is a morphism
$f\colon K\to K'$ of~$\cT$ such that the following diagram commutes:
\[
\xymatrix{K\otimes K\ar[r]^-{m}\ar[d]_{f\otimes  f} &K\ar[d]^{f}\\
K'\otimes K' \ar[r]^-{m'}& K'.}
\]
We define a \emph{left $(K,m)$-pseudomodule} to be an object $M$ of $\cT$
endowed with a morphism $n\colon K\otimes M\to M$ such that the following
diagram commutes
\[\xymatrix{K\otimes (K\otimes M)\ar[r]^-{\id_K\otimes n}\ar[d]_{a_{KKM}} & K\otimes M \ar[rd]^{n}\\
(K\otimes K)\otimes M\ar[r]^-{m\otimes \id_M} & K\otimes M \ar[r]^{n} & M.}
\]
A \emph{homomorphism of left $(K,m)$-pseudomodules} $(M,n)\to (M',n')$ is a
morphism $h\colon M\to M'$ of~$\cT$ such that the following diagram commutes
\[\xymatrix{K\otimes M\ar[r]^n\ar[d]_{\id_K\otimes h} & M\ar[d]^h\\
K\otimes M'\ar[r]^{n'} & M'.}\]
\end{definition}

\begin{definition}\label{r.ringhom}
Let $f\colon (K,m)\to (K',m')$ be a homomorphism of pseudo-rings. We define
a \emph{splitting of $f$} to be a morphism $n\colon K'\otimes K\to K$,
making $K$ into a $(K',m')$-pseudomodule and such that the following diagram
commutes
\[\xymatrix{K\otimes K \ar[rd]_m\ar[r]^{f\otimes \id_K} &\ar[d]_{n} K'\otimes K\ar[r]^{\id_{K'}\otimes f}& K'\otimes K'\ar[d]^{m'}\\
&K\ar[r]^f & K'.}
\]
\end{definition}

\begin{definition}\label{s.ring}
We define a \emph{ring}  in $\cT$ to be a pseudo-ring $(K,m)$ in $\cT$
endowed with a morphism $e\colon \bone\to K$ such that the following
diagrams commute:
\[\xymatrix{K\otimes \bone \ar[r]^{\id_K\otimes e}\ar[rd]_{u_K}& K\otimes K\ar[d]^m& \bone\otimes K\ar[r]^{e\otimes \id_K}\ar[rd]_{v_K} & K\otimes K\ar[d]^{m}\\
&K&&K.}
\]
(Thus a ring in our sense is a ``monoid'' in the terminology of
\cite[Section VII.3]{MacLane}.) The unit $\bone$ endowed with
$u_{\bone}\colon \bone\otimes \bone \to \bone$ and $\id_{\bone}\colon
\bone\to \bone$ is a commutative ring in $\cT$. A \emph{ring homomorphism}
$(K,m,e)\to (K',m',e')$ is a homomorphism of pseudo-rings $f\colon (K,m)\to
(K',m')$ such that the following diagram commutes:
\[\xymatrix{\bone\ar[r]^{e}\ar[rd]_{e'} &K \ar[d]^{f}\\
&K'.}
\]
A \emph{left $(K,m,e)$-module} is a left $(K,m)$-pseudomodule $(M,n)$ such
that the following diagram commutes
\[\xymatrix{\bone \otimes M\ar[rd]_{v_M}\ar[r]^{e\otimes M} & K\otimes M\ar[d]^n\\
&M.}
\]
A \emph{homomorphism of left $(K,m,e)$-modules} $(M,n)\to (M',n')$ is a
homomorphism between the underlying left $(K,m)$-pseudomodules.
\end{definition}

\begin{construction}\label{s.tensor}
Let $\cT=(\cT,\otimes, a,c,u,v)$ and $\cT'=(\cT',\otimes,a',c',u',v')$ be
$\otimes$-categories, and let $\omega\colon \cT\to \cT'$ be a functor. A
\emph{left-lax $\otimes$-structure} on $\omega$ is a natural transformation
of functors $\cT\times \cT\to \cT'$ consisting of morphisms of $\cT'$
\[
o_{MN}\colon \omega(M\otimes N)\to\omega(M)\otimes \omega(N),
\]
such that the following diagrams commute:
\begin{gather*}
\xymatrix@C=4em{
\omega(L\otimes(M\otimes N)) \ar[r]^{o_{L,M\otimes N}}\ar[d]_{\omega(a_{LMN})} & \omega(L)\otimes \omega(M\otimes N) \ar[r]^-{\omega(L)\otimes o_{MN}} & \omega(L)\otimes (\omega(M) \otimes \omega(N))\ar[d]^{a'_{\omega(L)\omega(M)\omega(N)}}\\
\omega((L\otimes M)\otimes N)\ar[r]^{o_{L\otimes M,N}} & \omega(L\otimes M)\otimes \omega(N)\ar[r]^-{o_{LM}\otimes \omega(N)} & (\omega(L)\otimes \omega(M)) \otimes \omega(N)}
\\
\xymatrix{\omega(M\otimes N)\ar[r]^{o_{MN}}\ar[d]_{\omega(c_{MN})} &\omega(M)\otimes \omega(N)\ar[d]^{c'_{\omega(M)\omega(N)}}\\
\omega(N\otimes M)\ar[r]^{o_{NM}} & \omega(N)\otimes \omega(M).}
\end{gather*}
A \emph{right-lax $\otimes$-structure} on $\omega$ is a left-lax
$\otimes$-structure on $\omega^\op\colon \cT^\op\to {\cT'}^\op$. It is given
by functorial morphisms
\[
t_{MN}\colon \omega(M)\otimes \omega(N) \to \omega(M\otimes N),
\]
such that the above diagrams with arrows $o$ inverted and replaced by $t$
commute. A \emph{$\otimes$-structure} on $\omega$ is a left-lax
$\otimes$-structure $o$ such that $o_{MN}$ is an isomorphism for all $M$ and
$N$. In this case $t_{MN}=o_{MN}^{-1}$ defines a right-lax
$\otimes$-structure. If $t$ is a right-lax $\otimes$-structure on $\omega$
and $(K, m)$ is a pseudo-ring in $\cT$, we endow $\omega(K)$ with the
pseudo-ring structure
\[\omega(K)\otimes \omega(K)\xto{t_{KK}} \omega(K\otimes K) \xto{\omega(m)} \omega(K).\]
If, moreover, $(M,n)$ is a left $(K,m)$-pseudomodule, we endow $\omega(M)$
with the left $\omega(K)$-pseudomodule structure
\[\omega(K)\otimes \omega (M)\xto{t_{KM}}\omega(K\otimes M)\xto{\omega(n)}\omega(M).\]
If $(K,m)$ is commutative, then $\omega(K)$ is commutative. This
construction sends homomorphisms of pseudo-rings to homomorphisms of
pseudo-rings and homomorphisms of left pseudomodules to homomorphisms of
left pseudomodules.

If $(\omega,t)$, $(\omega',t')$ are functors endowed with right-lax
$\otimes$-structures, we say that a natural transformation $\alpha\colon
\omega\to \omega'$ preserves the right-lax $\otimes$-structures if the
following diagram commutes
\[\xymatrix{\omega(M)\otimes \omega(N)\ar[r]^{t_{MN}}\ar[d]_{\alpha_M\otimes \alpha_N} & \omega(M\otimes N)\ar[d]^{
\alpha_{M\otimes N}} \\
\omega'(M)\otimes \omega'(N)\ar[r]^{t'_{MN}} & \omega'(M\otimes N).}
\]
In
this case, for any pseudo-ring $K$ in $\cT$, $\alpha_K\colon \omega(K)\to
\omega'(K)$ is a homomorphism of pseudo-rings.
\end{construction}

\begin{construction}\label{s.adj}
Now suppose that $\omega\colon \cT\to \cT'$ admits a right adjoint
$\tau\colon \cT' \to \cT$. For any left-lax $\otimes$-structure $o$ on
$\omega$, endow $\tau$ with the right-lax $\otimes$-structure $t$ such that
$t_{MN}\colon \tau(M)\otimes \tau(N) \to \tau(M\otimes N)$ is adjoint to the
composition
\[\omega(\tau(M)\otimes \tau(N))\xto{o_{\tau(M)\tau(N)}} \omega(\tau(M))\otimes \omega(\tau(N))\xto{\alpha_M\otimes \alpha_N} M\otimes N,\]
where $\alpha_M\colon \omega(\tau(M)) \to M$, $\alpha_N\colon
\omega(\tau(N)) \to N$ are adjunction morphisms. It is straightforward to
check that this construction defines a bijection from the set of left-lax
$\otimes$-structures on $\omega$ to the set of right-lax
$\otimes$-structures on $\tau$.

In the above construction, if $o$ is a $\otimes$-structure on $\omega$, then
the adjunction morphisms $\alpha\colon \omega\tau\to \id_{\cT'}$ and $\beta
\colon \id_{\cT}\to \tau\omega$ preserve the resulting right-lax
$\otimes$-structures.
\end{construction}

\begin{construction}
This formalism has a unital variant. A \emph{left-lax unital
$\otimes$-structure} on a functor $\omega\colon \cT\to \cT'$ is a left-lax
$\otimes$-structure endowed with a morphism $p\colon \omega(\bone)\to
\bone'$ in $\cT'$ such that the following diagrams commute
\[\xymatrix{\omega(M\otimes \bone)\ar[r]^{o_{M\bone}}\ar[d]_{\omega(u_M)}& \omega(M)\otimes \omega(\bone)\ar[d]^{\id_{\omega(M)}\otimes p}
& \omega(\bone \otimes M)\ar[r]^{o_{\bone M}}\ar[d]_{\omega(v_M)} & \omega(\bone)\otimes \omega(M)\ar[d]^{p\otimes \id_{\omega(M)}}\\
\omega(M) & \omega(M)\otimes \bone'\ar[l]^{u'_{\omega(M)}} & \omega(M) &
\bone'\otimes \omega(M)\ar[l]^{v'_{\omega(M)}}}
\]
A \emph{right-lax unital $\otimes$-structure} is a left-lax unital
$\otimes$-structure on $\omega^\op\colon \cT^\op\to {\cT'}^\op$. It consists
of a right-lax $\otimes$-structure endowed with a morphism $s\colon
\bone'\to \omega(\bone)$ in $\cT'$ such that the above diagrams, with arrows
$o$ inverted and replaced by $t$, arrows $p$ inverted and replaced by $s$,
commute. A \emph{unital $\otimes$-structure} is a left-lax unital
$\otimes$-structure $(o,p)$ such that $o$ is a $\otimes$-structure and $p$
is invertible. Constructions \ref{s.tensor} and \ref{s.adj} can be carried
over to the unital case.

Let $\cT$ be a $\otimes$-category, and let $\cC$ be a category. Then the
category $\cT^\cC$ of functors $\cC\to \cT$ has a natural
$\otimes$-structure. The constant functor $\cT\to \cT^\cC$ defined by
$M\mapsto (M)_\cC$ has a natural unital $\otimes$-structure.
\end{construction}

\begin{construction}\label{s.rtopos}
Let $X=(X,\cO_X)$ be a commutatively ringed topos. Two $\otimes$-categories
will be of interest to us:
\begin{enumerate}
\item The (unbounded) derived category $D(X)=D(X,\cO_X)$, equipped with
    $\otimes^L_{\cO_X}\colon D(X)\times D(X)\to D(X)$ \cite[Theorem
    18.6.4]{KashiwaraSCat}.

\item The category $\grmod(X)=\grmod(X,\cO_X)$ of graded $\cO_X$-modules
    $H=\bigoplus_{n\in \Z}H^n$, with $\otimes$ given by $(H\otimes
    K)^n=\bigoplus_{i+j=n}H^i\otimes_{\cO_X} K^j$, the isomorphism
    $c\colon H\otimes K\to K\otimes H$ being given by the usual sign rule.
\end{enumerate}

The cohomology functor
  \[\cH^* \colon D(X) \to \grmod(X)\]
has a natural right-lax unital $\otimes$-structure given by the canonical
maps $\cH^*L\otimes \cH^*M\to \cH^*(L\otimes^L M)$. (This is a unital
$\otimes$-structure when $\cO_X$ is a constant field, which is the case we
are mostly interested in).

Let $f\colon X=(X,\cO_X)\to Y=(Y,\cO_Y)$ be a morphism of commutatively
ringed topoi. We endow $f^*\colon \grmod(Y)\to \grmod(X)$ with the unital
$\otimes$-structure defined by the functorial isomorphisms
  \[f^*(M\otimes N)\to f^*M\otimes f^* N, \quad f^*\cO_Y\to \cO_X. \]
We endow $Lf^*\colon D(Y)\to D(X)$ \cite[Theorem 18.6.9]{KashiwaraSCat} with
the unital $\otimes$-structure defined by the functorial isomorphisms
  \[Lf^*(M\otimes^L N)\to Lf^*M\otimes^L f^* N, \quad Lf^*\cO_Y\to \cO_X. \]
We endow the right adjoint functors $f_*\colon \grmod(X)\to \grmod(Y)$ and
$Rf_*\colon D(X)\to D(Y)$ with the induced right-lax unital
$\otimes$-structures.
\end{construction}

\begin{construction}\label{c.Dring}
Let $\cX$ be an Artin stack, and let $\Lambda$ be a commutative ring. We
consider the $\otimes$-categories $D_\cart(\cX,\Lambda)$ and
$\grmod_\cart(\cX,\Lambda)$, the category of graded cartesian sheaves of
$\Lambda$-modules.

Let $f\colon \cX\to \cY$ be a morphism of Artin stacks. As in Construction
\ref{s.rtopos}, we endow the functors $f^*\colon
\grmod_\cart(\cY,\Lambda)\to \grmod_\cart(\cX,\Lambda)$ and $f^*\colon
D_\cart(\cY,\Lambda)\to D_\cart(\cX,\Lambda)$ with the natural unital
$\otimes$-structures. We endow the right adjoint functors $f_*\colon
\grmod_\cart(\cY,\Lambda)\to \grmod_\cart(\cX,\Lambda)$ and $Rf_*\colon
D_\cart(\cX,\Lambda)\to D_\cart(\cY,\Lambda)$ with the induced right-lax
unital $\otimes$-structures.

Assume that $\Lambda$ is annihilated by an integer $n$ invertible on $\cY$
and $f$ is locally of finite presentation. Then we have $Rf^!\colon
D_\cart(\cY,\Lambda)\to D_\cart(\cX,\Lambda)$. As in \cite[Cycle
(1.2.2.3)]{SGA4d}, for $M$ and $N$ in $D_\cart(\cY,\Lambda)$, we have a
morphism
\[f^*M\otimes^L Rf^!N \to Rf^!(M\otimes^L N)\]
given by the morphism $Rf^!N\to Rf^!R\cHom(M,M\otimes^L N)\simeq R\cHom(f^*
M,Rf^!(M\otimes^L N))$. For a pseudo-ring $(L,m)$ in $D_\cart(\cY,\Lambda)$,
we endow $Rf^! L$ with the left $f^* L$-pseudomodule structure given by the
composition
\[f^* L\otimes^L Rf^! L \to Rf^!(L\otimes^L L) \xto{Rf^! m} Rf^!L\]

Assume moreover that $f=i$ is a closed immersion. Then the right-lax
$\otimes$-structure on $i_*=Ri_*$ is an isomorphism and its inverse is a
$\otimes$-structure consisting of a functorial isomorphism
\[i_*(M\otimes^L N)\to i_*M\otimes^L i_*N.\]
We endow the right adjoint functor $Ri^!$ of $i_*$ with the induced
right-lax $\otimes$-structure. Note that the right \emph{unital}
$\otimes$-structure on $i_*$ is not invertible in general. For a pseudo-ring
$(L,m)$ in $D_\cart(\cY,\Lambda)$, the above left $i^*L$-pseudomodule
structure on $Ri^!L$ is a splitting of the homomorphism of pseudo-rings
$Ri^! L \to i^*L$ (Definition \ref{r.ringhom}).
\end{construction}

In the rest of this section, we discuss multiplicative structures on
spectral objects. We will only consider spectral objects of type $\tilde
\Z$, where $\tilde\Z$ is the category associated to the ordered set $\Z\cup
\{\pm \infty\}$.

\begin{definition}
Let $\cT$ be category endowed with a bifunctor $\otimes\colon \cT\times
\cT\to \cT$. Let $J$ be a category endowed with a bifunctor $*\colon J\times
J\to J$. Let $X,X',X''$ be functors $J\to \cT$. A \emph{pairing} from
$X$,$X'$ to $X''$ is a natural transformation of functors $J\times J\to \cT$
consisting of morphisms of $\cT$
\[X(j)\otimes X'(j')\to X''(j*j').\]

Assume moreover that $(\cT,\otimes)$ and $(J,*)$ are endowed with structures
of $\otimes$-categories. A pairing from $X$, $X$ to $X$ is called
\emph{associative} if for $j,j',j''\in J$, the following diagram commutes
\[\xymatrix{X(j)\otimes (X(j')\otimes X(j''))\ar[r]\ar[d]_a &
X(j)\otimes X(j'*j'')\ar[r] & X(j*(j'*j''))\ar[d]^a\\
(X(j)\otimes X(j'))\otimes X(j'')\ar[r] & X(j*j')\otimes X(j'')\ar[r] & X((j*j')*j''),
}
\]
and is called \emph{commutative} if for $j,j'\in J$, the following diagram
commutes
\begin{equation}\label{e.pcomm}
\xymatrix{X(j)\otimes X(j')\ar[r]\ar[d]_c&X(j*j')\ar[d]^c\\
X(j')\otimes X(j)\ar[r] & X(j'*j).}
\end{equation}

Assume moreover that $\cT$ is additive and $\otimes$ is an additive
bifunctor. Let $\SG$ be the $\otimes$-category given by the discrete
category $\{\pm 1\}$ and the ordinary product. Let $\sigma\colon J\to \SG$
be a $\otimes$-functor. A pairing from $X$, $X$ to $X$ is called
\emph{$\sigma$-commutative} if for $j,j'\in J$, the diagram \eqref{e.pcomm}
is $\max\{\sigma(j),\sigma(j')\}$-commutative.
\end{definition}

\begin{construction}
Let $\Ar(\tilde \Z)$ be the category of morphisms of $\tilde \Z=\Z\cup\{\pm
\infty\}$. We represent objects of $\Ar(\tilde\Z)$ by pairs $(p,q)$, $p,q\in
\tilde\Z$, $p\le q$. We endow $\Ar(\tilde \Z)$ with a structure of
$\otimes$-category by the formula
\[
(p,q)*(p',q')=(\max\{p+q'-1,p'+q-1\},q+q'-1).
\]
Here we adopt the convention that
$(-\infty)+(+\infty)=-\infty=(+\infty)+(-\infty)$.
\end{construction}

\begin{definition}
Let $\cD$ be a triangulated category endowed with a triangulated bifunctor
$\otimes\colon \cD\times \cD\to \cD$ \cite[Definition
10.3.6]{KashiwaraSCat}. Let $(X,\delta)$, $(X',\delta)$, $(X'',\delta'')$ be
spectral objects with values in $\cD$ \cite[II 4.1.2]{Verdier}. A
\emph{pairing} from $(X,\delta)$, $(X',\delta')$ to $(X'',\delta'')$
consists of a pairing from $X$, $X'$ to $X''$, namely a natural
transformation of functors $\Ar(\tilde \Z) \times \Ar(\tilde \Z) \to \cD$
consisting of morphisms of $\cD$
\[X(p,q)\otimes X'(p',q')\to X''((p,q)*(p',q')),\]
such that for $p\le q\le r$, $p'\le q'\le r'$ in $\tilde\Z$ satisfying
$q+r'=q'+r$ and $p+r'=p'+r$, the diagram
\[\xymatrix{X(q,r)\otimes X'(q',r')\ar[r]\ar[d]_{(\delta\otimes \id, \id\otimes \delta')} &
X''(q'',r'')\ar[d]^{\delta''}\\
(X(p,q)[1]\otimes X'(q',r'))\oplus (X(q,r)\otimes X'(p',q')[1])\ar[r] & X''(p'',q'')[1]}
\]
commutes. Here $(q'',r'')=(q,r)*(q',r')$,
$(p'',q'')=(p,q)*(q',r')=(q,r)*(p',q')$.

Assume moreover that $(\cD,\otimes)$ is endowed with a structure of
$\otimes$-category\footnote{Here we do not assume that the constraints of
the $\otimes$-category are natural transformations of triangulated functors
\cite[Definition 10.1.9 (ii)]{KashiwaraSCat} in each variable.}. A pairing
from $(X,\delta)$, $(X,\delta)$ to $(X,\delta)$ is called \emph{associative}
(resp.\ \emph{commutative}) if the underlying pairing from $X$, $X$ to $X$
is.
\end{definition}

\begin{example}\label{e.pairso}
Let $X$ be a commutatively ringed topos, and let $K,K',K''\in D(X)$. We
consider the second spectral object $(K,\delta)$ associated to $K$ \cite[III
4.3.1, 4.3.4]{Verdier}, with $K(p,q)=\tau^{[p,q-1]}K$, where
$\tau^{[p,q-1]}$ is the canonical truncation functor. Similarly, we have
spectral objects $(K',\delta')$, $(K'',\delta'')$. A map $K\otimes^L K'\to
K''$ in $D(X)$ defines a pairing from $(K,\delta)$, $(K',\delta')$ to
$(K'',\delta'')$ given by
\begin{multline*}
\tau^{[p,q-1]}K\otimes^L \tau^{[p',q'-1]}K'
\to \tau^{\ge p''} (\tau^{[p,q-1]}K\otimes^L \tau^{[p',q'-1]}K')
\simeq \tau^{\ge p''} (\tau^{\le q-1} K\otimes^L \tau^{\le q'-1}K')\\
\xrightarrow{\alpha} \tau^{[p'',q''-1]}(K\otimes^L K')\to \tau^{[p'',q''-1]}K'',
\end{multline*}
where $(p'',q'')=(p,q)*(p',q')$, $\alpha$ is given by the map $\tau^{\le
q-1}K\otimes^L \tau^{\le q'-1}K'\to \tau^{\le q''-1}(K\otimes^L K'')$
induced by adjunction from the map $\tau^{\le q-1}K\otimes^L \tau^{\le
q'-1}K'\to K\otimes^L K'$. Moreover, if $K$ is a pseudo-ring (resp.\
commutative pseudo-ring), then the induced pairing from $(K,\delta)$,
$(K,\delta)$ to $(K,\delta)$ is associative (resp.\ commutative).

The above also holds with $D(X)$ replaced by $D_\cart(\cX,\Lambda)$, where
$\cX$ is an Artin stack and $\Lambda$ is a commutative ring.
\end{example}

\begin{definition}
Let $\cA$ be an abelian category endowed with an additive bifunctor
$\otimes\colon \cA\times \cA\to \cA$. Let $(H^n,\delta)$, $(H'^n,\delta')$,
$(H''^n,\delta'')$ be spectral objects with values in $\cA$ \cite[II
4.1.4]{Verdier}. A \emph{pairing} from $(H^n,\delta)$, $(H'^n,\delta')$ to
$(H''^n,\delta'')$ consists of a pairing from $H^*$, $H'^*$ to $H''^*$,
namely a natural transformation of functors $(\Z\times \Ar(\tilde \Z))\times
(\Z\times \Ar(\tilde \Z)) \to \cA$ consisting of morphisms of $\cA$
\[H^n(p,q)\otimes H'^{n'}(p',q')\to H''^{n+n'}((p,q)*(p',q')),\]
such that for $p\le q\le r$, $p'\le q'\le r'$ in $\tilde\Z$ satisfying
$q+r'=q'+r$ and $p+r'=p'+r$, the diagram
\[\xymatrix{H^n(q,r)\otimes H'^{n'}(q',r')\ar[r]\ar[d]_{(\delta\otimes \id, (-1)^n\id\otimes \delta')} &
H''^{n+n'}(q'',r'')\ar[d]^{\delta''}\\
(H^{n+1}(p,q)\otimes H'^{n'}(q',r'))\oplus (H^n(q,r)\otimes H'^{n'+1}(p',q'))\ar[r] & H''^{n+n'+1}(p'',q'')}
\]
commutes. Here $(q'',r'')=(q,r)*(q',r')$,
$(p'',q'')=(p,q)*(q',r')=(q,r)*(p',q')$. Note that if $(H^n,\delta)$,
$(H'^n,\delta')$, and $(H''^n,\delta'')$ are stationary \cite[II
4.4.2]{Verdier}, then the pairing from $H^*$, $H'^*$ to $H''^*$ is uniquely
determined by the pairing from $H^*\res \Arm$, $H'^*\res \Arm$ to $H''^*\res
\Arm$, where $\Arm=\Ar(\Z\cup \{-\infty\})$. In fact, in this case, for
every $n$, there exists an integer $u(n)$ such that for every $q\ge u(n)$,
the morphism $H^n(-\infty,q)\to H^n(-\infty,+\infty)$ is an isomorphism.

Consider the induced spectral sequences $(E_2^{pq}\Rightarrow H^n)$,
$(E'^{pq}_2\Rightarrow H'^n)$, $(E''^{pq}_2\Rightarrow H''^n)$ given by
\cite[II (4.3.3.2)]{Verdier}. A pairing from $(H^n,\delta)$,
$(H'^n,\delta')$ to $(H''^n,\delta'')$ induces compatible pairings of
differential bigraded objects of $\cA$
\[E_r^{pq}\otimes E'^{p'q'}_r\to E''^{p+p',q+q'}_r\]
for $2\le r\le \infty$ (satisfying $d''_r(xy)=d_r(x)y+(-1)^{p+q}xd'_r(y)$
for $x\in E_r^{pq}$, $y\in E'^{p'q'}_r$) and a pairing of
filtered\footnote{For the filtration, we use the convention
$F^pH^n=\Img(H^n(-\infty,n-p+1)\to H^n(-\infty,\infty))$. In particular, in
Example \ref{e.ssmult} below, $F^pH^n(X,K)=\Img (H^n(X,\tau^{\le n-p}K)\to
H^n(X,K))$.} graded objects of $\cA$
\[F^pH^n \otimes F^{p'}H'^{n'} \to F^{p+p'}H''^{n+n'},\]
compatible with the pairing on $E_\infty$.

Assume moreover that $(\cA,\otimes)$ is endowed with a structure of
$\otimes$-category. A pairing from $(H^n,\delta)$, $(H^n,\delta)$ to
$(H^n,\delta)$ is called \emph{associative} (resp.\ \emph{commutative}) if
the underlying pairing from $H^*$, $H^*$ to $H^*$ is associative (resp.\
$\sigma$-commutative, where $\sigma \colon \Z\times\Ar(\tilde \Z)\to \SG$ is
given by $(n,(p,q))\mapsto (-1)^n$). An associative (resp.\ commutative)
pairing from $(H^n,\delta)$, $(H^n,\delta)$ to $(H^n,\delta)$ induces
associative (resp.\ commutative) pairings on $E_r^{pq}$ and $F^pH^n$. Here
the commutativity for $E_r^{pq}$ and $F^pH^n$ are relative to the functors
$\Z\times \Z\to \SG$ given by $(p,q)\mapsto (-1)^{p+q}$ and $(p,n)\mapsto
(-1)^n$, respectively.
\end{definition}

\begin{remark}\label{r.pairfun}
Let $\cD$, $\cD'$ be triangulated categories endowed with triangulated
bifunctors $\otimes\colon \cD\times \cD\to \cD$, $\otimes\colon \cD'\times
\cD'\to \cD'$. Let $\tau\colon \cD\to \cD'$ be a triangulated functor
endowed with a natural transformation of functors $\cD\times \cD\to \cD'$
consisting of morphisms $\tau(M)\otimes \tau(N)\to \tau(M\otimes N)$ of
$\cD'$ that is a natural transformation of triangulated functors in each
variable. Let $(X,\delta)$, $(X',\delta')$, $(X'',\delta'')$ be spectral
objects with values in $\cD$. Then a pairing from $(X,\delta)$,
$(X',\delta')$ to $(X'',\delta'')$ induces a pairing from $\tau(X,\delta)$,
$\tau(X',\delta')$ to $\tau(X'',\delta'')$. If $(\cD,\otimes)$,
$(\cD',\otimes)$ are endowed with structures of $\otimes$-categories and
$\tau$ is a right-lax $\otimes$-functor (Construction \ref{s.tensor}), then
an associative (resp.\ commutative) pairing from $(X,\delta)$, $(X,\delta)$
to $(X,\delta)$ induces an associative (resp.\ commutative) pairing from
$\tau(X,\delta)$, $\tau(X,\delta)$ to $\tau(X,\delta)$.

Similarly, let $\cA$ be an abelian category endowed with an additive
bifunctor $\otimes \colon\cA\times \cA\to \cA$ and let $H\colon \cD\to \cA$
be a cohomological functor endowed with a natural transformation of functors
$\cD\times \cD\to \cA$ consisting of morphisms $H(M)\otimes H(N)\to
H(M\otimes N)$ of $\cA$. We adopt the convention that for $p\le q\le r$ in
$\tilde \Z$, the map $\delta^n\colon H^n(X(q,r))\to H^{n+1}(X(p,q))$ is
$(-1)^n$ times the map obtained by applying $H$ to $\delta[n]\colon
X(q,r)[n]\to X(p,q)[n+1]$. Then a pairing from $(X,\delta)$, $(X',\delta')$
to $(X'',\delta'')$ induces a pairing from $H^*(X,\delta)$,
$H^*(X',\delta')$ to $H^*(X'',\delta'')$ given by
\begin{multline*}
H(X(p,q)[n])\otimes H(X'(p',q')[n']) \to H(X(p,q)[n]\otimes X'(p',q')[n'])\\
\simeq H((X(p,q)\otimes X'(p',q'))[n+n'])
\to H(X''((p,q)*(p',q'))[n+n']).
\end{multline*}
Here we have used the composite of the isomorphisms
\[M[m]\otimes N[n]\simeq (M\otimes N[n])[m]\simeq (M\otimes N)[m+n]\]
given by the structure of bifunctor of additive categories with translation
\cite[Definition 10.1.1 (v)]{KashiwaraSCat} on $\otimes\colon \cD\times
\cD\to \cD$. If $(\cD,\otimes)$, $(\cA,\otimes)$ are endowed with structures
of $\otimes$-categories and $H$ is a right-lax $\otimes$-functor, and if the
associativity (resp.\ commutativity) constraint of $(\cD,\otimes)$ is a
natural transformation of triangulated functors in each variable, then an
associative (resp.\ commutative) pairing from $(X,\delta)$, $(X,\delta)$ to
$(X,\delta)$ induces an associative (resp.\ commutative) pairing from
$H^*(X,\delta)$, $H^*(X,\delta)$ to $H^*(X,\delta)$. Indeed, the assumption
on the commutativity constraint implies the $(-1)^{mn}$-commutativity of the
following diagram
\[\xymatrix{M[m]\otimes N[n]\ar[r]^\sim\ar[d]_\simeq &
(M\otimes N[n])[m]\ar[r]^\sim & (M\otimes N)[m+n]\ar[d]^\simeq\\
N[n]\otimes M[m]\ar[r]^\sim & (N\otimes M[m])[n] \ar[r]^\sim & (N\otimes M)[m+n].}
\]
\end{remark}

\begin{example}\label{e.ssmult}
Let $X$ be a commutatively ringed topos and let $K$ be an object of $D(X)$.
The second spectral sequence of hypercohomology
\[E_2^{pq}=H^p(X,\cH^q K)\Rightarrow H^{p+q}(X,K)\]
is induced from the spectral object $H^*(K,\delta)$, where $(K,\delta)$ is
the second spectral object associated to $K$. If $K$ is a pseudo-ring in
$D(X)$, then Remark \ref{r.pairfun} applied to Example \ref{e.pairso} endows
the spectral sequence with an associative multiplicative structure, which is
graded commutative when $K$ is commutative.
\end{example}

\part{Main results}\label{p.2}
\section{Finiteness theorems for equivariant cohomology rings}\label{s.2}

We will first discuss Chern classes of vector bundles on Artin stacks. Let
$\cX$ be an Artin stack, let $n\ge 2$ be an integer invertible on $\cX$, and
let $\cL$ be a line bundle on $\cX$. The isomorphism class of $\cL$ defines
an element in $H^1(\cX,\G_m)$. We denote by
\begin{equation}\label{e.c1}
c_1(\cL)\in H^2(\cX,\Z/n\Z (1))
\end{equation}
the image of this element by the homomorphism $H^1(\cX,\G_m)\to H^2(\cX,\Z/n\Z(1))$ induced by the short exact sequence
\[1\to \Z/n\Z(1)\to \G_m \xrightarrow{n} \G_m \to 1,\]
where the map marked by $n$ is raising to the $n$-th power. For any integer
$i$, we write $\Z/n\Z(i)=\Z/n\Z(1)^{\otimes i}$. We say a quasi-coherent
sheaf \cite[06WG]{stacks} $\cE$ on $\cX$ is a \emph{vector bundle} if there
exists a smooth atlas $p\colon X\to \cX$ such that $p^*\cE$ is a locally
free $\cO_X$-module of finite rank. The following theorem generalizes the
construction of Chern classes of vector bundles on schemes
(\cite[Th\'eor\`eme 1.3]{Riou} and \cite[VII 3.4, 3.5]{SGA5}). If $\cX$ is a
Deligne-Mumford stack, it yields the Chern classes over the \'etale topos of
$\cX$ locally ringed by $\cO_\cX$, defined by Grothendieck in
\cite[(1.4)]{GrothDix}. In particular, it also generalizes
\cite[(2.3)]{GrothDix}.

\begin{theorem}\label{t.Chern}
  There exists a unique way to define, for every Artin stack $\cX$ over $\Z[1/n]$ and every vector bundle $\cE$ on $\cX$, elements $c_i(\cE)\in H^{2i}(\cX,\Z/n\Z(i))$ for all $i\ge 0$ such that the formal power series $c_t(\cE)=\sum_{i\ge 0}c_i(\cE)t^i$ satisfies the following conditions:
\begin{enumerate}
  \item (Functoriality) If $f\colon \cY\to \cX$ is a morphism of stacks over $\Z[1/n]$, then $f^*(c_t(\cE))=c_t(f^*\cE)$;

  \item (Additivity) If $0\to \cE'\to \cE\to \cE''\to 0$ is an exact sequence of vector bundles, then $c_t(\cE)=c_t(\cE')c_t(\cE'')$;

  \item (Normalization) If $\cL$ is a line bundle on $\cX$, then
      $c_1(\cL)$ coincides with the class defined in \eqref{e.c1} and
      $c_t(\cL)=1_\cX+c_1(\cL)t$. Here $1_\cX$ denotes the image of $1$ by
      the adjunction homomorphism $\Z/n\Z\to H^0(\cX,\Z/n\Z)$.
\end{enumerate}
  Moreover, we have:
\begin{itemize}
\item[(d)]  $c_0(\cE)=1_\cX$ and $c_i(\cE)=0$ for $i>\rk(\cE)$.
\end{itemize}
\end{theorem}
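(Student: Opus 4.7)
The plan is to follow Grothendieck's classical construction of Chern classes via the projective bundle formula (\cite[VII]{SGA5}), and reduce the Artin stack case to the case of algebraic spaces by smooth descent.

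For uniqueness, I would invoke the splitting principle. Given $\cE$ of rank $r$ on $\cX$, form the complete flag stack $\pi\colon \Flag(\cE)\to \cX$; this morphism is representable, smooth and proper (it is a tower of projective bundles above any smooth atlas), and $\pi^*\cE$ carries a tautological filtration with line bundle successive quotients $\cL_1,\dots,\cL_r$. Iterating the projective bundle formula below, $H^*(\Flag(\cE),\Z/n\Z(\bullet))$ becomes free over $H^*(\cX,\Z/n\Z(\bullet))$, so $\pi^*$ is injective. Axioms (a), (b), (c) then force
\[\pi^*c_t(\cE) = \prod_{i=1}^{r}\bigl(1_{\Flag(\cE)}+c_1(\cL_i)\,t\bigr),\]
which determines $c_t(\cE)$ uniquely.

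For existence, the key input is the projective bundle formula for $\cX$: for $p\colon \bP(\cE)\to \cX$ the projective bundle (representable, smooth, proper) and $\xi = c_1(\cO_{\bP(\cE)}(1))\in H^2(\bP(\cE),\Z/n\Z(1))$, the morphism in $D_\cart(\cX,\Z/n\Z)$
\[\bigoplus_{i=0}^{r-1} \Z/n\Z(-i)[-2i] \longrightarrow Rp_*\Z/n\Z,\]
whose $i$-th summand is adjoint to cup product with $\xi^i$, is an isomorphism. Since both source and target are cartesian, this can be checked after pullback along a smooth atlas $u\colon U\to \cX$; the 2-cartesian square $\bP(u^*\cE) \simeq \bP(\cE)\times_\cX U$, smooth base change for the representable morphism $p$, and the classical projective bundle formula on algebraic spaces (\cite[VII 2.2.6]{SGA5}) reduce the claim to the scheme case. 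I would then define $c_i(\cE)\in H^{2i}(\cX,\Z/n\Z(i))$, $1\le i\le r$, as the unique elements such that
\[\sum_{i=0}^{r}(-1)^{i}\,p^*c_i(\cE)\cdot \xi^{\,r-i} = 0\quad\text{in}\quad H^{2r}(\bP(\cE),\Z/n\Z(r)),\]
together with $c_0(\cE)=1_\cX$ and $c_i(\cE)=0$ for $i>r$, giving (d) by construction. Functoriality (a) follows from base change of $\bP(\cE)$ and from the fact that $\xi$ is defined via $c_1$, which is itself functorial (by the functoriality of \eqref{e.c1}). Normalization (c) is immediate: for $\cE=\cL$ a line bundle $\bP(\cL)=\cX$ with $\cO(1)\simeq \cL$, so the defining relation reads $\xi-c_1(\cL)=0$ and $c_1(\cE)=c_1(\cL)$. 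The Whitney formula (b) is reduced by the splitting principle to the case where $\cE$, $\cE'$, $\cE''$ are direct sums of line bundles, where it becomes the identity of polynomials $\prod_i(1+c_1(\cL_i)t) = \prod_j(1+c_1(\cL'_j)t)\prod_k(1+c_1(\cL''_k)t)$ obtained from the defining relation on the iterated projective bundle.

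The main obstacle is the projective bundle formula over the Artin stack $\cX$. The two substantive points are: first, that $Rp_*\Z/n\Z(\bullet)$ is cartesian and its formation commutes with arbitrary base change along morphisms of Artin stacks, which follows from $p$ being representable, smooth and proper combined with Proposition~\ref{p.descent} and proper base change on algebraic spaces; second, that the cup-product map above is an isomorphism of cartesian complexes, which upon pullback to a smooth atlas reduces to the classical statement. Once this formula is in place, together with injectivity of pullback to the flag stack, the verification of (a)--(d) and of uniqueness is formal.
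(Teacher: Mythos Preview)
Your proposal is correct and follows the same route as the paper: projective-bundle formula reduced to schemes by base change, uniqueness via the flag bundle and injectivity of $\pi^*$, definition of $c_i(\cE)$ through the relation $\sum_{i}(-1)^i p^*c_i(\cE)\,\xi^{r-i}=0$, and verification of (a)--(d) by appeal to the classical case.

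Two small sharpenings. First, the base change needed for the projective-bundle formula is the compatibility of $Rp_*$ with pullback along a smooth atlas (the paper cites \cite{LiuZheng} for this); Proposition~\ref{p.descent} concerns descent of cartesian sheaves, which is not the same statement. Second, the flag bundle yields only a \emph{filtration} of $\pi^*\cE$ with line-bundle graded pieces, not a direct-sum decomposition, so your reduction of additivity to ``direct sums of line bundles'' is not yet justified. The paper closes this gap with an extra lemma: for a short exact sequence $E\colon 0\to\cE'\to\cE\to\cE''\to0$, the fibration $\Sect(E)\to\cX$ of sections of $\cE\to\cE''$ is a torsor under the vector bundle $\cHom(\cE'',\cE')$, hence $\Lambda\to R\pi_*\Lambda$ is an isomorphism and $\pi^*E$ is canonically split. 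With this addition (or with the standard induction on filtrations reducing to the case where $\cE''$ is a line bundle), your argument is complete.
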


The $c_i(\cE)$ are called the (\'etale) \emph{Chern classes} of $\cE$. It
follows from (b) and (d) that $c_t(\cE)$ only depends on the isomorphism
class of $\cE$.

To prove Theorem \ref{t.Chern}, we need the following result, which
generalizes \cite[VII Th\'eor\`eme 2.2.1]{SGA5} and \cite[Th\'eor\`eme
1.2]{Riou}.

\begin{prop}\label{p.proj}
Let $\cX$ be an Artin stack and let $\cE$ be a vector bundle of constant
rank $r$ on $\cX$. Let $n$ be an integer invertible on $\cX$ and let
$\Lambda$ be a commutative ring over $\Z/n\Z$.  We denote by $\pi\colon
\bP(\cE)\to \cX$ the projective bundle of $\cE$. Let
$\xi=c_1(\cO_{\bP(\cE)}(1))\in H^2(\bP(\cE),\Lambda(1))$ as in \eqref{e.c1}.
Then the powers $\xi^i\in H^{2i}(\bP(\cE),\Lambda(i))$ of $\xi$ define an
isomorphism in $D(\cX,\Lambda)$
\begin{equation}\label{e.proj0}
  (1,\xi,\dots,\xi^{r-1})\colon \bigoplus_{i=0}^{r-1} \Lambda(-i)[-2i]\simto R\pi_* \Lambda.
\end{equation}
\end{prop}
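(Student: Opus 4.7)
The plan is to reduce to the classical case of schemes via smooth descent. Both sides of \eqref{e.proj0} lie in $D_\cart(\cX,\Lambda)$ (the left side is obviously cartesian, and the right side is cartesian because the projective bundle $\pi\colon\bP(\cE)\to\cX$ is representable and proper, hence its formation and that of $R\pi_*$ commute with base change in the cartesian sense). The morphism $(1,\xi,\dots,\xi^{r-1})$ is natural in $\cX$, since both the first Chern class \eqref{e.c1} and the relative $\cO(1)$ are compatible with arbitrary pullback.

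First I would show that the statement is local on $\cX$ for the smooth topology. Let $u\colon U\to\cX$ be a smooth atlas with $U$ an algebraic space, and form the 2-cartesian square
\[
\xymatrix{\bP(u^*\cE)\ar[r]^{u'}\ar[d]_{\pi_U} & \bP(\cE)\ar[d]^\pi \\ U\ar[r]^u & \cX.}
\]
Since $\pi$ is representable and proper, smooth base change yields a canonical isomorphism $u^*R\pi_*\Lambda\simto R\pi_{U*}\Lambda$, and this isomorphism identifies $u^*(1,\xi,\dots,\xi^{r-1})$ with the corresponding map $(1,\xi_U,\dots,\xi_U^{r-1})$ for $U$, because $\xi=c_1(\cO_{\bP(\cE)}(1))$ pulls back to $c_1(\cO_{\bP(u^*\cE)}(1))=\xi_U$. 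Since the pullback functor $u^*\colon D_\cart(\cX,\Lambda)\to D_\cart(U,\Lambda)$ is conservative (Notation \ref{s.fCart}), it suffices to prove the proposition for $U$. A further \'etale localization reduces to the case where the base is a scheme.

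For schemes, the result is the classical projective bundle formula: this is \cite[VII Th\'eor\`eme 2.2.1]{SGA5} and \cite[Th\'eor\`eme 1.2]{Riou}, which one may first establish for a trivial bundle $\cE\cong\cO^r$ by direct computation (so that $\bP(\cE)=\bP^{r-1}_\cX$ and one knows $H^*(\bP^{r-1},\Lambda)$ explicitly) and then extend to arbitrary $\cE$ by Zariski-localizing to trivialize the bundle and gluing.

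The main obstacle is the justification of smooth base change for the proper representable morphism $\pi$ in the stacky context; once this is in place (from the six-functor formalism of \cite{LiuZheng} recalled in Section \ref{s.1bis}, or by directly checking on the smooth topos), the rest is descent plus the classical theorem.
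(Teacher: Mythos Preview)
Your proposal is correct and follows exactly the paper's approach: reduce to the scheme case by base change (using the conservativity of pullback along a smooth atlas and compatibility of $R\pi_*$ and of $\xi$ with base change), then invoke \cite[VII Th\'eor\`eme 2.2.1]{SGA5}. The paper's proof is the one-line version of what you wrote.
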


\begin{proof}
By base change \cite{LiuZheng}, we reduce to the case of schemes, which is
proven in \cite[VII Th\'eor\`eme 2.2.1]{SGA5}.
\end{proof}

The uniqueness of Chern classes is a consequence of the following lemma,
which generalizes \cite[Propositions 1.4, 1.5]{Riou}.

\begin{lemma}
Let $\cX$ be an Artin stack, let $n$ be an integer invertible on
  $\cX$, and let $\Lambda$ be a commutative ring over $\Z/n\Z$.
  \begin{enumerate}
    \item (Splitting principle) Let $\cE$ be a vector bundle on $\cX$ of
        rank $r$ and let $\pi\colon \Flag(\cE)\to \cX$ be the fibration of
        complete flags of $\cE$. Then $\pi^*\cE$ admits a canonical
        filtration by vector bundles such that the graded pieces are line
        bundles, and the morphism $\Lambda \to R\pi_* \Lambda$ is a split
        monomorphism.

    \item Let $E\colon 0\to \cE'\to \cE\xrightarrow{p} \cE''\to 0$ be a
        short exact sequence of vector bundles and let $\pi\colon
        \Sect(E)\to \cX$ be the fibration of sections of $p$. Then
        $\Sect(E)$ is a torsor under $\cHom(\cE'',\cE')$ and $\pi^*E$ is
        canonically split. Moreover, the morphism $\Lambda\to
        R\pi_*\Lambda$ is an isomorphism.
  \end{enumerate}
\end{lemma}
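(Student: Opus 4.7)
The plan is to handle (a) and (b) separately, reducing each to known computations for projective bundles and for affine spaces. For (a), I would build $\Flag(\cE)$ inductively as a tower of projective bundles. Set $\cX_0=\cX$ and $\cE_0=\cE$, and define $\tau_k\colon \cX_k\to \cX_{k-1}$ to be the projective bundle $\bP(\cE_{k-1})$, carrying a tautological line subbundle $\cL_k\hookrightarrow \tau_k^*\cE_{k-1}$ and quotient $\cE_k=\tau_k^*\cE_{k-1}/\cL_k$ of rank $r-k$; after $r-1$ steps, $\cX_{r-1}=\Flag(\cE)$. The desired filtration of $\pi^*\cE$ by vector bundles with line bundle graded pieces is built from the $\cL_k$ (pulled back to $\Flag(\cE)$) together with $\cE_{r-1}$ at the top. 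For the cohomology statement, Proposition \ref{p.proj} supplies at each stage an isomorphism $(1,\xi,\dots,\xi^{r-k})\colon\bigoplus_{i}\Lambda(-i)[-2i]\simto R\tau_{k*}\Lambda$ whose $i=0$ factor is the adjunction unit $\Lambda\to R\tau_{k*}\Lambda$, exhibiting the latter as a split monomorphism. Since additive functors preserve split monomorphisms, applying $R\tau_{1*}\circ\cdots\circ R\tau_{(k-1)*}$ and composing across the tower gives that $\Lambda\to R\pi_*\Lambda$ is itself a split monomorphism for $\pi=\tau_1\circ\cdots\circ\tau_{r-1}$.

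For (b), my first step is to realize $\Sect(E)$ explicitly: its functor of points sends $t\colon T\to \cX$ to the set of morphisms $s\colon t^*\cE''\to t^*\cE$ satisfying $t^*p\circ s=\id$. The difference $s_1-s_0$ of two sections factors through $t^*\cE'$, yielding a free transitive action of $\cHom(\cE'',\cE')$ on $\Sect(E)$; since a short exact sequence of vector bundles splits Zariski-locally on a smooth atlas of $\cX$, this action makes $\Sect(E)$ a $\cHom(\cE'',\cE')$-torsor, in particular an algebraic space smooth and affine over $\cX$. By Yoneda, the identity map of $\Sect(E)$ corresponds to a tautological section of $\pi^*p$ over $\Sect(E)$, which is the desired canonical splitting of $\pi^*E$.

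To show that $\Lambda\to R\pi_*\Lambda$ is an isomorphism, I would argue smooth-locally on $\cX$, invoking Remark \ref{r.gbc} to commute $R\pi_*$ with smooth base change. After pulling back to a smooth atlas of $\cX$ that trivializes $\cE'$, $\cE''$, and the torsor $\Sect(E)$, the question reduces to showing that for $\pi\colon \mathbb{A}^N\times\cX\to\cX$, the adjunction $\Lambda\to R\pi_*\Lambda$ is an isomorphism; this in turn follows, after a further reduction to geometric fibers, from the classical homotopy invariance $H^*(\mathbb{A}^N_k,\Lambda)=\Lambda$ for $k$ algebraically closed with $n$ invertible in~$k$. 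The main technical obstacle I anticipate is the bookkeeping for smooth base change in the stacky setting, but this is precisely what Remark \ref{r.gbc} together with the formalism of Notation \ref{s.fCart} is designed to handle.
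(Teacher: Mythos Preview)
Your argument is correct and matches the paper's: part (a) is handled by iterating Proposition \ref{p.proj} along the tower of projective bundles, and part (b) by reducing via a smooth atlas to the affine-space case, which the paper dispatches by citing \cite[XV Corollaire 2.2]{SGA4}. One small correction: Remark \ref{r.gbc} is specific to base change over $BG$ and is not the right reference here; what you actually need to pass to an atlas is smooth base change for $R\pi_*$ on Artin stacks (invoked elsewhere in the paper via \cite{LiuZheng}), and your final reduction to geometric fibers is unnecessary once you have the relative statement for $\mathbb{A}^N$ over a base.
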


\begin{proof}
(a) follows from Proposition \ref{p.proj}, as $\pi$ is a composite of $r$
successive projective bundles. For (b), up to replacing $\cX$ by an atlas,
        we may assume that $\pi$ is the projection from an affine
        space. In this case the assertion follows from \cite[XV
       Corollaire 2.2]{SGA4}.
\end{proof}

To define $c_i(\cE)$, we may assume $\cE$ is of constant rank $r$. As usual,
we define
\[c_i(\cE)\in H^{2i}(\cX,\Z/n\Z(i)),\quad 1\le i\le r,\]
as the unique
elements satisfying
\[\xi^r+\sum_{1\le i\le r} (-1)^i c_i(\cE)\xi^{r-i}=0,\]
where $\xi=c_1(\cO_{\bP(\cE)}(1))\in H^2(\bP(\cE),\Z/n\Z(1))$. We put
$c_0(\cE)=1$ and $c_i(\cE)=0$ for $i>r$. The properties (a) to (d) follow
from the case of schemes. If $c_i(\cE)=0$ for all $i>0$, in particular if
$\cE$ is trivial, then \eqref{e.proj0} is an isomorphism of rings in
$D(\cX,\Lambda)$.

\begin{theorem}\label{l.finite}
Let $S$ be an algebraic space, let $n$ be an integer invertible on $S$, and
let $\Lambda$ be a commutative ring over $\Z/n\Z$. Let $N\ge 1$ be an
integer, let $G=GL_{N,S}$, and let $T=\prod_{i=1}^N T_i\subset G$ be the
subgroup of diagonal matrices, where $T_i=\G_{m,S}$. Let $\pi\colon BG\to
S$, $\tau\colon BT\to S$, $f'\colon G/T\to S$ be the projections, let
$k\colon S\to BG$ be the canonical section, let $f\colon BT\to BG$ be the
morphism induced by the inclusion $T\to G$ and let $h\colon G/T\to BT$ be
the morphism induced by the projection $G\to S$, as shown in the following
2-commutative diagram
\begin{equation}\label{e.dfinite}
\xymatrix{G/T\ar[d]_{f'}\ar[r]^h & BT\ar[d]_f\ar[rd]^\tau\\
S\ar[r]^k  & BG\ar[r]^\pi &S.}
\end{equation}
Let $\cE$ be the standard vector bundle of rank $N$ on $BG$, corresponding
to the natural representation of $G$ in $\cO_S^N$. The $i$-th Chern class
$c_i(\cE)$ of $\cE$ induces a morphism
    \[\alpha_i\colon K_i=\Lambda_S(-i)[-2i]\to R\pi_*\Lambda.\]
Let ${\cL}_i$ be the inverse image on $BT$ of the standard line bundle on
$BT_i$. Its first Chern class $c_1(\cL_i)$ induces a morphism
\[\beta_i\colon L_i=\Lambda_S(-1)[-2]\to R\tau_*\Lambda.\]
For a graded sheaf of $\Lambda$-modules $M=\bigoplus_{i\in \Z} M_i$ on $S$,
we let $M^\Delta= \bigoplus_i M_i(-i)[-2i]\in D(S,\Lambda)$. Let
$\Lambda_S[x_1,\dots, x_N]$ (resp.\ $\Lambda_S[t_1,\dots,t_N]$) be a
polynomial algebra on generators $x_i$ of degree $i$ (resp.\ $t_i$ of degree
$1$). The corresponding object $\Lambda_S[x_1,\dots,x_N]^{\Delta}$ (resp.\
$\Lambda_S[t_1,\dots,t_N]^{\Delta}$) is naturally identified with
$\Sym_\Lambda(\bigoplus_{1\le i \le N} K_i)$ (resp.\
$\Sym_\Lambda(\bigoplus_{1\le i \le N} L_i)$). Then the ring homomorphisms
\begin{gather}
\label{e.dfinitea}\alpha\colon \Lambda_S[x_1,\dots,x_N]^{\Delta} \to R\pi_* \Lambda,\\
\label{e.dfiniteb}\beta\colon \Lambda_S[t_1,\dots,t_N]^{\Delta}\to R\tau_* \Lambda,
\end{gather}
defined respectively by $\alpha_i$ and $\beta_i$, are isomorphisms of rings
in $D(S,\Lambda)$, and fit into a commutative diagram of rings in
$D(S,\Lambda)$
\begin{equation}\label{e.dfinite2}
\xymatrix{\Lambda_S[x_1,\dots, x_N]^\Delta\ar[r]^\sigma\ar[d]_\alpha^\simeq & \Lambda_S[t_1,\dots, t_N]^\Delta
\ar[d]_\beta^\simeq\ar[r]^-\rho & (\Lambda_S[t_1,\dots, t_N]/(\sigma_1,\dots, \sigma_N))^\Delta\ar[d]_{\gamma}^\simeq\\
 R\pi_*\Lambda\ar[r]^{a_f}
   & R\tau_*\Lambda\ar[r]^{a_h} & Rf'_*\Lambda,}
\end{equation}
which commutes with arbitrary base change of algebraic spaces $S'\to S$.
Here $\sigma$ sends $x_i$ to the $i$-th elementary symmetric polynomial
$\sigma_i$ in $t_1,\dots, t_N$, $\rho$ is the projection, $a_f$ is induced
by adjunction by $f$ and $a_h$ is induced by adjunction by $h$.  Moreover,
as graded module over $R^{2*}\pi_*\Lambda(*)$, $R^{2*}\tau_*\Lambda(*)$ is
free of rank $N!$.
\end{theorem}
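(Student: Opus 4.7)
The strategy proceeds in four stages tracing the diagram $B\Gm \to BT \to G/T \to BG$. First, I would establish the case $N=1$: that $R(\pi_1)_*\Lambda \simto \Lambda_S[t]^\Delta$ for $\pi_1\colon B\Gm \to S$. Using the identification $\bP^n_S \simeq [(\mathbb{A}^{n+1}_S \setminus 0)/\Gm]$, the morphism $u_n\colon \bP^n_S \to B\Gm$ classifying $\cO(-1)$ admits $\mathbb{A}^{n+1}_S \setminus 0$ as 2-fiber product with the atlas $S \to B\Gm$; this fiber has trivial étale cohomology in degrees $0 < q < 2n+1$. Combining smooth base change with Proposition \ref{p.proj} applied to $\bP^n_S \to S$, one sees the ring map $\bigoplus_{0 \leq i \leq n}\Lambda_S(-i)[-2i] \to R(\pi_1)_*\Lambda$ induced by powers of $t = c_1$ of the universal line bundle is an isomorphism in degrees $\leq 2n$. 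Letting $n \to \infty$ yields $R(\pi_1)_*\Lambda \simto \Lambda_S[t]^\Delta$.

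Second, the product decomposition $T = \prod T_i$ and the resulting equivalence $BT \simto \prod_i BT_i$ over $S$, combined with an iterated Künneth formula from Section \ref{s.Kunneth}, give $R\tau_*\Lambda \simto \bigotimes_{i=1}^N R(\pi_i)_*\Lambda \simto \Lambda_S[t_1,\dots,t_N]^\Delta$, establishing $\beta$. Third, to compute $Rf'_*\Lambda$, I factor $f'$ through the proper flag variety: $G/T \to G/B$ is a torsor under $B/T \simeq \mathbb{A}^{N(N-1)/2}$, so induces isomorphisms on cohomology by homotopy invariance, while $G/B \to S$ is an iterated projective bundle with successive fibers $\bP^{N-1},\bP^{N-2},\dots,\bP^0$. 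Iterated applications of Proposition \ref{p.proj} exhibit $Rf'_*\Lambda$ as a free $\Lambda_S$-module of rank $N!$. The tautological decomposition $\cO_S^N|_{G/T} \simto \bigoplus_i h^*\cL_i$ forces $\prod_i(1 + t_i s) = c_s(\cO_S^N|_{G/T}) = 1$, so $\sigma_i(t_1,\dots,t_N) = 0$ for $i \geq 1$; a rank count then identifies $Rf'_*\Lambda \simto (\Lambda_S[t_1,\dots,t_N]/(\sigma_1,\dots,\sigma_N))^\Delta$, yielding $\gamma$.

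Finally, for $R\pi_*\Lambda$, the left square in \eqref{e.dfinite} is 2-cartesian by Corollary \ref{p.quot}(b) applied to $T \hookrightarrow G$. Smooth base change gives $k^*Rf_*\Lambda \simto Rf'_*\Lambda$; since $G$ has connected geometric fibers, Corollary \ref{l.BG} and Remark \ref{r.BG} identify $k^*$ as an abelian-category equivalence with inverse $\pi^*$. The cartesian cohomology sheaves $R^qf_*\Lambda$ on $BG$ thus coincide with $\pi^*R^qf'_*\Lambda$; as they are concentrated in even degrees and $\Lambda$-flat, this lifts to an isomorphism $Rf_*\Lambda \simto \pi^*Rf'_*\Lambda$ in $D_\cart(BG,\Lambda)$. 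The projection formula (Proposition \ref{l.pf}) now yields
\[
R\tau_*\Lambda \;=\; R\pi_*Rf_*\Lambda \;\simto\; R\pi_*(\pi^*Rf'_*\Lambda) \;\simto\; Rf'_*\Lambda \otimes^L_\Lambda R\pi_*\Lambda.
\]
Comparing with Steps 2 and 3, and using that $\Lambda[t_1,\dots,t_N]$ is free of rank $N!$ over its symmetric subring $\Lambda[\sigma_1,\dots,\sigma_N]$, we deduce $R\pi_*\Lambda \simto \Lambda_S[x_1,\dots,x_N]^\Delta$ with $x_i \mapsto \sigma_i(t_1,\dots,t_N)$. The splitting principle applied to $f^*\cE \simto \bigoplus \cL_i$ identifies $x_i$ with $c_i(\cE)$, establishes commutativity of \eqref{e.dfinite2}, and implies the final rank-$N!$ freeness statement.

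The main technical obstacle is Step 4: lifting the abelian-category equivalence of Corollary \ref{l.BG} to a derived-category statement compatible with the ring/tensor structures, and justifying base change for $f$ despite the non-properness of $G/T$. Compatibility with arbitrary base change $S' \to S$ must be tracked through the naturality of each construction, but follows since every identification used is canonical.
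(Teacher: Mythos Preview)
Your approach is genuinely different from the paper's, which does not use K\"unneth, the flag fibration $BT\to BG$, or the projection formula to compute $R\pi_*\Lambda$. Instead, the paper approximates $BG$ directly by the finite Grassmannian $M^*/G$ (with $M^*$ the open locus of rank-$N$ matrices in $\mathbb{A}^{N'N}$), approximates $BT$ by $M^*/B$, observes that these approximations are good through degree $2(N'-N)$ because $M\smallsetminus M^*$ has codimension $N'-N+1$, and then invokes the explicit SGA5 computation of the cohomology of partial flag varieties to read off the answer. Your structural approach is more conceptual and has the advantage of not appealing to the SGA5 black box; the paper's has the advantage of being a single uniform computation.

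There is, however, a genuine gap in Step~4. You assert that the sheaf-level identifications $R^qf_*\Lambda \simeq \pi^*R^qf'_*\Lambda$ ``lift to an isomorphism $Rf_*\Lambda\simeq\pi^*Rf'_*\Lambda$ in $D_\cart(BG,\Lambda)$'' because the cohomology is in even degrees and flat. This is a formality claim, and the obstructions to splitting the Postnikov tower of $Rf_*\Lambda$ live in groups of the form $\Ext^{2m+1}_{D_\cart(BG)}(\Lambda,\Lambda)=H^{2m+1}(BG,\Lambda)$, whose vanishing is precisely part of what you are trying to prove. So as written the argument is circular. (Likewise, the final step of ``dividing'' $R\tau_*\Lambda\simeq Rf'_*\Lambda\otimes^L R\pi_*\Lambda$ to recover $R\pi_*\Lambda$ needs more than a rank count over a general ring $\Lambda$.)

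The fix is to build the comparison map explicitly rather than deduce formality abstractly. Choose a $\Lambda$-basis of $\Lambda[t_1,\dots,t_N]/(\sigma_1,\dots,\sigma_N)$ consisting of monomials $t^a$. Each such monomial, viewed as a class in $H^{2|a|}(BT,\Lambda(|a|))$, gives by adjunction a map $\Lambda_{BG}(-|a|)[-2|a|]\to Rf_*\Lambda$; summing yields $\phi\colon(\Lambda[t]/(\sigma))^\Delta_{BG}\to Rf_*\Lambda$. Since $k^*$ detects isomorphisms on cohomology sheaves (Corollary~\ref{l.BG}), it suffices to check $k^*\phi$ is your isomorphism $\gamma$ from Step~3, which it is by construction. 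With $\phi$ in hand, the projection formula gives $R\tau_*\Lambda\simeq Rf'_*\Lambda\otimes^L R\pi_*\Lambda$ as an $R\pi_*\Lambda$-module isomorphism; then the map $a_f\colon R\pi_*\Lambda\to R\tau_*\Lambda$ is identified with $\id\otimes(\text{unit})$, hence split injective, and one finishes by observing that the surjectivity of $a_h\colon H^*(BT)\to H^*(G/T)$ (from Step~3) forces the Leray spectral sequence for $f$ to degenerate, after which comparing graded ranks against the known freeness of $\Lambda[t]$ over $\Lambda[\sigma]$ gives $\alpha$ an isomorphism. A minor point: your appeal to Section~\ref{s.Kunneth} for Step~2 is imprecise, since Proposition~\ref{p.Kunneth} requires a surjectivity hypothesis not satisfied here; but the needed K\"unneth follows directly from Step~1, smooth base change along $BT_1\times\cdots\times BT_{N-1}\to S$, and the projection formula (Proposition~\ref{l.pf}(d)), by induction on $N$.
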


In particular, we have canonical decompositions
\[R\pi_*\Lambda\simeq \bigoplus_q R^{2q}\pi_*
\Lambda[-2q],\quad R\tau_*\Lambda\simeq \bigoplus_q R^{2q}\tau_*\Lambda[-2q],\quad
Rf'_*\Lambda \simeq \bigoplus_q R^{2q}f'_*\Lambda[-2q],
\]
$a_h$ induces an epimorphism $R^*\tau_*\Lambda\to R^*f'_*\Lambda$ and $a_f$
induces an isomorphism $R^*\pi_*\Lambda\simto (R^*\tau_*\Lambda)^{\SG_N}$,
where $\SG_N$ is the symmetric group on $N$ letters. Moreover,
\eqref{e.dfinite2} induces a commutative diagram of sheaves of
$\Lambda$-algebras on $S$
\begin{equation}
\xymatrix{\Lambda_S[x_1,\dots, x_N]\ar[r]^\sigma\ar[d]_\alpha^\simeq & \Lambda_S[t_1,\dots, t_N]
\ar[d]_\beta^\simeq\ar[r]^-\rho & \Lambda_S[t_1,\dots, t_N]/(\sigma_1,\dots, \sigma_N)\ar[d]_{\gamma}^\simeq\\
 R^{2*}\pi_*\Lambda(*)\ar[r]^{a_f}
   & R^{2*}\tau_*\Lambda(*)\ar[r]^{a_h} & R^{2*}f'_*\Lambda(*),}
\end{equation}
where $\alpha$ carries $x_i$ to the image of $c_i(\cE)$ under the edge
homomorphism
\[H^{2i}(BG,\Lambda(i))\to H^0(S,R^{2i} \pi_* \Lambda(i)), \]
and $\beta$ carries $t_i$ to the image of $c_1(\cL_i)$ under the edge
homomorphism
\[H^{2i}(BT,\Lambda(i))\to H^0(S,R^{2i} \tau_* \Lambda(i)). \]

We will derive from Theorem \ref{l.finite} a formula for $Rf_*$ (see
Corollary \ref{c.Rf}).

\begin{proof}
As in \cite[Lemma 2.3.1]{BehrendInv}, we approximate $BG$ by a finite
Grassmannian $G(N,N')=M^*/G$,\footnote{This approximation argument was
explained by Deligne to the first author in the context of de Rham
cohomology in 1967.} where $N'\ge N$, $M$ is the algebraic $S$-space of
$N'\times N$ matrices $(a_{ij})_{\substack{1\le i\le N'\\1\le j\le N}}$,
$M^*$ is the open subspace of~$M$ consisting of matrices of rank $N$. Let
$B\subset G$ be the subgroup of upper triangular matrices. The square on the
right of the diagram with 2-cartesian squares
\[\xymatrix{M^*\ar[d]_y\ar[r] & M^*/T\ar[r]^p\ar[d]_{\psi}\ar[rrdd]^(.3)w
& M^*/B\ar[r]\ar[rdd]^(.3)v &M^*/G\ar[d]^{\phi}\ar@/^2pc/[dd]^u \\
  S\ar[r]^g & BT\ar@{-->}[rr]^(.2)f\ar[rrd]_{\tau} && BG\ar[d]^\pi\\
  &&&S}
\]
induces a commutative square
  \begin{equation}\label{e.BUsq}
  \xymatrix{R\pi_* \Lambda \ar[rr]\ar[d] && R\tau_* \Lambda \ar[d]\\
  Ru_* \Lambda\ar[r] & Rv_* \Lambda\ar[r]^{a_p}& Rw_* \Lambda.}
  \end{equation}
Here $a_p$ is induced by the adjunction $\Lambda\to Rp_*\Lambda$. The latter
is an isomorphism by \cite[XV Corollaire 2.2]{SGA4}, because $p$ is a
$(B/T)$-torsor and $B/T$ is isomorphic to the unipotent radical of $B$,
which is an affine space over $S$. The diagram
\[\xymatrix{M-M^*\ar[r]^-i\ar[rd]_z & M\ar[d]^x & M^*\ar[dl]^y\ar[l]\\
&S}
\]
induces an
exact triangle
  \[Rz_* Ri^! \Lambda \to Rx_* \Lambda \to Ry_* \Lambda \to.\]
Since $M$ is an affine space over $S$, the adjunction $\Lambda\to Rx_*
\Lambda$ is an isomorphism \cite[XV Corollaire 2.2]{SGA4}. Since $x$ is
smooth and the fibers of $z$ are of codimension $N'-N+1$, we have
$Ri^!\Lambda \in D^{\ge 2(N'-N+1)}$ by semi-purity \cite[Cycle
2.2.8]{SGA4d}. It follows that the adjunction $\Lambda\to \tau^{\le
2(N'-N)}Ry_* \Lambda$ is an isomorphism. By smooth base change by $g$
(resp.\ $fg$) \cite{LiuZheng}, this implies that the adjunction $\Lambda\to
\tau^{\le 2(N'-N)}R\psi_* \Lambda$ (resp.\ $\Lambda\to \tau^{\le
2(N'-N)}R\phi_* \Lambda$) is an isomorphism, so that the right (resp.\ left)
vertical arrow of $\tau^{\le 2(N'-N)}\eqref{e.BUsq}$ is an isomorphism.

The assertions then follow from an explicit computation of $Ru_* \Lambda$
and $Rv_*\Lambda$. Note that $M^*/B$ is a partial flag variety of the free
$\cO_S$-module $\cO_S^{N'}$ of type $(1,\dots, 1, N'-N)$. By \cite[VII
Propositions 5.2, 5.6 (a)]{SGA5} applied to $u$ and $v$, we have a
commutative square
\[\xymatrix{A^\Delta\ar[r]^-\sim\ar[d]_\sigma & Ru_*\Lambda\ar[d]\\
  C^\Delta\ar[r]^-\sim & Rv_*\Lambda.}
\]
Here
\begin{gather*}
A=\Lambda_S[x_1,\dots,x_N,y_1,\dots, y_{N'-N}]/(\sum_{i+j=m} x_i y_j)_{m\ge 1},\\
C=\Lambda_S[t_1,\dots,t_N,y_1,\dots,y_{N'-N}]/(\sum_{i+j=m} \sigma_i
y_j)_{m\ge 1},
\end{gather*}
the upper horizontal arrow sends $x_i$ to the $i$-th Chern class
$c_i(\cE_{N'})$ of the canonical bundle $\cE_{N'}$ of rank $N$ on the
Grassmannian $M^*/G$, the lower horizontal arrow sends $t_i$ to the first
Chern class $c_1(\cL_{i,N'})$ of the $i$-th standard line bundle
$\cL_{i,{N'}}$ of the partial flag variety $M^*/B$, and the upper (resp.\
lower) horizontal arrow sends $y_i$ to the $i$-th Chern class
$c_i(\cE'_{N'})$ of the canonical bundle $\cE'_{N'}$ of rank $N'-N$ on
$M^*/G$ (resp.\ on $M^*/B$). In the definition of the ideals, we put
$x_0=y_0=1$, $x_i=0$ for $i>N$ and $y_i=0$ for $i>N'-N$, and we used the
fact that $c_m$ of the trivial bundle of rank $N'$ is zero for $m\ge 1$. As
$\cE_{N'}$ (resp.\ $\cL_{i,{N'}}$) is induced from $\cE$ (resp.\ $\cL_i$),
by the functoriality of Chern classes (Theorem \ref{t.Chern}), these
isomorphisms are compatible with the morphisms $\alpha$ \eqref{e.dfinitea}
and $\beta$ \eqref{e.dfiniteb}. We can rewrite $A$ as
$\Lambda[x_1,\dots,x_N]/(P_m(x_1,\dots,x_m))_{m>N'-N}$ and rewrite $C$ as
$\Lambda[t_1,\dots,t_N]/(Q_m(t_1,\dots,t_m))_{m>N'-N}$, where $P_m$ is an
isobaric polynomial of weight $m$ in $x_1,\dots, x_m$, $x_i$ being of weight
$i$, and $Q_m$ is a homogeneous polynomial of degree $m$ in $t_1,\dots,
t_m$. As the vertical arrows of \eqref{e.BUsq} induce isomorphisms after
application of the truncation functor $\tau^{\le 2(N'-N)}$, it follows that
$\tau^{\le 2(N'-N)}$ of the square on the left of \eqref{e.dfinite2} is
commutative and the vertical arrows induce isomorphisms after application of
$\tau^{\le 2(N'-N)}$. To get the square on the right of \eqref{e.dfinite2},
it suffices to apply the preceding computation of $Rw_*\Lambda$ (via
$Rv_*\Lambda$) to the case $N'=N$, because, in this case, $f'=w$. The fact
that \eqref{e.dfinite2} commutes with base change follows from the
functoriality of Chern classes. The last assertion of the theorem then
follows from \cite[VII Lemme 5.4.1]{SGA5}.
\end{proof}

\begin{cor}\label{c.Rf}
With assumptions and notation as in Theorem \ref{l.finite}:
\begin{enumerate}
\item For every locally constant $\Lambda$-module $\cF$ on $S$, the
    projection formula maps
\[\cF\otimes^L_\Lambda R\pi_*\Lambda \to R\pi_*\pi^* \cF, \quad \cF\otimes^L_\Lambda R\tau_* \Lambda \to R\tau_*\tau^* \cF\]
are isomorphisms.

\item The classes $c_1(\cL_i)$ induce an isomorphism of rings
    $\Lambda_{BG}[t_1,\dots,t_N]^\Delta/J\to Rf_*\Lambda$ in
    $D(BG,\Lambda)$, where $J$ is the ideal generated by
    $\sigma_i-c_i(\cE)$. Moreover, the left square of \eqref{e.dfinite}
    induces an isomorphism of $R\pi_*\Lambda$-modules
    $R\tau_*\Lambda\simeq R\pi_*\Lambda\otimes^L_\Lambda Rf'_*\Lambda$.
\end{enumerate}
\end{cor}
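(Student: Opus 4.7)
For part~(a), the projection-formula map is natural in $\cF$ and its formation commutes with étale base change on $S$: on the left by strong monoidality of $u^*$, on the right by smooth base change (Notation~\ref{s.fCart}, Remark~\ref{r.gbc}). Since $\cF$ is étale-locally constant, one reduces to $\cF = M_S$ constant for some $\Lambda$-module $M$. For constant coefficients the argument of Theorem~\ref{l.finite} carries over: its key inputs—affine-space invariance \cite[XV Cor.\ 2.2]{SGA4} and semi-purity \cite[Cycle 2.2.8]{SGA4d}—hold for any $\Lambda$-module annihilated by~$n$, while the ordinary projection formulas for the proper smooth morphisms $M^*/G\to S$ and $M^*/B\to S$ that enter the Grassmannian approximation are provided by Proposition~\ref{l.pf}(e). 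The resulting isomorphism $R\pi_* M_{BG}\simeq M_S[x_1,\dots,x_N]^\Delta\simeq M\otimes^L R\pi_*\Lambda$ is, by construction via cup product with Chern classes, the projection-formula map; the case of $\tau$ is identical.

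For the first assertion of~(b), I would construct $\phi\colon \Lambda_{BG}[t_1,\dots,t_N]^\Delta\to Rf_*\Lambda$ as the ring morphism built, via the multiplicative formalism of Section~\ref{s.8}, from the $f^*\dashv Rf_*$ adjoints of the Chern-class maps $c_1(\cL_i)\colon\Lambda_{BT}(-1)[-2]\to\Lambda_{BT}$. Since $T$ acts on $\cO_S^N$ through its $N$ standard characters, $f^*\cE\simeq\bigoplus_i\cL_i$ on $BT$, and Theorem~\ref{t.Chern}(b) yields $f^*c_i(\cE)=\sigma_i(c_1(\cL_1),\dots,c_1(\cL_N))$; by adjunction this equates the images of $c_i(\cE)$ and $\sigma_i(t)$ in $Rf_*\Lambda$, so $\phi$ descends to $\bar\phi\colon \Lambda_{BG}[t]^\Delta/J\to Rf_*\Lambda$. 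Because $k$ is a surjective smooth atlas, $k^*$ is conservative on $D_\cart(BG,\Lambda)$ (Notation~\ref{s.fCart}), so it suffices to show $k^*\bar\phi$ is an isomorphism. Smooth base change for the cartesian left square (Remark~\ref{r.gbc}) gives $k^*Rf_*\Lambda\simeq Rf'_*\Lambda$; since $k^*\cE$ is the trivial bundle on $S$, $k^*c_i(\cE)=0$ and hence $k^*J=(\sigma_1,\dots,\sigma_N)$, so $k^*\bar\phi$ is precisely the isomorphism $\gamma$ of Theorem~\ref{l.finite}.

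For the second assertion, I would exploit the classical fact that $\Lambda[t_1,\dots,t_N]$ is a free graded $\Lambda[\sigma_1,\dots,\sigma_N]$-module of rank~$N!$ (with basis, e.g., the harmonic monomials $\{t^a:a_i<i\}$). Combined with the first assertion, this presents $Rf_*\Lambda\simeq\Lambda_{BG}[t]^\Delta/J$ as a free $\Lambda_{BG}$-module $\bigoplus_b\Lambda_{BG}(-d_b)[-2d_b]$, the index $b$ ranging over the chosen basis and $d_b$ denoting its degree. Applying $R\pi_*$ term by term and invoking part~(a) yields
\[
R\tau_*\Lambda\simeq\bigoplus_b R\pi_*\Lambda(-d_b)[-2d_b]\simeq R\pi_*\Lambda\otimes_\Lambda(\Lambda_S[t]/(\sigma))^\Delta\simeq R\pi_*\Lambda\otimes_\Lambda Rf'_*\Lambda,
\]
the last identification coming from Theorem~\ref{l.finite}. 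Tracing through the construction, the $R\pi_*\Lambda$-module structures on the two sides agree, both being induced by the subring embedding $R\pi_*\Lambda\hookrightarrow R\tau_*\Lambda$ sending $x_i$ to $\sigma_i(t)$.

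The principal obstacle I anticipate lies in part~(a): one must check carefully that the layered approximation-by-Grassmannians of Theorem~\ref{l.finite} genuinely transfers from $\Lambda$-coefficients to arbitrary $\Lambda$-module coefficients, and that the isomorphism it produces coincides with the canonical projection-formula map rather than merely some abstract isomorphism. Once~(a) is secured, both halves of~(b) follow from the first assertion together with standard commutative algebra.
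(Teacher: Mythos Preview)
Your proposal is correct, but part~(a) takes a substantially more laborious route than the paper. The paper avoids re-running the Grassmannian approximation entirely by a trick: apply Theorem~\ref{l.finite} not only to $\Lambda$ but to the ring of dual numbers $\Lambda'=\Lambda\oplus F$ (with $F^2=0$). The ring isomorphism $\alpha_{\Lambda'}\colon\Lambda'[x]^\Delta\simto R\pi_*\Lambda'$ then decomposes along $\Lambda'=\Lambda\oplus F$, and its restriction to the $F$-summand is automatically the projection-formula map, precisely because $F$ sits inside $\Lambda'$ as an ideal and $\alpha_{\Lambda'}$ is a ring homomorphism. This sidesteps exactly the obstacle you flag—checking that the abstract isomorphism you build coincides with the canonical projection-formula map—by packaging the module structure as a ring structure from the outset. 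For part~(b) your argument is close to the paper's; two remarks. First, you assert the left square of~\eqref{e.dfinite} is cartesian without justification; the paper invokes Proposition~\ref{p.cart} applied to the square $(G,T)\to(S,T)$ over $(G,G)\to(S,G)$. Second, for the final isomorphism the paper argues more conceptually: rather than using the explicit free-module basis, it invokes Remark~\ref{r.BG} (connectedness of $G$) to get $Rf_*\Lambda\simeq\pi^*k^*Rf_*\Lambda\simeq\pi^*Rf'_*\Lambda$ directly, so that $R\tau_*\Lambda\simeq R\pi_*\pi^*Rf'_*\Lambda$ and~(a) applies immediately. Your route via the harmonic-monomial basis is equally valid and perhaps more hands-on, but requires the first assertion of~(b) as input, whereas the paper's argument for the second assertion only needs that $Rf_*\Lambda$ is pulled back from $S$.
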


\begin{proof}
(a) We may assume that $\cF$ is a constant $\Lambda$-module of value $F$.
Then the assertion follows from Theorem \ref{l.finite} applied to $\Lambda$
and to the ring of dual numbers $\Lambda\oplus F$ (with $m_1m_2=0$ for
$m_1,m_2\in F$).

(b) Since $f^*\cE\simeq \bigoplus_{i=1}^{N} \cL_i$,
$f^*c_i(\cE)=c_i(f^*\cE)$ is the $i$-th elementary symmetric polynomial in
$c_1(\cL_1),\dots,c_1(\cL_N)$. Thus the ring homomorphism
$\Lambda_{BG}[t_1,\dots,t_N]^\Delta\to Rf_*\Lambda$ induced by $c_1(\cL_i)$
factorizes through a ring homomorphism
$\Lambda_{BG}[t_1,\dots,t_N]^\Delta/J\to Rf_*\Lambda$. By Proposition
\ref{p.cart} applied to the square
  \[\xymatrix{(G,T)\ar[r]\ar[d]& (S,T)\ar[d]\\
  (G,G)\ar[r] & (S,G),}\]
the left square of \eqref{e.dfinite} is 2-cartesian. By smooth base change
by $k$, we have $k^* Rf_*\Lambda \simeq Rf'_*\Lambda$. The first assertion
then follows from Theorem \ref{l.finite}. By Remark \ref{r.BG}, it follows
that $Rf_*\Lambda\simeq \pi^*k^*Rf_*\Lambda\simeq \pi^* Rf'_*\Lambda$. Thus
$R\tau_*\Lambda\simeq R\pi_* Rf_*\Lambda \simeq R\pi_*\pi^*Rf'_*\Lambda$ and
the second assertion follows from (a).
\end{proof}

Let $k$ be a separably closed field, let $n$ be an integer invertible in
$k$, and let $\Lambda$ be a noetherian commutative ring over $\Z/n\Z$. The
next sequence of results are analogues of Quillen's finiteness theorem
\cite[Theorem 2.1, Corollaries 2.2, 2.3]{Quillen1}. Recall that an algebraic
space over $\Spec k$ is of finite presentation if and only if it is
quasi-separated and of finite type.

\begin{theorem}\label{t.finite}
Let $G$ be an algebraic group over $k$, let $X$ be an algebraic space of
finite presentation over $\Spec k$ equipped with an action of $G$, and let
$K$ be an
    object of $D^b_c([X/G],\Lambda)$ (see Notation \ref{s.cart}). Then $H^*(BG,\Lambda)$ is a finitely generated
    $\Lambda$-algebra and $H^*([X/G],K)$ is a finite
    $H^*(BG,\Lambda)$-module. In particular, if $K$ is a ring in the sense
    of Definition \ref{s.ring}, then the graded center $ZH^*([X/G],K)$ of $H^*([X/G],K)$ is a finitely generated
    $\Lambda$-algebra.
\end{theorem}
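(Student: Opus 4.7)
The plan is to treat the algebra-finiteness of $H^*(BG,\Lambda)$ and the module-finiteness of $H^*([X/G],K)$ as two successive problems, deducing the graded-center assertion formally at the end. For the algebra statement, I would proceed by successive reductions along the structure of $G$. For $G=\GL_{N,k}$, Theorem \ref{l.finite} already identifies $H^*(BG,\Lambda)$ with the polynomial algebra $\Lambda[c_1,\dots,c_N]$ in Chern classes. For a general smooth affine (linear) group $H$, I would embed $H\hookrightarrow\GL_N$ as a closed subgroup: the representable fibration $BH\to B\GL_N$ has fiber the quasi-projective $k$-variety $\GL_N/H$ of finite type, and since $\GL_N$ is connected it acts trivially on $H^*(\GL_N/H,\Lambda)$; the resulting Leray spectral sequence together with the $\GL_N$ case gives finite generation for $H^*(BH,\Lambda)$. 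For the disconnected case, the fibration $BG^0\to BG\to B\pi_0(G)$ and the Evens--Venkov theorem for the finite group $\pi_0(G)$ reduce to the connected one. Finally, for a general non-affine connected $G$, Chevalley's theorem yields a short exact sequence $1\to G_{\aff}\to G\to A\to 1$ with $A$ an abelian variety, and the fibration $BG_{\aff}\to BG\to BA$ reduces everything to the finite generation of $H^*(BA,\Lambda)$.

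The main obstacle is this last step, where the algebraic setting genuinely departs from Quillen's topological argument. Here one applies Deligne's trick: replace the abelian variety $A$ by its $\ell$-divisible group $A[\ell^\infty]$. Since $[\ell^n]\colon A\to A$ is an isogeny with kernel $A[\ell^n]$ -- a finite étale commutative $k$-group abstractly isomorphic to $(\Z/\ell^n\Z)^{2g}$ -- the morphism $BA[\ell^\infty]\to BA$ can be shown to be a mod-$\ell^n$ cohomology equivalence, and the cohomology of $BA[\ell^\infty]$ is a colimit of the well-understood group-cohomology algebras $H^*(BA[\ell^n],\F_\ell)$, which is plainly finitely generated with structure stabilizing in $n$.

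For the module-finiteness of $H^*([X/G],K)$, I would invoke the representable projection $\pi\colon [X/G]\to BG$ with fiber $X$. Since $X$ is of finite presentation and $\Lambda$ is torsion of order invertible in $k$, $R\pi_*K$ lies in $D^b_c(BG,\Lambda)$ by the finiteness theorem for constructible étale sheaves. The Leray spectral sequence
\[
E_2^{pq}=H^p(BG,\cH^q R\pi_*K)\Rightarrow H^{p+q}([X/G],K)
\]
is compatible with $H^*(BG,\Lambda)$-module structures, so module-finiteness of the abutment follows once we know it for the $E_2$ terms. This last reduces to showing $H^*(BG,\cF)$ is a finite $H^*(BG,\Lambda)$-module for every constructible cartesian $\cF$ on $BG$, which is established by devissage on $\cF$ along the same Chevalley-style filtration of $G$, using Hochschild--Serre spectral sequences at each step to reduce inductively to the trivial-coefficient case.

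Finally, the graded-center statement is formal. The image of $H^*(BG,\Lambda)$ in $H^*([X/G],K)$ lies in $ZH^*([X/G],K)$, since classes pulled back from the base are central for the cup-product. Thus $ZH^*([X/G],K)$ is an $H^*(BG,\Lambda)$-submodule of the finite $H^*(BG,\Lambda)$-module $H^*([X/G],K)$; as $H^*(BG,\Lambda)$ is Noetherian (being graded-commutative and finitely generated over the Noetherian ring $\Lambda$), this submodule is itself finite over $H^*(BG,\Lambda)$ and hence finitely generated as a $\Lambda$-algebra.
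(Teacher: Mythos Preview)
Your overall plan is in the right spirit, but the treatment of the abelian-variety step is not what the paper does and, as written, does not go through. You claim that ``$BA[\ell^\infty]\to BA$ is a mod-$\ell^n$ cohomology equivalence'' and that $H^*(BA[\ell^\infty])$ is a \emph{colimit} of the $H^*(BA[\ell^n],\F_\ell)$ that ``stabilizes''. None of this is correct as stated: $BA[\ell^\infty]$ is not an Artin stack, the relevant system of cohomologies is an inverse system (restriction maps), and the transition maps kill $H^1$ rather than stabilize it. The actual Deligne argument is an AR-null trick applied directly to $[X/G]$, not to $BA$ in isolation. One writes $G=L\cdot A\cdot F$ via Chevalley, builds affine subgroups $G_i=L\cdot A[mn^i]\cdot F$, and studies $f_i\colon [X/G_i]\to[X/G]$, whose fiber is the abelian variety $A/A[mn^i]$. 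Since multiplication by $n$ acts as zero on $H^{>0}(A,\Lambda)$, the system $(\tau^{\ge 1}Rf_{i*}\Lambda)_i$ is AR-null of level $2d$; the distinguished triangle $K\to Rf_{i*}f_i^*K\to K\otimes^L\tau^{\ge1}Rf_{i*}\Lambda$ then forces $H^*([X/G],K)=\Img\bigl(H^*([X/G_{4d}],f_{4d}^*K)\to H^*([X/G_{2d}],f_{2d}^*K)\bigr)$, exhibiting $H^*([X/G],K)$ as a quotient of the affine-group case.

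This also explains why the paper does not use your fibrations $BG^0\to BG\to B\pi_0(G)$ and $BG_{\aff}\to BG\to BA$: in both, the fiber has unbounded cohomology, so a naive Leray argument does not yield finite generation without substantial extra work. By contrast, once the Deligne trick reduces to affine $G$, the paper uses the induction formula $[X/G]\simeq[(X\wedge^G\GL_r)/\GL_r]$ (Corollary~\ref{p.wedge}) to replace $G$ by $\GL_r$; then $Rf_*K\in D^b_c(B\GL_r,\Lambda)$ reduces to $X=\Spec k$, and since $\GL_r$ is connected every constructible sheaf on $B\GL_r$ is constant, so one lands directly on Theorem~\ref{l.finite}. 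This single reduction handles the algebra statement for $H^*(BG,\Lambda)$ and the module statement for $H^*([X/G],K)$ simultaneously, so there is no need to treat them separately or to invoke Evens--Venkov. Your final paragraph on the graded center is fine.
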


Initially the authors established Theorem \ref{t.finite} for $G$ either a
linear algebraic group or a semi-abelian variety. The finiteness of
$H^*(BG,\Lambda)$ in the general case was proved by Deligne in
\cite{Deligneletter}.

\begin{cor}\label{t.finite2}
Let $G$ be an algebraic group over $k$ and let $f\colon\cX\to BG$ be a
representable morphism of Artin stacks of finite presentation over $\Spec
k$, and let $K\in D^b_c(\cX,\Lambda)$. Consider $H^*(\cX,K)$ as an
$H^*(BG,\Lambda)$-module by restriction of scalars via the map $f^*\colon
H^*(BG,\Lambda) \rightarrow H^*(\cX,\Lambda)$. Then $H^*(\cX,K)$ is a finite
$H^*(BG,\Lambda)$-module.
\end{cor}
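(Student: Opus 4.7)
The plan is to reduce Corollary \ref{t.finite2} directly to Theorem \ref{t.finite} by exhibiting $\cX$ as a quotient stack over $BG$. First I would form the 2-cartesian square
\[\xymatrix{X \ar[r]\ar[d] & \Spec k \ar[d]\\
\cX \ar[r]^f & BG,}\]
where $\Spec k \to BG$ is the canonical atlas classifying the trivial $G$-torsor. Because $f$ is representable, $X$ is an algebraic space, and the projection $X \to \cX$ is the pullback of the $G$-torsor $\Spec k \to BG$, hence itself a $G$-torsor over $\cX$. By the universal property of $[X/G]$ recalled in Notation \ref{s.wedge}, this identifies $\cX \simeq [X/G]$ in a way that sends $f$ to the canonical projection $[X/G] \to BG$. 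Consequently the ring homomorphism $f^*\colon H^*(BG,\Lambda)\to H^*(\cX,\Lambda)$ is identified with the one arising from the projection $[X/G]\to BG$, so the $H^*(BG,\Lambda)$-module structure on $H^*(\cX,K)$ appearing in the corollary coincides with the one appearing in Theorem \ref{t.finite}.

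Second, I would check the finite presentation hypothesis needed to apply Theorem \ref{t.finite}. Since $\Spec k \to BG$ is smooth and of finite presentation, so is its base change $X \to \cX$; combining with the hypothesis that $\cX$ is of finite presentation over $\Spec k$, we deduce that $X$ is an algebraic space of finite presentation over $\Spec k$ endowed with an action of $G$.

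With these two points in place, Theorem \ref{t.finite} applied to $(X, G, K)$ directly yields that $H^*([X/G], K) = H^*(\cX, K)$ is a finite module over $H^*(BG,\Lambda)$, which is the desired conclusion. No serious obstacle is expected: the whole content of the corollary is the identification $\cX \simeq [X/G]$ furnished by descent along the $G$-torsor $\Spec k \to BG$, after which the statement becomes a special case of the theorem.
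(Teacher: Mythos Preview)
Your argument is correct, but it takes a different route from the paper's proof. The paper does not realize $\cX$ as a quotient stack; instead it observes that $Rf_*K \in D^b_c(BG,\Lambda)$ (using that $f$ is representable and of finite type) and applies Theorem \ref{t.finite} in the degenerate case $X=\Spec k$, so that $H^*(\cX,K)\simeq H^*(BG,Rf_*K)$ is immediately a finite $H^*(BG,\Lambda)$-module. Your approach is more geometric: you pull back the universal torsor to present $\cX$ as $[X/G]$ and then invoke Theorem \ref{t.finite} with the original $K$. Both are equally valid; the paper's version trades the identification $\cX\simeq [X/G]$ for the (standard) fact that $Rf_*$ preserves $D^b_c$, yielding a one-line proof, while yours makes the underlying quotient-stack picture explicit.

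One small correction: you write that $\Spec k \to BG$ is smooth. This holds only when $G$ is smooth over $k$, which is not assumed (recall $k$ is only separably closed). The map is, however, always faithfully flat and of finite presentation (being a $G$-torsor with $G$ an algebraic group), and that is all you need to conclude that $X$ is an algebraic space of finite presentation over $\Spec k$. With ``smooth'' replaced by ``fppf'', your argument goes through unchanged.
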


\begin{proof}
It suffices to apply Theorem \ref{t.finite} to $Rf_*K\in D^b_c(BG,\Lambda)$.
\end{proof}

\begin{cor}\label{c.finite}
Let $X$ (resp.\ $Y$) be an algebraic space of finite presentation over
$\Spec k$, equipped with an action of an algebraic group $G$ (resp.\ $H$)
over $k$. Let $(f,u) \colon (X,G) \rightarrow (Y,H)$ be an equivariant
morphism. Assume that $u$ is a monomorphism. Then the map $[f/u]^*$  makes
$H^*([X/G],\Lambda)$ a finite $H^*([Y/H],\Lambda)$-module.
\end{cor}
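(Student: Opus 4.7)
The plan is to use the induction formula (Corollary \ref{p.wedge}) to reduce $[X/G]$ to a quotient by $H$, and then apply Corollary \ref{t.finite2}. First, I would observe that because $u\colon G\to H$ is a monomorphism, the right action of $G$ on $X\times H$ given by $(x,h)\cdot g = (xg, u(g)^{-1}h)$ is free: from $(xg,u(g)^{-1}h)=(x,h)$ one deduces $u(g)=1$, whence $g=1$. Letting $Z\coloneqq X\wedge^{G,u}H = (X\times H)/G$, which is an algebraic space of finite presentation over $\Spec k$ since $G$ is flat of finite presentation, Corollary \ref{p.wedge} produces an equivalence $[X/G]\simto [Z/H]$ under which $[f/u]$ corresponds to the morphism $[\phi/H]\colon [Z/H]\to[Y/H]$ associated to the $H$-equivariant morphism $\phi\colon Z\to Y$, $(x,h)\mapsto f(x)h$. (The compatibility of $[f/u]$ with $[\phi/H]$ through the equivalence is immediate from the construction: the equivalence is induced by the $u$-equivariant section $x\mapsto (x,1)$, and composing with $\phi$ recovers $f$.)

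Next I would verify the hypotheses of Corollary \ref{t.finite2}. The morphism $[\phi/H]\colon [Z/H]\to [Y/H]$ is representable because its base change along the smooth atlas $Y\to [Y/H]$ is $\phi\colon Z\to Y$, a morphism of algebraic spaces. The projection $[Y/H]\to BH$ is representable since $Y$ is an algebraic space. Hence the composite $g\colon [Z/H]\to [Y/H]\to BH$ is a representable morphism of Artin stacks, and $[Z/H]$ is of finite presentation over $\Spec k$.

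Applying Corollary \ref{t.finite2} to $g$ with $K=\Lambda$ then yields that $H^*([Z/H],\Lambda)\simeq H^*([X/G],\Lambda)$ is a finite $H^*(BH,\Lambda)$-module. Since the structural homomorphism $H^*(BH,\Lambda)\to H^*([X/G],\Lambda)$ factors as
\[ H^*(BH,\Lambda) \to H^*([Y/H],\Lambda) \xto{[f/u]^*} H^*([X/G],\Lambda), \]
any finite family of generators of $H^*([X/G],\Lambda)$ as an $H^*(BH,\Lambda)$-module remains generating over the intermediate ring $H^*([Y/H],\Lambda)$, which is the desired conclusion.

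There is no serious obstacle: the real content is already packaged in the induction formula and in Corollary \ref{t.finite2}. The only points demanding a moment of care are the freeness of the $G$-action on $X\times H$ (which is exactly what the monomorphism hypothesis on $u$ buys) and the representability of the composite to $BH$ (which follows from the two representabilities above).
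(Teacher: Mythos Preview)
Your proof is correct and follows essentially the same route as the paper's. The paper's argument is a one-liner: since $u$ is a monomorphism, the composite $[X/G]\to BH$ is representable, so Corollary~\ref{t.finite2} gives finiteness over $H^*(BH,\Lambda)$, hence over $H^*([Y/H],\Lambda)$. You unpack the representability assertion via the induction formula $[X/G]\simeq[Z/H]$, which is exactly one way of exhibiting the fiber of $[X/G]\to BH$ as the algebraic space $Z=X\wedge^{G,u}H$; the paper implicitly has in mind the factorization $[X/G]\to BG\to BH$ and Corollary~\ref{p.quot}~(b), but the content is the same.
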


Indeed, since the map $[X/G]\to BH$ induced by $u$ is representable,
$H^*([X/G],\Lambda)$ is a finite $H^*(BH,\Lambda)$-module by Corollary
\ref{t.finite2}, hence a finite $H^*([Y/H],\Lambda)$-module.

\begin{proof}[Proof of Theorem \ref{t.finite}]
By the invariance of \'etale cohomology under schematic universal
homeomorphisms, we may assume $k$ algebraically closed and $G$ reduced
(hence smooth). Then $G$ is an extension $1\to G^0 \to G \to F\to 1$, where
$F$ is the finite group $\pi_0(G)$ and $G^0$ is the identity component of
$G$. By Chevalley's theorem (cf.\ \cite[Theorem 1.1.1]{Brion} or
\cite[Theorem 1.1]{Conrad}), $G^0$ is an extension $1\to L\to G^0 \to A \to
1$, where $A$ is an abelian variety and $L=G_\aff$ is the largest connected
affine normal subgroup of $G^0$. Then $L$ is also normal in $G$, and if
$E=G/L$, then $E$ is an extension $1\to A\to E\to F\to 1$. We will sum up
this d\'evissage by saying that $G$ is an iterated extension $G=L\cdot
A\cdot F$.

By \cite[VIII 7.1.5, 7.3.7]{Giraud}, for every algebraic group $H$ over $k$,
the extensions of $F$ by $H$ with given action of $F$ on $H$ by conjugation
are classified by $H^2(BF,H)$. In particular, the extension $E$ of $F$ by
$A$ defines an action of $F$ on $A$ and a class in $H^2(BF,A)$, which comes
from a class $\alpha$ in $H^2(BF,A[m])$, where $m$ is the order of $F$ and
$A[m]$ denotes the kernel of $m\colon A\to A$. Indeed the second arrow in
the exact sequence
\[H^2(BF,A[m])\to H^2(BF,A) \xrightarrow{\times m} H^2(BF,A)\]
is equal to zero. This allows us to define an inductive system of subgroups
$E_i=A[mn^i]\cdot F$ of~$E$, given by the image of $\alpha$ in
$H^2(F,A[mn^i])$. This induces an inductive system of subgroups $G_i=L\cdot
A[mn^i]\cdot F$ of $G$, fitting into short exact sequences
\[\xymatrix{1\ar[r]& L \ar[r]\ar@{=}[d] & G_i\ar[r]\ar[d]\ar@{}[rd]|\square & E_i\ar[r]\ar[d] &
1\\
1\ar[r] & L\ar[r] & G\ar[r] &E\ar[r] & 1.
}
\]
Form the diagram with cartesian squares
\[\xymatrix{[X/G_i]\ar[r]\ar[d]_{f_i}& BG_i \ar[d]& G/G_i\ar[l]\ar[d]\\
[X/G] \ar[r]&  BG & \Spec k. \ar[l]}
\]
Note that $G/G_i=A/A[mn^i]$ and the vertical arrows in the above diagram are
proper representable. By the classical projection formula \cite[XVII
(5.2.2.1)]{SGA4}, $R f_{i*}f_i^*K\simeq K\otimes^L_\Lambda Rf_{i*}\Lambda$.
Moreover, $f_{i*}\Lambda \simeq \Lambda$. Thus we have a distinguished
triangle
\begin{equation}\label{e.finitetri}
K\to Rf_{i*}f_i^*K \to K \otimes^L_\Lambda \tau^{\ge 1}Rf_{i*}\Lambda\to .
\end{equation}
The first term forms a constant system and the third term $N_i=K
\otimes^L_\Lambda\tau^{\ge 1}Rf_{i*}\Lambda$ forms an AR-null system of
level $2d$ in the sense that $N_{i+2d}\to N_i$ is zero for all $i$, where
$d=\dim A$. Indeed the stalks of $R^q f_{i*}\Lambda$ are
$H^q(A/A[mn^i],\Lambda)$, which is zero for $q>2d$. For $q=0$, the
transition maps of $(H^0(A/A[mn^i],\Lambda))$ are $\id_\Lambda$ and for
$q>0$, the transition maps of $(H^q(A/A[mn^i],\Lambda))$ are zero. Thus, in
the induced long exact sequence of \eqref{e.finitetri}
\[H^{*-1}([X/G],N_i) \to H^*([X/G],K) \xrightarrow{\alpha_i} H^*([X/G_i], f_i^* K)\to H^*([X/G],N_i),\]
the system $(H^*([X/G],N_i))$ is AR-null of level $2d$. Therefore,
$\alpha_i$ is injective for $i\ge 2d$ and $\Img
\alpha_i=\Img(H^*([X/G_{i+2d}],f_{i+2d}^* K)\to H^*([X/G_{i}],f_{i}^* K))$
for all $i$. Taking $i=2d$, we get $H^*([X/G],K)=\Img(
H^*([X/G_{4d}],f_{4d}^* K)\to H^*([X/G_{2d}],f_{2d}^* K))$. In particular,
$H^*(BG,\Lambda)$ is a quotient $\Lambda$-algebra of $H^*(BG_{4d},\Lambda)$,
and $H^*([X/G],K)$ is a quotient $H^*(BG,\Lambda)$-module of
$H^*([X/G_{4d}],f_{4d}^*K)$. Therefore, it suffices to show the theorem with
$G$ replaced by $G_{4d}$. In particular, we may assume that $G$ is a linear
algebraic group.

Let $G\to \GL_r$ be an embedding into a general linear group. By Corollary
\ref{p.wedge}, the morphism of Artin stacks over $B\GL_r$,
\[[X/G]\to [(X\wedge^G \GL_r)/\GL_r],\]
is an equivalence. Replacing $G$ by $\GL_r$ and $X$ by $X\wedge^G \GL_r$, we
may assume that $G=\GL_r$.  Let $f\colon [X/G]\to BG$. Then $Rf_*K\in
D^b_c(BG,\Lambda)$. Thus we may assume $X=\Spec k$. The full subcategory of
objects $K$ satisfying the theorem is a triangulated category. Thus we may
further assume $K\in \Mod_c(BG,\Lambda)$. In this case, since $G$ is
connected, $K$ is necessarily constant (Corollary \ref{l.BG}) so that
$K\simeq \pi^* M$ for some finite $\Lambda$-module $M$, where $\pi\colon
BG\to \Spec k$. In this case, by Theorem \ref{l.finite},
$H^*(BG,\Lambda)\simeq\Lambda[c_1,\dots, c_r]$ is a noetherian ring and
$H^*(BG,K)\simeq M\otimes_\Lambda \Lambda[c_1,\dots,c_r]$ is a finite
$H^*(BG,\Lambda)$-module.
\end{proof}

\begin{remark}\label{r.finite}
We have shown in the proof of Theorem \ref{t.finite} that
$H^*([X/G],K)\simeq \Img(H^*([X/G_{4d}],f_{4d}^*K)\to
H^*([X/G_{2d}],f_{2d}^*K))$. In particular, $H^*([X/G],K)$ is a quotient
$H^*(BG,\Lambda)$-module of $H^*([X/G_{4d}],f_{4d}^*K)$. Here
$G_{2d}<G_{4d}$ are affine subgroups of $G$, independent of $X$ and $K$, and
$f_{2d}\colon [X/G_{2d}]\to [X/G]$, $f_{4d}\colon [X/G_{4d}]\to [X/G]$.
\end{remark}

In the following examples, we write $H^*(-)$ for $H^*(-,\Lambda)$, with
$\Lambda$ as in Theorem \ref{t.finite}.

\begin{example}
Let $G/k$ be an extension of an abelian variety $A$ of dimension $g$ by a
torus $T$ of dimension $r$. Then
\begin{enumerate}
\item $H^1(A)$, $H^1(T)$, $H^1(G)$ are free over $\Lambda$ of ranks $2g$,
    $r$ and $2g +r$ respectively, and the sequence $0 \to H^1(A) \to
    H^1(G) \to H^1(T) \to 0$ is exact. The inclusion
    $H^1(G)\hookrightarrow H^*(G)$ induces an isomorphism of
    $\Lambda$-modules
\begin{equation}\label{e.4.8.1}
\wedge H^1(G) \simto H^*(G).
\end{equation}

\item The homomorphism
\[
d_2^{01} \colon H^1(G) \to H^2(BG)
\]
in the spectral sequence
\[
E^{pq}_2 = H^p(BG) \otimes H^q(G) \Rightarrow H^{p+q}(\Spec k)
\]
of the fibration $\Spec k \to BG$ is an isomorphism.

\item We have $H^{2i+1}(BG) = 0$ for all $i$, and the inclusion $H^2(BG)
    \hookrightarrow H^*(BG)$ extends to an isomorphism of
    $\Lambda$-algebras
\[
\Sym(H^2(BG)) \simto H^*(BG).
\]
\end{enumerate}

Let us briefly sketch a proof.

Assertion (a) is standard. By projection formula, we may assume $\Lambda =
\Z/n\Z$. As the multiplication by $n$ on $T$ is surjective, the sequence $0
\to T[n] \to G[n] \to A[n] \to 0$ is exact. The surjection $\pi_1(G)\to
G[n]$ induces an injection $\Hom(G[n],\Z/n\Z) \to H^1(G)$. The fact that
this injection and \eqref{e.4.8.1} are isomorphisms follows (after reducing
to $n = \ell$ prime) from the structure of Hopf algebra of $H^*(G)$, as
$H^{2g+r}(G) \simto H^{r}(T) \otimes H^{2g}(A)$ is of rank 1 (cf.\
\cite[Chapter VII, Proposition 16]{SerreGA}).

Assertion (b) follows immediately from (a).

To prove (c) we calculate $H^*(BG)$ using the nerve $B_\bullet G$ of $G$
(cf.\ \cite[6.1.5]{HodgeIII}):
\[
H^*(BG) = H^*(B_\bullet G),
\]	
which gives the Eilenberg-Moore spectral sequence:
\begin{equation}\label{e.4.8.star}
E^{ij}_1 = H^j(B_iG) \Rightarrow H^{i+j}(BG).
\end{equation}
One finds that
\[
E^{\bullet,j}_1 \simeq L\wedge^j(H^1(G)[-1]).
\]
By \cite[I 4.3.2.1 (i)]{Illusie239} we get
\[
E^{\bullet,j}_1 \simeq L\Sym^j(H^1(G))[-j].
\]
Thus
\begin{equation*}
E^{ij}_2 \simeq \begin{cases}\Sym^j(H^1(G)) & \text{if $i = j$,} \\
0  & \text{if $i \ne j$.}\end{cases}
\end{equation*}
The $E_2$ term is concentrated on the diagonal, hence \eqref{e.4.8.star}
degenerates at $E_2$, and we get an isomorphism
\[
H^*(BG) = \Sym(H^1(G)[-2]),
\]
from which (c) follows.
\end{example}

\begin{example}
Let $G$ be a connected algebraic group over $k$. Assume that for every prime
number $\ell$ dividing $n$, $H^i(G,\Z_{\ell})$ is torsion-free for all $i$.
Classical results due to Borel \cite{Borel3} can be adapted as follows.
\begin{enumerate}
\item $H^*(G)$ is the exterior algebra over a free $\Lambda$-module having
    a basis of elements of odd degree \cite[Propositions 7.2,
    7.3]{Borel3}.

\item In the spectral sequence of the fibration $\Spec k \to BG$,
\[
E^{ij}_2 = H^i(BG) \otimes H^j(G) \Rightarrow H^{i+j}(\Spec k),
\]
primitive and transgressive elements coincide \cite[Proposition
20.2]{Borel3}, and the transgression gives an isomorphism $d_{q+1}\colon
P^q\simto Q^{q+1}$ from the transgressive part $P^q= E_{q+1}^{0q}$ of
$H^q(G)\simeq E_2^{0q}$ to the quotient $Q^{q+1}= E_{q+1}^{q+1,0}$ of
$H^{q+1}(BG)\simeq  E_2^{q+1,0}$. Moreover, $Q^*$ is a free
$\Lambda$-module having a basis of elements of even degrees, and every
section of $H^*(BG)\to Q^*$ provides an isomorphism between $H^*(BG)$ and
the polynomial algebra $\Sym_\Lambda(Q^*)$ \cite[Th\'eor\`emes 13.1,
19.1]{Borel3}.
\end{enumerate}

Now assume that $G$ is a connected reductive group over $k$. Let $T$ be
\textit{the} maximal torus in $G$, and $W = \Norm_G(T)/T$ \textit{the} Weyl
group. Recall that $G$ is $\ell$-torsion-free if $\ell$ does not divide the
order of $W$, cf.\ \cite{Borel4}, \cite[Section 1.3]{SerreBour}. As in
\cite[Sommes trig., 8.2]{SGA4d}, the following results can be deduced from
the classical results on compact Lie groups by lifting $G$ to characteristic
zero.
\begin{itemize}
\item[(c)] The spectral sequence
\begin{equation}\label{e.4.9.star}
E^{ij}_2 = H^i(BG) \otimes
    H^j(G/T) \Rightarrow H^{i+j}(BT)
\end{equation}
degenerates at $E_2$, $E^{ij}_2$ being zero if $i$ or $j$ is
odd\footnote{The vanishing of $H^j(G/T)$ for $j$ odd follows for example
from the Bruhat decomposition of $G/B$ for a Borel $B$ containing $T$.}.
In particular, the homomorphism
\[ H^*(BT) \to H^*(G/T)
\]
induced by the projection $G/T \to BT$ is surjective. In other words, in
view of Theorem \ref{l.finite}, $H^*(G/T)$ is generated by the Chern
classes of the invertible sheaves $L_{\chi}$ obtained by pushing out the
$T$-torsor $G$ over $G/T$ by the characters $\chi \colon T \to \Gm$.

\item[(d)] The Weyl group $W$ acts on \eqref{e.4.9.star}, trivially on
    $H^*(BG)$, and $H^*(G/T)$ is the regular representation of $W$
    \cite[Lemme 27.1]{Borel3}. In particular, the homomorphism $H^*(BG)
    \to H^*(BT)$ induced by the projection $BT \to BG$ induces an
    isomorphism
\begin{equation}\label{e.GTW}
 H^*(BG) \simto H^*(BT)^W.
\end{equation}
\end{itemize}
\end{example}

\section{Finiteness of orbit types}\label{s.3}

Let $k$ be a field of characteristic $p\ge 0$, let $G$ be an algebraic group
over $k$, and let $A$ be a finite group. The presheaf of sets
$\cHom_\group(A,G)$ on $\AlgSp_{/k}$ is represented by a closed subscheme
$X$ of the product $\prod_{a\in A}G$ of copies of $G$ indexed by~$A$. In the
case where $A\simeq (\Z/\ell\Z)^r$ is an elementary abelian $\ell$-group of
rank $r$, $\cHom_\group(A,G)(T)$ can be identified with the set of commuting
$r$-tuples of $\ell$-torsion elements of $G(T)$. The group $G$ acts on $X$
by conjugation. Let $x\in X(k)$ be a rational point of $X$ and let $c\colon
G\to X$ be the $G$-equivariant morphism sending $g$ to $xg$, where $xg\colon
a\mapsto g^{-1}x(a)g$. Let $H=c^{-1}(x)\subset G$ be the \emph{inertia}
subgroup at $x$. The morphism $c$ decomposes into
\[G\to H\backslash G \xrightarrow{f} X,\]
where $f$ is an immersion \cite[III, \S~3, Proposition 5.2]{DG}. The
\emph{orbit} of $x$ under $G$ is the (scheme-theoretic) image of $f$, which
is a subscheme of $X$. The orbits of $X$ are disjoint with each other.

The following result is probably well known. It was communicated to us by
Serre.

\begin{theorem}[Serre]\label{t.Serre}
Assume that the order of $A$ is not divisible by $p$. Then the orbits of $X$
under the action of $G$ are open subschemes. Moreover, if $G$ is smooth,
then $X$ is smooth.
\end{theorem}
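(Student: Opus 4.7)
The plan is to prove both assertions by deformation theory, the essential input being Maschke's theorem: since $|A|$ is invertible in $k$, one has $H^i(A,V)=0$ for $i\ge 1$ and every $k[A]$-module $V$.

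First I would establish the smoothness of $X$ when $G$ is smooth, by verifying the infinitesimal lifting criterion. Let $R'\twoheadrightarrow R$ be a square-zero extension of artinian local $k$-algebras with kernel $I$, and let $\phi\colon A\to G(R)$ be a homomorphism. Smoothness of $G$ provides set-theoretic lifts $\tilde\phi(a)\in G(R')$ of each $\phi(a)$. The defect
\[
c(a,b)=\tilde\phi(a)\tilde\phi(b)\tilde\phi(ab)^{-1}
\]
lies in $\ker(G(R')\to G(R))\simeq \Lie(G)\otimes_k I$, endowed with its natural $A$-module structure via $\Ad\circ\phi$. A direct check shows that $c$ is a $2$-cocycle and that perturbing the lifts by a map $b\colon A\to \Lie(G)\otimes I$ modifies $c$ by the coboundary $\delta b$. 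The vanishing $H^2(A,\Lie(G)\otimes I)=0$ then allows one to normalize $c=0$, yielding the desired lift $\phi'\colon A\to G(R')$. Since $X$ is of finite type over $k$, formal smoothness gives smoothness.

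For the openness of orbits, I would first reduce to the case where $G$ is smooth. Since the property is geometric, one may base change to the perfect closure of $k$; then $G_\red\subset G$ is a smooth closed subgroup and $G/G_\red$ is an infinitesimal group scheme, hence contains no nontrivial $|A|$-torsion. Since $|A|$ is invertible in every $k$-algebra $R$, every homomorphism $A\to G(R)$ factors through $G_\red(R)$, so $\cHom_\group(A,G)\simeq \cHom_\group(A,G_\red)$ as functors, and we may assume $G$ smooth. Fix $x\in X(k)$; a standard computation identifies $T_xX$ with the cocycles $Z^1(A,\Lie G)$ (for the $A$-action via $\Ad\circ\phi$), and identifies the differential at $1$ of the orbit map $c\colon G\to X$, $g\mapsto g^{-1}xg$, with the coboundary map $\Lie(G)\to Z^1(A,\Lie G)$, $\eta\mapsto\bigl(a\mapsto \eta-\Ad(\phi(a))^{-1}\eta\bigr)$. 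Its cokernel is $H^1(A,\Lie G)$, which vanishes by Maschke, so the differential is surjective at $1$, and hence everywhere by $G$-equivariance. Therefore $c$ is smooth between smooth schemes and its image---the orbit of $x$---is open. As a byproduct, the inertia $H=c^{-1}(x)$ is smooth as a fiber of a smooth morphism.

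The main technical obstacle is the bookkeeping behind the $2$-cocycle identity for $c(a,b)$ and the coboundary formula for changes of lift, which require care with the noncommutativity of $G(R')$; once they are in place, the Maschke vanishing delivers both smoothness and openness essentially for free. A secondary point worth attention is the reduction to smooth $G$ in the first assertion, which hinges on the fact that the infinitesimal quotient $G/G_\red$ has trivial prime-to-$p$ torsion.
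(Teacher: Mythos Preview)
Your proof is largely sound and aligns closely with the paper on the heart of the matter: both arguments identify $T_xX$ with $Z^1(A,\Lie G)$ and the differential of the orbit map with the coboundary, and then invoke $H^1(A,\Lie G)=0$ to get surjectivity. The paper then argues that the induced immersion $H\backslash G\hookrightarrow X$ is \'etale at the base point (hence open there), and deduces smoothness of $X$ from the resulting open cover by smooth orbits. Your alternative route to smoothness of $X$---directly lifting homomorphisms $A\to G$ along square-zero extensions using $H^2(A,\Lie(G)\otimes I)=0$---is correct and arguably cleaner; it avoids passing through the orbit decomposition.

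There is, however, a genuine gap in your reduction to smooth $G$. The claim that $\cHom_\group(A,G)\simeq\cHom_\group(A,G_\red)$ as functors is false in general: $G_\red$ need not be normal in $G$, so $G/G_\red$ need not exist as a group scheme, and even the weaker statement that $n$-torsion in $G(R)$ (for $n$ prime to $p$) lies in $G_\red(R)$ fails. Take $p=3$, $A=\Z/2$, and $G=\mu_3\rtimes\Z/2$ with $\Z/2$ acting by inversion. Then $G_\red=\Z/2$, and for $R=k[\epsilon]/(\epsilon^3)$ the element $(1+\epsilon,\sigma)\in G(R)$ has order $2$ but is not in $G_\red(R)$; moreover conjugating $(1,\sigma)\in G_\red(R)$ by $(1+\epsilon,1)\in G(R)$ gives $((1+\epsilon)^2,\sigma)\notin G_\red(R)$, so $G_\red$ is not normal. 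In this example $X=\cHom(\Z/2,G)$ is visibly non-reduced.

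The reduction the paper has in mind is purely topological. Over $\bar k$ one has $G(\bar k)=G_\red(\bar k)$, so the $G$-orbit and the $G_\red$-orbit of any rational point of $X$ coincide as subsets of $|X|$. Moreover $\cHom(A,G_\red)\hookrightarrow\cHom(A,G)$ is a closed immersion bijective on $\bar k$-points with reduced source (by the theorem for $G_\red$, which is smooth), hence identifies $\cHom(A,G_\red)$ with $X_\red$. Openness of the $G_\red$-orbits in $\cHom(A,G_\red)=X_\red$ then gives openness of the corresponding subsets in $|X|$. With this fix your argument goes through.
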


The condition on the order of $A$ is essential. For example, if $p>0$,
$A=\Z/p\Z$ and $G=\G_a$ is the additive group, then $G$ acts trivially on
$X\simeq G$.

Note that for any field extension $k'$ of $k$, if $Y$ is an orbit of $X$
under $G$, then $Y_{k'}$ is an orbit of $X_{k'}$ under $G_{k'}$.

\begin{cor}\label{c.Serre0}
The orbits are closed and the number of orbits is finite. Moreover, if
    $k$ is algebraically closed, then the orbits form a disjoint open
    covering of $X$.
\end{cor}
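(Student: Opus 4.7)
The plan is to derive the corollary from Theorem~\ref{t.Serre} in two stages: first prove the ``Moreover'' clause (the case $k$ algebraically closed), then descend to obtain finiteness and closedness for general $k$. The only substantive input is Theorem~\ref{t.Serre}; the rest is topology.

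For the first stage, I would argue as follows. Theorem~\ref{t.Serre} gives that each orbit is an open subscheme of $X$, and orbits are pairwise disjoint by construction (two orbits either coincide or are disjoint). To show they cover $X$, take an arbitrary point $\eta \in X$; since $X$ is of finite type over $k$, hence Noetherian, the closure $\overline{\{\eta\}}$ contains a closed point $x$. Because $k$ is algebraically closed, $x$ is $k$-rational, so the orbit $Y$ of $x$ is defined and is an open neighborhood of $x$. Since $\eta$ specializes to $x$, every open containing $x$ contains $\eta$, so $\eta \in Y$. Hence the orbits form a disjoint open covering of $X$. Quasi-compactness of $X$ forces this covering to be finite, and each orbit is then closed as the complement of the open union of the other finitely many orbits.

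For the second stage, I would base change to $\bar k$ and use the observation, recorded just before the corollary, that if $Y$ is a $k$-orbit then $Y_{\bar k}$ is a $\bar k$-orbit. Distinct $k$-orbits are disjoint non-empty open subschemes, and both disjointness and non-emptiness of opens are preserved by base change, so the map $Y \mapsto Y_{\bar k}$ is injective on the set of orbits. Finiteness of the set of $\bar k$-orbits (from the first stage) therefore forces finiteness of the set of $k$-orbits. For closedness of a $k$-orbit $Y$ in $X$: the first stage shows that $Y_{\bar k}$ is closed in $X_{\bar k}$, and since $X_{\bar k} \to X$ is faithfully flat and quasi-compact, hence a topological quotient map, closedness of $Y$ in $X$ descends from that of $Y_{\bar k}$ in $X_{\bar k}$.

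I do not anticipate any serious obstacle in executing this plan. The mildly delicate point is the covering argument in the algebraically closed case, which rests on two facts: closed points are dense in a Jacobson scheme of finite type over a field, and an open subscheme containing a closed point $x$ automatically contains every generic point specializing to $x$. Everything else is a routine combination of openness of orbits (Theorem~\ref{t.Serre}), quasi-compactness of $X$, and fpqc descent of the property of being a closed subscheme.
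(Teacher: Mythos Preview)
Your proof is correct and follows essentially the same approach as the paper's. The paper is terser: it simply says ``it suffices to consider the case when $k$ is algebraically closed'' without spelling out the descent, and in that case invokes density of rational points \cite[Corollaire 10.4.8]{EGAIV} rather than your specialization-to-a-closed-point argument, but these amount to the same thing. One minor remark: the justification ``Noetherian'' for the existence of a closed point in $\overline{\{\eta\}}$ is not quite the right one; the relevant fact is that any nonempty closed subscheme of a scheme of finite type over a field (or any nonempty quasi-compact scheme) has a closed point.
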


\begin{proof}
It suffices to consider the case when $k$ is algebraically closed. In this
case, rational points of $X$ form a dense subset \cite[Corollaire
10.4.8]{EGAIV}. Thus, by Theorem \ref{t.Serre}, the orbits form a disjoint
open covering of the quasi-compact topological space $X$. Therefore, the
orbits are also closed and the number of orbits is finite.
\end{proof}

\begin{cor}\label{c.Serre}
Let $G$ be an algebraic group over $k$ and let $\ell$ be a prime number
distinct from~$p$. There are finitely many conjugacy classes of elementary
abelian $\ell$-subgroups of~$G$. Moreover, if $k$ is algebraically closed
and $k'$ is an algebraically closed extension of $k$, then the natural map
$S_k\to S_{k'}$ from the set $S_k$ of conjugacy classes of elementary
abelian $\ell$-subgroups of~$G$ to the set $S_{k'}$ of conjugacy classes of
elementary abelian $\ell$-subgroups of~$G_{k'}$ is a bijection.
\end{cor}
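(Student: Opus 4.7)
The plan is to apply Theorem~\ref{t.Serre} with $A = A_r := (\Z/\ell\Z)^r$, whose order is coprime to $p$, and to sum the resulting finite count over $r$. Both assertions reduce to the case where $k$ is algebraically closed, which I assume from now on. For $r \ge 0$ set $X_r = \cHom_\group(A_r, G)$ and let $X_r^{\inj} \subset X_r$ be the open subscheme of injective homomorphisms, which is open since for each of the finitely many $a \ne 0$ in $A_r$ the condition $\phi(a) \ne 1$ is open. Conjugation makes $G$ act on $X_r$ preserving $X_r^{\inj}$, and the finite group $\Aut(A_r) = \GL_r(\F_\ell)$ acts by precomposition, commuting with the $G$-action; conjugacy classes of elementary abelian $\ell$-subgroups of $G$ of rank~$r$ correspond bijectively to orbits of $G \times \Aut(A_r)$ on $X_r^{\inj}$.

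By Corollary~\ref{c.Serre0} the $G$-orbits on $X_r$ form a finite disjoint open covering by open-and-closed subschemes, so a fortiori there are finitely many $(G \times \Aut(A_r))$-orbits on $X_r^{\inj}$ for each $r$. To sum over $r$, I would bound the rank using the fact that $G[\ell]$ is a finite étale group scheme over $k$ (reducing via Chevalley's theorem to a torus, an abelian variety, and a unipotent group, using $\ell \ne p$), so every elementary abelian $\ell$-subgroup of $G$ has rank at most $\log_\ell |G[\ell](k)|$. This yields the first assertion.

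For the bijection $S_k \to S_{k'}$, I would use the remark after Theorem~\ref{t.Serre} stating that the base change of a $G$-orbit $Y \subset X_r$ is a $G_{k'}$-orbit $Y_{k'} \subset X_{r,k'}$; this gives the well-defined map $Y \mapsto Y_{k'}$. Injectivity is faithfully flat descent: if $Y_{k'} = Z_{k'}$ as subschemes of $X_{r,k'}$ for two orbits $Y, Z$ over $k$, then $Y = Z$. For surjectivity, the finite disjoint decomposition $X_r^{\inj} = \bigsqcup_i Y_i$ into $(G \times \Aut(A_r))$-orbits (open and closed over~$k$) base changes to $X_{r,k'}^{\inj} = \bigsqcup_i (Y_i)_{k'}$; each $(Y_i)_{k'}$ is stable under $G_{k'} \times \Aut(A_r)$ and is a single $G_{k'}$-orbit, because a surjective orbit map $G \twoheadrightarrow Y_i$ attached to any $y_i \in Y_i(k)$ pulls back to a surjection $G_{k'} \twoheadrightarrow (Y_i)_{k'}$, hence it is a single $(G_{k'} \times \Aut(A_r))$-orbit. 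Any orbit of $(G_{k'} \times \Aut(A_r))$ on $X_{r,k'}^{\inj}$ must therefore coincide with some $(Y_i)_{k'}$.

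The main conceptual step is translating the group-theoretic statement about conjugacy classes of subgroups into orbits on the scheme $X_r^{\inj}$ (together with the auxiliary $\Aut(A_r)$-action, which is no obstacle since $\Aut(A_r)$ is a finite group and can be absorbed into the acting algebraic group $G \times \Aut(A_r)$); everything else then reduces formally to Corollary~\ref{c.Serre0}, the cited base-change stability of orbits, and faithfully flat descent. The rank bound via Chevalley's theorem is the only place where specific structural input about~$G$ enters.
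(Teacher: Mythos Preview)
Your overall strategy matches the paper's: reduce to finitely many $G$-orbits on $X_r = \cHom_\group(A_r,G)$ via Corollary~\ref{c.Serre0}, then bound the rank $r$; the bijection $S_k \to S_{k'}$ follows from the remark after Theorem~\ref{t.Serre} (orbits base change to orbits) together with the open-and-closed orbit decomposition over~$k$. Your treatment of the $\Aut(A_r)$-action and the bijection argument is correct and simply unpacks what the paper leaves implicit.

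There is, however, a genuine gap in your rank bound. You assert that $G[\ell]$ is a finite \'etale group scheme, but for non-commutative $G$ the $\ell$-th power map is not a homomorphism, so $G[\ell]$ is not even a subgroup scheme, and the $\ell$-torsion locus $\{g : g^\ell = 1\}$ can have positive dimension: already for $G = \GL_n$ with $n \ge 2$ it is a union of conjugacy classes of diagonalizable matrices, not a finite set. Your appeal to Chevalley's theorem as ``torus, abelian variety, unipotent group'' suggests you are implicitly treating $G$ as commutative, but the corollary concerns arbitrary algebraic groups. The paper's fix is to embed the maximal connected affine normal subgroup $L \lhd G^0$ into some $\GL_n$; an elementary abelian $\ell$-subgroup of $\GL_n$ consists of commuting diagonalizable matrices (as $\ell \ne p$), hence is simultaneously diagonalizable and has rank $\le n$. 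Combined with the bounds $2d$ from the abelian variety $G^0/L$ and $m$ from $\pi_0(G)$, this gives the global bound $n + 2d + m$.
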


\begin{proof}
By Corollary \ref{c.Serre0}, it suffices to show that the ranks of the
elementary abelian $\ell$-subgroups of~$G$ are bounded. For this, we may
assume $k$ algebraically closed, and $G$ smooth. As in the proof of Theorem
\ref{t.finite}, let $L$ be the maximal connected affine normal subgroup of
the identity component $G^0$ of~$G$. Let $d$ be the dimension of the abelian
variety $G^0/L$, and let $m$ be the maximal integer such that $\ell^m |
[G:G^0]$. Choose an embedding of $L$ into some $\GL_n$. Then every
elementary abelian subgroup of $G$ has rank $\le n+2d+m$.
\end{proof}

To prove the theorem, we need a lemma on tangent spaces. Let $S$ be an
algebraic space, and let $X$ be an $S$-functor, that is, a presheaf of sets
on $\AlgSp_{/S}$. Recall \cite[II 3.1]{SGA3} that the \emph{tangent bundle}
to $X$ is defined to be the $S$-functor
\[T_{X/S}=\cHom_S(\Spec(\cO_S[\epsilon]/(\epsilon^2)),X),\]
which is endowed with a projection to $X$. For every point $u\in X(S)$, the
\emph{tangent space} to $X$ at $u$ is the $S$-functor \cite[II 3.2]{SGA3}
\[T^u_{X/S}=T_{X/S}\times_{X,u}S.\]
Recall \cite[II 3.11]{SGA3} that, for $S$-functors $Y$ and $Z$, we have an
isomorphism
\[T_{\cHom_S(Y,Z)/S}\simeq \cHom_S(Y,T_{Z/S}).\]
For a morphism $f \colon Y\to Z$  of $S$-functors, this induces an
isomorphism
\begin{equation}\label{e.Lu}
T^f_{\cHom_S(Y,Z)/S} \simeq \cHom_{Z/S}((Y,f),T_{Z/S}).
\end{equation}
Assume that $Z$ is an $S$-group, that is, a presheaf of groups on
$\AlgSp_{/S}$. Then we have an isomorphism of schemes $T_{Z/S}\simeq
Z\times_S\Lie(Z/S)$, where $\Lie(Z/S)=T^1_{Z/S}$. Thus \eqref{e.Lu} induces
an isomorphism
\begin{equation}\label{e.Lu2}
T^f_{\cHom_S(Y,Z)/S} \simto \cHom_S(Y,\Lie(Z/S)).
\end{equation}
Furthermore, if $Y$ is an $S$-group and $f$ is a homomorphism of $S$-groups,
then the image of $T_{\cHom_{S\text{-}\group}(Y,Z)/S}^f$ by \eqref{e.Lu2} is
$\cZ^1_S(Y,\Lie(Z/S))$ \cite[II 4.2]{SGA3}, where $Y$ acts on $\Lie(Z/S)$ by
the formula $y\mapsto \Ad(f(y))$.\footnote{For compatibility with \cite[II
4.1]{SGA3}, we write the adjoint action as left action.}

\begin{lemma}\label{l.Serre}
Let $f\colon Y\to Z$ be a homomorphism of $S$-groups as above. Let $c\colon
Z\to \cHom_{S\text{-}\group}(Y,Z)$ be the morphism given by
  \[z\mapsto (y\mapsto(z^{-1}f(y)z)).\]
  Then the composition
  \[\Lie(Z/S) \xto{T^1_{c/S}} T^f_{\cHom_{S\text{-}\group}(Y,Z)/S} \to \cHom_S(Y,\Lie(Z/S))\]
  is given by $t\mapsto(y\mapsto \Ad(f(y))t-t)$, and the image is $\cB_S^1(Y,\Lie(Z/S))$.
\end{lemma}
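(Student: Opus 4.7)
The plan is to carry out a direct tangent-space computation, unraveling the isomorphism \eqref{e.Lu2} and evaluating the differential of $c$ at the identity. Since the second assertion (that the image is $\cB^1_S(Y,\Lie(Z/S))$) is a formal consequence of the first together with the definition of 1-coboundaries from the cohomology of $S$-groups, the real content is the explicit formula $t \mapsto (y \mapsto \Ad(f(y))t - t)$.

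First I would fix conventions by unraveling \eqref{e.Lu2}. Working with $S[\epsilon] = \cO_S[\epsilon]/(\epsilon^2)$: a tangent vector $t \in \Lie(Z/S)$ corresponds to the $S[\epsilon]$-point $1 + \epsilon t$ of $Z$, and a tangent vector $\tilde f \in T^f_{\cHom_{S\text{-}\group}(Y,Z)/S}$ corresponds, via the right trivialization $T_{Z/S} \simeq Z\times_S \Lie(Z/S)$, $(z,w)\mapsto (1+\epsilon w)z$, to the map $\delta\colon Y \to \Lie(Z/S)$ determined by $\tilde f(y) = (1+\epsilon\, \delta(y))\,f(y)$ for $y \in Y(T)$. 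The cocycle condition $\delta(yy') = \delta(y) + \Ad(f(y))\,\delta(y')$, which expresses that $\tilde f$ is a homomorphism, matches \cite[II~4.2]{SGA3} and places $\delta$ in $\cZ^1_S(Y,\Lie(Z/S))$.

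Next I would simply apply $c$ to $1+\epsilon t$. By definition, $c(1+\epsilon t)$ sends $y \in Y(T)$ to
\[
(1-\epsilon t)\,f(y)\,(1+\epsilon t) = f(y) + \epsilon\bigl(f(y)t - t f(y)\bigr)
\]
modulo $\epsilon^2$. Factoring $f(y)$ on the right and using $f(y)\,t\,f(y)^{-1} = \Ad(f(y))\,t$, this becomes $\bigl(1 + \epsilon(\Ad(f(y))t - t)\bigr)\,f(y)$. Comparing with the formula in the preceding paragraph, we read off $\delta(y) = \Ad(f(y))t - t$, proving the first assertion.

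Finally, the second assertion is immediate: $\cB^1_S(Y,\Lie(Z/S))$ is by definition the image of the coboundary operator $n \mapsto (y\mapsto y\cdot n - n)$ from $\Lie(Z/S)$ to $\cHom_S(Y,\Lie(Z/S))$, and since $Y$ acts on $\Lie(Z/S)$ through $\Ad\circ f$ (as noted after \eqref{e.Lu2}), this image is exactly the collection of maps $y \mapsto \Ad(f(y))t - t$ for $t\in \Lie(Z/S)$. The only real obstacle is bookkeeping of the side conventions: with the left trivialization one would instead obtain the cocycle $y\mapsto t - \Ad(f(y)^{-1})t$, cohomologous but not literally the one in the statement, so one must be careful to use the same trivialization as underlies \eqref{e.Lu2} and the identification with $\cZ^1$.
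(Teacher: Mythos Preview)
Your proof is correct and is essentially the same as the paper's: the paper packages the computation via the semidirect product identification $T_{Z/S}\simeq \Lie(Z/S)\rtimes Z$ with $(t,z)$ corresponding to $dR_z(t)$ and then evaluates $(t,1)^{-1}(0,f(y))(t,1)=(\Ad(f(y))t-t,f(y))$, which is exactly your dual-numbers calculation $(1-\epsilon t)f(y)(1+\epsilon t)=(1+\epsilon(\Ad(f(y))t-t))f(y)$ in different notation. Your care about the right trivialization matching the one underlying \eqref{e.Lu2} is well placed and mirrors the paper's explicit choice.
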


\begin{proof}
  The exact sequence
  \[1\to \Lie(Z/S) \to T_{Z/S} \to Z \to 1\]
  has a canonical splitting, which allows one to identify $T_{Z/S}$ with the semidirect product $\Lie(Z/S)\rtimes Z$.
An element $(t,z)$ of the semidirect product (evaluated at an $S$-scheme
$S'$) corresponds to the image of $dR_z(t)\in T^z_{Z/S}(S')$, where
$R_z\colon Z\times_S S'\to Z\times_S{S'}$ is the right translation by $z$.
  Multiplication in the semidirect product is given by
  \[(t,z)(t',z')=(t+\Ad(z)t',zz').\]
  The image of $t$ under $T_{c/S}^1$ in $T^f_{\cHom_{S}(Y,Z)/S}\xto{\sim} \cHom_{Z/S}(Y,T_{Z/S})$ is $T_{c/S}(t,1)$, given by
  \[y\mapsto (t,1)^{-1} (0,f(y)) (t,1) = (\Ad(f(y))t-t, f(y)).\]
  Hence the image in $\cHom_S(Y,\Lie(Z/S))$ is $y\mapsto \Ad(f(y))t-t$.
\end{proof}

\begin{proof}[Proof of Theorem \ref{t.Serre}]
We may assume $k$ algebraically closed and $G$ smooth. As in the beginning
of Section~\ref{s.3}, let $u\colon A\to G$ be a rational point of $X$, let
$H$ be the inertia at~$u$, let $Y=H\backslash G$, and let $c\colon G\to X$
be the $G$-equivariant morphism sending $g$ to $ug$, which factorizes
through an immersion $j\colon Y\to X$. Since $H^1(A,\Lie(G))=0$ for any
action of $A$ on $\Lie (G)$, it follows from Lemma \ref{l.Serre} that $T^1_c
\colon \Lie(G)\to T^u_X$ is an epimorphism. Thus the map $T^{\{H\}}_j\colon
T^{\{H\}}_Y\to T^u_X$ is an isomorphism. Since $Y$ is smooth
\cite[VI${}_{\textrm{B}}$ 9.2]{SGA3}, $j$ is \'etale \cite[Corollaire
17.11.2]{EGAIV} and hence an open immersion at this point. In other words,
the orbit of $u$ contains an open neighborhood of $u$. Since the rational
points of $X$ form a dense subset \cite[Corollaire 10.4.8]{EGAIV}, the
orbits of rational points form an open covering of $X$, which implies that
$X$ is smooth.
\end{proof}

\section{Structure theorems for equivariant cohomology rings}\label{s.4}

Throughout this section $\kappa$ is a field and $k$ is an algebraically
closed field.

\begin{definition}\label{s.cofinal}
For a functor $F\colon \cC\to \cD$ and an object $d$ of $\cD$, let
$(d\downarrow F)=\cC\times_\cD\cD_{d/}$ (strict fiber product) be the
category whose objects are pairs $(c,\phi)$ of an object $c$ of $\cC$ and a
morphism $\phi \colon d\to F(c)$ in $\cD$, and arrows are defined in the
natural way. Recall that $F$ is said to be \emph{cofinal} if, for every
object $d$ of $\cD$, the category $(d\downarrow F)$ is nonempty and
connected.
\end{definition}

If $F$ is cofinal and $G\colon \cD\to \cE$ is a functor such that
$\varinjlim GF$ exists, then $\varinjlim G$ exists and the morphism
$\varinjlim GF\to \varinjlim G$ is an isomorphism \cite[Theorem
IX.3.1]{MacLane}.

\begin{lemma}\label{l.cofinal}
Let $F\colon \cC\to \cD$ be a full and essentially surjective functor. Then
$F$ is cofinal.
\end{lemma}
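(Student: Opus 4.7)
Fix an object $d$ of $\cD$. The plan is to verify the two conditions defining cofinality separately: nonemptiness follows from essential surjectivity, and connectedness follows from essential surjectivity together with fullness.

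For nonemptiness, I would pick, by essential surjectivity, an object $c_0$ of $\cC$ together with an isomorphism $\psi\colon d\simto F(c_0)$ in $\cD$; then $(c_0,\psi)$ is an object of $(d\downarrow F)$.

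For connectedness, let $(c_1,\phi_1)$ and $(c_2,\phi_2)$ be two objects of $(d\downarrow F)$. The key observation is that an object $(c,\phi)$ with $\phi$ an \emph{isomorphism} has the property that it admits a morphism to every other object $(c',\phi')$ of $(d\downarrow F)$. Indeed, consider the morphism $\phi'\circ\phi^{-1}\colon F(c)\to F(c')$ in $\cD$; by fullness of $F$, it lifts to a morphism $f\colon c\to c'$ in $\cC$ with $F(f)=\phi'\circ\phi^{-1}$, and then $F(f)\circ\phi=\phi'$, so $f$ defines a morphism $(c,\phi)\to(c',\phi')$ in $(d\downarrow F)$. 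Applying this with the object $(c_0,\psi)$ constructed above in place of $(c,\phi)$, we obtain morphisms $(c_0,\psi)\to(c_1,\phi_1)$ and $(c_0,\psi)\to(c_2,\phi_2)$, giving a zigzag connecting $(c_1,\phi_1)$ and $(c_2,\phi_2)$.

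There is no real obstacle: the statement is essentially tautological once one notices that an object $(c,\phi)$ of $(d\downarrow F)$ in which $\phi$ is an isomorphism serves as a ``weakly initial'' object, and the existence of such an object is exactly essential surjectivity while the existence of arrows out of it is exactly fullness.
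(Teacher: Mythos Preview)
Your proof is correct and matches the paper's argument essentially verbatim: pick $(c_0,\psi)$ with $\psi$ an isomorphism by essential surjectivity, then use fullness to lift $\phi'\circ\psi^{-1}$ and obtain a morphism from $(c_0,\psi)$ to any other object of $(d\downarrow F)$. The paper states this in slightly more compressed form but the content is identical.
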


\begin{proof}
Let $d$ be an object of $\cD$. As $F$ is essentially surjective, there exist
an object $c$ of $\cC$ and an isomorphism $f\colon d\simto F(c)$ in $\cD$,
which give an object of $(d\downarrow F)$. As $F$ is full, for any morphism
$g\colon d\to F(c')$, with $c'$ an object of $\cC$, there exists a morphism
$h\colon c\to c'$ in $\cC$ such that $F(h)=gf^{-1}$, which gives a morphism
$(c,f)\to (c',g)$ in $(d \downarrow F)$.
\end{proof}

We now introduce some enriched categories, which will be of use in the
structure theorems, especially Theorem \ref{t.main2}.

\begin{definition}\label{s.enrich}
Let $\cD$ be a category enriched in the category $\AlgSp_{/\kappa}$ of
algebraic $\kappa$-spaces, with Cartesian product as the monoidal operation
\cite[Section 1.2]{Kelly}. For objects $X$ and $Y$ of $\cD$,
$\Hom_{\cD}(X,Y)$ is an algebraic $\kappa$-space and composition of
morphisms in $\cD$ is given by morphisms of algebraic $\kappa$-spaces. We
denote by $\cD(\kappa)$ the category having the same objects as $\cD$, in
which
\[\Hom_{\cD(\kappa)}(X,Y)=(\Hom_{\cD}(X,Y))(\kappa).\]
Assume that $\kappa$ is separably closed. We denote by $\cD(\pi_0)$ the
category having the same objects as $\cD$, in which
\[\Hom_{\cD(\pi_0)}(X,Y)=\pi_0(\Hom_{\cD}(X,Y)).\]
Note that, if $\Hom_{\cD}(X,Y)$ is of finite type, $\Hom_{\cD(\pi_0)}(X,Y)$
is a finite set. We have a functor
\[\eta\colon \cD(\kappa)\to \cD(\pi_0),\]
which is the identity on objects, and sends $f\in \Hom_{\cD}(X,Y)(\kappa)$
to the connected component containing it. Assume that $\Hom_{\cD}(X,Y)$ is
locally of finite type. If $\kappa$ is algebraically closed, or if for all
$X$, $Y$ in $\cD$, $\Hom_{\cD}(X,Y)$ is smooth over $\kappa$, then $\eta$ is
full, hence cofinal by Lemma \ref{l.cofinal}.
\end{definition}

\begin{construction}\label{s.GX}
Let $G$ be an algebraic group over $k$, let $X$ be an algebraic space of
finite presentation over~$k$, endowed with an action of $G$, and let $\ell$
be a prime number. We define a category enriched in the category $\FTSch{k}$
of schemes of finite type over $k$,
\[\cA_{G,X,\ell},\]
as follows. Objects of $\cA_{G,X,\ell}$ are pairs $(A,C)$ where $A$ is an
elementary abelian $\ell$-subgroup of $G$ and $C$ is a connected component
of the algebraic space of fixed points $X^A$ (which is a closed algebraic
subspace of $X$ if $X$ is separated). For objects $(A,C)$ and $(A',C')$ of
$\cA_{G,X}$, we denote by $\Trans_G((A,C), (A',C'))$ the \emph{transporter}
of $(A,C)$ into $(A',C')$, namely the closed subgroup scheme of $G$
representing the functor
\[
S\mapsto \{g \in G(S) \mid g^{-1}A_S g  \subset A'_S, C_Sg\supset C'_S\}.
\]
In fact, $\Trans_G((A,C), (A',C'))$ is a closed and open subscheme of the
scheme $\Trans_G(A,A')$ defined by the cartesian square
\[\xymatrix{\Trans_G(A,A')\ar[r]\ar[d] & \prod_{a\in A} A'\ar[d]\\
G\ar[r] &\prod_{a\in A} G}
\]
where the lower horizontal arrow is given by $g\mapsto (g^{-1}ag)_{a\in A}$.
Indeed, if we consider the morphism
\[F\colon \Trans_G(A,A')\times X^{A'}\to X^A\quad (g,x)\mapsto xg^{-1}\]
and the induced map
\[\phi\colon \pi_0(\Trans_G(A,A'))\to \pi_0(X^A)\quad \Gamma \mapsto \pi_0(F)(\Gamma,C'),
\]
then $\Trans_G((A,C), (A',C'))$ is the union of the connected components of
$\Trans_G(A,A')$ corresponding to $\phi^{-1}(C)$. We define
\[\Hom_{\cA_{G,X,\ell}}((A,C),(A',C')) \coloneqq \Trans_G((A,C),(A',C')).\]
Composition of morphisms is given by the composition of transporters
\[\Trans_G((A',C'),(A'',C''))\times \Trans_G((A,C),(A',C'))\to \Trans_G((A,C),(A'',C'')),\]
which is a morphism of $k$-schemes. When no confusion arises, we omit $\ell$
from the notation. We will denote $\cA_{G,\Spec(k)}$ by $\cA_G$.

For an object $(A,C)$ of $\cA_{G,X}$, we denote by $\Cent_G(A,C)$ its
\emph{centralizer}, namely the closed subscheme of $G$ representing the
functor
\[
S\mapsto \{g \in G(S) \mid C_Sg= C_S\text{ and $g^{-1}a g  =a$ for all $a\in A$}\}.
\]
For objects $(A,C)$, $(A',C')$ of $\cA_{G,X}$, we have natural injections
(cf.\ \cite[(8.2)]{Quillen2})
\begin{equation}\label{e.tranovercent}
\Cent_G(A,C)\backslash \Trans_G((A,C), (A',C')) \to \Cent_G(A) \backslash \Trans_G(A,A') \to \Hom(A,A').
\end{equation}
We let $\cA_{G,X}^\flat$ denote the category having the same objects as
$\cA_{G,X}$, but with morphisms defined by the left hand side of
\eqref{e.tranovercent}. We call the finite group
\begin{equation}\label{e.Weyl}
W_G(A,C) \coloneqq \Cent_G(A,C)\backslash \Trans_G((A,C),(A,C))
\end{equation}
the \emph{Weyl group} of $(A,C)$. This is a subgroup of the finite group
\[W_G(A)=\Cent_G(A)\backslash\Norm_G(A)\subset \Aut(A).\]
The functors
\[\cA_{G,X}(k)\to \cA_{G,X}(\pi_0)\to \cA_{G,X}^\flat\]
(the second one defined via \eqref{e.tranovercent}) are cofinal by Lemma
\ref{l.cofinal}.

Let $k'$ be an algebraically closed extension of $k$. We have a functor
$\cA_{G,X}(k)\to \cA_{G_{k'},X_{k'}}(k')$ carrying $(A,C)$ to $(A,C_{k'})$.
Since the map
\[\pi_0(\Trans_G((A,C),(A',C'))\to \pi_0(\Trans_{G_{k'}}((A,C_{k'}),(A',C'_{k'})))\]
is a bijection, this induces a functor $\cA_{G,X}(\pi_0)\to
\cA_{G_{k'},X_{k'}}(\pi_0)$.
\end{construction}

In the rest of the section we assume $\ell$ invertible in $k$.

\begin{lemma}\label{l.Afincat}
The category $\cA_{G,X}(\pi_0)$ is essentially finite, and the functor
$\cA_{G,X}(\pi_0)\to \cA_{G_{k'},X_{k'}}(\pi_0)$ is an equivalence. In
particular, $\cA_G(\pi_0)$ is essentially finite.
\end{lemma}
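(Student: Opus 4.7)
The plan is to deduce both claims from the finiteness of conjugacy classes of elementary abelian $\ell$-subgroups (Corollary \ref{c.Serre}) together with standard stability of $\pi_0$ of finite-type $k$-schemes under extension of algebraically closed base fields.

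For essential finiteness, I would first show there are only finitely many isomorphism classes of objects. Two pairs $(A,C)$ and $(A',C')$ are isomorphic in $\cA_{G,X}(\pi_0)$ iff some connected component of $\Trans_G((A,C),(A',C'))$ contains an invertible element, which by the definition of the transporter means there exists $g\in G(k)$ with $g^{-1}Ag=A'$ and $Cg=C'$. Thus the isomorphism class of $(A,C)$ depends only on the $G(k)$-conjugacy class of $A$ and the orbit of $C$ under $\Norm_G(A)(k)$. By Corollary \ref{c.Serre} there are finitely many conjugacy classes of elementary abelian $\ell$-subgroups $A$ of $G$; for each such $A$, the algebraic space $X^A$ is of finite type over $k$, hence has finitely many connected components, giving finitely many choices of $C$. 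Finally, for any pair of objects, $\Hom_{\cA_{G,X}(\pi_0)}((A,C),(A',C'))=\pi_0(\Trans_G((A,C),(A',C')))$ is a finite set because $\Trans_G((A,C),(A',C'))$ is a scheme of finite type over $k$.

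For the second assertion, the functor $\cA_{G,X}(\pi_0)\to \cA_{G_{k'},X_{k'}}(\pi_0)$ is fully faithful by the remark preceding the lemma: since $\Trans_G((A,C),(A',C'))$ is of finite type over $k$, the map
\[\pi_0(\Trans_G((A,C),(A',C')))\to \pi_0(\Trans_{G_{k'}}((A,C_{k'}),(A',C'_{k'})))\]
is a bijection (extension of algebraically closed fields induces a bijection on connected components of finite-type $k$-schemes).

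For essential surjectivity, let $(A',C')$ be an object of $\cA_{G_{k'},X_{k'}}(\pi_0)$. By the bijection $S_k\simto S_{k'}$ in Corollary \ref{c.Serre}, there exist an elementary abelian $\ell$-subgroup $A\subset G$ and an element $g\in G(k')$ with $g^{-1}A_{k'}g=A'$, so $(A',C')$ is isomorphic in $\cA_{G_{k'},X_{k'}}(\pi_0)$ to $(A_{k'},C'g^{-1})$. Now $(X^A)_{k'}=X_{k'}^{A_{k'}}$, and the map $\pi_0(X^A)\to\pi_0((X^A)_{k'})$ is a bijection since $X^A$ is of finite type over $k$; thus $C'g^{-1}=C_{k'}$ for a unique connected component $C$ of $X^A$, and $(A,C)$ maps to the class of $(A',C')$. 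Taking $X=\Spec(k)$ yields the final assertion.

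The only nontrivial input is Corollary \ref{c.Serre}, which is already established; the rest is bookkeeping about $\pi_0$ of finite-type schemes under algebraically closed base change, so I expect no real obstacle beyond stating the steps cleanly.
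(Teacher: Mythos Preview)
Your proof is correct and follows essentially the same approach as the paper: both use Corollary \ref{c.Serre} for finiteness of conjugacy classes and invariance under algebraically closed extension, together with finiteness and base-change stability of $\pi_0$ for the transporters and for $X^A$. The only cosmetic difference is that the paper packages the verification that a set of representatives of isomorphism classes is obtained (over $k$ and over $k'$) via an auxiliary combinatorial lemma (Lemma \ref{l.fincat}), whereas you argue directly.
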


\begin{proof}
Let $S$ be a set of representatives of isomorphisms classes of objects of
$\cA_{G}(\pi_0)$. In other words, $S$ is a set of representatives of
conjugacy classes of elementary abelian $\ell$-subgroups of $G$. By
Corollary \ref{c.Serre}, this is a finite set. Let $T$ be the set of objects
$(A,C)$ of $\cA_{G,X}(\pi_0)$ such that $A\in S$. Then $T$ is a finite set.
The conclusion follows from the following facts:
\begin{enumerate}
\item For $(A,C)$ and $(A',C')$ in $\cA_{G,X}$,
    $\Hom_{\cA_{G,X}(\pi_0)}((A,C),(A',C'))$ is finite (Definition
    \ref{s.enrich}), and, by Construction \ref{s.GX},
    \[\Hom_{\cA_{G,X}(\pi_0)}((A,C),(A',C'))\simto
    \Hom_{\cA_{G_{k'},X_{k'}}(\pi_0)}((A,C_{k'}),(A',C'_{k'})).
    \]
\item The finite set $T$ is a set of representatives of isomorphism
    classes of objects of $\cA_{G,X}(\pi_0)$, and $\{(A,C_{k'})\mid
    (A,C)\in T\}$ is a set of representatives of isomorphism classes of
    objects of $\cA_{G_{k'},X_{k'}}(\pi_0)$.
\end{enumerate}
Indeed, (b) follows from the following obvious lemma.
\end{proof}

\begin{lemma}\label{l.fincat}
Let $B$, $C$ be sets endowed with equivalence relations denoted by $\simeq$
and let $f\colon B\to C$ be a map such that $b\simeq b'$ implies $f(b)\simeq
f(b')$. Let $S$ be a set of representatives of $C$. For every $s\in S$, let
$T_s$ be a set of representatives of $f^{-1}(s)$. Then $\bigcup_{s\in S}
T_s$ is a set of representatives of $B$ if and only if for every $b\in B$
and every $c\in S$ such that $f(b)\simeq c$, there exists $b'\in f^{-1}(c)$
such that $b\simeq b'$.
\end{lemma}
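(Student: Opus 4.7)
This is a purely set-theoretic statement, and I would approach it by verifying the two defining properties of ``set of representatives'' (every equivalence class is hit, and distinct elements are inequivalent) in each direction of the equivalence.

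For the reverse implication, suppose the condition on the right-hand side holds. Given $b \in B$, I would first pick the unique $c \in S$ with $f(b) \simeq c$; by hypothesis, there exists $b' \in f^{-1}(c)$ with $b \simeq b'$. Since $T_c$ is a set of representatives of $f^{-1}(c)$ under $\simeq$, there exists $t \in T_c$ with $b' \simeq t$, hence $b \simeq t \in \bigcup_{s \in S} T_s$. For distinctness: if $t_1 \in T_{s_1}$ and $t_2 \in T_{s_2}$ satisfy $t_1 \simeq t_2$, then $s_1 = f(t_1) \simeq f(t_2) = s_2$, and since $S$ is a set of representatives of $C$, $s_1 = s_2$; then $t_1, t_2$ are both in the same set of representatives $T_{s_1}$ of $f^{-1}(s_1)$, so $t_1 = t_2$.

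For the forward implication, assume $\bigcup_{s \in S} T_s$ is a set of representatives of $B$. Given $b \in B$ and $c \in S$ with $f(b) \simeq c$, there exists a unique $t \in \bigcup_{s \in S} T_s$ with $b \simeq t$. Writing $t \in T_{s'}$ for some $s' \in S$, I have $f(t) = s'$ and $f(b) \simeq f(t) = s'$; combined with $f(b) \simeq c$ and the fact that $S$ is a set of representatives, this forces $s' = c$. Hence $t \in T_c \subseteq f^{-1}(c)$, and $b' \coloneqq t$ is the required element.

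There is no real obstacle here: the argument is a direct bookkeeping exercise reducing everything to the defining property of a system of representatives and the compatibility $b \simeq b' \Rightarrow f(b) \simeq f(b')$. The only thing to be careful about is not to conflate the two equivalence relations (on $B$ and on $C$) when invoking uniqueness of representatives.
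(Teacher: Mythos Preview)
Your proof is correct. The paper states this lemma without proof (it is introduced as ``the following obvious lemma''), so your argument simply supplies the routine verification that the paper omits; there is nothing to compare.
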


\begin{remark}
Let $G$ be an algebraic group over $k$ and let $T$ be a subtorus of $G$.
Then $W_G(T)=\Cent_G(T)\backslash \Norm_G(T)$ is a finite subgroup of
$\Aut(T)$. The inclusions
\[\Norm_G(T)\subset \Norm_G(T[\ell]), \quad \Cent_G(T)\subset \Cent_G(T[\ell])\]
induce a homomorphism $\rho\colon W_G(T)\to W_G(T[\ell])$. Via the
isomorphisms $\Aut(T)\simeq\Aut(M)$ and $\Aut(T[\ell])\simeq \Aut(M/\ell
M)$, where $M=\X^*(T)$, $\rho$ is compatible with the reduction homomorphism
$\Aut(M)\to \Aut(M/\ell M)$. If $T$ is a maximal torus, then $\rho$ is
surjective by the proof of \cite[1.1.1]{SerreBour}.

For $\ell>2$, $\rho$ is injective. In fact, for an element $g$ of
$\Ker(\Aut(M)\to \Aut(M/\ell M))$ and arbitrary $\ell$, the $\ell$-adic
logarithm $\log(g) \coloneqq \sum_{m=1}^{\infty} \frac{(-1)^{m-1}}{m}
(g-1)^{m}\in \ell \End(M)\otimes \Z_\ell$ is well defined. If $g^n=\id$ for
some $n\ge 1$, then $n\log (g)=\log(g^n)=0$, so that $\log(g)=0$. In the
case $\ell>2$, we then have $g=\exp\log(g)=\id$. For $\ell=2$, $\rho$ is not
injective in general. For example, if $G=\mathrm{SL}_2$ and $T$ is a maximal
torus, then $W_G(T)\simeq \Z/2$ and $W_G(T[2])=\{1\}$.

If $G=\GL_n$ and $T$ is a maximal torus, then $\rho$ is an isomorphism for
arbitrary $\ell$. In fact, in this case, $\Norm_G(T)=\Norm_G(T[\ell])$ and
$\Cent_G(T)=\Cent_G(T[\ell])$.
\end{remark}

\begin{notation}\label{s.H*}
We will sometimes omit the constant coefficient $\F_\ell$ from the notation.
We will sometimes write $H^*_G$ for $H^*(BG)=H^*(BG,\F_\ell)$.
\end{notation}

\begin{construction}\label{s.HAC}
Let $T=\Trans_G(A,A')$, let $g\in T(k)$, and let $c_g\colon A\to A'$ be the
map $a\mapsto g^{-1}ag$. In the above notation, the morphism $Bc_g\colon
BA\to BA'$ induces a homomorphism $\theta_g\colon H^*_{A'}\to H^*_A$. This
defines a presheaf $(H^*_A,\theta_g)$ on $\cA_G^\flat$, hence on
$\cA_{G,X}^\flat$.

If $(A,C)$ is an object of $\cA_{G,X}$, we have
\[
H^*([C/A]) = H^*_A \otimes H^*(C).
\]
The restriction $H^*([X/G]) \rightarrow H^*([C/A])$ induced by the inclusion
$(C,A)\to (X,G)$, composed with the projection
\begin{equation}\label{e.proj}
H^*([C/A])\to H^*_A
\end{equation}
induced by $H^*(C) \rightarrow H^0(C) = {\F}_{\ell}$, defines a homomorphism
\begin{equation}\label{e.Ac*}
(A,C)^* \colon H^*([X/G]) \rightarrow H^*_A .
\end{equation}
For $g\in \Trans((A,C),(A',C'))(k)\subset T(k)$, we have the following
2-commutative square of groupoids in the category $\AlgSp_{/U}$
(Construction \ref{s.Eq}):
\[\xymatrix{(C',A)_\bullet \ar[r]^{(\id,c_g)}\ar[d]_{(g^{-1},\id)}\drtwocell\omit{_} &(C',A')_\bullet\ar[d]\\
(C,A)_\bullet\ar[r] & (X,G)_\bullet}
\]
(with trivial action of $A$ and $A'$ on $C'$ and trivial action of $A$ on
$C$), where the 2-morphism is given by $g$. The corresponding 2-commutative
square of Artin stacks
\[\xymatrix{BA\times C' \ar[r]\ar[d] & BA'\times C'\ar[d]\\
BA\times C \ar[r] & [X/G]}
\]
induces by adjunction (Notation \ref{s.fCart}) the following commutative
square:
\[\xymatrix{H^*([X/G])\ar[r]\ar[d] & H^*([C/A])\ar[d]^{[g^{-1}/\id]^*}\\
H^*([C'/A'])\ar[r]^{[\id/c_g]^*} & H^*([C'/A]).}
\]
Composing with the projections \eqref{e.proj}, we obtain the following
commutative diagram:
\[\xymatrix{H^*([X/G])\ar[d]_{(A',C')^*}\ar[rd]^{(A,C)^*} \\
H^*_{A'}\ar[r]_{\theta} & H^*_{A}.}\]
Therefore the maps $(A,C)^*$ \eqref{e.Ac*} define a homomorphism
\begin{equation}\label{e.aGX}
a(G,X) \colon H^*([X/G]) \rightarrow \varprojlim_{\cA_{G,X}^\flat}
(H^*_A, \theta_g).
\end{equation}
Note that the right-hand side is the equalizer of
\[(j_1,j_2)\colon \prod_{(A,C)\in \cA_{G,X}} H_A^*\rightrightarrows \prod_{g\colon (A,C)\to (A',C')}H_A^*,\]
where $g$ runs through morphisms in $\cA_{G,X}^\flat$,
$j_1(h_{(A,C)})=(h_{(A,C)})_g$, $j_2(h_{(A,C)})=(\theta_g h_{(A',C')})_g$.
Moreover, by the finiteness results Corollary \ref{c.finite} and Lemma
\ref{l.Afincat}, the right-hand side of \eqref{e.aGX} is a finite
$H^*(BG)$-module, and, in particular, a finitely generated
$\F_\ell$-algebra.
\end{construction}

To state our main result for the map $a(G,X)$ \eqref{e.aGX}, we need to
recall the notion of uniform $F$-isomorphism. For future reference, we give
a slightly extended definition as follows.

\begin{definition}\label{s.grvec}
Let $\grvec$ be the category of graded $\F_\ell$-vector spaces. It is an
$\F_\ell$-linear $\otimes$-category. The commutativity constraint of
$\grvec$ follows Koszul's rule of signs, such that a (pseudo-)ring in
$\grvec$ is an anti-commutative graded $\F_\ell$-(pseudo-)algebra.

Let $\cC$ be a category. As a special case of Construction \ref{s.rtopos},
the functor category $\grvec^\cC:=\mathrm{Fun}(\cC^{op},\grvec)$ is a
$\F_\ell$-linear $\otimes$-category. The functor $\varprojlim_{\cC}\colon
\grvec^\cC \to \grvec$ is the right adjoint to the unital $\otimes$-functor
$\grvec \to \grvec^\cC$, thus has a right unital $\otimes$-structure. If $u
\colon R \rightarrow S$ is a homomorphism of pseudo-rings in $\grvec^\cC$,
we say that $u$ is a \emph{uniform $F$-injection} (resp.\ \emph{uniform
$F$-surjection}) if there exists an integer $n\ge 0$ such that for any
object $i$ of $\cC$ and any homogeneous element (or, equivalently, any
element) $a$ in the kernel of $u_i$ (resp.\ in $S_i$), $a^{\ell^n} = 0$
(resp.\ $a^{\ell^n}$ is in the image of $u_i$). Note that $a^{\ell^n}=0$ for
some $n \ge 0$ is equivalent to $a^m=0$ for some $m\ge 1$. We say $u$ is a
\emph{uniform $F$-isomorphism} if it is both a uniform $F$-injection and a
uniform $F$-surjection. These definitions apply in particular to $\grvec$ by
taking $\cC$ to be a discrete category of one object, in which case the
notion of a uniform $F$-isomorphism coincides with the definition in
\cite[Section~3]{Quillen1}.
\end{definition}

The following result is an analogue of Quillen's theorem (\cite[Theorem
6.2]{Quillen1}, \cite[Theorem 8.10]{Quillen2}):

\begin{theorem}\label{c.main}
Let $X$ be a separated algebraic space of finite type over $k$, and let $G$
be an algebraic group over $k$ acting on $X$. Then the homomorphism $a(G,X)$
\eqref{e.aGX} is a uniform $F$-isomorphism (Definition \ref{s.grvec}).
\end{theorem}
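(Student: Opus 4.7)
The plan is to deduce Theorem \ref{c.main} from the general structure theorem (Theorem \ref{p.str}) applied to the Artin stack $\cX = [X/G]$ with coefficient sheaf $K = \F_\ell$, equipped with its canonical commutative ring structure. The stack $[X/G]$ is of the form required by Theorem \ref{p.str}, and $\F_\ell \in D^+_c([X/G],\F_\ell)$, so that theorem will yield a uniform $F$-isomorphism from $H^*([X/G],\F_\ell)$ to the inverse limit, over $\ell$-elementary points $x\colon \cS \to [X/G]$, of $H^*(\cS, x^*\F_\ell)$. What remains is to identify this inverse limit with the right-hand side of \eqref{e.aGX} and to match up the restriction maps.

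To identify the limits, I would construct a cofinal functor from the category of $\ell$-elementary points of $[X/G]$ (to be defined in Section \ref{s.5}) to $\cA_{G,X}^\flat$. An $\ell$-elementary point should be (or be represented by) a morphism $\cS = BA \to [X/G]$ for some elementary abelian $\ell$-group $A$. Using the description of $[X/G](U)$ as the groupoid of pairs $(P,a)$ consisting of a $G$-torsor $P$ over $U$ and a $G$-equivariant morphism $a\colon P\to X$ (Notation \ref{s.wedge}), together with the fact that $G$-torsors over $BA$ are classified by homomorphisms $A\to G$ up to conjugation, such a morphism corresponds to a homomorphism $\phi\colon A\to G$ together with a geometric point $x\in X^{\phi(A)}(k)$. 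Taking the image of $\phi$ and the connected component $C$ of $X^{\phi(A)}$ containing $x$ produces an object $(\phi(A),C)$ of $\cA_{G,X}(k)$. Morphisms in the category of $\ell$-elementary points unravel similarly to elements of $\Trans_G((A,C),(A',C'))$ modulo the centralizer $\Cent_G(A,C)$, so one obtains a functor to $\cA_{G,X}^\flat$ that factors through $\cA_{G,X}(k)\to\cA_{G,X}(\pi_0)\to\cA_{G,X}^\flat$. The last two arrows are cofinal by Construction \ref{s.GX} and Lemma \ref{l.cofinal}; the first arrow should likewise be cofinal, using that $k$ is algebraically closed so the transporters and centralizers have enough $k$-points.

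Under this correspondence, $H^*(\cS, x^*\F_\ell) = H^*(BA,\F_\ell) = H^*_A$, and a morphism given by $g\in \Trans_G((A,C),(A',C'))(k)$ induces on $H^*_A$-values the pullback by the conjugation homomorphism $c_g\colon A\to A'$, i.e.\ the transition map $\theta_g$ of Construction \ref{s.HAC}. Moreover, the restriction map from Theorem \ref{p.str} for the point $(A,C)$ coincides with the composite $(A,C)^*$ of \eqref{e.Ac*}, since both arise from the factorization $BA \to BA \times C \to [X/G]$ followed by the canonical projection $H^*([C/A]) \simeq H^*_A \otimes H^*(C) \to H^*_A$.

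The main obstacle I anticipate is the careful verification that the functor from $\ell$-elementary points to $\cA_{G,X}^\flat$ is well-defined and cofinal. This requires handling 2-categorical subtleties --- two morphisms $BA \to [X/G]$ related by conjugation by an element of $G$ give isomorphic $\ell$-elementary points, reflected in the quotient by $\Cent_G(A,C)$ in the morphism sets of $\cA_{G,X}^\flat$ --- and using the finiteness results of Section \ref{s.3} to ensure that every object of $\cA_{G,X}^\flat$ is hit up to isomorphism (Theorem \ref{t.Serre} plays a role, since it guarantees $X^A$ is smooth and its connected components therefore have $k$-rational points). Once these identifications are in place, Theorem \ref{c.main} follows directly from Theorem \ref{p.str}.
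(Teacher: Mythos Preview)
Your overall strategy is correct and matches the paper's: Theorem \ref{c.main} is deduced from Theorem \ref{p.str} applied to $\cX=[X/G]$ with $K=\F_\ell$, together with an identification of $R^*([X/G],\F_\ell)$ with $\varprojlim_{\cA_{G,X}^\flat} H^*_A$. However, your execution of the limit comparison has a real gap.

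The definition of an $\ell$-elementary point (Definition \ref{d.7.15}) is not what you assume: it is a representable morphism $[S/A]\to\cX$ with $S$ a \emph{strictly local scheme} on which $A$ acts (trivially on the closed point), not simply $BA\to\cX$. In particular, your description ``such a morphism corresponds to a homomorphism $\phi\colon A\to G$ together with a geometric point $x\in X^{\phi(A)}(k)$'' is incorrect on two counts: the lifting of $[S/A]\to[X/G]$ to groupoids (Proposition \ref{p.7.16} (c)) yields a \emph{crossed} homomorphism $S\times A\to G$, not a group homomorphism, and the closed point of $S$ need not be $k$-rational. One must then invoke Corollary \ref{c.Serre0} to conjugate the induced homomorphism $A\to G(\bar s)$ into $G(k)$; this is precisely what the paper calls a ``rational lifting'' in the proof of Proposition \ref{p.RR}. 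Because of these subtleties, there is no obvious well-defined functor $\cC_{[X/G]}\to\cA_{G,X}^\flat$, and the paper does not attempt one.

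Instead, the paper builds a zigzag
\[
\cC_{[X/G]}\xleftarrow{\;E\;}\cC_{X,G}\xrightarrow{\;\Pi\;}\cA_G(k)^\natural
\]
(Construction \ref{c.8.4}), where $\cC_{X,G}$ is cofibered over $\cA_G(k)^\natural$ with fiber $P_{X^{A'}}$ over $(A,A',g)$, and proves that $E$ is cofinal (Proposition \ref{p.RR}). This yields $R^*([X/G],K)\simeq R^*_G(X,K)$ for all $K$; the further identification $R^*_G(X,\F_\ell)\simeq\varprojlim_{\cA_{G,X}^\flat}H^*_A$ is handled separately and more elementarily (see the remark following Remark \ref{s.bemol}). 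The proof that $E$ is cofinal is nontrivial: it uses the orbit-finiteness of Corollary \ref{c.Serre0} and an argument about connected transporter components to show that a suitable localization of $\cC_{X,G}$ maps fully and essentially surjectively to $\cC_{[X/G]}$. The ``2-categorical subtleties'' you anticipate are real, but they are resolved by this zigzag construction rather than by a direct functor in the direction you propose.
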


\begin{remark}\label{r.cohBA}
Let $A$ be an elementary abelian $\ell$-group of rank~$r\ge 0$. We identify
$H^1(BA,{\mathbb{F}}_{\ell})$ with $\check A = \Hom(A,{\mathbb{F}}_{\ell})$.
Recall \cite[Section~4]{Quillen1} that we have a natural identification of
${\mathbb{F}}_{\ell}$-graded algebras
\[
H^*(BA,{\mathbb{F}}_{\ell}) = \begin{cases}\Sym(\check A) & \text{if $\ell = 2$} \\
\wedge(\check A) \otimes \Sym(\beta \check A)  & \text{if $\ell >
2$,}\end{cases}\label{(7.1.1)}
\]
where $\Sym$ (resp.\ $\wedge$) denotes a symmetric (resp.\ exterior) algebra
over $\F_\ell$, and $\beta\colon \check A\to H^2(BA,\F_\ell)$ is the
Bockstein operator. In particular, if $\{x_1,\dots, x_r\}$ is a basis of
$\check A$ over ${\mathbb{F}}_{\ell}$, then
\[
H^*(BA,{\mathbb{F}}_{\ell}) = \begin{cases}{\mathbb{F}}_{\ell}[x_1,\dots,x_r] & \text{if $\ell = 2$} \\
\wedge(x_1,\dots,x_r) \otimes {\mathbb{F}}_{\ell}[y_1,\dots,y_r]  & \text{if $\ell > 2$}\end{cases}\label{(7.1.2)}
\]
where $y_i = \beta x_i$.
\end{remark}

\begin{cor}\label{c.PS}
With $X$ and $G$ as in Theorem \ref{c.main}, let $K \in
D^b_c([X/G],\F_{\ell})$. The Poincar\'e series
\[
\PS_t(H^*([X/G],K)) = \sum_{i \ge 0} \dim_{\F_{\ell}}H^i([X/G],K)t^i
\]
is a rational function of $t$ of the form $P(t)/\prod_{1 \le i \le
n}(1-t^{2i})$, with $P(t) \in \Z[t]$. The order of the pole of
$\PS_t(H^*([X/G]))$ at $t = 1$ is the maximum rank of an elementary abelian
$\ell$-subgroup $A$ of $G$ such that $X^A \neq \emptyset$.
\end{cor}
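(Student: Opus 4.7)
My plan is to reduce to $G=\GL_r$ via the d\'evissage of Theorem \ref{t.finite}. By Remark \ref{r.finite}, $H^*([X/G], K)$ is a quotient $H^*(BG)$-module of $H^*([X/G_{4d}], f_{4d}^*K)$, with $G_{4d}$ an affine subgroup of $G$. Choose a closed embedding $G_{4d} \hookrightarrow \GL_r$; the induction formula (Corollary \ref{p.wedge}) then yields an equivalence $[X/G_{4d}] \simeq [Y/\GL_r]$ with $Y = X\wedge^{G_{4d}}\GL_r$ separated of finite type over $k$. Applying Corollary \ref{t.finite2} to the representable morphism $[Y/\GL_r] \to B\GL_r$, the cohomology $H^*([Y/\GL_r], K')$ is a finite module over $H^*(B\GL_r, \F_\ell) = \F_\ell[c_1,\dots,c_r]$ with $\deg c_i = 2i$ (Theorem \ref{l.finite}). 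Hilbert--Serre then gives a Poincar\'e series of the form $Q(t)/\prod_{i=1}^r(1-t^{2i})$ with $Q\in\Z[t]$, a form inherited by the quotient $H^*([X/G], K)$.

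\textbf{Reduction of the pole computation.} Specializing to $K=\F_\ell$, set $R := H^*([X/G])$ and $L := \varprojlim_{\cA_{G,X}^\flat} H^*_A$. The plan is to invoke Theorem \ref{c.main}: the homomorphism $a(G,X)\colon R \to L$ is a uniform $F$-isomorphism. Since $\cA_{G,X}^\flat$ is essentially finite (Lemma \ref{l.Afincat}), $L$ embeds as a sub-$\F_\ell$-algebra of the finite product $\prod_{(A,C)\in T} H^*_A$ over a set of representatives $T$, and by Construction \ref{s.HAC} together with Corollary \ref{c.finite} it is a finite module over the image of $H^*(BG)$; hence $L$ is itself a finitely generated graded-commutative $\F_\ell$-algebra. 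For such algebras, the order of the pole of $\PS_t$ at $t=1$ equals the Krull dimension. A uniform $F$-isomorphism preserves Krull dimension: its kernel is a nil ideal (since $a^{\ell^n}=0$ for $a$ in the kernel), and its target is integral over its image (each element has its $\ell^n$-th power there). Therefore $\mathrm{ord}_{t=1}\PS_t(R) = \dim R = \dim L$.

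\textbf{Computing $\dim L$.} The principal remaining task, and the main obstacle, is to show $\dim L = r(G,X) := \max\{r(A) : X^A\neq\emptyset\}$. For the upper bound, the essential point is that $\prod_{(A,C)\in T} H^*_A$ is module-finite over $L$: by Noether's lemma each $H^*_A$ is finite over $H^*_A^{W_G(A,C)}$, and using the equalizer description of $L$ together with the fact that any non-isomorphism in $\cA_{G,X}^\flat$ goes from strictly smaller to strictly larger rank, each such invariant can be shown to be integral over $L$; hence $\dim L \le \dim \prod H^*_A = \max_{(A,C)}r(A) = r(G,X)$. For the lower bound, choose a maximum-rank $(A_0,C_0)$ and observe that all morphisms out of $(A_0,C_0)$ in $\cA_{G,X}^\flat$ target objects $(A',C')$ with $r(A')=r(A_0)$, hence isomorphic to $(A_0,C_0)$; thus the constraint on the $(A_0,C_0)$-component of an element of $L$ reduces to $W_G(A_0,C_0)$-invariance. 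One then exhibits $r(A_0)$ algebraically independent elements of $L$ by lifting elements of $H^*_{A_0}^{W_G(A_0,C_0)}$ through the transporter compatibilities, using that $W_G(A_0,C_0)$ is a finite subgroup of $\Aut(A_0)$ so that the invariant ring $H^*_{A_0}^{W_G(A_0,C_0)}$ retains Krull dimension $r(A_0)$. This yields $\dim L \ge r(A_0) = r(G,X)$ and completes the proof.
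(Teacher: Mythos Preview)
Your rationality argument is correct and matches the paper's: reduce via Remark~\ref{r.finite} to an affine subgroup, embed in $\GL_r$, and conclude from $H^*(B\GL_r)=\F_\ell[c_1,\dots,c_r]$ by Hilbert--Serre. Your reduction of the pole computation to $\dim L$ via the uniform $F$-isomorphism is also correct and is exactly Quillen's route, to which the paper simply refers.

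The section ``Computing $\dim L$'' is where the argument becomes shaky. Your ``upper bound'' claim that $\prod_{(A,C)\in T} H^*_A$ is module-finite over $L$ is correct (and in fact yields both inequalities at once), but your proof of it via Weyl invariants and rank induction is only a sketch: the assertion that each $H^*_A^{W_G(A,C)}$ is integral over $L$ is not established. Your separate ``lower bound'' argument has a genuine gap: you claim one can lift $W_G(A_0,C_0)$-invariant elements of $H^*_{A_0}$ to compatible families in $L$, but this ignores the compatibility constraints coming from morphisms \emph{into} $(A_0,C_0)$ from smaller-rank objects, and from possible maximal-rank objects not isomorphic to $(A_0,C_0)$. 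Such a lift need not exist for a given invariant.

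Both difficulties evaporate once you use an observation you already recorded. You noted (via Construction~\ref{s.HAC} and Corollary~\ref{c.finite}) that $L$ is a finite $H^*(BG)$-module. The same is true of $\prod_{(A,C)\in T} H^*_A$: each $H^*_A$ is finite over $H^*(BG)$ by Corollary~\ref{c.finite} applied to the inclusion $A\hookrightarrow G$. Since the map $H^*(BG)\to \prod H^*_A$ factors through $L$ (each $(A,C)^*$ is compatible with $H^*(BG)\to H^*([X/G])$), the product $\prod H^*_A$ is integral over $L$, whence
\[
\dim L \;=\; \dim \prod_{(A,C)\in T} H^*_A \;=\; \max_{(A,C)} r(A)\;=\;r(G,X).
\]
This single line replaces your entire ``Computing $\dim L$'' section.
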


\begin{proof}
By Theorem \ref{t.finite}, $H^*([X/G],K)$ is a finitely generated module
over $H^*([X/G])$, which is a finitely generated algebra over $\F_{\ell}$.
Therefore the Poincar\'e series $\PS_t(H^*([X/G],K))$ is a rational function
of $t$, and the order of the pole at $t = 1$ of $\PS_t(H^*([X/G]))$ is equal
to the dimension of the commutative ring $H^{2*}([X/G])$. To show that
$\PS_t(H^*([X/G],K))$ is of the form given in Corollary \ref{c.PS}, recall
(Remark \ref{r.finite}) that we have shown in the proof of Theorem
\ref{t.finite} that $H^*([X/G],K)$ is a quotient $H^*(BH)$-module of
$H^*([X/H],f^*K)$ for a certain affine subgroup $H$ of $G$, $f$ denoting the
canonical morphism $[X/H] \to [X/G]$. Embedding $H$ into some $\GL_n$ and
applying Corollary \ref{t.finite2}, we deduce that $H^*([X/G],K)$ is a
finite $H^*(B\mathrm{GL}_n)$-module. Since $H^*(B\mathrm{GL}_n)\simeq
\F_\ell[c_1,\dots,c_n]$, where $c_i$ is of degree $2i$ (Theorem
\ref{l.finite}), $\PS_t(M^*)$ is of the form $P(t)/\prod_{1 \le i \le
n}(1-t^{2i})$ with $P(t) \in \Z[t]$ for every finite graded
$H^*(B\GL_n)$-module $M^*$. The last assertion of Corollary \ref{c.PS} is
derived from Theorem \ref{c.main} as in \cite[Theorem 7.7]{Quillen1}. One
can also see it in a more geometric way, observing that the reduced spectrum
of $H^{\varepsilon *}([X/G])$ (where $\varepsilon = 1$ if $\ell = 2$ and 2
otherwise) is homeomorphic to an amalgamation of standard affine spaces
$\underline{A} = \Spec (H^{\varepsilon *}_A)_{\mathrm{red}}$ associated with
the objects $(A,C)$ of $\mathcal{A}_{G,X}$ (see Construction \ref{c.amalg}).
\end{proof}

\begin{example}
Let $G$ be a connected reductive group over $k$ with no $\ell$-torsion, and
let $T$ be a maximal torus of $G$. Let $\iota\colon \cA'\hookrightarrow
\cA_G^\flat$ be the full subcategory spanned by $T[\ell]$. The functor
$\iota$ is cofinal. Indeed, for every object $A$ of $\cA_G^\flat$, since $A$
is toral, there exists a morphism $c_g\colon A\to T[\ell]$ in $\cA_G^\flat$.
Moreover, for morphisms $c_{g}\colon A\to T[\ell]$, $c_{g'}\colon A\to
T[\ell]$ in $\cA_G^\flat$, there exists an isomorphism $c_{h}\colon
T[\ell]\to T[\ell]$ such that $c_h c_{g}=c_{g'}$ in $\cA_G^\flat$, by
\cite[1.1.1]{SerreBour} applied to the conjugation $c_{g^{-1}g'}\colon
c_g(A)\to c_{g'}(A)$. Let $W= W_G(T)$. The map $a(G,\Spec(k))$ can be
identified with the injective $F$-isomorphism
\[H^*_G\simeq (H^*_T)^W \to (H^*_{T[\ell]})^W\]
induced by restriction (where the isomorphism is \eqref{e.GTW}). In
particular,
\[\varprojlim_{A\in
\cA_{G}^\flat}(H^{\varepsilon*}_A)_{\red}\simeq \Sym(T[\ell]\spcheck)^W,\]
where $\varepsilon=1$ if $\ell=2$ and $\varepsilon=2$ if $\ell>2$. Moreover,
for $\ell>2$, $a(G,\Spec(k))$ induces an isomorphism $H^{2*}_G\simeq
((H^{2*}_{T[\ell]})_{\red})^W$.
\end{example}

\begin{example}
Let $X=X(\Sigma)$ be a toric variety over $k$ with torus $T$, where $\Sigma$
is a fan in $N\otimes \R$ and $N=\X_*(T)$. We identify $T[\ell]$ with
$N\otimes \mu_\ell$. The inertia $I_\sigma\subset T$ of the orbit $O_\sigma$
corresponding to a cone $\sigma\in \Sigma$ is $N_\sigma\otimes \Gm$, where
$N_\sigma$ is the sublattice of $N$ generated by $N\cap \sigma$, so that
$A_\sigma=I_\sigma[\ell]\simeq N_\sigma \otimes \mu_\ell$. The latter can be
identified with the image of $N\cap \sigma$ in $N\otimes \F_\ell$. This
defines an object $(A_\sigma, C_\sigma)$ of $\cA_{T,X}$, where $C_\sigma$ is
the connected component of $X^{A_\sigma}$ containing $O_\sigma$. The functor
$\Sigma\to \cA_{T,X}^\flat$ is cofinal. Thus we have a canonical isomorphism
\[\varprojlim_{A\in \cA_{T,X}^\flat} (H^{\varepsilon*}_A)_{\red}\simeq \varprojlim_{\sigma\in \Sigma} (H^{\varepsilon*}_{A_\sigma})_{\red}.\]
Note that $(H^*_{A_\sigma})_{\red}$ can be canonically identified with
$\Sym(M_\sigma)\otimes \F_\ell$, where $M_\sigma=M/(M\cap \sigma^\perp)$ and
$\Sym(M_\sigma)$ is the algebra of integral polynomial functions on
$\sigma$. In particular, we have a canonical isomorphism
\begin{equation}\label{e.toric}
\varprojlim_{A\in \cA_{T,X}^\flat}(H^{\varepsilon*}_A)_{\red} \simeq
\PP^*(\Sigma)\otimes \F_\ell,
\end{equation}
where
\[\PP^*(\Sigma)=\{f\colon \Supp(\Sigma) \to \R \mid (f\res \sigma) \in \Sym(M_\sigma)\text{ for each }\sigma\in \Sigma\}\]
is the algebra of piecewise polynomial functions on $\Sigma$.  Recall that
Payne established an isomorphism from the integral equivariant Chow
cohomology ring $A_T^*(X)$ of Edidin and Graham \cite[2.6]{EG} onto
$\PP^*(\Sigma)$ \cite[Theorem~1]{Payne}. Combining Theorem \ref{c.main} and
\eqref{e.toric}, we obtain a uniform $F$-isomorphism
\[H^*([X/T],\F_\ell)\to \PP^*(\Sigma)\otimes \F_\ell.\]
If $X$ is smooth, this is an isomorphism, and $\PP^*(\Sigma)$ is isomorphic
to the Stanley-Reisner ring of~$\Sigma$ \cite[Section~4]{BDP}.
\end{example}

In the rest of this section, we state an analogue of Theorem \ref{c.main}
with coefficients.

\begin{construction}\label{c.6.15}
Let $G$ be an algebraic group over $k$, $X$ an algebraic $k$-space endowed
with an action of $G$, and $K \in D^+_\cart([X/G],\F_{\ell})$.

If $A$, $A'$ are elementary abelian $\ell$-subgroups of $G$ and $g \in G(k)$
conjugates $A$ into $A'$ (i.e.\ $g^{-1}Ag \subset A'$), $A$ acts trivially
on $X^{A'}$ via $c_g = A \to A'$ (where $c_g$ is the conjugation $s \mapsto
g^{-1}sg$), and we have an equivariant morphism $(1,c_g)\colon (X^{A'},A)
\to (X,G)$, where $1$ denotes the inclusion $X^{A'}\subset X$, inducing
\[
[1/c_g] \colon [X^{A'}/A] = BA \times X^{A'} \to [X/G].
\]
We thus have, for all $q$, a restriction map
\[
H^q([X/G],K) \to H^q([X^{A'}/A],[1/c_g]^*K).
\]
On the other hand, we have a natural projection
\[
\pi \colon [X^{A'}/A] = BA \times X^{A'} \to X^{A'},
\]
hence an edge homomorphism for the corresponding Leray spectral sequence
\[
H^q([X^{A'}/A],[1/c_g]^*K) \to H^0(X^{A'},R^q\pi_*[1/c_g]^*K).
\]
By composition we get a homomorphism
\begin{equation}\label{e.6.15.1}
a^q(A,A',g) \colon
H^q([X/G],K) \to H^0(X^{A'},R^q\pi_*[1/c_g]^*K).
\end{equation}
Since $R^*\pi_*\F_\ell=\bigoplus_{q} R^q\pi_*\F_\ell$ is a constant sheaf of
value $H^*(BA,\F_\ell)$, $R^*\pi_*[1/c_g]^*K=\bigoplus_q R^q\pi_*[1/c_g]^*K$
is endowed with a $H^*(BA,\F_\ell)$-module structure by Constructions
\ref{s.tensor} and \ref{s.rtopos}, which induces a $H^*(BG,\F_\ell)$-module
structure via the ring homomorphism $[1/c_g]^*\colon H^*(BG,\F_\ell)\to
H^*(BA,\F_\ell)$. The map $a(A,A',g)=\bigoplus_q a^q(A,A',g)$ is
$H^*(BG,\F_\ell)$-linear.

If $(Z,Z',h)$ is a second triple consisting of elementary abelian
$\ell$-subgroups $Z$, $Z'$, and $h \in G(k)$ such that $c_h \colon Z \to
Z'$, the datum of elements $a$ and $b$ of $G(k)$ such that $g = ahb$ and
$c_a \colon A \to Z$, $c_b \colon Z' \to A'$, defines a commutative diagram
\begin{equation}\label{e.6.15.2}
\xymatrix{A \ar[r]^{c_g} \ar[d]_{c_a}& A' \\
Z \ar[r]^{c_h} &Z', \ar[u]_{c_b}}
\end{equation}
hence a morphism $[b^{-1}/c_a] \colon [X^{A'}/A] \to [X^{Z'}/Z]$, fitting
into a 2-commutative diagram
\begin{equation}\label{e.6.15.3}
\xymatrix{{} & [X^{A'}/A] \ar[d]^{[b^{-1}/c_a]} \ar[dl]_{[1/c_g]} \ar[r]^-{\pi} & X^{A'} \ar[d]^{b^{-1}}\\
[X/G]\urtwocell\omit{<2>} & [X^{Z'}/Z] \ar[l]^{[1/c_h]} \ar[r]^-{\pi} & X^{Z'},}
\end{equation}
where the 2-morphism of the triangle is induced by $b$. Consider the
homomorphism
\begin{equation}\label{e.6.15.4}
(a,b)^* \colon H^0(X^{Z'},R^q\pi_*[1/c_h]^*K) \to H^0(X^{A'},(b^{-1})^*R^q\pi_*[1/c_h]^* K)\to H^0(X^{A'},R^q\pi_*[1/c_g]^*K),
\end{equation}
where the first map is adjunction by $b^{-1}$ and the second map is base
change map for the square in \eqref{e.6.15.3}. This fits into a commutative
triangle
\begin{equation}\label{e.6.15.5}
\xymatrix{H^q([X/G],K) \ar[d] \ar[dr] & {} \\
H^0(X^{Z'},R^q\pi_*[1/c_h]^*K) \ar[r]^{(a,b)^*} & H^0(X^{A'},R^q\pi_*[1/c_g]^*K),}
\end{equation}
where the vertical and oblique maps are given by \eqref{e.6.15.1}. Denote by
\begin{equation}\label{e.6.15.6}
\mathcal{A}_G(k)^{\natural}
\end{equation}
the following category. Objects of $\mathcal{A}_G(k)^{\natural}$ are triples
$(A,A',g)$ as above, morphisms $(A,A',g) \to (Z,Z',h)$ are pairs $(a,b) \in
G(k) \times G(k)$ such that $g = ahb$ and $c_a \colon A \to Z$, $c_b \colon
Z' \to A'$. Via the maps $(a,b)^*$ \eqref{e.6.15.4}, the groups
$H^0(X^{A'},R^q\pi_*[1/c_g]^*K)$ form a projective system indexed by
$\mathcal{A}_G(k)^{\natural}$, and by the commutativity of \eqref{e.6.15.5}
we get a homomorphism
\begin{equation}\label{e.aE}
 a^q_G(X,K) \colon H^q([X/G],K) \to R^q_G(X,K),
\end{equation}
where
\begin{equation}\label{e.RE}
R^q_G(X,K) \coloneqq \varprojlim_{(A,A',g) \in \mathcal{A}_G(k)^{\natural}} H^0(X^{A'},R^q\pi_*[1/c_g]^*K).
\end{equation}
Since $\bigoplus_q (a,b)^*$ is $H^*(BG,\F_\ell)$-linear, $R^*_G(X,K)
\coloneqq \bigoplus_{q}R^q_G(X,K)$ is endowed with a structure of
$H^*(BG,\F_\ell)$-module. The map
\begin{equation}\label{e.aEstar}
a_G(X,K) = \bigoplus_q a^q_G(X,K) \colon H^*([X/G],K) \to R^*_G(X,K)
\end{equation}
induced by \eqref{e.aE} is a homomorphism of $H^*(BG,\F_\ell)$-modules. If
$K$ is a (pseudo-)ring in $D^+_\cart([X/G],\F_\ell)$, $R^*_G(X,K)$ is a
$\F_\ell$-(pseudo-)algebra and $a_G(X,K)$ is a homomorphism of
$\F_\ell$-(pseudo-)algebras.
\end{construction}

\begin{theorem}\label{t.main2}
Let $G$ be an algebraic group over $k$, $X$ a separated algebraic space of
finite type over $k$ endowed with an action of $G$, and $K\in
D^+_c([X/G],\F_\ell)$.
\begin{enumerate}
\item $R^q_G(X,K)$ is a finite-dimensional $\F_{\ell}$-vector space for
    all $q$; if $K \in D^b_c([X/G],\F_{\ell})$, $R^*_G(X,K)$ is a finite
    module over $H^*(BG,\F_{\ell})$.

\item If $K$ is a pseudo-ring in $D^+_c([X/G],\F_{\ell})$ (Construction
    \ref{c.Dring}), the kernel of the homomorphism $a_G(X,K)$
    \eqref{e.aEstar} is a nilpotent ideal of $H^*([X/G],K)$. If, moreover,
    $K$ is commutative, then $a_G(X,K)$ is a uniform $F$-isomorphism
    (Definition \ref{s.grvec}).
\end{enumerate}
\end{theorem}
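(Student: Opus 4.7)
The plan is to deduce both parts of Theorem \ref{t.main2} from the general structure theorem (Theorem \ref{p.str}) applied to the quotient stack $\cX = [X/G]$. The key step is the identification of $R^*_G(X,K)$ with the inverse limit $\varprojlim_{x \colon \cS \to \cX} H^*(\cS, x^*K)$ over the category of $\ell$-elementary points of $[X/G]$. By the groupoid-theoretic analysis of morphisms from $BA$ to $[X/G]$ (Propositions \ref{p.eqstr} and \ref{p.EqE}, combined with the bijection \eqref{e.eq0}), such a morphism over a geometric point is equivalent to the data of a homomorphism $A \to G$ together with a geometric point of the fixed locus, which is precisely the data packaged in a triple $(A,A',g)$ together with a connected component of $X^{A'}$ as in Construction \ref{c.6.15}. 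The transition maps $(a,b)^*$ of \eqref{e.6.15.4} then match the pullback functoriality on $\ell$-elementary points, identifying $a_G(X,K)$ with the map appearing in Theorem \ref{p.str}.

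For part (a), the projection formula (Proposition \ref{l.pf}) applied to the smooth projection $\pi \colon BA \times X^{A'} \to X^{A'}$ gives an isomorphism $R\pi_*[1/c_g]^*K \simeq (R\pi_*\F_\ell) \otimes^L_{\F_\ell} K|_{X^{A'}}$; combined with the explicit computation of $H^*(BA,\F_\ell)$ recalled in Remark \ref{r.cohBA}, each $R^q\pi_*[1/c_g]^*K$ is a constructible sheaf on the quasi-compact algebraic space $X^{A'}$, so $H^0$ is finite-dimensional. The inverse limit over $\mathcal{A}_G(k)^{\natural}$ is cofinally controlled, via Lemma \ref{l.cofinal} and the pattern of Lemma \ref{l.Afincat}, by the finite data of conjugacy classes of elementary abelian $\ell$-subgroups (Corollary \ref{c.Serre}) together with the finite sets $\pi_0(X^A)$; this yields the finite-dimensionality of $R^q_G(X,K)$. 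For $K \in D^b_c$, Corollary \ref{t.finite2} applied to each representable morphism $BA \times X^{A'} \to [X/G]$ makes each factor a finite $H^*(BG,\F_\ell)$-module, and the noetherianness of $H^*(BG,\F_\ell)$ from Theorem \ref{t.finite} extends this to $R^*_G(X,K)$.

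For part (b), the commutative case is a direct application of Theorem \ref{p.str}. For a general pseudo-ring $K$, the cohomology $H^*([X/G],K)$ is a finitely generated module over the commutative noetherian graded ring $H^*([X/G],\F_\ell)$, via the unit isomorphism $\F_\ell \otimes^L K \simto K$ combined with part (a) and Theorem \ref{t.finite}, and $a_G(X,K)$ is linear over $a_G(X,\F_\ell)$. Since the finitely generated graded commutative algebra $H^*([X/G],\F_\ell)$ has nilpotent nilradical, the uniform $F$-isomorphism in the commutative case promotes the element-wise $F$-nilpotence of $I = \ker a_G(X,\F_\ell)$ to nilpotence $I^N = 0$; the ideal $J = \ker a_G(X,K)$ is then a finitely generated $H^*([X/G],\F_\ell)$-submodule on which $I$ acts trivially after passage to the quotient, which should allow one to conclude that $J$ itself is nilpotent as a two-sided pseudo-ring ideal. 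The main obstacle I foresee is the careful $2$-categorical bookkeeping required for the identification of indexing categories in the first paragraph (handling the inherent ambiguity between abstract elementary abelian groups, their images in $G$, and the conjugating element $g$); a secondary subtlety is the passage from the commutative case to the pseudo-ring case, which cannot be derived from Frobenius alone and must use the module-theoretic structure above, possibly supplemented by the multiplicative spectral sequence of Example \ref{e.ssmult} to devissage through the constructible cohomology sheaves of $K$.
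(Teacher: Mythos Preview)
Your overall strategy—identifying $R^*_G(X,K)$ with $R^*([X/G],K)$ and invoking Theorem \ref{p.str}—matches the paper (Construction \ref{c.8.4}, Proposition \ref{p.RR}). But several steps fail as written. In part (a): the projection formula claim is false, since $[1/c_g]^*K$ on $BA\times X^{A'}$ carries the $A$-equivariant structure induced from the $G$-equivariance of $K$ via $c_g$, which is generally nontrivial, so $[1/c_g]^*K$ is not pulled back along $\pi$. The paper instead proves constructibility of $R^*\pi_*[1/c_g]^*K$ as a sheaf of $H^*(BA,\F_\ell)$-modules by stratification (Lemma \ref{l.fintriv}), then invokes Corollary \ref{c.finite}. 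Your alternative via Corollary \ref{t.finite2} gives finiteness of $H^*(BA\times X^{A'},[1/c_g]^*K)$, not of the edge term $H^0(X^{A'},R^*\pi_*[1/c_g]^*K)$, and no direct subquotient relation bridges these. Moreover, the indexing category $\cA_G(k)^\natural$ is \emph{not} essentially finite (its Hom-sets are $k$-points of transporter schemes); the paper shows (Lemma \ref{l.6.18}) via a homotopy-invariance argument that the projective system factors through the essentially finite category $\cE_G(\pi_0)\simeq\cA_G(\pi_0)^\natural$, and this is what makes the limit manageable.

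For part (b), your detour through module structure over $H^*([X/G],\F_\ell)$ is unnecessary: Theorem \ref{p.str}(b) already asserts nilpotence of $\Ker a_{\cX,K}$ for an arbitrary pseudo-ring $K$, so once the identification $a_G(X,K)\simeq a_{[X/G],K}$ is established both assertions follow at once. That identification (Proposition \ref{p.RR}) is indeed the crux, and the paper's cofinality proof relies essentially on Serre's finiteness of orbit types (Corollary \ref{c.Serre0}) to produce \emph{rational} liftings of $\ell$-elementary points to morphisms of groupoids $(S,A)_\bullet\to(X,G)_\bullet$; Propositions \ref{p.eqstr} and \ref{p.EqE} alone only give liftings with crossed homomorphism $S\times A\to G$ landing in $G(\bar s)$, which is not enough to connect to the indexing by $\cA_G(k)^\natural$.
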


\begin{remark}\label{s.bemol}
The projective limit in \eqref{e.aE} is the equalizer of the double arrow
\[(j_1,j_2)\colon \prod_{A\in \cA_G}\Gamma(X^{A},R^q\pi_{A*}[1/c_1]^*K) \rightrightarrows \prod_{(A,A',g)\in \cA_G(k)^\natural}\Gamma(X^{A'},R^q\pi_{(A,A',g)*} [1/c_g]^*K),\]
where $\pi_A=\pi_{(A,A,1)}$, $[1/c_1]\colon [X^A/A]\to [X/G]$, $j_1$ is
induced by $(1,g)\colon (A,A',g)\to (A,A,1)$ and $j_2$ is induced by
$(g,1)\colon (A,A',g)\to (A',A',1)$.

This is a consequence of the following general fact (applied to
$\cC=\cA_{G}(k)$). Let $\cC$ be a category. Define a category $\cC^\natural$
as follows. The objects of $\cC^\natural$ are the morphisms $A\to A'$ of
$\cC$. A morphism in $\cC^\natural$ from $A\to A'$ to $Z\to Z'$ is a pair of
morphisms $(A\to Z, Z'\to A')$ in $\cC$ such that the following diagram
commutes:
\[\xymatrix{A\ar[r]\ar[d] &A'\\
Z\ar[r]& Z'.\ar[u]}
\]
Let $\cF$ be a presheaf of sets on $\cC^\natural$.  Then the sequence
  \[\Gamma(\widehat{\cC^\natural}, \cF)\to \prod_{A\in \cC}\cF(\id_A)\rightrightarrows \prod_{(a\colon A\to A')\in \cC^\natural}\cF(a)\]
is exact. Here the two projections are induced by $(\id_A,a)\colon a\to
\id_A$ and $(a,\id_{A'})\colon a\to \id_{A'}$, respectively.

Indeed, because the two compositions are equal, we have a map $s\colon
\Gamma(\widehat{\cC^\natural},\cF)\to K$, where $K$ is the equalizer of the
double arrow. It is straightforward to check that the map $K\to \prod_{a\in
\cC^\natural} \cF(a)$ factors through $\Gamma(\widehat{\cC^\natural},\cF)$
to give the inverse of $s$.

Note that this statement generalizes the calculation of \emph{ends}
$\int_{A\in \cC}F(A,A)$ \cite[Section IX.5]{MacLane} of a functor $F$ from
$\cC^\op \times \cC$ to the category of sets. More generally, for any
category $\cD$ and any functor $F\colon \cC^\op \times \cC\to \cD$,
$\int_{A\in \cC} F(A,A)$ can be identified with the limit
$\varprojlim_{a\colon A\to A'} F(A,A')$ indexed by $\cC^\natural$.
\end{remark}

\begin{remark}
For $K=\F_\ell$, the commutative diagram
\[\xymatrix{H^*([X/G],\F_\ell)\ar[r]\ar[rd]
& \prod_{(A,C)\in \cA_{G,X}}H^*_A\ar@<.5ex>[r]\ar@<-.5ex>[r]\ar[d]^{\simeq} &\prod_{g\colon (A,C)\to (A',C')} H^*_A\ar[d]^{\simeq}\\
& \prod\limits_{A\in \cA_G}\Gamma(X^{A},R^q\pi_{A*}\F_\ell) \ar@<.5ex>[r]\ar@<-.5ex>[r]
&\prod\limits_{(A,A',g)\in \cA_G(k)^\natural}\Gamma(X^{A'},R^q\pi_{(A,A',g)*} \F_\ell)}
\]
induces a commutative diagram
\[\xymatrix{H^*([X/G],\F_\ell)\ar[r]^{a(G,X)}\ar[rd]_{a_G(X,\F_\ell)} & \varprojlim_{\cA_{G,X}^\flat} H^*_A\ar[d]^\simeq\\
& R^*_G(X,\F_\ell).}
\]
Therefore Theorem \ref{t.main2} generalizes Theorem \ref{c.main}.
\end{remark}

Part (b) of Theorem \ref{t.main2} will be proved as a corollary of a more
general structure theorem (Theorem \ref{p.str}). Part (a) will follow from
the next lemma.

\begin{lemma}\label{l.6.18}
Let $\cE_G$ be the category enriched in $\FTSch{k}$ having the same objects
as $\cA_G(k)^\natural$ and in which $\Hom_{\cE_G}((A,A',g), (Z,Z',h))$ is
the subscheme of $G\times G$ representing the presheaf of sets on
$\AlgSp_{/k}$:
\[S\mapsto \{(a,b)\in (G\times G)(S)\mid a^{-1}A_Sa\subset Z_S, b^{-1}Z'_Sb\subset A'_S, g=ahb\}\]
(so that by definition $\mathcal{E}_G(k)= \mathcal{A}_G(k)^{\natural}$).
\begin{enumerate}
\item The functor $F\colon\mathcal{E}_{G}(\pi_0)\to \cA_G(\pi_0)^\natural$
    carrying $(A,A',g)$ to $(A,A',\gamma)$, where $\gamma$ is the
    connected component of $\Trans_G(A,A')$ containing $g$, is an
    equivalence of categories. In particular, $\cE_G(\pi_0)$ is equivalent
    to a finite category, and for every algebraically closed extension
    $k'$ of $k$, the natural functor $\cE_G(\pi_0)\to \cE_{G_{k'}}(\pi_0)$
    is an equivalence of categories.

\item The projective system $H^0(X^{A'},R^q\pi_*[1/c_g]^*K)$ indexed by
    $(A,A',g) \in \mathcal{A}_G(k)^{\natural}$ factors through
    $\mathcal{E}_{G}(\pi_0)$.
\end{enumerate}
\end{lemma}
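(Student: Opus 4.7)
The plan is to prove (a) first and then derive (b) from the geometric picture developed in (a). For (a), essential surjectivity of $F$ is immediate: any component $\gamma$ of the finite-type $k$-scheme $\Trans_G(A,A')$ contains a $k$-point $g$ (since $k$ is algebraically closed), and $(A,A',g)$ maps to $(A,A',\gamma)$. For full faithfulness, set $\mu\colon Y := \Trans_G(A,Z) \times \Trans_G(Z',A') \to \Trans_G(A,A')$ by $(a,b) \mapsto ahb$, so $\Hom_{\cE_G}((A,A',g),(Z,Z',h)) = \mu^{-1}(g)$ by definition. Unwinding the $\natural$-construction together with the composition law in $\cA_G$ (transporter multiplication) identifies the hom-set in $\cA_G(\pi_0)^\natural$ with the fiber $\pi_0(\mu)^{-1}([g])$, and the map induced by $F$ is the canonical $\pi_0(\mu^{-1}(g)) \to \pi_0(\mu)^{-1}([g])$ that I need to show is bijective.

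The geometric input is that every transporter $\Trans_G(X,Y)$ is a disjoint union of $\Cent_G(X)$-torsors under left multiplication, one over each point in the image of the restriction map $r\colon \Trans_G(X,Y) \to \Hom_\group(X,Y)$, so the connected components of $\Trans_G(X,Y)$ are exactly the orbits of the smooth connected subgroup $\Cent_G(X)^0$ (smoothness uses that the order of $X$ is invertible in $k$). Since $\mu$ is $\Cent_G(A)^0$-equivariant via left multiplication on the first factor, the image under $\mu$ of any component $Y_{(\alpha,\beta)}$ of $Y$ equals a single component $\gamma$ of the target; choosing $g_0 \in \gamma$ and $F_0 := Y_{(\alpha,\beta)} \cap \mu^{-1}(g_0)$, the action map $\Cent_G(A)^0 \times F_0 \to Y_{(\alpha,\beta)}$, $(c,f)\mapsto c\cdot f$, is a scheme isomorphism, forcing $F_0$ to be connected. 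Fullness then follows by adjusting any $(a_0,b_0) \in Y_{(\alpha,\beta)}$ with $a_0 h b_0 \in \gamma = [g]$ by the unique $c \in \Cent_G(A)^0$ satisfying $c \cdot a_0 h b_0 = g$, and faithfulness follows since two points of $\mu^{-1}(g)$ in the same $Y_{(\alpha,\beta)}$ lie in the connected fiber $F_0$. The statements on essential finiteness and on the equivalence $\cE_G(\pi_0) \to \cE_{G_{k'}}(\pi_0)$ then transport from Lemma \ref{l.Afincat} via the $\natural$-construction and the equivalence $F$.

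For (b), given $(a,b), (a',b') \in \Hom_{\cE_G}((A,A',g),(Z,Z',h))(k)$ in the same connected component, I pick a connected $k$-scheme $T$ mapping to $\Hom_{\cE_G}$ whose image contains both points. The construction of $(a,b)^*$ in Construction \ref{c.6.15} --- pullback by $b^{-1}\colon X^{A'} \to X^{Z'}$, base change for the right square of \eqref{e.6.15.3}, and insertion of the 2-isomorphism $[1/c_h] \circ [b^{-1}/c_a] \simeq [1/c_g]$ witnessed algebraically by $b^{-1}$ --- is functorial in $(a,b)$ and extends to a universal $T$-family, producing a single morphism
\[
H^0(X^{Z'} \times T,\, R^q\pi_*[1/c_h]^*K \boxtimes \F_\ell) \longrightarrow H^0(X^{A'} \times T,\, R^q\pi_*[1/c_g]^*K \boxtimes \F_\ell).
\]
Using the universal 2-isomorphism to collapse $[b_T^{-1}/c_{a_T}]^*[1/c_h]^*K$ to the constant pullback of $[1/c_g]^*K$ on the target side, both sides become outer tensor products with the constant sheaf $\F_\ell$ on $T$; the K\"unneth formula applied to connected $T$ identifies each with $H^0(X^{\bullet},\cdot) \otimes H^0(T,\F_\ell) = H^0(X^{\bullet},\cdot)$. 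Specialization at any $t \in T(k)$ recovers the map $(a_t,b_t)^*$ of Construction \ref{c.6.15}, so $(a,b)^* = (a',b')^*$.

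The hard part will be the careful verification that the torsor decomposition of transporters in (a) is valid --- in particular, that the connected components of $\Trans_G(X,Y)$ really are $\Cent_G(X)^0$-orbits and that the action map $\Cent_G(A)^0 \times F_0 \to Y_{(\alpha,\beta)}$ is a scheme isomorphism (not merely a bijection on $k$-points) --- together with the confirmation in (b) that the 2-isomorphism in \eqref{e.6.15.3}, though dependent on $(a,b)$, varies algebraically over $T$ so that the universal $T$-family of cohomology maps is genuinely well-defined.
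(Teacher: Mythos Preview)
Your proof is correct, but for part (a) you take a genuinely different route from the paper. The paper's argument exploits the fact that the constraint $g=ahb$ determines $a$ uniquely from $b$ (namely $a=gb^{-1}h^{-1}$), so the second projection $\phi\colon (a,b)\mapsto b$ embeds $\Hom_{\cE_G}((A,A',g),(Z,Z',h))$ into $\Trans_G(Z',A')$; a Cartesian diagram over the finite discrete scheme $\Hom_\group(Z',A')$ then shows $\phi$ is an open and closed immersion, giving faithfulness at once. Fullness is handled by fixing $b\in\beta(k)$ and showing that $\psi\colon a\mapsto ahb$ is likewise an open and closed immersion $\Trans_G(A,Z)\hookrightarrow\Trans_G(A,A')$ carrying $\alpha$ onto $\gamma$. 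This avoids any appeal to smoothness of centralizers: the only structural input is that $\Hom_\group(Z',A')$ is discrete. Your torsor approach via $\Cent_G(A)^0$-orbits is valid (the smoothness of $\Cent_G(A)$ does follow from the tangent-space computation in Theorem~\ref{t.Serre}, as you indicate), but it requires that extra verification and is less direct. What your approach buys is a more symmetric, geometric picture of the fibers of $\mu$; what the paper's buys is brevity and freedom from auxiliary smoothness hypotheses.

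For part (b), your argument is essentially the paper's, stripped of the Artin $\cD$-stack formalism (Definitions~\ref{d.dstack}--\ref{r.homotinv}). The paper sets up the universal diagram \eqref{e.6.18.3} over $T=\Hom_{\cE_G}$ all at once, notes that this makes $(H^0(X^{A'},R^q\pi_*[1/c_g]^*K))$ into an object of $\Mod_\cart(\Spec(k)_{\cE_G},\F_\ell)$, and invokes the observation that such an object is exactly a system indexed by $\cE_G(\pi_0)$ because a morphism of constant sheaves on the connected components of $T$ is constant. Your K\"unneth phrasing is the same content: the point you flag as ``hard'' (that the $2$-isomorphism varies algebraically) is exactly what the universal diagram \eqref{e.6.18.3} provides, and the collapse to a single map is the statement that $\Hom$ between constant sheaves on connected $T$ is just $\Hom$ of the values.
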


\begin{remark}
The projective system in Lemma \ref{l.6.18} (b) does not factor through
$(\cA_G^\flat)^\natural$ in general. Indeed, if $G$ is a finite discrete
group of order prime to $\ell$, then $\cA_G(\pi_0)$ and
$\cA_G(\pi_0)^\natural$ are both connected groupoids of fundamental group
$G$, while $\cA_G^\flat$ is a simply connected groupoid. If $K\in
\Mod_c(BG,\F_\ell)$, then the projective system in Lemma \ref{l.6.18} (b)
can be identified with the $\F_\ell$-representation of $G$ corresponding to
$K$.
\end{remark}

The proof of Lemma \ref{l.6.18} will be given after Remark \ref{r.homotinv}.
We will exploit the fact that the family of stacks $[X^{A'}/A]$
parameterized by $(A,A',g) \in \mathcal{A}_G(k)^{\natural}$ underlies a
family ``algebraically parameterized'' by $\mathcal{E}_G$. To make sense of
this, the following general framework will be convenient.

\begin{definition}\label{d.dstack}
Let $\mathcal{D}$ be a category enriched in $\AlgSp_{/\kappa}$ (Definition
\ref{s.enrich}). By a \emph{family of Artin $\kappa$-stacks parameterized by
$\mathcal{D}$}, or, for short,  an \emph{Artin $\mathcal{D}$-stack}, we mean
a collection
\[X=(X_A,x_{A,B},\sigma_A, \gamma_{A,B,C})_{A,B,C\in\cD},\]
where $X_A$ is an Artin stack over~$\kappa$, $x_{A,B}\colon X_A\times
\Hom_\cD(A,B)\to X_B$ is a morphism of Artin stacks over~$\kappa$,
$\sigma_A$ and $\gamma_{A,B,C}$ are 2-morphisms:
\[\xymatrix{X_A\ar[r]^-{\id_{X_A}\times i}\ar[rd]_{\id} & X_A\times\Hom_\cD(A,A)\ar[d]^{x_{A,A}}\\
&X_A\ultwocell\omit{<2>\sigma_A}}\]
\[\xymatrix{X_A\times \Hom_\cD(A,B)\times \Hom_\cD(B,C)\ar[d]_{x_{A,B}\times \id_{\Hom_\cD(B,C)}}\ar[r]^-{\id_{X_A}\times c}\ar@{}[rd]_(.4){\gamma_{A,B,C}} &  X_A\times \Hom_\cD(A,C)\ar[d]^{x_{A,C}}\\
X_B\times \Hom_\cD(B,C)\ar[r]^{x_{B,C}} & X_C,\ultwocell\omit{}}\]
satisfying identities of 2-morphisms expressing the unit and associativity
axioms. Here $i\colon \Spec (\kappa)\to \Hom_\cD(A,A)$ is the unit section
and $c\colon \Hom_\cD(A,B)\times \Hom_\cD(B,C)\to \Hom_\cD(A,C)$ is the
composition.

A morphism $f\colon X\to Y$ of Artin $\cD$-stacks is a collection
$((f_A)_{A\in \cD},(\phi_{A,B})_{A,B\in \cD})$, where $f_A\colon X_A\to Y_A$
is a morphism of Artin stacks over $\kappa$ and $\phi_{A,B}$ is a
2-morphism:
\[\xymatrix{X_A\times_{\Spec(\kappa)}\Hom_{\cD}(A,B)\ar[r]^-{x_{A,B}}\ar[d]_{f_A\times \id}\drtwocell\omit{}\ar@{}[rd]^(.6){\phi_{A,B}} & X_B\ar[d]^{f_B}\\
Y_A\times_{\Spec(\kappa)}\Hom_{\cD}(A,B)\ar[r]^-{y_{A,B}} & Y_B}
\]
satisfying certain identities of 2-morphisms with respect to the unit
section $i$ and the composition~$c$.
\end{definition}

\begin{definition}
Let $\Lambda$ be a commutative ring and let $X$ be an Artin $\cD$-stack. We
define a category $D_\cart(X,\Lambda)$ as follows. An object of
$D_\cart(X,\Lambda)$ is a collection $((K_A)_{A\in
\cD},(\alpha_{A,B})_{A,B\in \cD})$, where $K_A\in D_\cart(X_A,\Lambda)$,
$\alpha_{A,B}\colon x_{A,B}^*K_B\to p^*K_A$, $p\colon
X_A\times\Hom_\cD(A,B)\to X_A$ is the projection, such that the following
diagrams commute
\[\xymatrix{i^*x_{A,A}^* K_A\ar[r]^{i^*\alpha_{A,A}}\ar[rd]_{\sigma_A^*}^\simeq &i^*p^* K_A\ar[d]^{\simeq} & x_{A,C}^*
K_C\ar[r]^{\alpha_{A,C}}\ar[d]_{\gamma^*_{A,B,C}}^\simeq
& p^* K_A\\
&K_A &x_{A,B}^*x_{B,C}^* K_{C} \ar[r]^{\alpha_{B,C}} &p^* x_{A,B}^*
K_B. \ar[u]_{\alpha_{A,B}}}
\]
A morphism $K\to L$ in $D_\cart(X,\Lambda)$ is a collection $(K_A\to
L_A)_{A\in \cD}$ of morphisms in $D_\cart(X_A,\Lambda)$ commuting with
$\alpha_{A,B}$.  If $S$ is an Artin stack over $\kappa$, we denote by
$S_\cD$ the constant Artin $\cD$-stack. If $\kappa$ is separably closed and
$\Hom_{\cD}(A,B)$ is noetherian for every $A$ and every $B$ in $\cD$, then
$\Mod_\cart(\Spec(\kappa)_\cD,\Lambda)$ is equivalent to the category of
projective systems of $\Lambda$-modules indexed by $\cD(\pi_0)$. Indeed, in
this case, $\alpha_{A,B}\colon p^*K_B\to p^* K_A$ is a morphism between
constant sheaves on $\Hom_{\cD}(A,B)$, and has to be constant on every
connected component of $\Hom_{\cD}(A,B)$.
\end{definition}

\begin{remark}
If $\mathcal{D}$ is discrete (i.e.\ induced from a usual category) and $X$
is a $\mathcal{D}$-scheme, i.e.\ a functor from $\mathcal{D}$ to the
category of $\kappa$-schemes, the category $D_{\cart}(X,\Lambda)$ consists
of families of objects $K_A \in D(X_A,\Lambda)$ and compatible transition
maps $X_f^*K_B \to K_A$ for $f\colon A \to B$, and should not be confused
with the derived category of sheaves of $\Lambda$-modules on the total
\'etale topos of $X$ over $\mathcal{D}$.
\end{remark}

\begin{construction}
Let $f = ((f_A)_{A \in \mathcal{D}}, (\phi_{A,B})_{A,B\in \mathcal{D}})$ be
a morphism of Artin $\mathcal{D}$-stacks. The functors $f_A^*$ induce a
functor $f^*\colon D_\cart(Y,\Lambda)\to D_\cart(X,\Lambda)$. On the other
hand, for $K\in D_\cart(X,\Lambda)$ we have a diagram
\begin{equation}\label{e.homotinv}
\xymatrix{y^*_{A,B}Rf_{B*}K_B \ar[d] & p^* Rf_{A*}K_A\ar[d]\\
R(f_{A}\times \id)_*x_{A,B}^*K_B\ar[r]^{\alpha_{A,B}} & R(f_{A}\times \id)_*p^* K_A}
\end{equation}
where the left (resp.\ right) vertical arrow is  base change for the square
$\phi_{A,B}$ (resp.\ for the obvious cartesian square).

Assume that $\Lambda$ is annihilated by an integer invertible in $\kappa$,
and that the condition (a) (resp.\ (b)) below holds:
\begin{enumerate}
  \item $\Hom_{\cD}(A,B)$ is smooth over $\kappa$ for all objects $A$, $B$
      in $\cD$;
  \item $f_A$ is quasi-compact and quasi-separated and $K_A\in D^+_\cart$
      for every object $A$ of $\cD$.
\end{enumerate}
Then the right vertical arrow is an isomorphism by smooth base change
(resp.\ generic base change (Remark \ref{r.gbc})) from $\Spec (\kappa)$ to
$\Hom_{\cD}(A,B)$, and thus the diagram \eqref{e.homotinv} defines a map
$y_{A,B}^*Rf_{B*}K_B \to p^*Rf_{A*}K_A$. These maps endow $(Rf_{A*}K_A)$
with a structure of object of $D_{\cart}(Y,\Lambda)$. We thus get a functor
\[Rf_*\colon D_\cart(X,\Lambda)\to D_\cart(Y,\Lambda)\quad (\text{resp.\ $D^+_\cart(X,\Lambda)\to D^+_\cart(Y,\Lambda)$}).
\]
The adjunctions
$\id_{D_\cart(X_A,\Lambda)}\to Rf_{A*}f_A^*$ induce a natural transformation
$\id \to Rf_* f^*$.
\end{construction}

\begin{remark}\label{r.homotinv}
The construction of $Rf_*$ above encodes the homotopy-invariance of \'etale
cohomology \cite[XV Lemme 2.1.3]{SGA7}. More precisely, assume $\kappa$
separably closed. Let $Y, Y'$ be two Artin stacks over $\kappa$, $L\in
D_\cart(Y,\Lambda)$, $L'\in D_\cart(Y',\Lambda)$. A morphism $c\colon
(Y,L)\to (Y',L')$ is a pair $(g,\phi)$, where $g\colon Y\to Y'$, $\phi\colon
g^*L'\to L$. Following \cite[XV Section 2.1]{SGA7}, we say that two
morphisms $c_0, c_1\colon (Y,L)\to (Y',L')$ are \emph{homotopic} if there
exists a connected scheme $T$ of finite type over $\kappa$, two points
$0,1\in T(\kappa)$, a morphism $(Y\times_{\Spec(\kappa)} T, \pr_1^* L)\to
(Y,L')$ inducing $c_0$ and $c_1$ by taking fibers at $0$ and $1$,
respectively. This is equivalent to the existence of an Artin $\cD_T$-stack
$X$ and an object $K\in D_\cart(X,\Lambda)$ such that $X_A=Y$, $X_{A'}=Y'$,
$K_A=L$, $K_{A'}=L'$ and inducing $c_0$ and $c_1$ by taking fibers at $0$
and $1$. Here $\cD_T$ is the $\FTSch{\kappa}$-enriched category with
$\Ob(\cD_T)=\{A,A'\}$,
$\Hom_{\cD_T}(A,A)=\Hom_{\cD_T}(A',A')=\Spec(\kappa)$,
$\Hom_{\cD_T}(A',A)=\emptyset$ and $\Hom_{\cD_T}(A,A')=T$. If $c_0$ and
$c_1$ are homotopic, then $c_0^*= c_1^*\colon H^*(Y',L')\to H^*(Y,L)$. To
prove this, we may assume that $T$ is a smooth curve as in \cite[XV Lemme
2.1.3]{SGA7}. Let $a\colon X\to \Spec(\kappa)_{\cD_T}$ be the projection. By
the above, $R^* a_* K$ is a projective system of graded $\Lambda$-modules
indexed by $\cD_T(\pi_0)$, and $c_0^*=c_1^*$ is the image of the nontrivial
arrow of $\cD_T(\pi_0)$.
\end{remark}

\begin{proof}[Proof of Lemma \ref{l.6.18}.]
By construction, $F$ is essentially surjective. Consider the morphism of
schemes $\phi\colon \Hom_{\cE_G}((A,A',g),(Z,Z',h))\to
\Hom_{\cA_G}(Z',A')=\Trans_G(Z',A')$ given by $(a,b)\mapsto b$. It fits into
the following Cartesian diagram
\[\xymatrix{\Hom_{\cE_G}((A,A',g),(Z,Z',h)) \ar[d]\ar[r]^-{\phi} & \Trans_G(Z',A')\ar[d]\\
\{t\in\Hom(Z',A')\mid t(c_h(Z))\supset c_g(A) \}\ar@{^{(}->}[r] & \Hom(Z',A').}
\]
In particular, $\phi$ is an open and closed immersion and induces an
injection on
\[\Hom_{\cE_G(\pi_0)}((A,A',g),(Z,Z',h))\to
\Hom_{\cA_G(\pi_0)}(Z',A').\]
In other words, the composite functor $p_2\circ
F\colon \cE_G(\pi_0)\to \cA_G(\pi_0)^{\op}$ is faithful, where $p_2\colon
\cA_G(\pi_0)^\natural \to \cA_G(\pi_0)^{\op}$. Therefore, $F$ is faithful.
To show that $F$ is full, let $(\alpha,\beta)\colon F(A,A',g)\to F(Z,Z',h)$
be a morphism in $\cA_G(\pi_0)^\natural$. Choose $b\in \beta(k)\subset
G(k)$. Then we have a Cartesian diagram
\[\xymatrix{\Trans_G(A,Z) \ar[r]^\psi\ar[d] & \Trans_G(A,A')\ar[d]\\
\Hom(A,Z)\ar@{^{(}->}[r] &\Hom(A,A'),}
\]
where $\psi\colon a\mapsto ahb$. In particular, $\psi$ is an open and closed
immersion. The map $\pi_0(\Trans_G(A,Z))\to \pi_0(\Trans_G(A,A'))$ induced
by $\psi$ carries $\alpha$ to $\gamma=\alpha\eta\beta$, where $\gamma\in
\pi_0(\Trans_G(A,A'))$ and $\eta\in \pi_0(\Trans_G(Z,Z'))$ are the connected
components of $g$ and $h$, respectively. Thus there exists $a\in
\alpha(k)\subset G(k)$ such that $g=\psi(a)=ahb$. Then $(a,b)\colon
(A,A',g)\to (Z,Z',h)$ is a morphism in $\cE_G(k)=\cA_G(k)^\natural$, and
induces a morphism $\tau$ in $\cE_G(\pi_0)$ such that
$F(\tau)=(\alpha,\beta)$. Therefore, $F$ is an equivalence of categories.
The second assertion of (a) follows from this and Lemma \ref{l.Afincat}.

Let us prove (b). For $(A,A',g)$ and $(Z,Z',h)$ in $\mathcal{E}_G(k)$,
consider the scheme
\[
T = \Hom_{\mathcal{E}_G}((A,A',g),(Z,Z',h))
\]
and the tautological section $t = (\underline{a},\underline{b}) \in T(T)$.
Then, if $[X^{A'}/A]_T$ (resp.\ $[X^{Z'}/Z]_T$) denotes the product of
$[X^{A'}/A]$ (resp.\ $[X^{Z'}/Z]$) with $T$ over $\Spec(k)$, $t$ defines a
morphism of stacks $[\underline{b}^{-1}/c_{\underline{a}}] \colon
[X^{A'}/A]_T \to [X^{Z'}/Z]_T$ over $T$, whose fiber at $(a,b)$ is
$[b^{-1}/c_a]$. These morphisms are compatible with composition of morphisms
up to 2-morphisms, and define a structure of $\mathcal{E}_G$-stack
(Definition \ref{d.dstack}) on the family of stacks $[X^{A'}/A]$ for
$(A,A',g) \in \mathcal{E}_G(k)$. Moreover, we have a diagram over $T$
\begin{equation}\label{e.6.18.3}
\xymatrix{{} & [X^{A'}/A]_T \ar[d]^{[\underline{b}^{-1}/c_{\underline{a}}]} \ar[dl]_{[1/c_g]} \ar[r]^{\pi} & X^{A'}_T \ar[d]^{\underline{b}^{-1}}\\
[X/G]_T & [X^{Z'}/Z]_T \ar[l]^{[1/c_h]} \ar[r]^{\pi} & X^{Z'}_T,}
\end{equation}
where the 2-morphism of the triangle is induced by $\underline{b}$. The
fiber of \eqref{e.6.18.3} at $(a,b)$ is \eqref{e.6.15.3}.  Therefore we get
morphisms of Artin $\mathcal{E}_G$-stacks
\[
[X/G]_{\mathcal{E}_G} \from ([X^{A'}/A])_{(A,A',g)} \xrightarrow{\pi} (X^{A'})_{(A,A',g)} .
\]
Thus the system $H^0(X^{A'},R^q\pi_*[1/c_g]^* K)$ indexed by $(A,A',g)\in
\cA_G(k)^\natural$ can be extended to an object of
$\Mod_\cart(\Spec(k)_{\cE_G},\F_\ell)$, which amounts to a system indexed by
$\cE_G(\pi_0)$. More concretely, the morphism $(a,b)^*$ \eqref{e.6.15.4} is
the stalk at $(a,b)$ of a morphism of constant sheaves on $T$
\begin{equation}\label{e.6.18.4}
(\underline{a},\underline{b})^* \colon H^0(X^{Z'},R^q\pi_*[1/c_h]^*K)_T \to
H^0(X^{A'},R^q\pi_*[1/c_g]^*K)_T,
\end{equation}
defined by $(\underline{a},\underline{b})$ via \eqref{e.6.18.3}. Therefore
it depends only on the connected component of $(a,b)$ in $T$.
\end{proof}

We need the following lemma for the proof of Theorem \ref{t.main2} (a).

\begin{lemma}\label{l.fintriv}
Let $Y$ be an algebraic space over $k$, and let $A$ be a finite discrete
group. Let $L \in D^b_c([Y/A],\F_{\ell})$, where $A$ acts trivially on~$Y$.
Let $\pi \colon [Y/A] = BA \times Y \to Y$ be the second projection.
Consider the structure of $H^*(BA,\F_\ell)$-module on $R^*\pi_*L$ given by
Constructions \ref{s.tensor} and \ref{s.rtopos}, as $R^*\pi_*\F_\ell$ is a
constant sheaf of value $H^*(BA,\F_\ell)$. Then $R^*\pi_*L$ is a sheaf of
constructible $H^*(BA,\F_\ell)$-modules.
\end{lemma}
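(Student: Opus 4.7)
The plan is to reduce the statement to sheafified group cohomology and then invoke the already-proven finiteness (Theorem~\ref{t.finite}) applied to $G=A$. Since $H^*(BA,\F_\ell)$ is noetherian (a special case of Theorem~\ref{t.finite}), the class of sheaves of graded constructible $H^*(BA,\F_\ell)$-modules on $Y$ is preserved under extensions, so I would first reduce via the second hypercohomology spectral sequence $E_2^{p,q}=R^p\pi_*\cH^q L\Rightarrow R^{p+q}\pi_* L$ (whose differentials respect the $H^*(BA,\F_\ell)$-module structure provided by Constructions \ref{s.tensor} and \ref{s.rtopos}) to the case where $L$ is a single constructible sheaf $\cF$ on $[Y/A]$.

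By Corollary~\ref{c.XG}, such an $\cF$ corresponds to a constructible $\F_\ell$-sheaf $\cG$ on $Y$ equipped with an action of the finite group $A$ (the trivial $A$-action on $Y$ entering only via equivariance). An unwinding of the topos $[Y/A]_\sm$ shows that $\pi_*\cF=\cG^A$ and, more generally, $R^q\pi_*\cF$ is the sheafification of $U\mapsto H^q(A,\cG(U))$; in particular, the stalk at a geometric point $\bar y\in Y$ is the group cohomology $H^q(A,\cG_{\bar y})$, and the module structure on $R^*\pi_*\cF$ reduces stalkwise to the canonical $H^*(A,\F_\ell)$-action on derived $A$-invariants.

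Next I would stratify $Y$ by locally closed subspaces $(Y_\alpha)$ over which $\cG$ is locally constant constructible, and on each stratum choose a finite \'etale Galois cover $u_\alpha\colon \tilde Y_\alpha\to Y_\alpha$ trivializing $\cG|_{Y_\alpha}$. On $\tilde Y_\alpha$, the pullback $u_\alpha^*\cG$ is the constant sheaf of value a finite $\F_\ell[A]$-module $M_\alpha$, carrying an action of $\Gal(u_\alpha)$ commuting with the $A$-action. Consequently $u_\alpha^*R^*\pi_*\cF$ is the constant sheaf with value the graded $H^*(BA,\F_\ell)$-module $H^*(A,M_\alpha)$, which is finitely generated over $H^*(BA,\F_\ell)$ by Theorem~\ref{t.finite} applied to $G=A$, $X=\Spec(k)$, and the complex $M_\alpha\in D^b_c(BA,\F_\ell)$. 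Finite \'etale descent then gives that $R^*\pi_*\cF|_{Y_\alpha}$ is locally constant with stalk the finitely generated $H^*(BA,\F_\ell)$-module $H^*(A,M_\alpha)$, endowed with the induced monodromy; this yields constructibility stratum by stratum.

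The main obstacle I anticipate is the compatibility check hidden in the second step: one must verify that the $H^*(BA,\F_\ell)$-module structure on $R^*\pi_*\cF$ defined by the lax $\otimes$-structure of $R\pi_*$ agrees, under the equivalence of Corollary~\ref{c.XG}, with the Yoneda action of $H^*(A,\F_\ell)=\Ext^*_{\F_\ell[A]}(\F_\ell,\F_\ell)$ on the derived functor of $A$-invariants. This is a formal but non-trivial piece of bookkeeping which must be in place for the finite generation proven on the covers $\tilde Y_\alpha$ to translate back into the finite generation for $R^*\pi_*\cF$ claimed by the lemma.
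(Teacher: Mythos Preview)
Your proposal is correct and follows essentially the same route as the paper: reduce to a single constructible sheaf, stratify $Y$ so that the sheaf is locally constant on each stratum (using finiteness of $A$ for base change to the strata), trivialize on an \'etale cover, and invoke Theorem~\ref{t.finite} to see that the stalk $H^*(BA,L_\alpha)$ is a finite $H^*(BA,\F_\ell)$-module. The compatibility you flag as an obstacle is not a genuine difficulty: the module structure on $R^*\pi_*L$ coming from the right-lax $\otimes$-structure on $R\pi_*$ and the module structure on $H^*(BA,L_\alpha)$ used in Theorem~\ref{t.finite} both arise from the same right-lax $\otimes$-structure on $R\Gamma(BA,-)$ (Construction~\ref{s.rtopos}), so they agree by construction and no separate identification with the Yoneda product is needed.
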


\begin{proof}
We may assume $L$ concentrated in degree zero. Suppose first that $L$ is
locally constant. Then $R^*\pi_*L$ is a locally constant, constructible
sheaf of $H^*(BA,\F_\ell)$-modules. Indeed, by definition there is an
\'etale covering $(U_\alpha)$ of $Y$ such that $L\res [U_\alpha/A]$
(considered as a sheaf of $\F_\ell[A]$-modules on $U_\alpha$) is a constant
$\F_{\ell}[A]$-module of finite dimension over $\F_{\ell}$ of value
$L_{\alpha}$. Then $R^*\pi_*L \res U_\alpha$ is a constant
$H^*(BA,\F_\ell)$-module of value $H^*(BA,L_\alpha)$. By Theorem
\ref{t.finite}, $H^*(BA,L_{\alpha})$ is a finite $H^*(BA,\F_\ell)$-module,
so the lemma is proved in this case. In general, we may assume $Y$ to be an
affine scheme. Take a finite stratification $Y = \bigcup Y_{\alpha}$ into
disjoint locally closed constructible subsets such that $L \res Y_{\alpha}$
is locally constant, or equivalently, that $L\res [Y_\alpha/A]$ is locally
constant. Then, if $\pi_{\alpha} = \pi \res [Y_{\alpha}/A] \to Y_{\alpha}$,
$(R^*\pi_*L )\res Y_{\alpha} \simeq R\pi_{\alpha *}(L \res Y_{\alpha})$ by
the finiteness of $A$, and we conclude by the preceding case.
\end{proof}

\begin{proof}[Proof of Theorem \ref{t.main2} (a).]
By Lemma \ref{l.6.18} (b) we can rewrite $R^q_G(X,K)$ in the form
\[
R^q_G(X,K) \coloneqq \varprojlim_{(A,A',g) \in \mathcal{E}_G(\pi_0)} H^0(X^{A'},R^q\pi_*[1/c_g]^*K).
\]
As $\mathcal{E}_G(\pi_0)$ is essentially finite (Lemma \ref{l.6.18} (a)) and
$R^q\pi_*[1/c_g]^*K$ is constructible, the first assertion follows. Let us
now prove the second assertion. As $\mathcal{E}_G(\pi_0)$ is equivalent to a
finite category, it is enough to show that, for all $(A,A',g)$,
$H^0(X^{A'},R^*\pi_* [1/c_g]^* K)$ is a finite $H^*(BG,\F_\ell)$-module. As
$A$ acts trivially on $X^{A'}$, $R^*\pi_*[1/c_g]^*K$ is a constructible
sheaf of $H^*(BA,\F_\ell)$-modules by Lemma \ref{l.fintriv}. Therefore
$H^0(X^{A'},R^*\pi_*[1/c_g]^*K)$ is a finite $H^*(BA,\F_\ell)$-module, thus,
by Corollary \ref{c.finite}, a finite $H^*(BG,\F_\ell)$-module.
\end{proof}

\section{Points of Artin stacks}\label{s.5}

In this section we discuss two kinds of points of Artin stacks which will be
of use to us:
\begin{enumerate}
\item \emph{geometric points}, which generalize the usual geometric points
    of schemes,

\item \emph{$\ell$-elementary points}, which depend on a prime number
    $\ell$, and are adapted to the study of the maps $a(G,X) $
    \eqref{e.aGX} and $a_G(X,K)$ (Theorem \ref{t.main2}).
\end{enumerate}

The statement of the main structure theorem on Artin stacks (Theorem
\ref{p.str}) requires only the notion (b). The notion (a) is a technical
tool used in the proof.

\begin{definition}\label{s.point}
Let $\mathcal{X}$ be a Deligne-Mumford stack. By a \emph{geometric point} of
$\mathcal{X}$ we mean a morphism $x \to \mathcal{X}$, where $x$ is the
spectrum of a separably closed field. The geometric points of $\mathcal{X}$
form a category
\[
P_{\mathcal{X}},
\]
where a morphism from $x \to \mathcal{X}$ to $y \to \mathcal{X}$ is defined
as an $\mathcal{X}$-morphism $\mathcal{X}_{(x)} \to \mathcal{X}_{(y)}$ of
the corresponding strict henselizations \cite[Remarque 6.2.1]{LMB}. The
category $P_{\mathcal{X}}$ is essentially $\mathcal{U}$-small.  One shows as
in \cite[VIII Th\'eor\`eme 7.9]{SGA4} that the functor $(x \to \mathcal{X})
\mapsto (\mathcal{F} \mapsto \mathcal{F}_{x})$ from $P_{\mathcal{X}}$ to the
category of points of the \'etale topos $\mathcal{X}_{\et}$ is an
equivalence of categories.

When $\mathcal{X}$ is a scheme, $P_{\mathcal{X}}$ is the usual category of
geometric points of $\mathcal{X}$. If $\mathcal{X} = \Spec  k$, $k$ a field,
$P_{\mathcal{X}}$ is a connected groupoid whose fundamental group is
isomorphic to the Galois group of $k$.

As $P_\cX$ is an essentially $\cU$-small category, we have a morphism of
topoi
\begin{equation}\label{e.7.1.1}
p \colon \widehat{P_{\cX}} \to \cX_{\et},
\end{equation}
where $\widehat{P_{\cX}}$ denotes the topos of presheaves on $P_\cX$. For a
sheaf $\mathcal{F}$ on $\mathcal{X}$, $p^*\mathcal{F}$ is the presheaf $(x
\to \mathcal{X}) \mapsto \mathcal{F}_x$ on $P_{\mathcal{X}}$, and $p_*$
applied to a presheaf $(K_x)_{x \in P_{\mathcal{X}}}$ is the sheaf whose set
of sections on $U$ is $\varprojlim_{x \in P_{U}} K_x$. In particular we have
an adjunction map
\begin{equation}\label{e.adj}
b_{\mathcal{X},\mathcal{F}} \colon \mathcal{F} \to p_*p^*\mathcal{F},
\end{equation}
which is a monomorphism, as $\cX_{\et}$ has enough points and
$p^*b_{\cX,\cF}$ is a split monomorphism (this fact holds of course more
generally for any topos $\cX$ with an essentially small conservative family
of points $P_\cX$, cf.\ \cite[IV 6.7]{SGA4}).
\end{definition}

\begin{prop}\label{p.sp}
Let $\cX$ be a locally noetherian Deligne-Mumford stack, $\Lambda$ a
noetherian commutative ring, $\cF$ a constructible sheaf of
$\Lambda$-modules on $\cX$. Then the adjunction map $b_{\cX,\cF}\colon
\cF\to p_*p^* \cF$ \eqref{e.adj} is an isomorphism. In particular, the
homomorphism
  \begin{equation}\label{e.sp}
  \phi\colon \cF(\cX)\to \varprojlim_{x\in P_\cX} \cF_x
  \end{equation}
  is an isomorphism.
\end{prop}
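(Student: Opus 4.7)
The plan is to establish surjectivity of $b_{\cX,\cF}$, since its injectivity is already known; surjectivity can be verified on stalks at geometric points. First, I would reduce to the case where $\cX$ is a scheme by choosing an \'etale atlas $u\colon X\to\cX$. For any \'etale $V\to X$, the category $P_V$ is the same whether $V$ is viewed over $\cX$ or as a standalone scheme, and stalks of $\cF$ are unchanged by \'etale morphisms; hence $b_{\cX,\cF}$ restricts to $b_{X,\cF\res X}$ on the \'etale site of $X$, and it suffices to treat the scheme case.

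Assume now $\cX=X$ is a scheme, and fix a geometric point $x$ of $X$. For each \'etale neighborhood $V$ of $x$, projection to the $y=x$ coordinate of the limit gives a map $\varprojlim_{y\in P_V}\cF_y\to\cF_x$; passing to the colimit over $V$ yields a retraction $r_x\colon (p_*p^*\cF)_x\to\cF_x$ of the stalk of $b_{X,\cF}$ at $x$. It therefore remains to show $r_x$ is injective: given a compatible family $(s_y)_{y\in P_V}$ with $s_x=0$, one must exhibit an \'etale neighborhood $V'\subset V$ of $x$ over which the restricted family is identically zero.

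To analyze this, I would pass to the strict henselization $X_{(x)}=\varprojlim V$: one has $\cF(X_{(x)})=\cF_x$ by definition of the stalk, and in the category $P_{X_{(x)}}$ the closed point $x$ plays a distinguished role, since for every geometric point $y$ of $X_{(x)}$, the structure morphism $X_{(y)}\to X_{(x)}$ provides a canonical arrow $y\to x$ in $P_{X_{(x)}}$ inducing on stalks the cospecialization $\cF_x\to\cF_y$. It follows from functoriality of cospecialization that a compatible family over $P_{X_{(x)}}$ is determined by its component at $x$; in particular, vanishing at $x$ forces the whole family to vanish.

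The main obstacle is the last step of descending from $X_{(x)}$ back to a single \'etale neighborhood $V'$: the colimit over \'etale neighborhoods $V$ of $x$ does not obviously commute with the inverse limits $\varprojlim_{y\in P_V}$, so the pro-\'etale assertion on $X_{(x)}$ does not automatically give one on some $V'$. Here constructibility of $\cF$ is essential, allowing one to use a finite stratification of a sufficiently small $V$ on which $\cF$ becomes locally constant together with a noetherian approximation to descend the vanishing of the limiting family from $X_{(x)}$ to some $V'$. Once this is done, $r_x$ is injective, hence an isomorphism at every geometric point, and $b_{\cX,\cF}$ is an isomorphism.
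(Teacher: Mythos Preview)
Your outline is correct and converges with the paper's argument, but the step you flag as the ``main obstacle'' is exactly the heart of the proof, and you do not actually carry it out. Your reduction to schemes is fine (the paper works directly on DM stacks, but this is cosmetic), and your observation that $x$ is final in $P_{X_{(x)}}$ so that compatible families there are determined by $s_x$ is correct. The problem, as you say, is that $(p_*p^*\cF)_x = \varinjlim_V \varprojlim_{y\in P_V}\cF_y$ is not $\varprojlim_{y\in P_{X_{(x)}}}\cF_y$: the colimit does not commute with the inverse limit.

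What is needed to close the gap is precisely the stratification argument the paper gives (in its computation of $i^*(p_*p^*j_!j^*\cF)$). Given $(s_y)_{y\in P_V}$ with $s_x=0$, stratify $V$ by constructible locally closed $V_\alpha$ on which $\cF$ is locally constant; then specialization maps within each stratum become isomorphisms on stalks, so the projective system factors through the localization $P_{V,(V_\alpha)}$ of $P_V$ at those maps. Now let $V'\subset V$ be the union of strata from which $x$ is reachable in $P_{V,(V_\alpha)}$; one checks $V'$ is open (closed under generization). Every geometric point of $V'$ then admits a morphism to $x$ in the localized category, and chasing the zigzag forces $s_y=0$. This shows the restricted family on $P_{V'}$ vanishes, completing your argument. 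Your phrase ``noetherian approximation'' does not capture this; the content is really the combinatorics of the localized category of points.

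Organizationally, the paper does a d\'evissage by open/closed decomposition to reduce to locally constant $\cF$; the technical step above appears there as the vanishing $i^*(p_*p^*j_!j^*\cF)=0$ at points of the closed complement. Your direct stalk computation avoids the d\'evissage but requires the same technical input, so neither route is essentially shorter.
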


\begin{proof}
If $f\colon\cY\to\cX$ is a morphism of Deligne-Mumford stacks, the square of
topoi
  \[\xymatrix{\widehat{P_\cY} \ar[r]^{p_\cY}\ar[d]_{P_f} & \cY_{\et}\ar[d]^{f_\et}\\
  \widehat{P_\cX} \ar[r]^{p_{\cX}} & \cX_{\et}}\]
  commutes and induces base change morphisms
  \begin{equation}\label{e.spbc1}
    p_\cX^* f_* \to P_{f*} p_\cY^*
  \end{equation}
  and
  \begin{equation}\label{e.spbc2}
    f^* p_{\cX*} \to p_{\cY*} P_f^*
  \end{equation}
and commutative diagrams
\begin{equation}\label{e.spbc3}
  \xymatrix{f_* \cG \ar[r]^{f_* b_{\cY,\cG}}\ar[d]_{b_{\cX,f_*\cG}} & f_* p_{\cY*} p_{\cY}^* \cG\ar[d]^\simeq &f^*\cF\ar[r]^{b_{\cY,f^*\cF}}\ar[d]_{f^*b_{\cX,\cF}} & p_{\cY *} p_\cY^*f^*\cF\ar[d]^\simeq \\
  p_{\cX*}p_\cX^*f_*\cG \ar[r]^{\eqref{e.spbc1}} & p_{\cX*} P_{f*} p_\cY^* \cG &f^* p_{\cX*} p_\cX^* \cF\ar[r]^{\eqref{e.spbc2}} & p_{\cY*} P_f^* p_{\cX}^* \cF .}
\end{equation}
If $f$ is a closed immersion, \eqref{e.spbc1} is an isomorphism. If $f$ is
\'etale, \eqref{e.spbc2} is an isomorphism.

Let $i\colon \cZ\to \cX$ be a closed immersion, and let $j\colon \cU\to \cX$
be the complementary open immersion. Then the following diagram with exact
rows commutes (where we write $p$ for $p_{\cX}$):
  \[\xymatrix{0\ar[r]& j_!j^*\cF \ar[r]\ar[d]_{b_{\cX,j_!j^*\cF}} & \cF\ar[r]\ar[d]^{b_{\cX,\cF}} & i_* i^* \cF\ar[r]\ar[d]^{b_{\cX,i_*i^*\cF}} & 0\\
  0\ar[r] & p_*p^* j_!j^*\cF\ar[r] & p_*p^*\cF\ar[r] & p_*p^*i_*i^*\cF.}
  \]
Thus, to show that $b_{\cX,\cF}$ is an isomorphism, it suffices to show that
both $b_{\cX,i_*i^*\cF}$ and $b_{\cX,j_!j^*\cF}$ are isomorphisms. By the
square on the left of \eqref{e.spbc3} applied to $\cG=i^*\cF$,
$b_{\cX,i_*i^*\cF}$ is an isomorphism if $b_{\cZ,i^*\cF}$ is an isomorphism.
On the other hand, the following diagram commutes:
\begin{equation}\label{e.sp23}
  \xymatrix{j^*\cF\ar[d]_{b_{\cU,j^*\cF}} & j^*j_!j^*\cF \ar[l]_\sim\ar[r]^{j^*b_{\cX,j_!j^*\cF}}\ar[d]^{b_{\cU,j^*j_!j^*\cF}} & j^*p_*p^*j_!j^*\cF\ar[d]^{\eqref{e.spbc2}}_\simeq \\
  p_{\cU*}p_\cU^*j^*\cF & p_{\cU*}p_\cU^*j^*j_!j^*\cF \ar[l]_\sim\ar[r]^\sim & p_{\cU*}P_j^*p^*j_!j^*\cF.}
\end{equation}
  We now prove that
  \begin{equation}\label{e.zero}
  i^*(p_*p^*j_!j^*\cF)=0.
  \end{equation}
By the commutativity of \eqref{e.sp23}, this will imply that
$b_{\cX,j_!j^*\cF}$ is an isomorphism if $b_{\cU,j^*\cF}$ is an isomorphism.
For any geometric point $z\to \cZ$,
  \begin{equation}\label{e.splim}
  (p_*p^*j_!j^*\cF)_z \simeq \varinjlim_{U\in N_\cX(z)^\op} \varprojlim_{u\in P_U}(j_!j^*\cF)_u,
  \end{equation}
where $N_\cX(z)$ is the category of \'etale neighborhoods of $z$ in $\cX$
that are quasi-compact and quasi-separated schemes. Let $U$ be any such
neighborhood. Take a finite stratification $(U_\alpha)_{\alpha\in A}$ of $U$
by connected locally closed constructible subschemes such that the
restrictions $\cF\res U_\alpha$ are locally constant. Let
$P_{U,(U_\alpha)_{\alpha\in A}}$ be the category obtained from $P_U$ by
inverting all arrows in the full subcategories $P_{U_\alpha}$. Geometric
points of the same stratum are isomorphic in $P_{U,(U_\alpha)_{\alpha\in
A}}$. Let $B\subset A$ be the subset of indices $\alpha$ such that there
exists a morphism from a geometric point of $U_\alpha$ to $z$ in
$P_{U,(U_\alpha)_{\alpha\in A}}$. Let $V=\bigcup_{\alpha\in B} U_\alpha$.
Since the geometric points of $V$ are closed under generization in $U$, $V$
is an open subset of $U$. Since specialization maps on the same stratum are
isomorphisms, the projective system $((j_!j^*\cF)_v)_{v\in P_V}$ factors
uniquely through a projective system $((j_!j^*\cF)_x)_{x\in P_{V,
(U_\alpha)_{\alpha\in B}}}$ (where on each stratum $U_\alpha$, $\alpha\in B$
all specialization maps are isomorphisms) and
  \[\varprojlim_{v\in P_V} (j_!j^*\cF)_v \simeq \varprojlim_{x\in P_{V, (U_\alpha)_{\alpha\in B}}} (j_!j^*\cF)_x
  \]
by Lemma \ref{l.loclim} below. Note that $P_V$ contains $z$ and that for any
object $x$ of $P_{V,(U_\alpha)_{\alpha\in B}}$ there exists a morphism from
$x$ to $z$. Therefore, as $(j_!j^*\cF)_z=0$, this limit is zero. This
implies that the full subcategory of $N_\cX(z)^\op$ consisting of the
neighborhoods $U$ such that $\varprojlim_{v\in P_U}(j_!j^*\cF)_v=0$ is
cofinal. It follows that the limit \eqref{e.splim} is zero and hence
\eqref{e.zero} holds, as claimed. To sum up, we have shown that
$b_{\cX,\cF}$ is an isomorphism if both $b_{\cZ,i^*\cF}$ and
$b_{\cU,j^*\cF}$ are isomorphisms.

By induction, we may therefore assume $\cF$ locally constant. Using the
square on the right of \eqref{e.spbc3}, we may assume $\cF$ constant. In
this case it suffices to show that \eqref{e.sp} is an isomorphism. We may
further assume that $\cX$ is connected and noetherian. Then $P_\cX$ is a
connected category and the assertion is trivial.
\end{proof}

\begin{lemma}\label{l.loclim}
Let $\cC$ be a category and let $S$ be a set of morphisms in $\cC$. If we
denote by $F\colon \cC\to S^{-1}\cC$ the localization functor, then $F$ and
$F^\op$ are cofinal (Definition \ref{s.cofinal}).
\end{lemma}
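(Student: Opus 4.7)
The plan is to argue directly from the explicit zig-zag model of the localization, exploiting the fact that $F$ is bijective on objects. Recall that $S^{-1}\cC$ has the same objects as $\cC$ (so $F$ is the identity on objects) and that every morphism in $S^{-1}\cC$ admits a finite zig-zag representative consisting of morphisms of $\cC$, the backward-pointing arrows of which lie in $S$.

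To show that $F$ is cofinal, I would check that $(d\downarrow F)$ is nonempty and connected for every object $d$ of $S^{-1}\cC$. Nonemptiness is immediate: since $d$ is itself an object of $\cC$, the pair $(d,\id_d)$ is an object of $(d\downarrow F)$. For connectedness, given an arbitrary $(c,\phi)\in(d\downarrow F)$, pick a zig-zag representative of $\phi\colon d\to F(c)$ through intermediate objects $d=c_0,c_1,\dots,c_n=c$ in $\cC$, and write $\phi_i\colon d\to F(c_i)$ for the partial composite, so that $\phi_0=\id_d$ and $\phi_n=\phi$. Each forward arrow $g\colon c_{i-1}\to c_i$ in the zig-zag induces a morphism $(c_{i-1},\phi_{i-1})\to(c_i,\phi_i)$ in $(d\downarrow F)$, while each backward arrow $s\colon c_i\to c_{i-1}$ (with $s\in S$) induces a morphism $(c_i,\phi_i)\to(c_{i-1},\phi_{i-1})$ in $(d\downarrow F)$, since by the construction of the partial composites $F(s)\circ\phi_i=\phi_{i-1}$. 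Concatenating these gives a zig-zag in $(d\downarrow F)$ connecting $(d,\id_d)$ to $(c,\phi)$, establishing connectedness.

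The statement for $F^{\op}$ then follows by applying the same argument to the opposite category, since the localization construction commutes with taking opposites: $F^{\op}\colon \cC^{\op}\to (S^{-1}\cC)^{\op}=(S^{\op})^{-1}(\cC^{\op})$ is again a localization functor. There is no real obstacle here; the only point requiring care is tracking the directions of the arrows in a zig-zag representative and of the corresponding induced morphisms in the comma category.
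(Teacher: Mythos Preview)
Your proof is correct and is essentially the same as the paper's: both exhibit $(d,\id_d)$ as an object of $(d\downarrow F)$ and then connect an arbitrary $(c,\phi)$ to it by walking along a zig-zag representative of $\phi$, using the forward arrows and the $S$-arrows as morphisms in the comma category. The paper writes the zig-zag in the alternating form $t_n s_n^{-1}\cdots t_1 s_1^{-1}$ and leaves the reduction of $F^{\op}$ to $F$ implicit, while you spell both points out, but the argument is the same.
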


\begin{proof}
It suffices to show that $F$ is cofinal. Let $X$ be an object of
$S^{-1}\cC$, let $Y$ be an object of $\cC$ and let $f\colon X\to FY$ be a
morphism in $S^{-1}\cC$. Then $f=t_ns_n^{-1}\dots t_1s_1^{-1}$ with $t_i$ in
$\cC$ and $s_i\in S$. Using $t_i$ and $s_i$, $f$ can be connected to
$\id_X\colon X\to FX$ in $(X\downarrow S^{-1}\cC)$.
\end{proof}

\begin{remark}
If, in Proposition \ref{p.sp}, the sheaf $\cF$ is not assumed constructible,
then the monomorphism $\phi$ is not an isomorphism in general, as shown by
the following example. Let $X$ be a scheme of dimension $\ge 1$ of finite
type over a separably closed field $k$  and let $\mathcal{F}=\bigoplus_{x
\in |X|} i_{x*}\Lambda$, where $|X|$ is the set of closed points of $X$ and
let $i_x \colon \{x\} \to X$ be the inclusion. Then $\Gamma(X,\mathcal{F})
\simeq \Lambda^{(|X|)}$ (by commutation of $\Gamma(X,-)$ with filtered
inductive limits). On the other hand, for $x \in P_X$, $\mathcal{F}_x =
\Lambda$ if the image of $x$ is a closed point, and $\mathcal{F}_x = 0$
otherwise, hence $\varprojlim_{x \in P_X} \Lambda \simeq \Lambda^{|X|}$. The
monomorphism $\varphi$ in Proposition \ref{p.sp} can be identified with the
inclusion $\Lambda^{(|X|)} \subset \Lambda^{|X|}$, which is not an
isomorphism, as $|X|$ is infinite.
\end{remark}

\begin{remark}
In the situation of Proposition \ref{p.sp}, the morphism
  \[R\Gamma(\cX,\cF)\to R\varprojlim_{x\in P_\cX} \cF_x\]
is not an isomorphism in general. In fact, if $\cX=\Spec (k)$, then the left
hand side computes the continuous cohomology of the Galois group $G$ of $k$
while the right hand side computes the cohomology of $G$ as a discrete
group.
\end{remark}

\begin{definition}\label{d.7.4}
Let $\mathcal{X}$ be an Artin stack. By a \emph{geometric point} of
$\mathcal{X}$ we mean a morphism $a \colon S \to \mathcal{X}$, where $S$ is
a strictly local scheme. If $a \colon S \to \mathcal{X}$ and $b \colon S \to
\mathcal{X}$ are geometric points of $\cX$, a morphism $(a \colon S \to
\mathcal{X}) \to (b \colon T \to \mathcal{X})$ is a morphism $u \colon S \to
T$ together with a 2-morphism
\begin{equation}\label{e.triST}
  \xymatrix{S\ar[r]\ar[dr] & T\ar[d]\\& \cX.\ultwocell\omit{<2>}}
\end{equation}
We thus get a full subcategory $\mathcal{P}'_{\mathcal{X}}$ of
$\AlgSp_{/\cX}$ (Notation \ref{s.smtop}). We define the \emph{category of
geometric points} of $\mathcal{X}$ as the category
\[
\mathcal{P}_{\mathcal{X}} = M_{\mathcal{X}}^{-1}\mathcal{P}'_{\mathcal{X}},
\]
localization of $\mathcal{P}'_{\mathcal{X}}$ by the set $M_{\mathcal{X}}$ of
morphisms $(a \to b)$ in $\mathcal{P}'_{\mathcal{X}}$ sending the closed
point of $S$ to the closed point of $T$.
\end{definition}

Although $\cP'_\cX$ is a $\cU$-category and not essentially small in
general, we will show in Proposition \ref{p.small} that $\cP_\cX$ is
essentially small. The next proposition shows that the definition above is
consistent with Definition \ref{s.point}.

\begin{prop}\label{l.DMpt}
For any Deligne-Mumford stack $\cX$, the functor $P_\cX\to \cP'_\cX$
sending every geometric point $x\to \cX$ to the strict henselization
$\cX_{(x)}\to \cX$ induces an equivalence of categories $\iota\colon
P_\cX\to \cP_\cX$.
\end{prop}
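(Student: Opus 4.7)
The plan is to verify essential surjectivity and full faithfulness of $\iota$, both by exploiting the universal property of the strict henselization $\cX_{(x)}$ of a Deligne--Mumford stack at a geometric point $x$.

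For essential surjectivity, I would take an object $a\colon S\to \cX$ of $\cP'_\cX$ with closed point $s$. Since $S$ is strictly local, its residue field $k(s)$ is separably closed, so the composite $x\colon \Spec k(s)\to S\xrightarrow{a}\cX$ is a geometric point of $\cX$ in the sense of Definition~\ref{s.point}. The universal property of $\cX_{(x)}$ applied to $S$ equipped with its canonical geometric point at the closed point produces a unique $\cX$-morphism $v\colon S\to \cX_{(x)}$ sending the closed point of $S$ onto $x$. Thus $v\in M_\cX$ becomes invertible in $\cP_\cX$, and $a\simeq \iota(x)$ there.

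Full faithfulness will rest on the following key lemma: any morphism $s\colon \cX_{(x')}\to \cX_{(x)}$ of $\cP'_\cX$ lying in $M_\cX$ is already an isomorphism in $\cP'_\cX$. Indeed, such an $s$ sends the closed point $x'$ to the closed point $x$; by $\cX$-compatibility, the underlying points $\bar x$ and $\bar x'$ in $\cX$ coincide, and both $k(x)$ and $k(x')$ are separable closures of the common residue field $k(\bar x)$. The induced residue field map $k(x)\to k(x')$ is a $k(\bar x)$-homomorphism between two algebraic separable closures, hence an isomorphism, and the universal property of strict henselization produces a two-sided inverse for $s$. Granted this lemma, I would next verify that $M_\cX$ admits a calculus of fractions, the Ore-type completion of a diagram $\cX_{(x)}\to T\xleftarrow{} T'$ with $T'\to T$ in $M_\cX$ being supplied by the essential surjectivity argument above. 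A morphism $\iota(x)\to \iota(y)$ in $\cP_\cX$ is thereby represented by a single roof $\cX_{(x)}\xleftarrow{f\in M_\cX} T\to \cX_{(y)}$; applying essential surjectivity to $T$ and then the key lemma to identify $T$ with $\cX_{(x)}$ itself, the roof collapses to an ordinary $\cX$-morphism $\cX_{(x)}\to \cX_{(y)}$, proving fullness. Faithfulness follows from the analogous analysis of a fraction identifying two morphisms in $\cP_\cX$.

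The principal obstacle is the rigorous treatment of the strict henselization $\cX_{(x)}$ and its universal property in the Deligne--Mumford setting. I would handle this by choosing an \'etale atlas $U\to \cX$, lifting $x$ to a geometric point $\tilde x$ of $U$, and identifying $\cX_{(x)}$ with the ordinary strict henselization $U_{(\tilde x)}$ of the scheme $U$ at $\tilde x$ (as in \cite[Remarque 6.2.1]{LMB}). The universal property then reduces to its classical counterpart for schemes as in SGA~4 VIII, and the key lemma reduces to the corresponding statement for $U_{(\tilde x)}$, which follows from the uniqueness clause of the universal property together with the fact that any $k(\bar x)$-endomorphism of a separable closure is bijective.
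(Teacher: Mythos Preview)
Your essential surjectivity argument and your key lemma are both correct, and the key lemma is in fact the substantive point underlying the paper's proof as well. The gap is in your treatment of full faithfulness via a calculus of fractions.

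The Ore condition you invoke---completing a diagram $\cX_{(x)}\to T\xleftarrow{\,s\in M_\cX\,} T'$ to a square with an $M_\cX$-arrow into $\cX_{(x)}$---does not hold. Take $\cX=T$ to be a strictly henselian discrete valuation ring, let $T'\hookrightarrow T$ be the inclusion of the closed point (which lies in $M_\cX$), and let $\cX_{(x)}\to T$ be the inclusion of the generic point (with $x$ a geometric generic point). Any completion $W\to T'$, $W\to \cX_{(x)}$ with the latter in $M_\cX$ would force the closed point of $W$ to map simultaneously to the closed and generic points of $T$, which is impossible. So morphisms in $\cP_\cX$ between objects of the form $\cX_{(x)}$ need not be representable by spans as you claim, and the collapse-the-roof argument breaks down.

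The paper avoids this entirely by constructing an explicit quasi-inverse rather than analyzing fractions. Define $F'\colon \cP'_\cX\to P_\cX$ by sending $(S\to\cX)$ to its closed point $(s\to\cX)$; on morphisms, $(S\to T)$ goes to the induced $\cX$-map $\cX_{(s)}\to\cX_{(t)}$ between strict henselizations. Your key lemma is exactly the statement that $F'$ carries $M_\cX$ to isomorphisms, so $F'$ descends to $F\colon\cP_\cX\to P_\cX$. One then checks $F\iota=\id_{P_\cX}$ directly, and the natural transformation $\id_{\cP_\cX}\to\iota F$ is furnished by the very maps $v\colon S\to\cX_{(s)}$ in $M_\cX$ that you produced in your essential surjectivity step. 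All your ingredients are present; they just need to be reorganized into the construction of $F$ rather than fed into a calculus of fractions that $M_\cX$ does not satisfy.
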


\begin{proof}
Consider the functor $F'\colon \cP'_\cX\to P_\cX$ sending $S\to \cX$ to its
closed point $s\to \cX$. For any morphism in $\cP'_\cX$ as in
\eqref{e.triST}, its image under $F'$ is the induced morphism $\cX_{(s)}\to
\cX_{(t)}$, where $s$ and $t$ are the closed points of $S$ and $T$,
respectively. The functor $F\colon \cP_\cX\to P_\cX$ induced by $F'$ gives a
quasi-inverse to $\iota\colon P_\cX\to \cP_\cX$. In fact
$F\iota=\id_{P_\cX}$ and we have a natural isomorphism $\id_{\cP_\cX}\to
\iota F$ given by the morphism $S\to \cX_{(s)}$ in $M_\cX$ for $S\to \cX$ in
$\cP_\cX$ of closed point $s$.
\end{proof}

\begin{remark}\label{s.cP}
The reason why we do not consider the category of points $\Point(\cX_\sm)$
of the smooth topos $\cX_{\sm}$ is that already in the case $\cX$ is an
algebraic space, the functor $\Point(\cX_\sm)\to \Point(\cX_\et)$ induced by
the morphism of topoi $\epsilon\colon \cX_\sm\to \cX_\et$ is not an
equivalence. For example, if $U\to \cX$ is a smooth morphism and $y$ is a
geometric point of $U$ lying above a geometric point $x$ of $\cX$ such that
the image of $y$ in the fiber $U\times_\cX x$ is not a closed point, then
the points $\tilde x\colon \cF\mapsto (\cF_\cX)_x$ and $\tilde y\colon
\cF\mapsto (\cF_U)_y$ of $\cX_\sm$ are not equivalent, but have equivalent
images in $\Point(\cX_\et)$. Indeed, if we denote by $\epsilon^!\colon
\cX_\et\to \cX_\sm$ the right adjoint of $\epsilon_*$, then the stalk of
$\epsilon^! \cG$ is $\cG_x$ at $\tilde x$, but is $e$ at $\tilde y$.
\end{remark}

\begin{prop}\label{p.small}
Let $\cX$ be an Artin stack, and let $\tilde\cP'_\cX$ be the full
subcategory of $\cP'_\cX$ consisting of morphisms $S\to \cX$, such that
$S\to \cX$ is the strict henselization of some smooth atlas $X\to \cX$ at
some geometric point of $X$. Let $\tilde M_\cX=M_\cX\cap \Ar(\tilde
\cP'_\cX)$. Then the inclusion $\tilde \cP'_\cX\subset \cP'_\cX$ induces an
equivalence of categories $\tilde M_\cX^{-1} \tilde \cP'_\cX\to \cP_\cX$.
\end{prop}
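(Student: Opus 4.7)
The plan is to prove essential surjectivity and full faithfulness of the induced functor $\bar\iota\colon \tilde M_\cX^{-1}\tilde\cP'_\cX \to \cP_\cX$.

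\textbf{Essential surjectivity.} Given any $a\colon S\to \cX$ in $\cP'_\cX$, I would choose a smooth atlas $p\colon X\to \cX$. The 2-pullback $S\times_\cX X\to S$ is smooth and surjective with strictly local target, so it admits a section by \cite[Corollaires 17.16.3(ii), 18.8.1]{EGAIV} (as already invoked in the proof of Proposition \ref{p.eqstr}). This produces a lift $\tilde a\colon S\to X$ with $p\tilde a$ 2-isomorphic to $a$; letting $s$ denote the closed point of $S$ and $x$ its image in $X$, the universal property of strict henselization factors $\tilde a$ through $X_{(x)}\to X$, yielding a morphism $b\colon S\to X_{(x)}$ in $\cP'_\cX$. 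Since $b$ sends the closed point of $S$ to the closed point $x$ of $X_{(x)}$, one has $b\in M_\cX$, so $a$ becomes isomorphic in $\cP_\cX$ to the object $X_{(x)}\to \cX$ of $\tilde\cP'_\cX$.

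\textbf{Full faithfulness.} I would show that every morphism $\phi\colon a\to b$ in $\cP_\cX$ between objects of $\tilde\cP'_\cX$ is represented by a zigzag in $\cP'_\cX$ whose intermediate objects all lie in $\tilde\cP'_\cX$, and that equivalences between such zigzags in $\cP'_\cX$ descend to equivalences inside $\tilde\cP'_\cX$. The main technical step is a local straightening procedure for a zigzag: given an intermediate object $a_i\in \cP'_\cX\setminus\tilde\cP'_\cX$, apply the construction of the first part to obtain $t\colon a_i\to \tilde a_i$ in $M_\cX$ with $\tilde a_i\in \tilde\cP'_\cX$; at a ``valley'' piece $a_{i-1}\xrightarrow{f} a_i\xleftarrow{s} a_{i+1}$ one simply replaces by $a_{i-1}\xrightarrow{tf} \tilde a_i\xleftarrow{ts} a_{i+1}$, which represents the same morphism in $\cP_\cX$ because $L(t)$ is invertible. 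At a ``peak'' $a_{i-1}\xleftarrow{s} a_i\xrightarrow{f} a_{i+1}$ one needs to construct morphisms \emph{out of} some object of $\tilde\cP'_\cX$ to both sides, and for this one forms the 2-fiber product $\tilde a_{i-1}\times_\cX \tilde a_{i+1}$ (or a fiber product involving the atlases witnessing membership in $\tilde\cP'_\cX$) in $\AlgSp_{/\cX}$, chooses a geometric point above the closed points coming from $a_i$ via the compositions $a_i\to a_{i\pm 1}\to \tilde a_{i\pm 1}$, and takes the strict henselization there, obtaining an apex in $\tilde\cP'_\cX$ with canonical maps to $\tilde a_{i\pm 1}$ in $\tilde M_\cX$. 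Iterating trims every zigzag to one in $\tilde\cP'_\cX$; equivalence of zigzags is handled by the analogous argument applied to the 2-commutative squares witnessing the equivalence.

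\textbf{Main obstacle.} The main obstacle is the ``peak'' case of the straightening, where one must produce an object of $\tilde \cP'_\cX$ mapping by $M_\cX$-morphisms to both neighbors. The delicate point is the existence of the required geometric point in the 2-fiber product of smooth atlases above prescribed closed points; this uses that smooth surjective morphisms to strictly local bases admit sections (to lift the data coming from $a_i$ to a single geometric point), together with the universal property of strict henselization to make the whole construction consistent up to the appropriate 2-isomorphisms over $\cX$. Tracking these 2-isomorphisms and verifying that the final zigzag is genuinely equivalent in $\cP'_\cX$ to the original is routine but is the part that requires the most bookkeeping.
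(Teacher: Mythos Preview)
Your essential surjectivity argument is correct and matches the paper's starting observation.

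For full faithfulness you take a different route from the paper, attempting direct zigzag straightening. The fullness half is plausible: your peak construction (strict henselization of a fiber product of atlases at a point determined by $a_i$) is essentially right, though your exposition conflates replacing $a_i$ with having already replaced its neighbors --- one must either process valleys first or fold the $M_\cX$-morphisms $a_{i\pm 1}\to\tilde a_{i\pm 1}$ into the same step.

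The genuine gap is faithfulness. You write that ``equivalence of zigzags is handled by the analogous argument applied to the 2-commutative squares witnessing the equivalence,'' but the equivalence relation on zigzags in a general localization is not generated by single commutative squares; it is generated by chains of elementary moves (compositions, cancellations, insertions), and there is no evident inductive scheme reducing the straightening of such a chain to the straightening of its boundary zigzags. Making this work would amount to showing that your assignment $a\mapsto\tilde a$ can be made \emph{functorial} in $a$ --- which is exactly what the paper does instead.

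The paper avoids zigzag manipulation entirely by constructing a quasi-inverse. For each $x\colon S\to\cX$ in $\cP'_\cX$ let $A_x$ be the category of factorizations $S\to X\to\cX$ through a smooth atlas; it is nonempty (your lifting argument) and has finite nonempty products (fiber product of atlases over $\cX$). Sending $X$ to $X_{(s)}$ defines $F_x\colon A_x\to\tilde\cP'_\cX$. Every morphism of $A_x$ lands in $\tilde M_\cX$, and any two parallel morphisms $X\rightrightarrows Y$ in $A_x$ become equal in $\tilde M_\cX^{-1}\tilde\cP'_\cX$ since the inclusion of the closed point of $X_{(s)}$ (which lies in $\tilde M_\cX$) equalizes them. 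Hence the composite $A_x\to\tilde M_\cX^{-1}\tilde\cP'_\cX$ factors through the contractible groupoid on $\Ob(A_x)$, and a choice of lift for each $x$ assembles into a functor $\cP'_\cX\to\tilde M_\cX^{-1}\tilde\cP'_\cX$ sending $M_\cX$ to isomorphisms, i.e.\ a quasi-inverse $\cP_\cX\to\tilde M_\cX^{-1}\tilde\cP'_\cX$. The products in $A_x$ are precisely the fiber-product-of-atlases trick you reached for in the peak case, repackaged so that no hammock-chasing is required.
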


Note that $\tilde \cP'_\cX$ and hence $\tilde \cP_\cX$ are essentially
small. Thus Proposition \ref{p.small} shows that $\cP_\cX$  is essentially
small,

\begin{proof}
We write $\tilde\cP_\cX=\tilde M_\cX^{-1}\tilde \cP'_\cX$. For $x\colon S\to
\cX$ in $\cP'_\cX$, let $A_x$ be the full subcategory of
$(\AlgSp_{/\cX})_{x/}$ consisting of diagrams
  \begin{equation}\label{e.small}
  \xymatrix{S\ar[r]\ar[rd]_x & X\ar[d]^p\\
  &\cX \ultwocell\omit{<2>}}
  \end{equation}
such that $p$ is a smooth atlas. Then $A_x$ is nonempty since every smooth
surjection to $S$ admits a section \cite[Corollaire 17.16.3 (ii)]{EGAIV}.
Moreover, $A_x$ admits finite nonempty products. Consider the functor
$F_x\colon A_x\to \tilde \cP'_\cX$ sending \eqref{e.small} to the strict
localization $X_{(s)}\to \cX$ at the closed point $s$ of $S$. For any pair
of morphisms $(f,g)\colon X\rightrightarrows Y$ with the same source and
target in $A_x$, $F_x(f)$ and $F_x(g)$ have the same image in $\tilde
\cP_\cX$. Indeed, $f\res S=g\res S$ implies $F_x(f)t=F_x(g)t$, where $t\in
\tilde M_\cX$ is the inclusion of the closed point of $X_{(s)}$. Thus there
exists a unique functor $G_x$ making the following diagram commutative
  \[\xymatrix{A_x\ar[r]^{F_x}\ar[d] & \tilde \cP'_\cX\ar[d]\\
  \lvert A_x \rvert \ar[r]^{G_x} & \tilde \cP_\cX}\]
where $\lvert A_x \rvert$ is the simply connected groupoid having the same
objects as $A_x$. This construction is functorial in $x$, in the sense that
for $x\to y$ in $\cP'_\cX$, we have a natural transformation
  \[\xymatrix{\lvert A_y \rvert\ar[d]\ar[rd]^{G_y}\\
  \lvert A_x\rvert\ar[r]_{G_x} &  \tilde \cP_\cX.\ultwocell\omit{<-2>}}\]
Choosing an object $X$ in  $A_x$ for every $x$, we obtain a functor
$\cP'_\cX \to \tilde \cP_\cX$ sending $x$ to $X_{(s)}$. This functor factors
through $\cP_\cX\to \tilde\cP_\cX$ and defines a quasi-inverse of $\tilde
\cP_\cX\to \cP_\cX$.
\end{proof}

\begin{remark}\label{s.homeo}
For any morphism $f\colon \cX\to \cY$ of Artin stacks, composition with $f$
defines a functor $\cP'_f\colon \cP'_\cX\to \cP'_\cY$, which induces
$\cP_f\colon \cP_\cX \to \cP_\cY$.
\begin{enumerate}
\item If $f$ is a schematic universal homeomorphism, then $\cP_f$ is an
    equivalence of categories. In fact, for any object $T\to \cY$ of
    $\cP'_\cY$, the base change $S=T\times_\cY \cX\to T$ is a schematic
    universal homeomorphism, so that $S$ is a strictly local scheme by
    \cite[Proposition 18.8.18 (i)]{EGAIV}. The functor $\cP'_{\cY}\to
    \cP'_{\cX}$ carrying $T\to \cY$ to $T\times_\cY \cX \to \cX$ carries
    $M_\cY$ to $M_\cX$ and induces a quasi-inverse of $\cP_f$.

\item For morphisms $\cX\to \cY$ and $\cZ\to\cY $ of Artin stacks, the
    functor $\cP'_{\cX\times_\cY\cZ}\to \cP'_\cX\times_{\cP'_\cY}\cP'_\cZ$
    is an equivalence of categories.
\end{enumerate}
\end{remark}

\begin{example}\label{e.PBG}
Let $k$ be a separably closed field, and let $G$ be an algebraic group over
$k$. Then $\cP_{BG}$ is a connected groupoid whose fundamental group is
isomorphic to $\pi_0(G)$.

To prove this, by Remark \ref{s.homeo} (a), we may assume $k$ algebraically
closed and $G$ smooth. Then, for every object $S\to BG$ of $\cP'_{BG}$, the
corresponding $G_{S}$-torsor is trivial and we fix a trivialization. For any
strictly local scheme $S$ over $\Spec(k)$, we denote by $p_S\colon S\to BG$
the object of $\cP_{BG}$ corresponding to the trivial $G_S$-torsor and by
$a_S\colon S\to \Spec(k)$ the projection. By the definition of $BG$
\eqref{e.1.5.2}, morphisms $p_S\to p_T$ in $\cP'_{BG}$ correspond
bijectively to pairs $(f,r)$, where $f\colon S\to T$ is a morphism of
schemes and $r\in G(S)$. We denote the morphism corresponding to $(f,r)$ by
$\theta(f,r)$. If $s$ is the closed point of $S$, $r(s) \in G(s)$ belongs to
the inverse image of a unique connected component of $G$, denoted $[r]$. Let
$\Pi$ be the groupoid with one object and fundamental group $\pi_0(G)$. The
above construction defines a full functor $\cP'_{BG}\to \Pi$ sending
$\theta(f,r)$ to $[r]$, which induces a functor still denoted by $F\colon
\cP_{BG}\to \Pi$. Since $\theta(a_S,r)\colon p_S\to p_{\Spec(k)}$ is in
$M_{BG}$ and $\theta(f,r)\theta(a_T,1)=\theta(a_S,r)$, $\theta(f,r)$ is an
isomorphism in $\cP_{BG}$. Thus $\cP_{BG}$ is a connected groupoid. To show
that $F$ is an equivalence of categories, it suffices to check that for all
$r\in G^0(S)$, $\theta(a_S,r)\equiv\theta(a_S,1)$. Here $\equiv$ stands for
equality in $\cP_{BG}$. For this, we may assume that $S$ is a point, say
$S=\Spec(k')$. We regard $r\colon \Spec(k')\to G$ as a geometric point
of~$G$. Since $G^0$ is irreducible, $X=G_{(1)}\times_G G_{(r)}$ is nonempty.
Let $x$ be a geometric point of $X$, and let $t\in G(G)$ be the tautological
section. Then
  \[\theta(a_{G_{(1)}},t)\theta(s_1,1)=\theta(a_{\Spec(k)},1)=\theta(a_{G_{(1)}},1) \theta(s_1,1),\]
where $s_1\colon \Spec(k)\to G_{(1)}$ is the closed point. It follows that
$\theta(a_{G_{(1)}},t)\equiv\theta(a_{G_{(1)}},1)$,
$\theta(a_x,t)\equiv\theta(a_x,1)$, and hence
$\theta(a_{G_{(r)}},t)\equiv\theta(a_{G_{(r)}},1)$. Therefore, if $s_r\colon
\Spec(k')\to G_{(r)}$ denotes the closed point, we have
  \[\theta(a_{\Spec(k')},r)=\theta(a_{G_{(r)}},t)\theta(s_r,1)\equiv\theta(a_{G_{(r)}},1)\theta(s_r,1)=\theta(a_{\Spec(k')},1).\]
\end{example}

\begin{construction}\label{c.p}
Let $\mathcal{X}$ be a locally noetherian Artin stack. If $\cF$ is a
cartesian sheaf on $\mathcal{X}$, then the presheaf
\[
\fp' \cF\colon (a \colon S \to \mathcal{X}) \mapsto \Gamma(S,a^*\cF) \simeq  \cF_s
\]
(where $s$ is the closed point of $S$) on $\cP'_\cX$ defines a presheaf on
$\mathcal{P}_{\mathcal{X}}$, which will denote by $\fp \cF$. We thus get an
exact functor
\begin{equation}\label{e.p*}
\fp\colon \Sh_\cart(\cX)\to \hat\cP_\cX.
\end{equation}
If $\cX$ is a Deligne-Mumford stack, then $p^*\simeq \iota^*\fp$, where
$p\colon \hat P_\cX\to\cX_\et$ is the projection \eqref{e.7.1.1} and
$\iota\colon P_\cX\to \cP_\cX$ the equivalence of Proposition \ref{l.DMpt}.
\end{construction}

The following result generalizes Proposition \ref{p.sp}.

\begin{prop}\label{p.spA}
Let $\cX$ be an Artin stack, let $\Lambda$ be a noetherian commutative ring,
and let $\cF$ be a constructible sheaf of $\Lambda$-modules on $\cX$. Then
the map
  \begin{equation}\label{e.spA}
  \Gamma(\cX,\cF) \to \varprojlim_{x\in \cP_\cX} \cF_x
  \end{equation}
defined by the restriction maps $\Gamma(\cX,\cF)\to (\fp \cF)(x)=\cF_x$ is
an isomorphism.
\end{prop}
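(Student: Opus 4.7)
The plan is to reduce to Proposition \ref{p.sp} via smooth descent and then match the resulting equalizer against $\varprojlim_{x\in\cP_\cX}\cF_x$ using the universal property of the $2$-fiber product $U_0\times_\cX U_0$.

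First I would choose a smooth atlas $u_0\colon U_0\to\cX$ with $U_0$ an algebraic space and form its Čech nerve $U_\bullet$, with $U_1=U_0\times_\cX U_0$. Since $u_0$ is faithfully flat and locally of finite presentation, Proposition \ref{p.descent} yields the exact sequence
\[
\Gamma(\cX,\cF)\to\Gamma(U_0,u_0^*\cF)\rightrightarrows \Gamma(U_1,u_1^*\cF).
\]
Since $U_0$ and $U_1$ are algebraic spaces (hence Deligne–Mumford) and $u_n^*\cF$ is constructible by definition, Proposition \ref{p.sp} identifies $\Gamma(U_n,u_n^*\cF)\simeq\varprojlim_{\bar x\in P_{U_n}}(u_n^*\cF)_{\bar x}$. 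Thus the problem becomes that of identifying
\[
\varprojlim_{x\in\cP_\cX}\cF_x\simto \mathrm{eq}\Bigl(\varprojlim_{\bar u\in P_{U_0}}(u_0^*\cF)_{\bar u}\rightrightarrows \varprojlim_{\bar v\in P_{U_1}}(u_1^*\cF)_{\bar v}\Bigr).
\]

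To construct the comparison, I would define, for $n=0,1$, a functor $i_n\colon P_{U_n}\to\cP_\cX$ by $\bar x\mapsto\bigl((U_n)_{(\bar x)}\to\cX\bigr)$, where the structure morphism to $\cX$ is any composition $U_n\to U_0\xrightarrow{u_0}\cX$ (different choices give objects canonically isomorphic in $\cP_\cX$ through morphisms of $\tilde M_\cX$). The two composites $P_{U_1}\rightrightarrows P_{U_0}\to\cP_\cX$ coming from the projections $p_1,p_2\colon U_1\to U_0$ are naturally isomorphic via morphisms in $\tilde M_\cX$. The restriction maps $\varprojlim_{x\in\cP_\cX}\cF_x\to\cF_{i_n(\bar x)}=(u_n^*\cF)_{\bar x}$ are thus compatible with both arrows of the equalizer and yield the desired comparison homomorphism $\Psi$.

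For injectivity of $\Psi$, I would use Proposition \ref{p.small} together with the observation that any object $V_{(\bar v)}$ of $\tilde\cP'_\cX$ can be refined by forming $W=V\times_\cX U_0$ (still a smooth atlas of $\cX$) and picking a geometric point $\bar w$ of $W$ above $\bar v$, producing a zigzag $V_{(\bar v)}\leftarrow W_{(\bar w)}\to(U_0)_{(\bar u)}$ in $\tilde M_\cX$, with $\bar u$ the image of $\bar w$ in $U_0$; this shows $i_0$ is essentially surjective, whence a compatible family $(s_x)_{x\in\cP_\cX}$ is determined by its values on objects of the form $i_0(\bar u)$.

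For surjectivity, given $(t_{\bar u})$ in the equalizer I would define $s_x=t_{\bar u}$ after choosing $x\simeq i_0(\bar u)$, and reduce the compatibility check to morphisms $\varphi\colon i_0(\bar u)\to i_0(\bar u')$ coming from a single arrow of $\cP'_\cX$: any such $\varphi$, comprising a morphism $f\colon(U_0)_{(\bar u)}\to(U_0)_{(\bar u')}$ of schemes together with a $2$-morphism $\alpha$ making the triangle commute, defines via the universal property of $U_1$ a morphism $(U_0)_{(\bar u)}\to U_1$ whose image of the closed point is a $\bar v\in P_{U_1}$ with $p_1(\bar v)=\bar u$, $p_2(\bar v)=f(\bar u)$, and the equalizer condition at $\bar v$ reads exactly as $\varphi^*(t_{\bar u'})=t_{\bar u}$ via the cartesian descent isomorphism on $U_1$. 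The main obstacle will be this last step, namely tracking how a general morphism in the localization $\cP_\cX=M_\cX^{-1}\cP'_\cX$ decomposes when its intermediate terms may not lie in the image of $i_0$: this will require iterating the refinement argument above along each step of a representing zigzag so as to reduce to the case handled by the fiber-product description.
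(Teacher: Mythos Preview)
Your approach is essentially the paper's: reduce to Proposition~\ref{p.sp} via a smooth atlas and the associated \v{C}ech equalizer. The difference is in how the equalizer of the point-limits is identified with $\varprojlim_{\cP_\cX}\cF_x$, and here the paper's execution is cleaner than your direct element-chasing.

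Rather than working with the localized category $\cP_\cX$ and the functors $i_n\colon P_{U_n}\to\cP_\cX$, the paper first invokes Lemma~\ref{l.loclim} to replace $\varprojlim_{\cP_\cX}$ by $\varprojlim_{\cP'_\cX}$ (and similarly for $U_0$, $U_1$). Then the whole bottom row becomes
\[
\varprojlim_{\cP'_\cX}\cF_x \;\to\; \varprojlim_{\cP'_{U_0}}\cF_x \;\rightrightarrows\; \varprojlim_{\cP'_{U_1}}\cF_x,
\]
and its exactness is obtained by a single abstract descent lemma (Lemma~\ref{l.limit}): for a functor $F\colon\cC\to\cD$ such that every morphism in $\cD$ is, up to isomorphism, in the image of $F$, the sequence $\cF\to F_*F^*\cF\rightrightarrows F_{2*}F_2^*\cF$ is exact. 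That hypothesis is verified for $\cP'_f\colon\cP'_{U_0}\to\cP'_\cX$ in Lemma~\ref{l.P} by exactly your section-lifting trick (given a morphism $(S,u)\to(T,v)$ in $\cP'_\cX$, lift $T\to\cX$ through the smooth surjection $U_0\to\cX$), and the identification of the double arrow with $\cP'_{U_1}$ comes from Remark~\ref{s.homeo}(b). This bypasses entirely the zigzag decomposition in $\cP_\cX$ that you flagged as the main obstacle: once you are in $\cP'$ there are no inverted arrows to track, and the compatibility check collapses to the single lifting statement of Lemma~\ref{l.P}.

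Your plan would ultimately work, but if you carry it out you will find yourself reproving Lemma~\ref{l.limit} inside the argument. The takeaway is: pass to $\cP'$ first, then package the descent as a statement about the functor $\cP'_{U_0}\to\cP'_\cX$.
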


The proof will be given after a couple of lemmas.

\begin{lemma}\label{l.limit}
Let $F\colon \cC\to \cD$ be a functor between small categories.
Assume that for any morphism $f\colon X\to Y$ in $\cD$, there exists
a morphism $a\colon A\to B$ in $\cC$ and a commutative square in
$\cD$ of the following form:
  \[\xymatrix{X\ar[r]^f\ar[d]_{\simeq} & Y\ar[d]^{\simeq}\\
  F(A)\ar[r]^{F(a)} & F(B).}\]
Then $F$ is of descent for presheaves. More precisely, for any presheaf
$\cF$ on $\cD$, with the notation of \cite[IV 4.6]{SGA4}, the sequence
  \[\cF\to F_*F^* \cF \rightrightarrows F_{2*}F_2^* \cF\]
is exact, where $\cC\times_\cD \cC$ is the 2-fiber product,
$F_2\colon \cC\times_\cD \cC \to \cD$ is the projection, and the
double arrow is induced by the two projections from $\cC\times_\cD
\cC$ to $\cC$. In particular, the sequence
  \begin{equation}\label{e.limit}
  \Gamma(\hat{\cD},\cF)\to \Gamma(\hat{\cC}, F^* \cF)\rightrightarrows \Gamma(\widehat{\cC\times_\cD \cC}, F_2^* \cF)
  \end{equation}
is exact.
\end{lemma}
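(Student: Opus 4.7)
The plan is to reduce the presheaf statement to its specialization to global sections, and then to prove the global sections statement \eqref{e.limit} by an explicit pointwise construction using the lifting hypothesis on $F$.

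For the reduction: evaluating the sequence $\cF\to F_*F^*\cF\rightrightarrows F_{2*}F_2^*\cF$ at an object $D\in\cD$ and rewriting the limits defining $F_*$ and $F_{2*}$ as global sections of presheaves on slice categories, I obtain a sequence
\[\Gamma(\widehat{\cD/D},\cF^D)\to \Gamma(\widehat{F/D},F_D^*\cF^D)\rightrightarrows \Gamma(\widehat{(F/D)\times_{\cD/D}(F/D)},F_{D,2}^*\cF^D),\]
where $F/D$ denotes the comma category of pairs $(A,u\colon F(A)\to D)$, $F_D\colon F/D\to\cD/D$ is the induced functor, and $\cF^D$ is the restriction of $\cF$ to $\cD/D$. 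Since $(D,\id_D)$ is a terminal object of $\cD/D$, the first term identifies with $\cF(D)$, and the whole sequence recovers the evaluation at $D$ of the original sequence. A direct check shows that $F_D$ inherits the lifting hypothesis from $F$: given $f\colon(X,v)\to(Y,v')$ in $\cD/D$, the hypothesis on $F$ applied to $f\colon X\to Y$ yields $a\colon A\to B$ in $\cC$ with isomorphisms $\alpha\colon X\simto F(A)$ and $\beta\colon Y\simto F(B)$ such that $\beta f=F(a)\alpha$; equipping $A$ and $B$ with the structure maps $u=v\alpha^{-1}$ and $u'=v'\beta^{-1}$ then produces a lift in $F/D$, and compatibility is formal. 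This reduces the lemma to the exactness of \eqref{e.limit}.

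For \eqref{e.limit}, let $(s_C)_{C\in\cC}\in\Gamma(\hat\cC,F^*\cF)$ be an element equalizing the double arrow, i.e., a family $s_C\in\cF(F(C))$ compatible with morphisms of $\cC$ and such that $\cF(\phi)(s_B)=s_A$ for every object $(A,B,\phi)$ of $\cC\times_\cD\cC$. I will construct the lift $t\in\Gamma(\hat\cD,\cF)$ pointwise: for each $D\in\cD$, the hypothesis applied to $\id_D$ produces some $A\in\cC$ with an isomorphism $\phi\colon D\simto F(A)$, and I set $t_D=\cF(\phi)(s_A)$. Independence of the choice $(A,\phi)$ is precisely the equalizer condition: any other choice $(A',\phi')$ yields an isomorphism $\phi'\phi^{-1}\colon F(A)\simto F(A')$ which defines an object of $\cC\times_\cD\cC$, and the equalizer condition combined with contravariance of $\cF$ forces $\cF(\phi)(s_A)=\cF(\phi')(s_{A'})$. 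Functoriality of $D\mapsto t_D$ is then verified on a general morphism $f\colon D\to D'$ by lifting $f$ to $a\colon A\to B$ with compatible isomorphisms $\phi,\psi$, and translating the compatibility $\cF(F(a))(s_B)=s_A$ into $\cF(f)(t_{D'})=t_D$. Uniqueness is automatic: any lift $t'$ satisfies $t'_{F(C)}=s_C$, hence $t'_D=\cF(\phi)(t'_{F(A)})=\cF(\phi)(s_A)=t_D$ for any chosen $\phi\colon D\simto F(A)$.

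The conceptual point is that the equalizer condition is used precisely to force well-definedness of the pointwise construction, while the compatibility of $(s_C)$ with morphisms of $\cC$ handles only functoriality; neither condition alone suffices, which is characteristic of descent. No substantive technical obstacle arises beyond organizing these verifications, once the slice-category reduction is in place.
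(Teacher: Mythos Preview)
Your proof is correct and follows essentially the same approach as the paper: both reduce the presheaf-level exactness to the global-sections statement \eqref{e.limit} by passing to slice categories (the paper's $\cC_{/X}\to\cD_{/X}$ is your $F/D\to\cD/D$), and both construct the inverse to $\Gamma(\hat\cD,\cF)\to E$ by the same pointwise recipe $t_D=\cF(\phi)(s_A)$ for a chosen isomorphism $\phi\colon D\simto F(A)$, using the equalizer condition for well-definedness and the lifting hypothesis for functoriality. Your writeup is somewhat more explicit about why $F_D$ inherits the lifting hypothesis and about uniqueness, but the argument is the same.
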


\begin{proof}
For any $X$ in $\cD$, $\cF(X)\to (F_*F^*\cF)(X)\rightrightarrows
(F_{2*}F_2^* \cF)(X)$ is \eqref{e.limit} applied to the functor $F'\colon
\cC_{/X}\to \cD_{/X}$ induced by $F$ and the presheaf $\cF\res(\cD_{/X})$.
Since $F'$ also satisfies the assumption of the lemma, it suffices to prove
that \eqref{e.limit} is exact. By definition,
$\Gamma(\hat{\mathcal{C}},F^*\cF)$ consists of families $s = (s_X) \in
\varprojlim_{X \in \mathcal{C}}\mathcal{F}(F(X))$. Similarly,
$\Gamma(\widehat{\mathcal{C}\times_{\mathcal{D}}
\mathcal{C}},F_2^*\mathcal{F}) = \varprojlim_{(Y,Z,\alpha) \in
\mathcal{C}\times_{\mathcal{D}} \mathcal{C}} \mathcal{F}(F_2(Y,Z,\alpha))$.
Let $E$ be the equalizer of the double arrow in \eqref{e.limit}. We
construct $\epsilon\colon E\to \Gamma(\hat\cD,\cF)$ as follows. Let $s\in
E$. For any object $X$ of $\cD$, put $\epsilon(s)_X=\cF(e)(s_A)\in \cF(X)$,
for a choice of $e \colon X \simto F(A)$. This does not depend on the choice
of $e$, because if $e'\colon X\simto F(A')$, then $(A,A',e'e^{-1})$ defines
an object of $\cC\times_\cD \cC$, and $s\in E$ implies
$\cF(e)(s_A)=\cF(e')(s_{A'})$. For any morphism $f\colon X\to Y$ in $\cD$,
the hypothesis implies that $\cF(f)(\epsilon(s)_Y)=\epsilon(s)_X$. This
finishes the construction of $\epsilon$ It is straightforward to check that
$\epsilon$ is an inverse of $\Gamma(\hat\cD,\cF)\to E$.
\end{proof}

\begin{lemma}\label{l.P}
Let $f\colon \cX\to \cY$ be a smooth surjective morphism of Artin stacks. If
$\cU$ is a universe containing $\cP'_\cX$ and $\cP'_\cY$, then the functor
$\cP'_f\colon \cP'_\cX \to \cP'_\cY$ satisfies the condition of Lemma
\ref{l.limit} for $\cU$.
\end{lemma}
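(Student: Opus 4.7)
The task is to verify, for a given morphism $(h,\alpha)\colon (a\colon S\to \cY)\to (b\colon T\to \cY)$ in $\cD=\cP'_\cY$, that it fits into a square as in Lemma \ref{l.limit} whose bottom edge is the image of some morphism in $\cC=\cP'_\cX$. My plan is to keep the underlying strictly local schemes $S$ and $T$ fixed and simply lift the structural maps to $\cX$ along $f$. With such lifts in hand, the vertical isomorphisms of the required square will be identities at the level of schemes, and the 2-isomorphisms needed in $\cP'_\cY$ will come from the lifts themselves.

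The crucial step is to lift $b\colon T\to \cY$ to some $\tilde b\colon T\to \cX$ equipped with a 2-isomorphism $\beta\colon b\simto f\tilde b$. For this I would base-change $f$ along $b$ to obtain a smooth surjective morphism of Artin stacks $T\times_\cY \cX\to T$, choose a scheme atlas $W\to T\times_\cY \cX$, and note that the resulting composition $W\to T$ is a smooth surjective morphism of schemes. Because $T$ is strictly local, it admits a section by \cite[Corollaire 17.16.3 (ii), Proposition 18.8.1]{EGAIV} (the same tool used in the proof of Proposition \ref{p.eqstr}); composing with $W\to T\times_\cY \cX\to \cX$ produces $\tilde b$ together with the desired $\beta$. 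Once $\tilde b$ has been constructed, the lift of $a$ is forced: I set $\tilde a=\tilde b\circ h$ and $\tilde\alpha=(\beta*h)\circ\alpha\colon a\simto f\tilde a$, where $*$ denotes whiskering.

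What remains is formal 2-categorical bookkeeping. The pair $(h,\mathrm{id}_{f\tilde b h})$ is a morphism $\tilde a\to \tilde b$ in $\cP'_\cX$; its image under $\cP'_f$ is $(h,\mathrm{id})$ in $\cP'_\cY$. The pairs $(\mathrm{id}_S,\tilde\alpha)$ and $(\mathrm{id}_T,\beta)$ are isomorphisms $a\simto f\tilde a$ and $b\simto f\tilde b$ in $\cP'_\cY$, and using the composition law of morphisms in $\cP'_\cY$ from Definition \ref{d.7.4}, both paths around the candidate square evaluate to $(h,\tilde\alpha)$, so commutativity is automatic. The only non-formal obstacle is the lifting of $b$ through $f$, which rests on the smoothness and surjectivity of $f$ together with the strictly local nature of $T$. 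The universe hypothesis is essentially vacuous: it merely guarantees that $\cP'_\cX$ and $\cP'_\cY$ are $\cU$-small so that Lemma \ref{l.limit} is applicable.
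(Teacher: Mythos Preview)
Your proof is correct and follows essentially the same argument as the paper: lift the target $T\to\cY$ along $f$ using that smooth surjections over a strictly local base admit sections, then take the lift of $S\to\cY$ to be the composite through $h$, so that the required square has identity vertical maps at the scheme level. Your version is in fact slightly more careful in one respect: you pass through a scheme atlas $W\to T\times_\cY\cX$ before invoking the EGA section result, whereas the paper simply asserts that the Artin stack $\cX\times_\cY T$, being smooth over $T$, admits a section.
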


\begin{proof}
Let $(h,\alpha)\colon (S,u)\to (T,v)$ be a morphism in $\cP'_\cY$. Since
$\cX\times_\cY T$ is an Artin stack smooth over $T$, it admits a section,
giving rise to the following 2-commutative diagram
  \[\xymatrix{S\ar[r]^h\ar@/_/[rrd]_u &T\ar[r]^g\ar[rd]_v & \cX\ar[d]^f\\
  &&\cY.\ulltwocell\omit{<1.5>\alpha}\ultwocell\omit{<2>\beta}}\]
Then the following diagram commutes
  \[\xymatrix{(S,u)\ar[r]^{(h,\alpha)}\ar[d] & (T,v)\ar[d]\\
  \cP'_f((S,gh))\ar[r]^{\cP'_f((h,\id))} & \cP'_f(( T,g)),}
  \]
where the left (resp.\ right) vertical arrow is the isomorphism
$(\id_S,\beta\alpha\colon u \to fgh)$ (resp.\ $(\id_T,\beta\colon v\to
fg)$).
\end{proof}

\begin{proof}[Proof of Proposition \ref{p.spA}]
Note that $\varprojlim_{x\in \cP_\cX}\cF_x\to \varprojlim_{x\in
\cP'_\cX}\cF_x$ is an isomorphism by Lemma \ref{l.loclim}.  Let $f\colon
X\to \cX$ be a smooth atlas. The following diagram commutes:
\[
  \xymatrix{\Gamma(\cX,\cF) \ar[d]\ar[r] & \Gamma(X,f^*\cF)\ar@<.5ex>[r]\ar@<-.5ex>[r]\ar[d] & \Gamma(X\times_\cX X,g^*\cF)\ar[d]\\
  \varprojlim_{x\in \cP'_\cX} \cF_x\ar[r] & \varprojlim_{x\in \cP'_X} \cF_x\ar@<.5ex>[r]\ar@<-.5ex>[r] & \varprojlim_{x\in \cP'_{X\times_\cX X}} \cF_x.}
\]
Here $g\colon  X\times_\cX X \to \cX$ and the double arrows are induced by
the two projections from $X\times_\cX X$ to~$X$. The top row is exact by the
definition of a sheaf. The bottom row is exact by Lemmas \ref{l.limit},
\ref{l.P} and Remark \ref{s.homeo} (b). The middle and right vertical arrows
are isomorphisms by Propositions \ref{p.sp} and \ref{l.DMpt}. It follows
that the left vertical arrow is also an isomorphism.
\end{proof}

\begin{example}
For $\mathcal{X} = BG$ as in Example \ref{e.PBG}, $\mathcal{F}$ corresponds
(by Corollaries \ref{c.XG} and \ref{l.BG}) to a $\Lambda$-module of finite
type $M$ equipped with an action of $\pi_0(G)$. Thus
$\Gamma(BG,\mathcal{F})$  is the module of invariants $M^{\pi_0(G)}$. By
Example \ref{e.PBG}, $\varprojlim_{x \in \mathcal{P}_{BG}} \mathcal{F}_x$ is
the set of zero cycles $Z^0(\pi_0(G),M)$, and \eqref{e.spA} is the
tautological isomorphism.

If $G$ is finite, the isomorphism \eqref{e.spA} extends to an isomorphism
\[R\Gamma(BG,\cF)\simto R\varprojlim_{x\in \cP_{BG}}\cF_x=R\Gamma(B\pi_0(G),M).\]
However, this no longer holds for $G$ general, as the example of $G=\Gm$ and
$\cF=\Lambda$ already shows (Theorem \ref{l.finite}).
\end{example}

In the rest of this section, we fix a prime number $\ell$.

\begin{definition}\label{d.7.15}
Let $\mathcal{X}$ be an Artin stack. By an \textit{$\ell$-elementary point}
of $\mathcal{X}$ we mean a \textit{representable} morphism $x \colon
\mathcal{S} \to \mathcal{X}$, where $\mathcal{S}$ is isomorphic to a
quotient stack $[S/A]$, where $S$ is a strictly local scheme  endowed with
an action of an elementary abelian $\ell$-group $A$ acting trivially on the
closed point of $S$. If $x \colon [S/A] \to \mathcal{X}$, $y \colon [T/B]
\to \mathcal{X}$ are $\ell$-elementary points of $\mathcal{X}$, a morphism
from $x$ to $y$ is an isomorphism class of pairs $(\varphi,\alpha)$, where
$\varphi \colon [S/A] \to [T/B]$ is a morphism and $\alpha \colon x \to
y\varphi$ is a 2-morphism. An isomorphism between two pairs
$(\varphi,\alpha)\to(\psi,\beta)$ is a 2-morphism $c\colon \varphi\to \psi$
such that $\beta=(y*c)\circ \alpha$. We thus get a category
$\cC'_{\cX,\ell}$, full subcategory of $\Rep{\cX}$ (Remark \ref{s.rep}).
\end{definition}

\begin{prop}\label{p.7.16}
Let $\mathcal{X}$ be an Artin stack.
\begin{enumerate}
\item Let $x\colon \mathcal{S} = [S/A] \to \mathcal{X}$ be an
    $\ell$-elementary point of $\mathcal{X}$, let $s$ be the closed point
    of $S$, and let $\varepsilon$ be the composition $s \to S \to
    \mathcal{S}$. Then $\mathrm{Aut}_{\mathcal{S}(s)}(\varepsilon) = A$,
    and the morphism $x$ induces an injection
\[
\mathrm{Aut}_{\mathcal{S}(s)}(\varepsilon)  \hookrightarrow \mathrm{Aut}_{\mathcal{X}(s)}(x).
\]

\item Let $x \colon [S/A] \to \mathcal{X}$, $y \colon [T/B] \to
    \mathcal{X}$ be $\ell$-elementary points of $\mathcal{X}$,  and let
    $(\varphi,\alpha) \colon x \to y$ be a morphism in $\cC'_{\cX,\ell}$.
    Then there exists a pair $(f,u)$, where $u \colon A \to B$ is a group
    monomorphism and  $f \colon S \to T$ is a $u$-equivariant morphism of
    $\cX$-schemes, such that the morphism of $\cX$-stacks
    $(\varphi,\alpha)$ is induced by the morphism of groupoids $(f,u)
    \colon (S,A)_{\bullet} \to (T,B)_{\bullet}$ over $\cX$. If $(f,u)$ is
    such a pair and $r\in B$, then $(fr,u)$ is also such a pair. If
    $(f_1,u_1)$ and $(f_2,u_2)$ are two such pairs, then $u_1=u_2$ and
    there exists a unique $r\in B$ such that $f_1=f_2r$.

\item Assume that $\mathcal{X} = [X/G]$ for an algebraic space $X$ over a
    base algebraic space $U$, endowed with an action of a smooth group
    algebraic space $G$ over $U$. Then every $\ell$-elementary point
    $x\colon [S/A]\to [X/G]$ lifts to a morphism of $U$-groupoids
    $(x_0,i)\colon (S,A)_\bullet\to (X,G)_\bullet$, where $x_0 \colon S
    \to X$ and $i\colon S \times A \to G$. Moreover, in the situation of
    (b), if $(x_0,i)$, $(y_0,j)$, $(f,u)$ are liftings of $x$, $y$,
    $\varphi$ to $U$-groupoids, respectively, then there exists a unique
    2-morphism of $U$-groupoids (Proposition \ref{p.eq}) lifting $\alpha$
\begin{equation}\label{e.7.16.1}
\xymatrix{(S,A)_{\bullet} \ar[rd]_{(x_0,i)} \ar[r]^{(f,u)} & (T,B)_{\bullet} \ar[d]^{(y_0,j)} \\
&(X,G)_{\bullet}\ultwocell\omit{<2>},}
\end{equation}
given by $r\colon S\to G$ satisfying $x_0(z) = (y_0f)(z)r(z)$ and $i(z,a)
= r(z)^{-1}j(f(z),u(a))r(z a)$.
\end{enumerate}
\end{prop}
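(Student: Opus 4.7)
The proof plan proceeds part by part, with all three parts flowing from the explicit description of morphisms of groupoids in $\Eq(E)$ given by Propositions \ref{p.eq} and \ref{p.EqE}, combined with the equivalence between morphisms of groupoids and morphisms of associated stacks provided by Proposition \ref{p.eqstr}.

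For (a), I plan a direct computation. The fiber $\mathcal{S}(s) = [S/A](s)$ is the groupoid of pairs $(P,a)$ with $P$ an $A$-torsor on $s$ and $a: P \to S$ an $A$-equivariant morphism. The composition $\varepsilon\colon s \to S \to \mathcal{S}$ corresponds to the trivial torsor $s \times A$ equipped with the $A$-equivariant map $a_0(p,g) = s_0 g$, where $s_0$ denotes the closed point of $S$. An $A$-equivariant self-morphism of $s \times A$ has the form $\psi_{r_0}(p,g) = (p, r_0 g)$ for a unique $r_0 \in A$; since $A$ acts trivially on $s_0$ by hypothesis, the compatibility $a_0 \psi_{r_0} = a_0$ is automatic, yielding $\Aut(\varepsilon) = A$. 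The injection $\Aut(\varepsilon) \hookrightarrow \Aut_{\cX(s)}(x\varepsilon)$ then follows from representability of $x$, which makes $x$ faithful (Definition \ref{s.faith}) and hence injective on automorphism groups.

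For (b), since $A$ and $B$ are finite discrete groups, the source and target maps of $(T,B)_\bullet$ are smooth and representable. Applying Proposition \ref{p.eqstr} (with ambient stack $\cX$) yields that every morphism $(\varphi, \alpha)$ in $\St_{/\cX}$ lifts to a morphism of groupoids $(f,u)$, and by Proposition \ref{p.eq} (a) this is a pair $f\colon S \to T$, $u\colon S \times A \to B$ (a crossed homomorphism). Since $S$ is connected (strictly local) and $B$ is discrete, $u$ factors through a genuine group homomorphism $A \to B$. To see that $u$ is injective, I would trace through Proposition \ref{p.EqE} to identify the map $A = \Aut(\varepsilon) \to \Aut_{\cT(s)}(\varphi\varepsilon)$ induced by $\varphi$ with $u$ (up to an inner automorphism coming from $f(s) \neq t$); its composition with the injection $\Aut_{\cT(s)}(\varphi\varepsilon) \hookrightarrow \Aut_{\cX(s)}(x)$ (from representability of $y$) coincides with the injection of (a), forcing $u$ injective. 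The replacement $(f,u) \mapsto (fr,u)$ preserves the pair because $B$ is abelian, so $f(zg) = f(z)u(g)$ implies $(fr)(zg) = (fr)(z)u(g)$. For uniqueness: given two liftings $(f_1,u_1)$, $(f_2,u_2)$, the fully-faithfulness part of Proposition \ref{p.eqstr} shows that the canonical 2-isomorphism in $\St_{/\cX}$ between them lifts to a unique 2-morphism of groupoids, which by Proposition \ref{p.eq} (b) is a map $r\colon S \to B$; since $S$ is connected and $B$ discrete, $r$ is a constant $r_0 \in B$ giving $f_1 = f_2 r_0$, and the relation $r_0 u_1(a) = u_2(a) r_0$ combined with abelianness of $B$ yields $u_1 = u_2$.

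For (c), I apply Proposition \ref{p.eqstr} a second time, now with ambient stack $U$: the smoothness of $G$ over $U$ ensures that the source and target maps of $(X,G)_\bullet$ are smooth and representable, while $(S,A)_\bullet$ has strictly local object-space $S$. The equivalence gives the desired lifting of $x$ to a morphism of groupoids $(x_0, i)$, with $x_0\colon S \to X$ and $i\colon S \times A \to G$ a crossed homomorphism (not factoring through $A$ in general, since $G$ acts nontrivially on $X$). For the 2-morphism statement: given liftings $(x_0,i)$, $(y_0,j)$, $(f,u)$, the composition formula of Proposition \ref{p.eq} (a) gives $(y_0,j) \circ (f,u) = (y_0 f, w)$ with $w(z,a) = j(f(z), u(a))$. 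By fully-faithfulness of the equivalence from Proposition \ref{p.eqstr}, the 2-morphism $\alpha$ lifts uniquely to a 2-morphism of groupoids $(x_0,i) \to (y_0 f, w)$, and Proposition \ref{p.eq} (b) translates this into a unique map $r\colon S \to G$ satisfying $x_0(z) = y_0(f(z)) r(z)$ and $r(z) i(z,a) = j(f(z), u(a)) r(za)$, which rearranges into the displayed formula for $i(z,a)$.

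The main technical obstacle will be the injectivity of $u$ in (b): this is the only point where merely invoking Propositions \ref{p.eq} and \ref{p.eqstr} does not suffice, and one must use (a) together with an explicit unwinding of the construction in Proposition \ref{p.EqE} relating morphisms of quotient stacks to automorphism groups of geometric points. The remaining steps are routine applications of the tools assembled in Section \ref{s.1}.
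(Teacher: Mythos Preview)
Your proposal is correct and follows essentially the same approach as the paper: both proofs rely on Proposition \ref{p.eqstr} to lift $(\varphi,\alpha)$ to a morphism of groupoids, on Proposition \ref{p.eq} to describe such liftings and their 2-morphisms explicitly, and on connectedness of $S$ to turn crossed homomorphisms into genuine homomorphisms.

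One simplification worth noting concerns the injectivity of $u$ in (b), which you flag as the main technical obstacle. The paper handles this in one line: since $\cC'_{\cX,\ell}$ is a full subcategory of $\Rep{\cX}$ (Remark \ref{s.rep}), the morphism $\varphi$ itself is representable, hence faithful; faithfulness of $\varphi$ then directly gives that the induced map on automorphism groups at $\varepsilon$, which is $u$, is injective. Your route via (a) and the factorization through $y$ also works, but the direct argument avoids the explicit unwinding of Proposition \ref{p.EqE} that you anticipate needing.
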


\begin{proof}
(a) The first assertion follows from the definition of $[S/A]$ (Notation
\ref{s.wedge}), and the second one from the assumption that $x$ is
representable, hence faithful.

(b) Applying Proposition \ref{p.eqstr} to the groupoids  $(S,A)_{\bullet}$
and $(T,B)_{\bullet}$  over $\cX$, we get a pair $(f,u)$, with  $u \colon S
\times A \to B$ given by Proposition \ref{p.eq} (a), such that $[f/u] =
(\varphi,\alpha)$. The morphism $u$ is constant on $S$, hence induced by a
homomorphism, still denoted $u$, from $A$ to $B$. Since $\varphi$ is
representable, $u$ is a monomorphism. Such a pair $(f,u)$ is unique up to a
unique 2-isomorphism. If $(f_1,u_1)$ and $(f_2,u_2)$ are two choices, a
2-isomorphism from $(f_1,u_1)_{\bullet}$ to $(f_2,u_2)_{\bullet}$ is given
by $r \colon S \to B$ (Proposition \ref{p.eq} (b)), which is necessarily
constant, of value denoted again $r \in B$. Then we have $f_1 = f_2r$ and
$ru_1 = u_2r$, hence $u_1 = u_2$.

(c)  The existence of the liftings follows from Proposition \ref{p.eqstr}
applied to the three groupoids. The description of the morphisms and the
2-morphism of groupoids comes from Proposition \ref{p.eq} (b).
\end{proof}

\begin{remark}
As the referee points out, Definition \ref{d.7.15} is related to the
$\ell$-torsion inertia stack $I(\cX,\ell)$ considered in \cite[Proposition
3.1.3]{AGV}. Indeed, $\cP'_{I(\cX,\ell)}$ can be identified with the
subcategory of $\cC'_{\cX,\ell}$ spanned by $\ell$-elementary points of the
form $S\times BA\to \cX$ and morphisms inducing $\id_A$, where $A=\Z/\ell$.
\end{remark}

\begin{definition}
For a stack of the form $\mathcal{S} = [S/A]$ as in Definition \ref{d.7.15},
the group $A$ is, in view of Proposition \ref{p.7.16} (a), uniquely
determined by $\mathcal{S}$ (up to an isomorphism). We define the
\emph{rank} of $\mathcal{S}$ to be the rank of $A$, and for an
$\ell$-elementary point $x \colon \mathcal{S} \to \mathcal{X}$, we define
the \textit{rank} of $x$ to be the rank of $\mathcal{S}$. $\ell$-elementary
points of rank zero are just geometric points (Definition \ref{d.7.4}). The
full subcategory of $\mathcal{C}'_{\mathcal{X}}$ (Definition \ref{d.7.15})
spanned by $\ell$-elementary points of rank zero is the category
$\mathcal{P}'_{\mathcal{X}}$ (Definition \ref{d.7.4}).
\end{definition}

\begin{definition}
We define the \emph{category of $\ell$-elementary points of $\mathcal{X}$}
to be the category
\begin{equation}\label{e.7.18.2}
\mathcal{C}_{\mathcal{X},\ell}= N_{\mathcal{X},\ell}^{-1}\mathcal{C}'_{\mathcal{X},\ell}
\end{equation}
deduced from $\mathcal{C}'_{\mathcal{X},\ell}$ by inverting the set
$N_{\mathcal{X},\ell}$ of morphisms given by pairs $(f,u)$ (Proposition
\ref{p.7.16} (b)) such that $f\colon S\to T$ carries the closed point of $S$
to the closed point of $T$ and $u$ is a group isomorphism. When no ambiguity
can arise, we will remove the subscript $\ell$ from the notation.
\end{definition}

Although $\cC'_{\cX,\ell}$ is only a $\cU$-category, we will see that
$\cC_{\cX,\ell}$ is essentially small if $\cX$ is a Deligne-Mumford stack of
finite inertia or a global quotient stack (Proposition \ref{l.DMlpt} and
Remark \ref{r.small}).

We may interpret $\cC_{\cX,\ell}$ with the help of an auxiliary category
$\bar\cC'_{\cX,\ell}$ as follows.

\begin{construction}\label{c.C}
Objects of $\bar\cC'_{\cX,\ell}$ are pairs $(x,A)$ such that $x\colon S\to
\cX$ is a geometric point of $\cX$, $A$ is an elementary abelian
$\ell$-group acting on $x$ by $\cX$-automorphisms with trivial action on the
closed point of $S$, and the morphism $[S/A]\to \cX$ is representable.
Morphisms of $\bar\cC'_{\cX,\ell}$ are pairs $(f,u)\colon (x,A)\to (y,B)$,
where $u\colon A\to B$ is a homomorphism and $f\colon x\to y$ is an
equivariant morphism in $\cP'_\cX$. Note that $u$ is necessarily a
monomorphism. By definition, $\bar\cC'_{\cX,\ell}$ is a full subcategory of
$\Eq(\Rep{\cX})$.

We have a natural functor $\rho'\colon\bar \cC'_{\cX,\ell} \to
\cC'_{\cX,\ell}$ sending $(x,A)$ to $[S/A]\to \cX$.  By Proposition
\ref{p.7.16} (a) and (b), the functor is full and essentially surjective,
and in particular cofinal (Lemma \ref{l.cofinal}). If $\varpi' \colon
\mathcal{P}'_{\mathcal{X}} \hookrightarrow \mathcal{C}'_{\mathcal{X},\ell}$
is the inclusion functor, and $\bar \varpi'\colon\cP'_\cX\to
\bar\cC'_{\cX,\ell}$ is the functor sending $x$ to $(x,\{1\})$, which is
also fully faithful, we have a 2-commutative diagram
\begin{equation}\label{e.varpitri}
\xymatrix{& \bar\cC'_{\cX,\ell}\ar[d]^{\rho'}\\
\cP'_\cX\ar[r]^{\varpi'}\ar[ur]^{\bar\varpi'} & \cC'_{\cX,\ell}.}
\end{equation}

Let
\begin{equation}
\bar \cC_{\cX,\ell}=\bar N_{\cX,\ell}^{-1} \bar\cC'_{\cX,\ell}
\end{equation}
be the category deduced from $\bar \cC'_{\cX,\ell}$ by inverting the set
$\bar N_{\cX,\ell}$ of morphisms $(f,u)\colon (S,A)\to (T,B)$ such that $f$
sends the closed point $s$ of $S$ to the closed point $t$ of $T$ and $u
\colon A \to B$ is an isomorphism. The diagram \eqref{e.varpitri} induces a
diagram
\begin{equation}\label{e.varpitri0}
\xymatrix{& \bar\cC_{\cX,\ell}\ar[d]^\rho\\
\cP_\cX\ar[r]^{\varpi}\ar[ur]^{\bar\varpi} & \cC_{\cX,\ell}.}
\end{equation}
\end{construction}

The functor $\rho$ is essentially surjective, and its effects on morphisms
can be described as follows. Let $(x,A)$ and $(y,B)$ be objects of
$\bar\cC_{\cX}$. The action of $B$ on $(y,B)$ by automorphisms in
$\bar\cC'_{\cX}$ induces an action of $B$ on $(y,B)$ by automorphisms in
$\bar\cC_{\cX}$, and, in turn, an action of $B$ on
$\Hom_{\bar\cC_{\cX}}((x,A),(y,B))$. This action is compatible with
composition in the sense that if $f\colon (x,A)\to (y,B)$, $g\colon (y,B)\to
(z,C)$ are morphisms of $\bar\cC_\cX$, and $b\in B$, then
$g\circ(fb)=(g(\theta(g)(b)))\circ f$, where $\theta\colon \bar\cC_\cX\to
\cA$ is the functor induced by the functor $\bar\cC'_\cX\to \cA$ carrying
$(x,A)$ to $A$. Here $\cA$ denotes the category whose objects are elementary
abelian $\ell$-groups and whose morphisms are monomorphisms.

\begin{prop}\label{l.varpi}\leavevmode
\begin{enumerate}
\item The functor $\rho$ induces a bijection
\begin{equation}\label{e.locquot}
\Hom_{\bar \cC_\cX}((x,A),(y,B))/B \simto
\Hom_{\cC_\cX}(\rho(x,A),\rho(y,B)).
\end{equation}

\item The functors $\bar\varpi$ and $\varpi$ are fully faithful.
\end{enumerate}
\end{prop}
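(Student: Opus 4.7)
The plan is to reduce (a) to Proposition \ref{p.7.16}(b) applied to lifts of morphisms, and to reduce (b) to (a) together with the observation that $\bar\varpi'$ admits an explicit right adjoint. First I establish the bijection in (a) at the level of the unlocalized categories. By Proposition \ref{p.7.16}(b), any morphism $\rho'(x,A)\to \rho'(y,B)$ in $\cC'_{\cX,\ell}$ lifts to a pair $(f,u)$ with $u\colon A\hookrightarrow B$ a group monomorphism and $f\colon S\to T$ a $u$-equivariant $\cX$-morphism, and two lifts $(f_1,u_1)$, $(f_2,u_2)$ satisfy $u_1=u_2$ with $f_1=f_2 r$ for a unique $r\in B$. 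Since the action of $r$ corresponds to post-composition by the automorphism of $(y,B)$ in $\bar\cC'_{\cX,\ell}$ with group part $\id_B$ and geometric part given by multiplication by $r$, this establishes the bijection \eqref{e.locquot} before localization.

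To propagate (a) through the Gabriel--Zisman localizations, I use that $\rho'$ sends $\bar N_{\cX,\ell}$ into $N_{\cX,\ell}$ and, conversely, that any lift of a morphism in $N_{\cX,\ell}$ automatically lies in $\bar N_{\cX,\ell}$---the defining conditions, that $u$ be an isomorphism and $f$ send closed points to closed points, being $B$-invariant. Hence every zigzag $\rho(x,A)\to \cdots \leftarrow \rho(y,B)$ representing a morphism in $\cC_{\cX,\ell}$ with backward arrows in $N_{\cX,\ell}$ lifts termwise to a zigzag $(x,A)\to \cdots \leftarrow (y,B)$ in $\bar\cC'_{\cX,\ell}$ with backward arrows in $\bar N_{\cX,\ell}$, and any two such lifts differ only by a single $b\in B$ absorbed into the $(y,B)$ end of the zigzag, by iterating the uniqueness clause of Proposition \ref{p.7.16}(b) along the zigzag. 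Verifying that the equivalence relations defining the two localizations are compatible under this termwise lifting---so that zigzags equivalent in $\cC_{\cX,\ell}$ lift to $B$-related zigzags in $\bar\cC_{\cX,\ell}$---is the main technical obstacle, and is handled by translating each elementary move on fractions to the lifted setting via Proposition \ref{p.7.16}(b).

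For (b), the functor $\bar\varpi'\colon \cP'_\cX\to \bar\cC'_{\cX,\ell}$ admits a right adjoint $F\colon (x,A)\mapsto x$, $(f,u)\mapsto f$: a morphism $(x,\{1\})\to (y,B)$ in $\bar\cC'_{\cX,\ell}$ has its group component $u\colon \{1\}\hookrightarrow B$ uniquely determined and thus reduces to a morphism $x\to y$ in $\cP'_\cX$. The unit of this adjunction is the identity, and the counit at $(y,B)$ is $(\id_y,\{1\}\hookrightarrow B)$. Since $\bar\varpi'(M_\cX)\subset \bar N_{\cX,\ell}$ and $F(\bar N_{\cX,\ell})\subset M_\cX$, the adjunction descends by the universal property of localization to $\bar\varpi\dashv F$ between $\cP_\cX$ and $\bar\cC_{\cX,\ell}$, with $F\bar\varpi=\id_{\cP_\cX}$, so $\bar\varpi$ is fully faithful. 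Finally, $\varpi=\rho\bar\varpi$ by the diagram \eqref{e.varpitri0}, and taking $A=B=\{1\}$ in (a) makes the $B$-quotient trivial, yielding
\[\Hom_{\cC_{\cX,\ell}}(\varpi x,\varpi y)=\Hom_{\bar\cC_{\cX,\ell}}(\bar\varpi x,\bar\varpi y)=\Hom_{\cP_\cX}(x,y),\]
which shows that $\varpi$ is fully faithful as well.
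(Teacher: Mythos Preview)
Your proof of (b) is correct and in fact tidier than the paper's. Both reduce $\varpi$ to $\bar\varpi$ via (a), but the paper then argues fullness of $\bar\varpi$ by a direct zigzag analysis: given a morphism in $\bar\cC_\cX$ between objects $(x,\{1\})$ and $(x',\{1\})$, write it as an alternating word $(t_n,v_n)(s_n,u_n)^{-1}\cdots(t_1,v_1)(s_1,u_1)^{-1}$ with each $(s_i,u_i)\in\bar N_\cX$; since each $u_i$ is an isomorphism and each $v_i$ a monomorphism, and the endpoints have trivial group, all intermediate groups are forced to be $\{1\}$, so the zigzag already lies in the image of $\cP_\cX$. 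Your adjunction argument replaces this with the single observation that the forgetful functor $F\colon(x,A)\mapsto x$ is right adjoint to $\bar\varpi'$ with identity unit, and that this adjunction localizes because $\bar\varpi'(M_\cX)\subset\bar N_\cX$ and $F(\bar N_\cX)\subset M_\cX$.

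Your proof of (a), however, has a real gap. The unlocalized bijection
\[
\Hom_{\bar\cC'_\cX}((x,A),(y,B))/B\;\simeq\;\Hom_{\cC'_\cX}(\rho'(x,A),\rho'(y,B))
\]
is exactly Proposition~\ref{p.7.16}(b), as you say. But the passage to the localized categories is only asserted: you write that ``the equivalence relations defining the two localizations are compatible under this termwise lifting'' and that this ``is handled by translating each elementary move on fractions to the lifted setting,'' without carrying out the translation. This is the substantive content of (a): you would have to show that every elementary hammock move relating two zigzags in $\cC'_\cX$ lifts to a relation between their lifts in $\bar\cC'_\cX$ modulo a single element of $B$ at the terminal vertex, and the ambiguities at intermediate vertices, which live in varying groups $C_i$, must be shown to cancel in $\bar\cC_\cX$. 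The paper sidesteps this entirely by a universal-property argument: it introduces the quotient category $\cC^\sharp_\cX$ with $\Hom_{\cC^\sharp_\cX}=\Hom_{\bar\cC_\cX}/B$, and observes that both $\cC^\sharp_\cX$ and $\cC_\cX$ are initial among categories receiving a functor from $\bar\cC'_\cX$ that inverts $\bar N_\cX$ and collapses the $B$-action---the former by construction, the latter because $\rho'$ already induces an equivalence $(\bar\cC'_\cX)/B\simeq\cC'_\cX$ identifying the image of $\bar N_\cX$ with $N_\cX$. This gives mutually inverse functors $\cC^\sharp_\cX\leftrightarrow\cC_\cX$ without manipulating any fractions.
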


\begin{proof}
(a) Indeed, consider the quotient category $\cC^\sharp_\cX$ having the same
objects as $\bar \cC_\cX$ with morphisms defined by
\[\Hom_{\cC^\sharp_\cX}((x,A),(y,B))=\Hom_{\bar \cC_\cX}((x,A),(y,B))/B,\]
and the quotient functor $\rho^\sharp \colon\bar\cC_\cX\to \cC^\sharp_\cX$.
By the universal properties of $\rho$, $\rho^\sharp$, and the localization
functors $\bar \cC'_\cX\to \bar \cC_\cX$, $\cC'_\cX\to \cC_\cX$, we obtain
an equivalence between $\cC^\sharp_\cX$ and $\cC_\cX$, compatible with
$\rho$ and $\rho^\sharp$.

(b) It follows from (a) that $\rho$ induces an equivalence of categories
from the full subcategory of $\bar\cC_{\cX}$ spanned by the image of
$\cP_{\cX}$ to the full subcategory of $\cC_{\cX}$ spanned by the image of
$\cP_{\cX}$. Thus it suffices to show that $\bar\varpi$ is fully faithful.
The functor $\bar\cC'_{\cX}\to \cP'_\cX$ sending $(x,A)$ to $x$ is a
quasi-retraction of $\bar\varpi'$, and induces a quasi-retraction of
$\bar\varpi$. Here, by a quasi-retraction of a functor $F$, we mean a
functor $G$ endowed with a natural isomorphism $GF\simeq \id$.  Thus
$\bar\varpi$ is faithful. Let us show that $\bar\varpi$ is full. Let $x,x'$
be geometric points of $\mathcal{X}$. By definition, any morphism $f\colon
x\to x'$ in $\bar\cC_\cX$ is of the form $(t_n,v_n)(s_n,u_n)^{-1}\dots
(t_1,v_1)(s_1,u_1)^{-1}$, where $(t_i,v_i)\colon (x_i,A_i)\to
(y_{i+1},B_{i+1})$ is in $\bar\cC'_\cX$ and $(s_i,u_i)\colon (x_i,A_i)\to
(y_i,B_i)$ is in $\bar N_\cX$ for $1\le i\le n$, $y_1=x$, $y_{n+1}=x'$,
$B_1=B_{n+1}=\{1\}$. Then $u_i\colon A_i\to B_i$ is an isomorphism and
$v_i\colon A_i\to B_{i+1}$ is a monomorphism. Thus $A_i=B_i=\{1\}$.
Moreover, $t_i$ is in $\cP'_\cX$ and $s_i$ is in $M_\cX$. It follows that
$f=\bar\varpi(a)$, where $a=t_ns_n^{-1}\dots t_1s_1^{-1}$ is in $\cP_\cX$.
\end{proof}

\begin{remark}\label{r.Cfunc}
For any \emph{representable} morphism $f\colon \cX\to \cY$ of Artin stacks,
composition with $f$ induces functors $\cC_f\colon \cC_\cX\to \cC_\cY$ and
$\bar\cC_f\colon \bar\cC_\cX\to \bar\cC_\cY$. As in Remark \ref{s.homeo}
(a), $\cC_f$ and $\bar\cC_f$ are equivalences of categories if $f$ is a
schematic universal homeomorphism.
\end{remark}

\begin{definition}
Morphisms in the categories $\bar \cC_{\cX,\ell}$ and
$\mathcal{C}_{\mathcal{X},\ell}$ are in general difficult to describe. When
$\mathcal{X}$ is a Deligne-Mumford stack of finite inertia, the categories
$\bar\cC_{\cX,\ell}$ and $\cC_{\mathcal{X},\ell}$ admit simpler
descriptions, as in Proposition \ref{l.DMpt}. Let us call a \emph{DM
$\ell$-elementary point of $\mathcal{X}$} a pair $(x,A)$, where $x\colon
s\to \cX$ is a geometric point of $\mathcal{X}$ and $A$ an $\ell$-elementary
abelian subgroup of $\mathrm{Aut}_{\cX(s)}(x)$. Define a morphism from
$(x\colon s\to \cX,A)$ to $(y\colon t\to \cX,B)$ to be an
$\mathcal{X}$-morphism $\cX_{(x)}\to \cX_{(y)}$ such that $f(A)\subset B$,
where $f\colon \Aut_{\cX(s)}(x)\to \Aut_{\cX(t)}(y)$ is defined as follows.
Note that $I_{(y)}\coloneqq I_{\cX}\times_{\cX} \cX_{(y)}$ is finite and
unramified over $\cX_{(y)}$, thus is a finite disjoint union of closed
subschemes of $\cX_{(y)}$ by \cite[Corollaire 18.4.7]{EGAIV}. For $a \in
\Aut_{\cX(s)}(x)$, the point $s\to I_{(y)}$ given by $a$ lies in same
component as the point $t\to I_{(y)}$ given by $f(a)$. We thus get a
category $\bar C_{\cX,\ell}$. We define the \emph{category of DM
$\ell$-elementary points of $\cX$} to be the category $C_{\cX,\ell}$ having
the same objects as $\bar C_{\cX,\ell}$ and such that
$\Hom_{C_{\cX,\ell}}((x,A),(y,B))=\Hom_{\bar C_{\cX,\ell}}((x,A),(y,B))/B$.
We omit the subscript $\ell$ from the notation when no ambiguity arises.
\end{definition}

Note that for $(x,A)\in \bar C_\cX$, the morphism $[\cX_{(x)}/A]\to \cX$ is
representable.

\begin{prop}\label{l.DMlpt}
Let $\cX$ be a Deligne-Mumford stack of finite inertia. Then the functor
$C_\cX\to \bar\cC'_\cX$ carrying $(x,A)$ to $(\cX_{(x)}\to \cX,A)$ induces
an equivalence of categories $\iota\colon \bar C_\cX\to \bar \cC_\cX$ and,
in turn, an equivalence of categories $\cC_\cX\to C_\cX$.
\end{prop}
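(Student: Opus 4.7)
The plan is to construct a quasi-inverse $\bar F\colon \bar\cC_\cX\to \bar C_\cX$ of $\iota$, paralleling the quasi-inverse used in the proof of Proposition \ref{l.DMpt} and carrying along the group data supplied by Proposition \ref{p.7.16}.

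First I would define $F'\colon \bar\cC'_{\cX,\ell}\to \bar C_{\cX,\ell}$ on objects by $(S\to \cX, A)\mapsto (s\to \cX, A)$, where $s$ is the closed point of $S$ and $A$ is viewed inside $\Aut_{\cX(s)}(s)$ via the injection of Proposition \ref{p.7.16}(a). On a morphism $(f,u)\colon (S,A)\to (T,B)$, let $F'(f,u)$ be the $\cX$-morphism $\tilde f\colon \cX_{(s)}\to \cX_{(t)}$ associated to $f$ by the construction of the quasi-inverse $F\colon \cP_\cX\to P_\cX$ of Proposition \ref{l.DMpt}. The $u$-equivariance $f(za)=f(z)u(a)$ forces the induced homomorphism $\Aut_{\cX(s)}(s)\to \Aut_{\cX(t)}(t)$ to restrict to $u$ on $A$, so in particular $\tilde f(A)\subset B$, and $\tilde f$ genuinely defines a morphism in $\bar C_\cX$. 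When $(f,u)\in \bar N_{\cX,\ell}$, i.e.\ $f$ preserves closed points and $u$ is an isomorphism, $\tilde f$ is an isomorphism in $P_\cX$ by Proposition \ref{l.DMpt}, hence $F'(f,u)$ is an isomorphism in $\bar C_\cX$. Thus $F'$ passes to a functor $\bar F\colon \bar\cC_\cX\to \bar C_\cX$.

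Second, I would check $\bar F\circ \iota=\mathrm{id}_{\bar C_\cX}$ on the nose, which is tautological once one notes that the closed point of $\cX_{(x)}$ is $x$. Then I would construct a natural isomorphism $\mathrm{id}_{\bar\cC_\cX}\simto \iota\circ \bar F$ using the canonical $\cX$-morphism $\pi_S\colon S\to \cX_{(s)}$ furnished by the universal property of strict henselization; it lies in $M_\cX$, and I would verify its $A$-equivariance by the same universal property: writing $\alpha_a,\tilde\alpha_a$ for the actions of $a\in A$ on $S$ and on $\cX_{(s)}$, the two $\cX$-morphisms $\pi_S\circ\alpha_a$ and $\tilde\alpha_a\circ\pi_S\colon S\to \cX_{(s)}$ agree on the closed point $s$ (fixed by $A$ in both schemes) and hence coincide. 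The pair $(\pi_S,\id_A)$ then lies in $\bar N_{\cX,\ell}$ and provides the required isomorphism; its naturality in $(f,u)$ is inherited from the analogous naturality in Proposition \ref{l.DMpt}.

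Finally, to extract the equivalence $\cC_\cX\to C_\cX$, I would observe that both $\cC_\cX$ and $C_\cX$ arise from $\bar\cC_\cX$ and $\bar C_\cX$ by quotienting hom-sets by the right action of the target group---for $\cC_\cX$ this is the content of Proposition \ref{l.varpi}(a), for $C_\cX$ it is the definition itself. Since $\iota$ manifestly intertwines these $B$-actions, it descends to an equivalence of quotients, and passing to its quasi-inverse supplies the announced equivalence $\cC_\cX\to C_\cX$. The main delicate step, which I would examine first, is verifying that the induced homomorphism $\Aut_{\cX(s)}(s)\to \Aut_{\cX(t)}(t)$ really carries $A$ into $B$ in the inertia-decomposition sense used to define $\bar C_\cX$: this amounts to translating the equivariance $f(za)=f(z)u(a)$ into compatibility, under pullback along $\tilde f$, between the clopen decompositions $I_\cX\times_\cX\cX_{(s)}=\bigsqcup_{a'\in \Aut(s)} Z_{a'}$ and $I_\cX\times_\cX\cX_{(t)}=\bigsqcup_{b\in \Aut(t)} Z_b$, and is established by checking at the closed point and propagating by étale-local rigidity.
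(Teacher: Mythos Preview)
Your proposal is correct and follows essentially the same route as the paper: the paper's proof consists of the single sentence that a quasi-inverse of $\iota$ is induced by the functor $\bar\cC'_\cX\to\bar C_\cX$ carrying $(S\to\cX,A)$ to $(s\to\cX,A)$, where $s$ is the closed point of $S$, leaving all the verifications (compatibility with morphisms, passage to the localization, the natural isomorphism via $S\to\cX_{(s)}$, and the descent to $\cC_\cX\simeq C_\cX$) implicit. You have simply unpacked these steps, including the one point genuinely requiring the finite-inertia hypothesis, namely that the equivariance of $f$ translates into the containment $\tilde f(A)\subset B$ via the clopen decomposition of $I_\cX\times_\cX\cX_{(t)}$.
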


\begin{proof}
A quasi-inverse of $\iota$ is induced by the functor $\bar\cC'_\cX\to \bar
C_\cX$ carrying $(S\to \cX,A)$ to $(s\to \cX,A)$, where $s$ is the closed
point of $S$.
\end{proof}

In the sequel, for $\mathcal{X}$  a Deligne-Mumford stack of finite inertia,
we will often identify the categories $\mathcal{C}_{\mathcal{X}}$ and
$C_{\mathcal{X}}$ by the equivalence of Proposition \ref{l.DMlpt} and call
$DM$ $\ell$-elementary points just $\ell$-elementary points.

\begin{construction}
Let $\mathcal{X}$ be an Artin stack. Let $\mathcal{F}$ be a cartesian sheaf
on $\mathcal{X}$. If $x \colon [S/A] \to \mathcal{X}$ is an
$\ell$-elementary point of $\mathcal{X}$, let $\mathcal{F}_x \coloneqq
x^*\mathcal{F}$, and
\[
\Gamma(x,\mathcal{F}_x) \coloneqq \Gamma([S/A],\mathcal{F}_x) \simeq \Gamma(BA,\mathcal{F}_s) \simeq \mathcal{F}_s^A.
\]
If $(\varphi,\alpha) \colon [S/A] \to [Y/B]$ is a morphism in
$\mathcal{C}'_{\mathcal{X}}$, we have a natural map $\Gamma(x,\mathcal{F}_x)
\to \Gamma(y,\mathcal{F}_y)$ given by restriction, and in this way we get a
presheaf $\fq'\cF\colon x \mapsto \Gamma(x,\mathcal{F}_x)$ on
$\mathcal{C}'_{\mathcal{X}}$, which factors through a presheaf $\fq \cF$ on
$\mathcal{C}_{\mathcal{X}}$. The canonical restriction maps
$\Gamma(X,\mathcal{F}) \to \Gamma(x,\mathcal{F}_x)$ yield a map
\begin{equation}\label{e.7.22.1}
\Gamma(X,\mathcal{F}) \to \varprojlim_{x \in
\mathcal{C}_{\mathcal{X}}} \Gamma(x,\mathcal{F}_x).
\end{equation}
If $x \colon [S/A] \to \mathcal{X}$ is an elementary point of rank zero,
i.e.\ a geometric point of $X$ (Definition \ref{d.7.15}),
$\Gamma(x,\mathcal{F}_x) = \mathcal{F}_x$, and by restriction via $\varpi
\colon \mathcal{P}_{\mathcal{X}} \hookrightarrow \mathcal{C}_{\mathcal{X}}$,
the presheaf $\fq \cF$ induces the presheaf $\mathfrak{p}\mathcal{F}$
(Construction \ref{c.p}). Therefore we have a commutative diagram
\begin{equation}\label{e.CPtri}
\xymatrix{\Gamma(X,\mathcal{F}) \ar[dr] \ar[r] & \varprojlim_{x \in \mathcal{C}_{\mathcal{X}}} \Gamma(x,\mathcal{F}_x) \ar[d] \\
{} & \varprojlim_{x \in \mathcal{P}_{\mathcal{X}}} \mathcal{F}_x,}
\end{equation}
where the horizontal (resp.\ oblique) map is \eqref{e.7.22.1} (resp.\
\eqref{e.spA}), and the vertical one is restriction via $\varpi$.
\end{construction}

\begin{prop}
Let $\cX$ be a locally noetherian Artin stack, $\Lambda$ a noetherian
commutative ring, and $\cF$ a constructible sheaf of $\Lambda$-modules on
$\cX$. Then \eqref{e.7.22.1} is an isomorphism.
\end{prop}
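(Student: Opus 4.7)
The plan is to use the commutative triangle \eqref{e.CPtri}; since the oblique arrow is an isomorphism by Proposition \ref{p.spA}, it suffices to prove that the vertical restriction map
\[\varpi^* \colon \varprojlim_{x \in \cC_\cX} \Gamma(x, \cF_x) \to \varprojlim_{y \in \cP_\cX} \cF_y,\]
induced by $\varpi \colon \cP_\cX \to \cC_\cX$, is an isomorphism. For an $\ell$-elementary point $x \colon [S/A] \to \cX$, I will write $\tilde x \colon S \to \cX$ for the underlying geometric point and $s$ for the closed point of $S$, so that $\Gamma(x, \cF_x) = (\cF_s)^A \subset \cF_s = \cF_{\tilde x}$.

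The injectivity of $\varpi^*$ is immediate: the pair $(\id_S, \{1\} \hookrightarrow A)$ in $\bar\cC'_\cX$ yields a morphism $\varpi(\tilde x) \to x$ in $\cC_\cX$ whose action on the presheaf $\fq \cF$ is precisely the inclusion $(\cF_s)^A \hookrightarrow \cF_s$, so any family $(t_x) \in \varprojlim_{\cC_\cX} \fq \cF$ is determined by its restriction to rank-zero points.

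For surjectivity, given $(s_y) \in \varprojlim_{\cP_\cX} \cF_y$, I will set $t_x \coloneqq s_{\tilde x}$ and verify two points. First, that $t_x$ lies in $(\cF_s)^A$: each $a \in A$ acts on $S$ as a scheme automorphism fixing $s$, and the $A$-equivariant structure of $x$ supplies a 2-morphism making the pair $(a, \alpha_a)$ an endomorphism of $\tilde x$ in $\cP'_\cX$ that belongs to $M_\cX$ and is invertible in $\cP'_\cX$; after localization it gives an automorphism of $\tilde x$ in $\cP_\cX$, and compatibility of $(s_y)$ under this automorphism forces $A$-invariance. Second, that $(t_x)$ is compatible in $\cC_\cX$: any morphism $x \to y$ in $\cC_\cX$ lifts by Proposition \ref{l.varpi}(a) to $(f, u) \colon (x, A) \to (y, B)$ in $\bar\cC_\cX$, and the induced restriction on $\fq \cF$ is the restriction to invariants of the stalk map $f^* \colon \cF_{\tilde y} \to \cF_{\tilde x}$ attached to the underlying $\cP_\cX$-morphism $f$; compatibility of $(s_y)$ along $f$ yields $f^*(s_{\tilde y}) = s_{\tilde x}$.

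The most delicate point will be independence of the chosen lift, since morphisms in $\cC_\cX$ are only $B$-orbits of those in $\bar\cC_\cX$ (Proposition \ref{l.varpi}(a)): two lifts $(f, u)$ and $(fb, u)$ with $b \in B$ differ by composition of $f$ with the $B$-automorphism of $\tilde y$ induced by $b$, whose action on $(\cF_{\tilde y})^B$ is trivial; and $s_{\tilde y}$ lies in $(\cF_{\tilde y})^B$ by the invariance argument of the first step applied to $y$. With this independence established, the assignment $(s_y) \mapsto (t_x)$ inverts $\varpi^*$ and the proof concludes.
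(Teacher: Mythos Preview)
Your proof is correct and takes a genuinely different route from the paper's. Both arguments begin with the commutative triangle \eqref{e.CPtri} and reduce to showing that the vertical restriction map is an isomorphism, but they diverge from there.

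The paper works with the \emph{un}localized categories $\cC'_\cX$ and $\cP'_\cX$. Using Lemma~\ref{l.loclim}, it identifies the vertical map with $\varprojlim_{\cC'_\cX}$ of the adjunction map $\alpha\colon \fq'\cF \to \varpi'_* \fp'\cF$, and then shows that $\alpha$ is a \emph{pointwise} isomorphism: at an $\ell$-elementary point $x\colon [S/A]\to\cX$ one has $(\varpi'_*\fp'\cF)(x) = \varprojlim_{(\varpi'\downarrow x)}\cF_t$, and the functor from the one-object groupoid with automorphism group $A$ to $(\varpi'\downarrow x)$ sending the unique object to $(\tilde x,\varepsilon)$ is shown to be cofinal (each comma category over it is an $A$-torsor, hence a simply connected groupoid). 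This yields $(\varpi'_*\fp'\cF)(x)\simeq (\cF_s)^A$ directly.

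You instead work after localization and rely on the structural description of $\cC_\cX$ as a quotient of $\bar\cC_\cX$ from Proposition~\ref{l.varpi}. Your argument hinges on the forgetful functor $\theta\colon\bar\cC_\cX\to\cP_\cX$ and the natural inclusion $\rho^*\fq\cF\hookrightarrow\theta^*\fp\cF$ of presheaves on $\bar\cC_\cX$, which lets you pull a section over $\cP_\cX$ back to $\bar\cC_\cX$ and check that it lands in invariants. The paper's approach is shorter, avoids Proposition~\ref{l.varpi} entirely, and proves the stronger statement that $\alpha$ itself is an isomorphism of presheaves; your approach is more explicit and makes the role of the $A$- and $B$-actions on stalks completely transparent, at the cost of the bookkeeping around lifts that you correctly identify as the delicate point.
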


\begin{proof}
The oblique map of \eqref{e.CPtri} is an isomorphism by Proposition
\ref{p.spA}. By Lemma \ref{l.loclim}, the vertical map is obtained by
applying the functor
$\Gamma(\widehat{\cC'_\cX},-)=\varprojlim_{\cC'_\cX}(-)$ to the adjunction
map
\[\alpha\colon \fq' \cF \to \varpi'_*\varpi'^*\fq'\cF=\varpi'_* \fp' \cF,\]
where $\varpi'\colon \cP'_\cX\to \cC'_\cX$. Thus it suffices to show that
$\alpha$ is an isomorphism. Here
\[(\varpi'_*,\varpi'^*)\colon \widehat{\cP'_\cX}\to \widehat{\cC'_\cX}\]
is the morphism of topoi defined by $(\varpi'^*\cE)(z)=\cE(\varpi'(z))$ and
$(\varpi'_*\cG)(x)=\varprojlim_{(t,\phi)\in (\varpi'\downarrow x)}\cG_t$,
where for an $\ell$-elementary point $x\colon [S/A]\to \cX$,
$(\varpi'\downarrow x)$ is the category of pairs $(t,\phi)$, where $t$ is a
geometric point of $\cX$ and $\phi\colon \varpi' t\to x$ is a morphism in
$\cC'_\cX$, which is equivalent to $\cP'_{[S/A]}$. Let $\cA$ be the groupoid
with one object~$*$ and fundamental group $A$. Consider the functor $F\colon
\cA\to (\varpi'\downarrow x)$ sending $*$ to $(x\varepsilon,\varepsilon)$,
where $\varepsilon\colon S\to [S/A]$, and $a\in A$ to the morphism
$x\varepsilon\to x\varepsilon$ induced by the action of $a$. For any object
$(t,\phi)$ of $(\varpi'\downarrow x)$, the category $((t,\phi)\downarrow F)$
is a simply connected groupoid. Therefore, $F$ is cofinal and
\[\alpha(x)\colon \Gamma(x,\cF_x)\to \varprojlim_{(t,\phi)\in (\varpi'\downarrow x)}\cF_t\simto \varprojlim_{\cA}\cF_{x
\varepsilon} \simeq \cF_s^A
\]
is an isomorphism. Here $s$ is the closed point of $S$.
\end{proof}

In the next section we study higher cohomological variants of
\eqref{e.7.22.1}.

\section{A generalization of the structure theorems to Artin stacks}\label{s.6}

In this section we fix an algebraically closed field $k$  and a prime number
$\ell$ invertible in $k$.

\begin{construction}
Let $\mathcal{X}$ be an Artin stack, and let $K \in
D_{\cart}(\mathcal{X},\F_{\ell})$. For $q \in \Z$, consider the presheaf of
$\F_{\ell}$-vector spaces on $\mathcal{C}_{\mathcal{X}}$ \eqref{e.7.18.2}
\[
(x \colon [S/A] \to \mathcal{X}) \mapsto H^q([S/A],K_x) \simeq H^q(BA,K_s)
\]
(where $K_x \coloneqq x^*K$ and $s$ is the closed point of $S$), and let
\begin{equation}\label{e.R}
R^q(\mathcal{X},K) \coloneqq \varprojlim_{(x \colon \mathcal{S} \to \mathcal{X}) \in \mathcal{C}_{\mathcal{X}}} H^q(\mathcal{S},K_x).
\end{equation}
The restriction maps $H^q(\mathcal{X},K) \to H^q(\mathcal{S},K_x)$ define a
map
\begin{equation}\label{e.a}
a^q_{\mathcal{X},K} \colon H^q(\mathcal{X},K) \to R^q(\mathcal{X},K).
\end{equation}
We denote by $a_{\mathcal{X},K}$ the direct sum of these maps:
\begin{equation}\label{e.astar}
a_{\mathcal{X},K} = \bigoplus_q a^q_{\mathcal{X},K}\colon H^*(\mathcal{X},K) \to R^*(\mathcal{X},K).
\end{equation}
If $K$ has a (pseudo-)ring structure (Construction \ref{c.Dring}), then both
sides of \eqref{e.astar} are $\F_{\ell}$-(pseudo-)algebras, and
$a_{\mathcal{X},K}$ is a homomorphism of $\F_\ell$-(pseudo-)algebras.
\end{construction}

\begin{definition}\label{d.quot}
We say that an Artin stack $\cX$ over $k$ is a \emph{global quotient stack}
if $\cX$ is equivalent to a stack of the form $[X/G]$ for $X$ a
\emph{separated} algebraic space of finite type over $k$ and $G$ an
algebraic group over $k$. We say that an Artin stack $\cX$ of finite
presentation over $k$ \emph{has a stratification by global quotients} if
there exists a stratification of $\cX_\red$ by locally closed substacks such
that each stratum is a global quotient stack.
\end{definition}

Recall that an Artin stack over $k$ is of finite presentation if and only if
it is quasi-separated and of finite type over $k$. Note that our Definition
\ref{d.quot} differs from \cite[Definition 2.9]{EHKV} and \cite[Definition
3.5.3]{Kresch} because we allow quotients by non-affine algebraic groups.

The following theorem is our main result.

\begin{theorem}\label{p.str}
Let $\cX$ be an Artin stack of finite presentation over $k$ admitting a
stratification by global quotients, $K\in D^+_c(\cX,\F_\ell)$.
\begin{enumerate}
\item $R^q(\cX,K)$ is a finite-dimensional $\F_\ell$-vector space for all
    $q$. Moreover, $R^*(\cX,\F_\ell)$ is a finitely generated
    $\F_\ell$-algebra and, for $K$ in $D^b_c(\cX,\F_\ell)$, $R^*(\cX,K)$
    is a finitely generated $R^*(\cX,\F_\ell)$-module.

\item If $K$ is a pseudo-ring in $D^+_c(\cX,\F_\ell)$, then $\Ker
    a_{\cX,K}$ \eqref{e.a} is a nilpotent ideal of $H^*(\cX,K)$. If,
    moreover, $K$ is commutative and $\cX$ is a Deligne-Mumford stack with
    finite inertia or a global quotient stack, then $a_{\cX,K}$ is a
    uniform $F$-isomorphism (Definition \ref{s.grvec}).
\end{enumerate}
\end{theorem}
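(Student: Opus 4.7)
The plan is to prove both parts by reducing to the cases already handled by Theorem \ref{t.main2}, namely global quotient stacks, and to a local étale model for Deligne--Mumford stacks with finite inertia.

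For part (a), I would first use dévissage along the stratification by global quotients. Given a locally closed decomposition of $\cX$ with open part $j\colon \cU\to\cX$ and closed complement $i\colon\cZ\to\cX$, the distinguished triangle $j_!j^*K\to K\to i_*i^*K\to$ gives a long exact sequence for $H^*(\cX,-)$. The functor $R^q$ admits a similar long exact sequence because each $\ell$-elementary point $x\colon[S/A]\to\cX$ factors through $\cU$ or $\cZ$ (since $S$ is strictly local, its image is a single point), so the category $\cC_\cX$ splits as $\cC_\cU\sqcup\cC_\cZ$ up to the gluing morphisms, and the limit \eqref{e.R} becomes compatible with the triangle. This reduces the finiteness claim to the case $\cX=[X/G]$. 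There, using Proposition \ref{p.7.16}(c), an $\ell$-elementary point corresponds to a morphism of groupoids $(x_0,i)\colon(S,A)_\bullet\to(X,G)_\bullet$ with $i\colon A\hookrightarrow G$ a monomorphism and $x_0(s)\in X^A$. The resulting identification, $H^q([S/A],x^*K)\simeq (R^q\pi_*[1/i]^*K)_{x_0(s)}$, together with Lemma \ref{l.6.18} and Proposition \ref{p.spA}, yields a natural isomorphism $R^*(\cX,K)\simto R^*_G(X,K)$ of the objects appearing in Theorem \ref{t.main2}. Then Theorem \ref{t.main2}(a) gives finiteness.

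For part (b), the nilpotent kernel statement for an arbitrary pseudo-ring reduces in the same way via the stratification: if $\alpha\in\ker a_{\cX,K}$, then $j^*\alpha\in\ker a_{\cU,j^*K}$ is nilpotent by induction on the number of strata, so $\alpha^n$ lies in the image of $i_*$ for some $n$; writing $\alpha^n=i_*\beta$ and applying nilpotence of $\ker a_{\cZ,i^*K}$ to $\beta$ (after observing $i_*$ is compatible with the pseudo-ring structure on $K$ via the splitting in Construction \ref{c.Dring}) yields $\alpha^{nm}=0$. Once we reach the base case, the global quotient case $\cX=[X/G]$ follows directly from the identification $a_{\cX,K}=a_G(X,K)$ and Theorem \ref{t.main2}(b), which simultaneously gives the uniform $F$-isomorphism in the commutative case. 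For the Deligne--Mumford case with finite inertia, I would exploit the local structure: such a stack admits, étale-locally at any geometric point $x$ with automorphism group $\Gamma_x$, a presentation of the form $[U/\Gamma_x]$ with $\Gamma_x$ finite (a consequence of Keel--Mori and the standard étale slice for unramified inertia). Applying the global quotient case to such charts, together with Proposition \ref{l.DMlpt} identifying $\cC_\cX\simeq C_\cX$, gives the uniform $F$-isomorphism locally; one then globalizes using the Künneth/descent formulas in Section \ref{s.Kunneth} and the observation that both source and target of $a_{\cX,K}$ satisfy Mayer--Vietoris for étale covers.

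The main obstacle will be the bookkeeping in part (a)'s reduction: one must check that $\cC_\cX$ for an $\ell$-elementary point lying over a stratum really reproduces the indexing category $\cE_G(\pi_0)$ of Lemma \ref{l.6.18}, including the action of elements $g\in G(k)$ transporting one subgroup into another, and that the resulting identification is compatible with the restriction maps on both sides. A secondary subtle point is the passage from the pseudo-ring nilpotent-kernel statement to the uniform $F$-isomorphism statement; this relies on commutativity to raise elements to $\ell^n$-th powers, and on making the Mayer--Vietoris/stratification arguments \emph{uniform} in the sense of Definition \ref{s.grvec}, which requires the finiteness results in part (a) to bound the exponents.
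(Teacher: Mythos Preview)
Your plan has a genuine circularity problem. You propose to settle the global quotient case of Theorem~\ref{p.str}(b) by invoking Theorem~\ref{t.main2}(b) together with the identification $a_{[X/G],K}\simeq a_G(X,K)$. But in this paper Theorem~\ref{t.main2}(b) is not proved independently: it is deduced \emph{from} Theorem~\ref{p.str}(b) via Proposition~\ref{p.RR} (see the sentence just before Lemma~\ref{l.6.18}). So your base case is empty. The paper instead proves the global quotient case directly, by Quillen's trick: one compares $[X/G]$ with $[X\times F/G]$ for $F=S\backslash L$ (with $L=\GL_n$ or an abelian variety), where the inertia becomes elementary abelian $\ell$-groups, and then invokes the special Deligne--Mumford case (Corollary~\ref{c.DMl}) together with the K\"unneth formulas of Section~\ref{s.Kunneth} to show the rows of the diagram~\eqref{e.mtdiag} are exact.

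Two further gaps. First, your d\'evissage for part~(a) asserts that $\cC_\cX$ splits as $\cC_\cU\sqcup\cC_\cZ$ and that $R^q$ inherits a long exact sequence from the triangle $j_!j^*K\to K\to i_*i^*K$. Neither holds: morphisms in $\cC_\cX$ can go from points over $\cU$ to points over $\cZ$ by specialization, and $R^q$ is a projective limit over a non-filtered category, so it has no reason to be exact. The paper's d\'evissage for the nilpotent-kernel assertion uses instead the triangle $i_*Ri^!K\to K\to Rj_*j^*K$ and the splitting of $Ri^!K\to i^*K$ (Construction~\ref{c.Dring}, Lemma~\ref{p.shr}) to control $\Ker u$ in the bottom row; for part~(a) it uses only essential surjectivity of $\coprod_i\cC_{\cX_i}\to\cC_\cX$, which gives an \emph{injection} $R^*(\cX,K)\hookrightarrow\prod_i R^*(\cX_i,j_i^*K)$, enough for finiteness. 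Second, your Deligne--Mumford argument via \'etale-local charts and Mayer--Vietoris is not how the paper proceeds and would face the usual difficulty that uniform $F$-isomorphism does not obviously glue along \'etale covers. The paper instead passes to the coarse moduli space $f\colon\cX\to Y$, uses the Leray edge map $e_{f,K}$ (Lemma~\ref{l.10.2}) to reduce to the fibers $\cX_t$, and applies the global quotient bounds of Corollary~\ref{c.bound} uniformly in $t$ via Lemma~\ref{p.limit}.
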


\begin{remark}
\begin{enumerate}
\item A non separated scheme of finite presentation over $k$ is not a
    global quotient stack in the sense of Definition \ref{d.quot} in
    general. Michel Raynaud gave the example of an affine plane with
    doubled origin. More generally, if $Y$ is a separated smooth scheme of
    finite type over $k$, and $Y'$ is obtained by gluing two copies of
    $Y$, $Y^{(1)}$ and $Y^{(2)}$, along the complement of a nonempty
    closed subset of codimension $\ge 2$, then, for any algebraic group
    $G$, every $G$-torsor $X$ over $Y'$ is non separated. To see this, we
    may assume $G$ smooth. By \'etale localization on $Y$, we may further
    assume that $X$ admits a section $s_i$ over $Y^{(i)}$, $i=1,2$. Assume
    that $X$ is separated. The restrictions of $s_1$ and $s_2$ to
    $V=Y^{(1)}\cap Y^{(2)}$ provide a section of $G\times V$, which
    extends by Weil's extension theorem (see \cite[Theorem 4.4.1]{BLR} for
    a generalization) to a section of $G\times Y^{(2)}$. Via this section,
    $s_1$ and $s_2$ can be glued to give a trivialization of $X$ over
    $Y'$, contradicting the separation assumptions.

\item Recall \cite[Proposition 3.5.9]{Kresch} that, if for every geometric
    point $\eta \to \cX$, the inertia $I_\eta=\eta\times_{\cX} I_{\cX}$ is
    affine, where $I_{\cX}=\cX\times_{\Delta_\cX,\cX\times \cX,\Delta_\cX}
    \cX$, then $\cX$ has a stratification by global quotients in the sense
    of \cite[Definition 3.5.3]{Kresch}, and a fortiori in the sense of
    Definition \ref{d.quot}.

\item On the other hand, the fact that $\cX$ has a stratification by
    global quotients in the sense of Definition \ref{d.quot} imposes
    restrictions on its inertia groups. In fact, if $k$ has characteristic
    zero, then, for any geometric point $\eta=\Spec(K) \to \cX$ with $K$
    algebraically closed, $I_\eta^0/(I_{\eta})_\aff$ is an abelian variety
    over $K$ defined over $k$. Here $I_\eta^0$ is the identity component
    of $I_\eta$ and $(I_\eta)_{\aff}$ is the largest connected affine
    normal subgroup of $I_\eta$. Indeed, if $\cX=[X/G]$, then $I_\eta$ is
    a subgroup of $G\otimes_k K$, so that $I_\eta^0/(I_{\eta})_\aff$ is
    isogenous to an abelian subvariety of $(G^0/G_{\aff})\otimes_k K$,
    hence is defined over $k$ (for an abelian variety $A$ over $k$,
    torsion points of order invertible in $k$ of $A\otimes_k K$ are
    defined over $k$ as $k$ is algebraically closed).

\item For an Artin stack $\cX$ of finite presentation over $k$ and a
    commutative ring $K$ in $D^b_c(\cX,\F_\ell)$, we do not know whether
    $H^*(\cX,K)$ is a finitely generated $\F_\ell$-algebra or whether
    $a_{\cX,K}$ is a uniform $F$-isomorphism in general, even under the
    assumption that $\cX$ has a stratification by global quotients. It may
    be the case that to treat the general case we would need to
    reformulate the theory in a relative setting.
\end{enumerate}
\end{remark}

The proof of Theorem \ref{p.str} will be given in Section~\ref{s.7}. In the
rest of this section we show that Theorem \ref{p.str} (b) implies Theorem
\ref{t.main2} (b).

\begin{construction}\label{c.8.4}
Let $G$ be an algebraic group over $k$ and $X$ an algebraic space over $k$
endowed with an action of $G$ (here we do not assume $X$ to be of finite
type over $k$). To show that Theorem \ref{p.str} (b) implies Theorem
\ref{t.main2} (b), we will proceed in two steps.
\begin{enumerate}[(1)]
\item For $K \in D^+_\cart([X/G],\F_{\ell})$ we will construct a
    homomorphism
\begin{equation}\label{e.RtoR}
\alpha \colon R^*([X/G],K) \to R^*_G(X,K),
\end{equation}
which will be a homomorphism of $\F_{\ell}$-(pseudo-)algebras if $K$ has a
(pseudo-)ring structure, and whose composition with $a_{[X/G],K} \colon
H^*([X/G],K) \to R^*([X/G],K)$ will be $a_G(X,K)$ \eqref{e.aE}.

\item We will show that $\alpha$ is an isomorphism.
\end{enumerate}

Let us construct $\alpha$. Recall that
\[
R^q([X/G],K) = \varprojlim_{(x \colon \mathcal{S} \to [X/G]) \in \mathcal{C}_{[X/G]}} H^q(\mathcal{S},K_x),
\]
and $R^q_G(X,K)= \varprojlim_{(A,A',g) \in \mathcal{A}_G(k)^{\natural}}
H^0(X^{A'},R^q\pi_*[1/c_g]^*K)$ \eqref{e.RE}. We first compare the
categories $\mathcal{A}_G(k)^{\natural}$ and $\mathcal{C}_{[X/G]}$ by means
of a third category $\mathcal{C}_{X,G}$ mapping to them by functors $E$
and~$\Pi$:
\begin{equation}\label{e.8.4.2}
\xymatrix{{} & \mathcal{C}_{X,G} \ar[dl]_{E} \ar[dr]^{\Pi} & {} \\
\mathcal{C}_{[X/G]} & {} & \mathcal{A}_G(k)^{\natural}.}
\end{equation}
The category $\mathcal{C}_{X,G}$ is cofibered over
$\mathcal{A}_G(k)^{\natural}$ by $\Pi$. The fiber category of
$\mathcal{C}_{X,G}$ at an object $(A,A',g)$ of $\mathcal{A}_G(k)^{\natural}$
is the category of points $P_{X^{A'}}$ of the fixed point space of $A'$ in
$X$. If $(a,b) \colon (A,A',g) \to  (Z,Z',h)$ is a morphism in
$\mathcal{A}_G(k)^{\natural}$ (cf.\ \eqref{e.6.15.2}), we define the pushout
functor $P_{b^{-1}} \colon P_{X^{A'}} \to P_{X^{Z'}}$ to be the functor
induced by $b^{-1} \colon X^{A'} \to (X^{A'})b^{-1} = X^{bA'b^{-1}} \subset
X^{Z'}$. If $x \colon s \to X^{A'}$ is a geometric point of $X^{A'}$, let
$E_{(A,A',g)}(x) \colon [s/A] \to [X/G]$ be the $\ell$-elementary point of
$[X/G]$ defined by the composition
\[
E_{(A,A',g)}(x) \colon [s/A] \xrightarrow{[x/A]} [X^{A'}/A] \xrightarrow{[1/c_g]} [X/G].
\]
For $(x \colon s \to X^{A'}) \in P_{X^{A'}}$, $(y \colon t \to X^{Z'}) \in
P_{X^{Z'}}$, let $u \colon x \to y$ be a morphism in $\mathcal{C}_{X,G}$
above $(a,b) \colon (A,A',g) \to (Z,Z',h)$. The morphism
\[
E(u) \colon E_{(A,A',g)}(x) \to E_{(Z,Z',h)}(y)
\]
is defined as follows. By definition, $u$ is a commutative square
\[
\xymatrix{X^{A'} \ar[d]_{b^{-1}} & (X^{A'})_{(x)} \ar[d]^f \ar[l]_x \\
X^{Z'} & (X^{Z'})_{(y)} \ar[l]_y,}
\]
where the horizontal arrows denote by abuse of notation the morphisms
induced by strict localizations. It gives the (2-commutative) square on the
right of the diagram
\begin{equation}\label{e.8.4.2b}
\xymatrix{&\ar[ld]_{[1/c_g]}[X^{A'}/A] \ar[d]^{[b^{-1}/c_a]} & [(X^{A'})_{(x)}/A] \ar[l]_{[x/A]} \ar[d]^{[f/c_a]} \\
[X/G]\urtwocell\omit{<2>}& \ar[l]^{[1/c_h]}[X^{Z'}/Z] & [(X^{Z'})_{(y)}/Z] \ar[l]_{[y/Z]},}
\end{equation}
whose composition with the 2-morphism (given by $b$) in the left triangle of
\eqref{e.8.4.2b} (appearing in \eqref{e.6.15.3}) is the morphism $E(u)$.
This defines the functor $E$ in \eqref{e.8.4.2}.

Fix $q \in \Z$. Denote by $H^q(K_{\bullet})$ the projective system $((\xi
\colon \mathcal{S} \to [X/G]) \mapsto H^q(\mathcal{S},K_{\xi}))$ on
$\mathcal{C}_{[X/G]}$, whose projective limit is $R^q([X/G],K)$ \eqref{e.R}.
In other words, $R^q([X/G],K) =
\Gamma(\widehat{\mathcal{C}_{[X/G]}},H^q(K_{\bullet}))$. We have an inverse
image map
\begin{equation}\label{e.8.4.3}
\Gamma(\widehat{\mathcal{C}_{[X/G]}},H^q(K_{\bullet})) \to \Gamma(\widehat{\mathcal{C}_{X,G}},E^*H^q(K_{\bullet}))
\simeq \Gamma(\widehat{\cA_G(k)^\natural},\Pi_*E^*H^q(K_\bullet)).
\end{equation}
By the cofinality lemma (Lemma \ref{l.cofcof}) below,
\[
(\Pi_*E^*H^q(K_\bullet))_{(A,A',g)} \simeq \varprojlim_{x \in P_{X^{A'}}}H^q([x/A],K_x).
\]
By Proposition \ref{p.sp} (applied to the algebraic space $X^{A'}$), we have
a natural isomorphism
\[
\varprojlim_{x \in P_{X^{A'}}}H^q([x/A],K_x) \simto H^0(X^{A'},R^q\pi_*([1/c_g]^*K))
\]
where $\pi \colon [X^{A'}/A] = BA \times X^{A'} \to X^{A'}$ is the
projection, and $[1/c_g] \colon [X^{A'}/A] \to [X/G]$ is the morphism in
\eqref{e.6.15.3}. Finally, we find a natural isomorphism
\[
\Gamma(\widehat{\cA_G(k)^\natural},\Pi_*E^*H^q(K_\bullet)) \simto \varprojlim_{(A,A',g) \in \mathcal{A}_G(k)^{\natural}} H^0(X^{A'},R^q\pi_*([1/c_g]^*K)),
\]
which, by the definition of $R^q_G(X,K)$ \eqref{e.RE}, can be rewritten
\begin{equation}\label{e.8.4.4}
\Gamma(\widehat{\cA_G(k)^\natural},\Pi_*E^*H^q(K_\bullet)) \simto R^q_G(X,K).
\end{equation}
The composition of \eqref{e.8.4.3} and \eqref{e.8.4.4} yields the desired
map $\alpha$ \eqref{e.RtoR}.
\end{construction}

\begin{lemma}\label{l.cofcof}
Let $\Pi\colon \cC\to \cE$ be a cofibered category, let $e$ be an object of
$\cE$, and let $\Pi_e$ be the fiber category of $\Pi$ above $e$. Then the
functor $F\colon \Pi_e\to (\Pi\downarrow e)$ is cofinal. In
  particular, for every presheaf $\cF$ on $\cC$, $(\Pi_* \cF)(e)\simto \varprojlim_{c\in \Pi_e} \cF(c)$.
\end{lemma}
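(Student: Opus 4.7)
The plan is to verify directly the cofinality criterion of Definition \ref{s.cofinal} for the inclusion $F\colon \Pi_e \hookrightarrow (\Pi\downarrow e)$ sending $c$ (with $\Pi(c)=e$) to $(c,\id_e)$. Once cofinality is established, the ``In particular'' clause is immediate: by the standard description of the right adjoint $\Pi_*$ to restriction on presheaves (as used in Lemma \ref{l.limit}, following \cite[IV 4.6]{SGA4}), one has $(\Pi_*\cF)(e) = \varprojlim_{(c,\phi)\in (\Pi\downarrow e)} \cF(c)$, and cofinality of $F$ applied to the presheaf $(c,\phi)\mapsto \cF(c)$ identifies this limit with $\varprojlim_{c\in \Pi_e} \cF(c)$.

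So the heart of the proof is to show that, for each object $(c,\phi\colon \Pi(c)\to e)$ of $(\Pi\downarrow e)$, the comma category $((c,\phi)\downarrow F)$ is nonempty and connected. Unwinding the definitions, its objects are pairs $(c',g)$ where $c'\in \Pi_e$ and $g\colon c\to c'$ is a morphism of $\cC$ satisfying $\Pi(g)=\phi$; a morphism from $(c'_1,g_1)$ to $(c'_2,g_2)$ is a morphism $h\colon c'_1\to c'_2$ of $\Pi_e$ (so $\Pi(h)=\id_e$) such that $hg_1=g_2$.

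Nonemptiness is immediate from the assumption that $\Pi$ is cofibered: a cocartesian lift of $\phi$ at $c$ furnishes a morphism $g\colon c\to c'$ in $\cC$ with $\Pi(g)=\phi$, and automatically $\Pi(c')=e$, so $(c',g)$ is such an object. For connectedness, I shall exhibit such a cocartesian pair $(c',g)$ as weakly initial in $((c,\phi)\downarrow F)$. Given any other object $(c'_1,g_1)$, the universal property of the cocartesian morphism $g$ applied to the data consisting of $g_1\colon c\to c'_1$ together with $\id_e\colon e=\Pi(c')\to \Pi(c'_1)=e$ (which is compatible with $g$ since $\id_e\circ \Pi(g)=\phi=\Pi(g_1)$) yields a unique $h_1\colon c'\to c'_1$ in $\cC$ with $\Pi(h_1)=\id_e$ and $h_1 g=g_1$; this $h_1$ constitutes a morphism $(c',g)\to (c'_1,g_1)$ of the comma category, so the whole category is connected.

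There is no substantive obstacle; the argument is essentially bookkeeping built directly on the definition of a cocartesian morphism, the only subtle point being the careful tracking of the direction conventions in $(\Pi\downarrow e)$ so as to correctly match the universal property of cocartesian arrows with the required morphisms of the comma category.
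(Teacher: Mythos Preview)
Your proof is correct and follows essentially the same approach as the paper: both use the cocartesian lift of $\phi$ at $c$ to produce an initial object of $((c,\phi)\downarrow F)$ (the paper writes this as $(c,f)\to F(f_*c)$), from which connectedness is immediate. Your argument in fact shows the object is initial (not merely weakly initial, since your $h_1$ is unique), exactly as the paper claims.
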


\begin{proof}
For every object $(c,f\colon \Pi c\to e)$ of $(\Pi\downarrow e)$, $(c,f)\to
F(f_*c)$ is an initial object of $((c,f)\downarrow F)$. Thus
$((c,f)\downarrow F)$ is connected.
\end{proof}

\begin{prop}\label{p.RR}
Under the assumptions of Construction \ref{c.8.4}, the functor $E$ is
cofinal. In particular, \eqref{e.RtoR} is an isomorphism.
\end{prop}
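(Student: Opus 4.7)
The plan is to verify that for every $\xi\in \cC_{[X/G]}$ the comma category $(\xi\downarrow E)$ is nonempty and connected, by exhibiting a canonical object and explicit zigzags connecting it to arbitrary other objects. Cofinality then gives the isomorphism $\alpha$ from the fact that cofinal functors preserve limits.

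First, for nonemptiness, represent $\xi$ by $[S/B]\to[X/G]$ in $\cC'_{[X/G]}$ and lift it via Proposition \ref{p.7.16}(c) to a groupoid morphism $(x_0,i)\colon(S,B)_\bullet\to(X,G)_\bullet$. Restricting the crossed homomorphism $i$ to the closed point $s_0$ (on which $B$ acts trivially) produces an ordinary group monomorphism $i_0\colon B\hookrightarrow G(k)$, injective by representability of $\xi$ (Proposition \ref{p.7.16}(a)); set $A\coloneqq i_0(B)\subset G$, an elementary abelian $\ell$-subgroup, and observe that the equivariance of $x_0$ yields $x_0(s_0)\in X^A(k)$. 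Then $\eta\coloneqq(x_0(s_0),(A,A,1))$ is an object of $\cC_{X,G}$, and the $B$-equivariant closed immersion $s_0\hookrightarrow S$ combined with the isomorphism $i_0\colon B\simto A$ gives morphisms in $N_{[X/G],\ell}$, yielding a canonical isomorphism $\mathrm{can}\colon \xi\simto E(\eta)$ in $\cC_{[X/G]}$.

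For connectedness, given another object $(y,\phi)\in(\xi\downarrow E)$ with $y=(y_0\colon t\to X^{Z'},(Z,Z',h))$, set $\psi\coloneqq\phi\circ\mathrm{can}^{-1}\colon E(\eta)\to E(y)$ and lift via Propositions \ref{l.varpi}(a), \ref{p.eqstr}, and \ref{p.7.16}(c) to extract a monomorphism $u\colon A\to Z$ and an element $r_0\in G(k)$ satisfying $x_0(s_0)=y_0(t_0)\cdot r_0$ and $a=(hr_0)^{-1}u(a)(hr_0)$ for $a\in A$. The conjugate $r_0^{-1}Z'r_0$ is an elementary abelian $\ell$-subgroup fixing $x_0(s_0)$, and the chain of inclusions $u(A)\subset Z\subset hZ'h^{-1}$ combined with $A=(hr_0)^{-1}u(A)(hr_0)$ yields $A\subset r_0^{-1}Z'r_0$. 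Set $A^*\coloneqq r_0^{-1}Z'r_0$ and $\eta^*\coloneqq(x_0(s_0),(A,A^*,1))\in\cC_{X,G}$: the pair $(1,1)$ defines a morphism $u_1\colon \eta^*\to\eta$ in $\cC_{X,G}$, and $(r_0^{-1}h^{-1},r_0)$ defines a morphism $u_2\colon \eta^*\to y$ (the $\cA_G(k)^\natural$-conditions $(hr_0)^{-1}A(hr_0)=u(A)\subset Z$ and $r_0^{-1}Z'r_0=A^*$ holding by construction, with the fiber component $x_0(s_0)\cdot r_0^{-1}=y_0(t_0)$ mapping canonically to $y_0$ in $\cP'_{X^{Z'}}$). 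Since $E(u_1)$ is the identity of the $\ell$-elementary point $[s_0/A]\to[X/G]$, setting $\mathrm{can}^*\coloneqq E(u_1)^{-1}\circ\mathrm{can}$ and checking $E(u_2)\circ\mathrm{can}^*=\phi$ by unwinding the definition of $E$ via Proposition \ref{p.EqE}(a) furnishes the zigzag $(\eta,\mathrm{can})\xleftarrow{u_1}(\eta^*,\mathrm{can}^*)\xrightarrow{u_2}(y,\phi)$ in $(\xi\downarrow E)$, proving connectedness.

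The main obstacle is the group-theoretic inclusion $A\subset r_0^{-1}Z'r_0$ in the second step, which requires the full strength of the 2-morphism datum $r_0$ extracted from the lifted groupoid morphism via Proposition \ref{p.7.16}(c); the remaining verifications are routine but require careful bookkeeping through the multiple layers of localization (the set $N_{[X/G],\ell}$, the quotient $\bar\cC/Z$ of Proposition \ref{l.varpi}(a), and the groupoid realization via Proposition \ref{p.eqstr}). Once cofinality is established, the final isomorphism \eqref{e.RtoR} follows immediately from the identification \eqref{e.8.4.4} already produced in Construction \ref{c.8.4}.
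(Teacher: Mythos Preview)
There is a genuine gap concerning rationality. When you lift $\xi$ via Proposition~\ref{p.7.16}(c), the crossed homomorphism $i\colon S\times B\to G$ restricted to the closed point gives a monomorphism $i_0\colon B\hookrightarrow G(\kappa(s_0))$, not $G(k)$: the residue field $\kappa(s_0)$ is a separably closed field that is typically a nontrivial extension of $k$ (e.g.\ when $s_0$ lies over a nonclosed point of $X$). Since the objects of $\cC_{X,G}$ are indexed by $\cA_G(k)^\natural$, whose entries are $k$-rational elementary abelian subgroups $A,A'\subset G$ and $k$-rational elements $g\in G(k)$, your candidate $A=i_0(B)$ does not in general define an object of $\cC_{X,G}$. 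The paper closes this gap precisely by invoking Corollary~\ref{c.Serre0} on finiteness of orbit types: every $\bar s$-point of $\cHom(B,G)$ is $G(\bar s)$-conjugate to a $k$-point, so one can replace $i_0$ by a rational homomorphism (their notion of ``rational lifting'').

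The same issue contaminates your connectedness argument: the 2-morphism $r$ extracted from Proposition~\ref{p.7.16}(c) is a morphism $S\to G$, so $r_0=r(s_0)\in G(\kappa(s_0))$, not $G(k)$, and the morphisms in $\cA_G(k)^\natural$ that you need are pairs of $k$-rational elements. Furthermore, $\psi\colon E(\eta)\to E(y)$ is a morphism in the localized category $\cC_{[X/G]}=N_{[X/G]}^{-1}\cC'_{[X/G]}$, hence a priori a zigzag; Propositions~\ref{l.varpi}(a) and~\ref{p.eqstr} let you lift morphisms of $\cC'_{[X/G]}$, but a morphism of $\cC_{[X/G]}$ need not be represented by a single such morphism. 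The paper handles both difficulties by factoring $E$ through the intermediate localization $\cB=N^{-1}\cC_{X,G}$, reducing to showing that $\cB\to\cC_{[X/G]}$ is full and essentially surjective, and then decomposing an arbitrary morphism of $\cC'_{[X/G]}$ into three elementary pieces (Steps~2--5 of the lemma following Proposition~\ref{p.RR}), each of which is analyzed separately with careful control over rationality via connected components of transporter schemes.
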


\begin{cor}
Theorem \ref{p.str} (b) implies Theorem \ref{t.main2} (b).
\end{cor}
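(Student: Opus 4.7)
The plan is to establish that the functor $E$ of \eqref{e.8.4.2} is cofinal, that is, for every $\xi\colon\mathcal{S}\to[X/G]$ in $\mathcal{C}_{[X/G]}$ the comma category $(\xi\downarrow E)$ is nonempty and connected. Once this is shown, the inverse image map \eqref{e.8.4.3} is an isomorphism, and composing with \eqref{e.8.4.4} gives that $\alpha$ \eqref{e.RtoR} is an isomorphism. Moreover $\alpha$ is compatible with the (pseudo-)ring structures by the functoriality of the construction, and the identification $\alpha\circ a_{[X/G],K}=a_G(X,K)$ is immediate from the fact that both sides are defined via the same restriction maps along the $\ell$-elementary points $E_{(A,A',g)}(x)$. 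Since $[X/G]$ is a global quotient stack, Theorem \ref{p.str} (b) then translates directly into Theorem \ref{t.main2} (b).

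For nonemptiness, given $\xi\colon[S/A_0]\to[X/G]$, I invoke Proposition \ref{p.7.16} (c) to lift $\xi$ to a morphism of groupoids $(x_0,i)\colon(S,A_0)_\bullet\to(X,G)_\bullet$. Restricting to the closed point $s=\Spec(k)$ of $S$, on which $A_0$ acts trivially, the crossed homomorphism $i$ specialises to a group homomorphism $i_s\colon A_0\to G(k)$. By Proposition \ref{p.7.16} (a), representability of $\xi$ forces $i_s$ to be injective, so $A\coloneqq i_s(A_0)$ is an elementary abelian $\ell$-subgroup of $G$ fixing $x_0(s)\in X(k)$. Setting $y_0\coloneqq((A,A,1),x_0(s))\in\mathcal{C}_{X,G}$, the factorisation of $(x_0,i)$ through $(s,A)_\bullet$ produces a canonical morphism $\phi_0\colon\xi\to E(y_0)$ in $\mathcal{C}_{[X/G]}$.

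For connectedness, given $(y_1,\phi_1),(y_2,\phi_2)\in(\xi\downarrow E)$ with $y_j=((A_j,A_j',g_j),x_j\colon s_j\to X^{A_j'})$, I will show each is connected to $(y_0,\phi_0)$ constructed above. By Proposition \ref{p.7.16} (b), each $\phi_j$ is represented by a pair $(f_j,u_j)$ with $u_j\colon A_0\hookrightarrow A_j$ a group monomorphism (in fact $u_j$ composed with $A_j\subset G$ recovers $i_{s,j}$, another choice of lifting of $\xi\res s$). By the uniqueness part of Proposition \ref{p.7.16} (c), any two such liftings of $\xi$ differ by conjugation by an element $r_j\in G(k)$ relating $i_s$ to $i_{s,j}$. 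This element $r_j$ produces a morphism in $\mathcal{A}_G(k)^\natural$, and together with the morphism $S\to(X^{A_j'})_{(x_j)}$ induced by $f_j$, it furnishes a morphism in $\mathcal{C}_{X,G}$ from (an auxiliary object constructed from) $y_0$ to $y_j$ (or in the opposite direction, after an obvious localisation) over which $\phi_j$ and $\phi_0$ become equal. The zig-zag $(y_1,\phi_1)\leftrightsquigarrow(y_0,\phi_0)\leftrightsquigarrow(y_2,\phi_2)$ then connects the two given objects.

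The main obstacle will be to keep careful track, in the connectedness argument, of the difference between a morphism in $\mathcal{C}_{[X/G]}$ (an equivalence class of pairs $(\varphi,\alpha)$) and its representative as a morphism of groupoids over $(X,G)_\bullet$, and to verify that the element $r_j$ produced by Proposition \ref{p.7.16} (c) really assembles into a morphism of $\mathcal{C}_{X,G}$, i.e.\ fits into the cartesian-type data $(a,b)$ with $g=ahb$ and induces a compatible map on strict localisations of fixed-point spaces. A clean way to handle this is to observe that since both $\phi_0$ and $\phi_j$ factor the same morphism $\xi\res s$, the conjugating element $r_j$ necessarily satisfies $r_j^{-1}A r_j\subset A_j'$ and $x_0(s)r_j^{-1}\in C_j$ for the component $C_j$ of $X^{A_j'}$ through which $x_j$ specialises, which is exactly the datum of a morphism $y_0\to y_j$ in $\mathcal{C}_{X,G}$.
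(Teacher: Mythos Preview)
Your overall strategy---prove $E$ is cofinal so that $\alpha$ is an isomorphism, then transport Theorem \ref{p.str} (b) along $\alpha$---is the same as the paper's (this is Proposition \ref{p.RR}). But your execution has a genuine gap at the very first step.

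You write ``the closed point $s=\Spec(k)$ of $S$''. This is false in general: $S$ is an arbitrary strictly local scheme mapping to $[X/G]$, so its closed point is $\Spec$ of some algebraically closed extension of $k$, possibly transcendental. Consequently $i_s$ only gives you an injection $A_0\hookrightarrow G(\kappa(s))$, not into $G(k)$, and its image is \emph{not} an object of $\mathcal{A}_G(k)^\natural$. The paper handles this by passing to an algebraic closure $\bar s$ and invoking Corollary \ref{c.Serre0} (Serre's finiteness-of-orbit-types theorem): any homomorphism $A_0\to G(\bar s)$ is conjugate by an element of $G(\bar s)$ to one landing in $G(k)$. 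This is what the paper calls a \emph{rational lifting}, and it is the essential ingredient you are missing for essential surjectivity.

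The same rationality issue contaminates your connectedness argument. When you compare two liftings via Proposition \ref{p.7.16} (c), the conjugating element $r_j$ lies in $G(\kappa(s))$, not $G(k)$, so it does not directly give a morphism in $\mathcal{A}_G(k)^\natural$. The paper resolves this by working with rational liftings on both sides and using the geometry of transporter schemes (connected components, again via Serre's theorem) to replace $r_j$ by a $k$-rational element in the same connected component; this is the content of the five-step lemma following Proposition \ref{p.RR}. Your sentence ``an obvious localisation'' hides precisely this nontrivial step. Note also that the paper does not prove connectedness of $(\xi\downarrow E)$ directly but instead shows the localized functor $N^{-1}\mathcal{C}_{X,G}\to\mathcal{C}_{[X/G]}$ is full and essentially surjective, which is a cleaner way to package the argument.
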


\begin{proof}[Proof of Proposition \ref{p.RR}]
The second assertion follows from the first assertion and the construction
of \eqref{e.RtoR}. To show the first assertion, since the functor
$\cC_{X,G_{\red}} \to \cC_{X,G}$ is an isomorphism and the functor
$\cC_{[X/G_{\red}]} \to \cC_{[X/G]}$ is an equivalence of categories by
Remark \ref{r.Cfunc}, we may assume $G$ smooth.

Let $N$ be the set of morphisms in $\mathcal{\cC}_{X,G}$ whose image under
$E$ is an isomorphism in $\mathcal{C}_{[X/G]}$. Then $E$ factors as
\[
\cC_{X,G}\to \cB \coloneqq N^{-1}\mathcal{\cC}_{X,G} \xrightarrow{F} \mathcal{C}_{[X/G]}.
\]
By Lemma \ref{l.loclim}, $\cC_{X,G}\to \cB$ is cofinal. Thus it suffices to
show that $F$ is cofinal. We will show that:
\begin{enumerate}
\item $F$ is essentially surjective;

\item $F$ is full.
\end{enumerate}
This will imply that $F$ is cofinal by Lemma \ref{l.cofinal}. For the proof
it is convenient to use the following notation. For an object $x$ of
$P_{X^{A'}}$ above an object $(A,A',g)$ of $\mathcal{A}_G(k)^{\natural}$, we
will denote the resulting object of $\cB$ by the notation
\[
(x,(A,A',g)).
\]

Let us prove (a). For every $\ell$-elementary point $\xi \colon [S/A] \to
[X/G]$, we choose an algebraic closure $\bar s$ of the closed point $s$ of
$S$ and we let $\bar\xi$ denote the composite $[\bar s/A]\to
[S/A]\xrightarrow{\xi}[X/G]$. We say that a lifting
\[\sigma=(a\in X(\bar
s), \alpha\in \cHom(A,G)(\bar s),\iota\colon [a/\alpha]\simeq \bar\xi)
\]
of $\bar\xi$ (Proposition \ref{p.7.16} (c)) is \emph{rational} if $\alpha\in
\cHom(A,G)(k)$. Recall that $\alpha$ is injective. Here $\cHom(A,G)$ is the
scheme of group homomorphisms from $A$ to $G$ (Section~\ref{s.3}). A
rational lifting $\sigma$ of $\bar \xi$ defines an object
\[\omega_\sigma=\omega_{a,\alpha}=(a,(\alpha(A),\alpha(A),1))\]
of $\cB$ and an isomorphism
\[\psi_{\xi,\sigma}\colon F(\omega_\sigma)\to \bar \xi \to \xi\]
in $\cC_{[X/G]}$. By Corollary \ref{c.Serre0}, every element of
$\cHom(A,G)(\bar s)$ is conjugate by an element of $G(\bar s)$ to an element
of $\cHom(A,G)(k)$. Thus every $\bar \xi$ admits a rational lifting. It
follows that $F$ is essentially surjective.

Let us prove (b). For any object $\mu=(x,(A,A',g))$ of $\cB$,
$\sigma_\mu=(\bar x,c_g\colon A\to G,\id)$ is a rational lifting of
$\overline{F(\mu)}$ and $\psi_{F(\mu),\sigma_\mu}=F(m_\mu)$, where
\[m_\mu\colon \omega_{\sigma_\mu}=(\bar x,(g^{-1}Ag,g^{-1}Ag,1))\to (x,(A,A',g))=\mu\]
is the inverse of the obvious morphism in $N$ above the morphism
$(g,1)\colon (g^{-1}Ag,g^{-1}Ag,1)\from (A,A',g)$ of $\cA_G(k)^\natural$.
Now if $\mu$ and $\nu$ are objects of $\cB$ and $f\colon F(\mu)\to F(\nu)$
is a morphism in $\cC_{X,G}$, then $f=F(m_\nu u m_\mu^{-1})$, where $u$ is
obtained from the following lemma applied to $f$, $\sigma=\sigma_\mu$,
$\tau=\sigma_\nu$. Thus $F$ is full.
\end{proof}

\begin{lemma}
Let $\xi \colon [S/A] \to [X/G]$, and let $\eta \colon [T/B] \to [X/G]$ be
$\ell$-elementary points of $[X/G]$. For every morphism $f \colon \xi \to
\eta$ in $\mathcal{C}_{[X/G]}$, every rational lifting $\sigma$ of
$\bar\xi$, and every rational lifting $\tau$ of $\bar\eta$, there exists a
morphism $u \colon \omega_{\sigma} \to \omega_{\tau}$ in $\mathcal{B}$
making the following diagram commute:
\[
\xymatrix{F(\omega_{\sigma}) \ar[r]^{{F}(u)} \ar[d]_{\psi_{\xi,\sigma}} & F(\omega_{\tau}) \ar[d]^{\psi_{\eta,\tau}} \\
\xi \ar[r]^f & \eta.}
\]
\end{lemma}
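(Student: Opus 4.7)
My approach would be to lift $f$ to $\cB$ by exhibiting a single intermediate object $c\in\cC_{X,G}$ together with a forward morphism $c\to\omega_\tau$ in $\cC_{X,G}$ and a morphism $c\to\omega_\sigma$ lying in $N$, and then setting $u=(c\to\omega_\tau)\circ(c\to\omega_\sigma)^{-1}$ in $\cB$.

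First I would reduce the problem to lifting
\[g\coloneqq\psi_{\eta,\tau}^{-1}\circ f\circ\psi_{\xi,\sigma}\colon F(\omega_\sigma)=[\bar s/\alpha(A)]\longrightarrow F(\omega_\tau)=[\bar s'/\beta(B)]\]
in $\cC_{[X/G]}$ to a morphism $\omega_\sigma\to\omega_\tau$ in $\cB$. The morphisms $\bar\xi\to\xi$ and $\bar\eta\to\eta$ arise from closed-point inclusions and so lie in $N_\cX$, hence are isomorphisms in $\cC_{[X/G]}$; together with the $2$-isomorphisms $\iota$ and $\kappa$ built into $\sigma=(a,\alpha,\iota)$ and $\tau=(b,\beta,\kappa)$ this repackaging is cost-free. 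Then I would apply Proposition \ref{p.7.16}(b) and (c) to $g$ with the liftings $(a,\alpha)$, $(b,\beta)$ of $F(\omega_\sigma)$, $F(\omega_\tau)$ to obtain a groupoid morphism $(f_0,v)\colon(\bar s,\alpha(A))_\bullet\to(\bar s',\beta(B))_\bullet$ with $v\colon\alpha(A)\hookrightarrow\beta(B)$ a group monomorphism, together with a $2$-morphism recorded by an element $r\in G(\bar s)$ satisfying $a=(b\circ f_0)\cdot r$ and $v(h)=rhr^{-1}$ for all $h\in\alpha(A)$.

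The element $r$ is a priori only defined over $\bar s$. To rationalize it I observe that the closed subscheme $Z\subset G$ cut out by the finitely many equations $zhz^{-1}=v(h)$ for $h\in\alpha(A)$ is $k$-defined and nonempty over $\bar s$ (it contains $r$); since $k$ is algebraically closed and Corollary \ref{c.Serre} guarantees that the relevant $\bar s$-conjugation of $\alpha(A)$ into $\beta(B)$ is already realized over $k$, the scheme $Z$ has a $k$-rational point $r_0$. Writing $r=r_0 t$ yields $t\in\Cent_G(\alpha(A))(\bar s)$. I then take the intermediate object
\[c=(b\circ f_0,\,(\alpha(A),\beta(B),r_0^{-1}))\in\cC_{X,G},\]
which is well-defined since $b\circ f_0\in X^{\beta(B)}(\bar s)$ and $r_0\alpha(A)r_0^{-1}=v(\alpha(A))\subset\beta(B)$. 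The pair $(r_0^{-1},1)\in\cA_G(k)^\natural$ defines the morphism $c\to\omega_\tau$, whose point component is the specialization $b\circ f_0\to b$ induced by $f_0$. The pair $(1,r_0^{-1})$ defines a morphism $c\to\omega_\sigma$ whose group component $c_a=\id_{\alpha(A)}$ is manifestly an isomorphism and whose induced stack-level $2$-morphism is realized by the $k$-rational element $r_0\in\Aut_{[X/G]}(\bar\xi)$, placing this morphism in $N$ and making it invertible in $\cB$. The commutativity $F(u)=g$ then reduces to a diagram chase through \eqref{e.6.15.3} and \eqref{e.8.4.2b}, which amounts to verifying that conjugation by $r_0$ combined with the $f_0$-specialization reproduces the $2$-morphism $r=r_0t$ at the stack level.

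The hard part will be checking that the point component of $c\to\omega_\sigma$, namely a morphism $(b\circ f_0)\cdot r_0\to a$ in $P_{X^{\alpha(A)}}$, is actually well-defined: these two $\bar s$-points differ by the translation action of $t\in\Cent_G(\alpha(A))(\bar s)$, which lives only over $\bar s$ and not over $k$. Resolving this will probably require enlarging the zigzag by an additional $N$-morphism whose sole effect is to translate the geometric point by $t$; because $t$ lies in the centralizer of $\alpha(A)$, this translation commutes with all the group-theoretic data along the zigzag, and the associated stack-level $2$-morphism, living in $G(\bar s)=\Aut_{[X/G]}(\bar\xi)$, is absorbed into the formal composition in $\cC_{[X/G]}$ without disturbing the required identity $F(u)=g$.
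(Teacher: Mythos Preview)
There are two genuine gaps.

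First, Proposition \ref{p.7.16} applies only to morphisms in $\cC'_{[X/G]}$, not in the localization $\cC_{[X/G]}$. Your $g=\psi_{\eta,\tau}^{-1}\circ f\circ\psi_{\xi,\sigma}$ is an arbitrary morphism of $\cC_{[X/G]}$ and hence a zigzag in $\cC'_{[X/G]}$; there is no single $(f_0,v)$ underlying it. Even after reducing to $f\in\cC'_{[X/G]}$ (which the paper does, using closure of the statement under composition and under inversion of isomorphisms), the underlying scheme map $h\colon S\to T$ need not send the closed point to the closed point, so no morphism $[\bar s/A]\to[\bar t/B]$ in $\cC'_{[X/G]}$ is available to play the role of $g$. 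The paper circumvents this by decomposing $f\in\cC'$ as $f_3f_2f_1$ (a pure $2$-isomorphism, a pure scheme map $[h/\id_A]$, and a pure group map $[\id_T/\gamma]$) and treating each piece with a tailored rational lifting; for $f_2$ in particular (Step~5) one must first pass to a finite radicial extension of $T^A$ and conjugate the crossed homomorphism to a $k$-rational one before the required specialization in $P_{X^{A'}}$ exists.

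Second, your ``hard part'' is indeed the crux, and the sketch does not resolve it. You need a morphism in $\cC_{X,G}$ linking the $\bar s$-points $(b\circ f_0)r_0$ and $a=(b\circ f_0)r_0 t$ of $X^{\alpha(A)}$, but morphisms in $\cC_{X,G}$ are built from $k$-rational data in $\cA_G(k)^\natural$ and specialization maps in $P_{X^{A'}}$; translation by a $\bar s$-valued element $t$ is neither, and saying the stack-level $2$-morphism ``is absorbed'' does not produce one. The paper's Step~2 supplies the missing idea: choose $r_0$ in the same connected component of the transporter as $r$, so that $t$ lies in the identity component $H$ of $\Norm_G(\alpha(A))$; then push the specialization zigzag $1\leftarrow e\rightarrow t$ in $P_{H_{\bar s}}$ (with $e$ the generic point of $H_{\bar s}$) along the $H$-equivariant orbit map $H_{\bar s}\to X^{\alpha(A)}$. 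Both legs are honest specialization morphisms, and the backward one lies in $N$ because $\cP_{[H_{\bar s}/H_{\bar s}]}$ is a simply connected groupoid. Without this connectedness-plus-generic-point argument the zigzag cannot be closed.
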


\begin{proof}
Given a triple $(f,\sigma,\tau)$ as in the lemma, we say that
$L(f,\sigma,\tau)$ holds if there exists $u$ satisfying the condition of the
lemma. Given $f\colon \xi\to \eta$, we say that $L(f)$ holds if for every
rational lifting $\sigma$ of $\bar\xi$ and every rational lifting $\tau$ of
$\bar\eta$, $L(f,\sigma,\tau)$ holds.

\emph{Step 1. First reductions.} If $L(f\colon \xi\to \eta,\sigma,\tau)$ and
$L(g\colon \eta\to \zeta,\tau,\kappa)$ hold, where $\sigma$, $\tau$,
$\kappa$ are rational liftings of $\xi$, $\eta$, $\zeta$, respectively, then
$L(gf,\sigma,\kappa)$ holds, where $(gf,\sigma,\kappa)$ is the composed
triple $(gf,\sigma,\kappa)=(g,\tau,\kappa)(f,\sigma,\tau)$. In particular,
if $L(f \colon \xi \to \eta)$ and $L(g \colon \eta \to \zeta)$ hold, then
$L(gf)$ holds. Moreover, if $L(f)$ holds for an isomorphism $f$, then
$L(f^{-1})$ holds. Thus we may assume that $f$ is a morphism of
$\cC'_{[X/G]}$. Then $f=([h/\gamma],\theta)$, where $(h \colon S \to
T,\gamma \colon A \to B)$ is an equivariant morphism and $\theta\colon
\xi\to \eta''\coloneqq \eta[h/\gamma]$ is a 2-morphism. Note that $f$ can be
decomposed as
\[\xi\xrightarrow{f_1} \eta''\xrightarrow{f_2}\eta'\xrightarrow{f_3}\eta,\]
where $\eta'=\eta[\id_T/\gamma]$, $f_1=(\id_{[S/A]},\theta)$,
$f_2=([h/\id_A],\id_{\eta''})$, $f_3=([\id_T/\gamma],\id_{\eta'})$, as shown
by the diagram
\[\xymatrix{
[S/A]\ar@{=}[r]\xlowertwocell[rrd]{}_\xi{^\theta} &
[S/A]\ar[r]^{[h/\id_A]}\ar[rd]_{\eta''} &
[T/A]\ar[r]^{[\id_T/\gamma]}\ar[d]^{\eta'}
& [T/B]\ar[ld]^{\eta}\\
&&[X/G].}
\]

\emph{Step 2. $L(f)$ holds for any morphism of the form
$f=(\id_{[S/A]},\theta)$, and in particular $L(f_1)$ holds.} Let
$\sigma=(a,\alpha,\iota)$ and $\tau=(b,\beta,\epsilon)$ be rational liftings
of $\bar\xi$ and $\bar \eta$, respectively. Via the liftings, $\theta$ is
given by $g\in J(\bar s)$, where $J=\Trans_G(\beta(A),\alpha(A))$, and
$a=bg$. Let $g'\in J(k)$ be a rational point of the connected component of
$J$ containing $g$. Then $h\coloneqq g'^{-1}g\in H(\bar s)$, where $H$ is
the identity component of $\Norm_G(\alpha(A))$. Let $e$ be the generic point
of $H_{\bar s}$. Note that $\cP_{[H_{\bar s}/H_{\bar s}]}$ is equivalent to
$P_{\bar s}$, and hence is a simply connected groupoid. Thus the morphism in
$\cP_{[H_{\bar s}/H_{\bar s}]}$ induced by the diagram $1\from e\to h$ in
$P_{H_{\bar s}}$ can be identified with the 2-morphism $i_1\to i_h$ given by
$h$, where $i_1,i_h\colon \bar s\to [H_{\bar s}/H_{\bar s}]$ are the
morphisms induced by $1$ and $h$, respectively. Then we can take $u$ to be
the morphism
\[(a,(\alpha(A),\alpha(A),1))\xrightarrow{v} (bg',(\alpha(A),\alpha(A),1))\xrightarrow{w} (b,(\beta(A),\beta(A),1)).\]
in $\cB$, where $v$ is given by the diagram $1\from e\to h$ in $P_{H_{\bar
s}}$ via the $H$-equivariant morphism $H_{\bar s}\to X^A$ carrying $1$ to
$a$ (and carrying $h$ to $bg'$), and $w$ is the obvious morphism of
$\cC_{X,G}$ above the morphism $(g'^{-1},g')\colon
(\alpha(A),\alpha(A),1)\to (\beta(A),\beta(A),1)$ of $\cA_G(k)^\natural$.

\emph{Step 3. If $L(f,\sigma,\tau)$ holds for a triple $(f,\sigma,\tau)$,
then $L(f)$ holds.} Indeed, if $\sigma'$ and $\tau'$ are rational liftings
of $\bar\xi$ and $\bar\eta$, respectively, then, by Step 2,
$L(\id_\xi,\sigma',\sigma)$ and $L(\id_\eta,\tau,\tau')$ hold, so
$L(f,\sigma',\tau')$ holds because
$(f,\sigma',\tau')=(\id_\eta,\tau,\tau')(f,\sigma,\tau)(\id_\xi,\sigma',\sigma)$.

\emph{Step 4. $L(f_3)$ holds.} Indeed, a rational lifting
$\tau=(b,\beta,\epsilon)$ of $\bar\eta$ induces a rational lifting of
$\overline{\eta'}$, and with respect to these liftings we can take $u$ to be
the morphism in $\cB$ induced by the diagram in $\cC_{X,G}$
\[(b,(A,A,1))\from (b,(A,B,1)) \to (b,(B,B,1))\]
above the diagram in $\cA_G(k)^\natural$
\[(A,A,1)\xleftarrow{(\id_A,\gamma)} (A,B,1)\xrightarrow{(\gamma,\id_A)} (B,B,1).\]

\emph{Step 5. $L(f_2)$ holds.} By Proposition \ref{p.7.16} (c), $\eta'$ can
be lifted to a morphism of groupoids $(b,\alpha)$, where $b\colon T\to X$,
and $\alpha\colon T\times A\to G$ is a crossed homomorphism, which restricts
to a homomorphism $T^A \times A\to G$, corresponding to a morphism, denoted
by $\alpha \res T^A$, from the (strictly local) scheme $T^A$ to the scheme
$\cHom(A,G)$ of group homomorphisms from $A$ to $G$ (Section~\ref{s.3}). We
will first show that, up to replacing $T^A$ by a finite radicial extension,
$\alpha \res T^A$ is conjugate to a \emph{$k$-rational} point of
$\cHom(A,G)$. For this, recall (Corollary \ref{c.Serre0}) that the orbits of
$G$ acting by conjugation on $\cHom(A,G)$ form a finite cover by open and
closed subschemes. Let $C \subset \cHom(A,G)$ be the orbit containing the
image of $\alpha \res T^A$. Choose a $k$-rational point $\alpha'\in
\cHom(A,G)(k)$ of $C$. Then the homomorphism $g \mapsto c_g(\alpha') =
g^{-1}\alpha'g$ from $G$ onto $C$ factors through an isomorphism
\[
H\backslash G \simto C,
\]
for a subgroup $H$ of $G$. Let $T'$ be defined by the cartesian square
\[
\xymatrix{T' \ar[r] \ar[d] & H_{\mathrm{red}} \backslash G \ar[d] \\
T^A \ar[r]^{\alpha \res T^A} & C.}
\]
Since the projection $G \to H_{\mathrm{red}} \backslash G$ is smooth, the
upper horizontal arrow can be lifted to a morphism $g \colon T' \to G$. Then
$c_{g^{-1}}(\alpha\res T') \colon T' \to \cHom(A,G)$ is the constant map of
value $\alpha'$. Let $\pi\colon T'\to T^A\hookrightarrow T$ be the
composite. We obtain a lifting $(b\pi,\alpha')\colon (T',A)\to (X,G)$ of
$[T'/A]\to [X/G]$, which induces rational liftings of $\overline{\eta'}$ and
$\overline{\eta''}$. With respect to these liftings, we can take $u$ to be
the morphism $\bar s\to \bar t$ in $P_{X^A}$ above $(A,A,1)$.
\end{proof}

\begin{remark}\label{r.small}
The categories $\cC_{X,G}$ and hence $N^{-1}\cC_{X,G}$ are essentially
small. It follows from (a) and (b) in the proof of Proposition \ref{p.RR}
that $\cC_{[X/G]}$ is essentially small.
\end{remark}

\section{K\"unneth formulas}\label{s.Kunneth}

The main results of this section are the K\"unneth formulas of Propositions
\ref{p.ss} and \ref{p.Kunneth}. One may hope for more general formulas
involving derived categories of modules over derived rings. We will not
tackle this question. Instead, we use an elementary approach, based on
module structures on spectral sequences, described in Construction
\ref{s.ssring} and Lemma \ref{l.ss}.

\begin{construction}\label{s.ssring}
Let $(\cC,T)$ be an additive category with translation. For objects $M$ and
$N$ in~$\cC$, the extended homomorphism group is the graded abelian group
$\Hom^*(M,N)$ with $\Hom^n(M,N)=\Hom(M,T^n N)$. The extended endomorphism
ring $\End^*(M)=\Hom^*(M,M)$ is a graded ring and $\Hom^*(M,N)$ is a
$(\End^*(N),\End^*(M))$-bimodule. Let $A^*$ be a graded ring.  A left
$A^*$-module structure on an object $M$ of $\cC$ is by definition a
homomorphism $\lambda_M \colon A^*\to \End^*(M)$ of graded rings. More
precisely, such a structure is given by morphisms $\lambda_a\colon M\to T^n
M$, $a\in A^n$, $n\in \Z$ such that $\lambda_{a+b}=\lambda_a+\lambda_b$ for
$a,b\in A^n$ and the diagram
  \[\xymatrix{M\ar[r]^{\lambda_b}\ar[rd]_{\lambda_{ab}} &T^nM\ar[d]^{T^n \lambda_a}\\
  & T^{m+n} M}\]
commutes for $a\in A^m$, $b\in A^n$. A morphism $M\to M'$ in $\cC$, with $M$
and $M'$ endowed with $A^*$-module structures, is said to preserve the
$A^*$-module structures if it commutes with all $\lambda_a$, $a\in A^n$,
$n\in \Z$. Let $B^*$ be a graded right $A^*$-module. A morphism $B^*
\otimes_{A^*}M\to N$ is by definition a homomorphism $B^*\to \Hom^*(M,N)$ of
right $A^*$-modules. More precisely, it is given by a family of morphisms
$f_b\colon M\to T^nN$, $b\in B^n$, $n\in \Z$ in $\cC$ such that
$f_{b+c}=f_b+f_c$ for $b,c\in B^n$ and the diagram
  \[\xymatrix{M\ar[r]^{\lambda_a}\ar[rd]_{f_{ba}} & T^mM\ar[d]^{T^m f_b}\\
  & T^{m+n} N}\]
commutes for $a\in A^m$, $b\in B^n$. We thus get a functor $N\mapsto
\Hom(B^*\otimes_{A^*} M,N)$ from $\cC$ to the category of abelian groups,
contravariant in $M$. In the category of graded abelian groups with
translation given by shifting, the notion of left $A^*$-module coincides
with the usual notion of graded left $A^*$-module and the above functor is
represented by the usual tensor product. Let $F\colon (\cC,T)\to (\cC',T)$
be a functor of additive categories with translation \cite[Definition 10.1.1
(ii)]{KashiwaraSCat}. A left $A^*$-module structure on $M$ induces a left
$A^*$-module structure on $FM$ and a morphism $B^*\otimes_{A^*}M\to N$
induces a morphism $B^*\otimes_{A^*} FM\to FN$.

Let $\cD$ be a triangulated category, and let $\cA$ be an abelian category.
We consider the additive categories of spectral objects $\SpOb(\cD)$,
$\SpOb(\cA)$ of type $\tilde \Z$ \cite[II 4.1.2, 4.1.4, 4.1.6]{Verdier}.
Here $\tilde \Z$ is the category associated to the ordered set $\Z\cup \{\pm
\infty\}$. For $m\in \Z$, $(X,\delta)\in \SpOb(\cD)$, $(H,\delta)\in
\SpOb(\cA)$, we put
\[(X,\delta)[m]=(X[m], (-1)^m\delta[m]),\quad (H^n,\delta^n)_n[m]=(H^{n+m},(-1)^m\delta^{n+m})_n.\]
For $a\in \Z\cup\{\infty\}$, let $\SpSeq_a(\cA)$ be the category of spectral
sequences $E_a\Rightarrow H$ in $\cA$. We define
\[(E_a^{pq}\Rightarrow H^n)[m] = (E_a^{p+m,q}\Rightarrow H^{n+m})\]
by multiplying all $d_r$ by $(-1)^m$. We endow $\SpOb(\cD)$, $\SpOb(\cA)$
and $\SpSeq_a(\cA)$ with the translation functor $[1]$. The resulting
categories with translation are covariant in $\cD$ and $\cA$ for exact
functors. If $H\colon \cD\to \cA$ is a cohomological functor, the induced
functor $\SpOb(\cD)\to \SpOb(\cA)$ commutes with translation. For $b\ge a$,
the restriction functor $\SpSeq_a(\cA)\to \SpSeq_b(\cA)$ commutes with
translation. Using the notation of \cite[II (4.3.3.2)]{Verdier}, we obtain a
functor $\SpOb(\cA)\to \SpSeq_2(\cA)$, which also commutes with translation.
A left $A^*$-module structure on an object of $\SpSeq_a(\cA)$ induces left
$A^*$-module structures on $H^*$ and $E_r^{*q}$ for all $q\in \Z$ and $r\in
[a,\infty]$. If we put $G_qH^n=F^{n-q} H^n$, so that the abutment is of the
form $E_\infty^{pq}\simto\gr^G_q H^{p+q}$, then $G_q$ preserves the
$A^*$-module structure. The differentials $d_r^{*q}\colon E_r^{*q}\to
E_r^{*+r,q-r+1}$ and the abutment $E_\infty^{*q}\simto \gr^G_q H^*$ are
$A^*$-linear. A morphism $B^*\otimes_{A^*}(E_a\Rightarrow H) \to
(E'_a\Rightarrow H')$ induces morphisms on $E_r^{*q}$, $H^*$, $G_qH^*$,
$\gr^G_q H^*$, compatible with $d_r$, abutment, the projection $G_q\to
\gr^G_q$ and the inclusions $G_{q-1}H^*\to G_{q}H^*\to H^*$.
\end{construction}

\begin{lemma}\label{l.ss}
Let $H$ and $H'$ be filtered graded abelian groups, $H$ endowed with a left
$A^*$-module structure. We let $G$ denote the (increasing) filtrations.
Assume that $G_qH^*=G_qH'^*=0$ for $q$ small enough and $H^n=\bigcup_{q\in
\Z} G_q H^n$, $H'^n=\bigcup_{q\in \Z}G_q H'^n$ for all $n$. Let
$B^*\otimes_{A^*}H \to H'$ be a morphism such that the homomorphism
$B^*\otimes_{A^*}\gr^G_q H^*\to \gr^G_qH'^{*}$ is an isomorphism for all
$q$. Then the homomorphism $B^*\otimes_{A^*}H^*\to {H'}^*$ is an
isomorphism.
\end{lemma}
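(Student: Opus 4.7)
The plan is to reduce the claim to a statement about filtered $A^*$-modules and then prove it by induction on the filtration degree. First I would observe that, in the category of graded abelian groups, the functor $B^*\otimes_{A^*}(-)$ of Construction \ref{s.ssring} agrees with the ordinary graded tensor product, and in particular is right exact, having the graded $\Hom^*$ as a right adjoint. I would also note that each $G_qH$ is an $A^*$-submodule of $H$, as asserted in Construction \ref{s.ssring}, so that each $\gr^G_qH$ inherits a left $A^*$-module structure and the given morphism $B^*\otimes_{A^*}H\to H'$ restricts to compatible morphisms $\varphi_q\colon B^*\otimes_{A^*}G_qH\to G_qH'$ for every $q\in\Z$.

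Next I would show by induction on $q$ that every $\varphi_q$ is an isomorphism. The base case is immediate from the hypothesis $G_qH=G_qH'=0$ for $q$ small enough. For the inductive step, I would form the commutative diagram with exact rows
\[
\xymatrix{
& B^*\otimes G_{q-1}H\ar[r]\ar[d]_{\varphi_{q-1}} & B^*\otimes G_qH \ar[r]\ar[d]_{\varphi_q} & B^*\otimes \gr^G_qH\ar[r]\ar[d]^{\simeq} & 0 \\
0\ar[r] & G_{q-1}H' \ar[r] & G_qH'\ar[r] & \gr^G_qH'\ar[r] & 0,
}
\]
where the top row is right exact thanks to the right exactness of $B^*\otimes_{A^*}(-)$, the bottom row is exact by definition of the filtration, and the rightmost vertical arrow is an isomorphism by assumption. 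A direct diagram chase then yields both surjectivity and injectivity of $\varphi_q$: for surjectivity, any $z\in G_qH'$ has image in $\gr^G_qH'$ coming from some element of $B^*\otimes\gr^G_qH$, which lifts to $B^*\otimes G_qH$ by right exactness, and the resulting element of $G_qH'$ differs from $z$ by something in $G_{q-1}H'$, hit by $\varphi_{q-1}$ via the inductive hypothesis; for injectivity, any $x\in\ker\varphi_q$ has image $0$ in $B^*\otimes\gr^G_qH$, hence lifts to $y\in B^*\otimes G_{q-1}H$ by exactness of the top row at $B^*\otimes G_qH$, and $\varphi_{q-1}(y)=0$ in $G_{q-1}H'$ by commutativity together with the injectivity of $G_{q-1}H'\to G_qH'$, so $y=0$ by induction and therefore $x=0$.

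Finally, since the filtrations are exhaustive by hypothesis and the graded tensor product commutes with filtered colimits, passing to the colimit $\varinjlim_q\varphi_q$ produces the desired isomorphism $B^*\otimes_{A^*}H\to H'$. The argument is essentially a standard diagram chase and presents no serious obstacle; the only point requiring attention is the $A^*$-compatibility of the filtration $G$, which is built into the formalism of Construction \ref{s.ssring} and guarantees that each short exact sequence $0\to G_{q-1}H\to G_qH\to\gr^G_qH\to0$ is a short exact sequence of $A^*$-modules, so that right exactness of $B^*\otimes_{A^*}(-)$ may be applied.
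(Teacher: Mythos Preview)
Your proposal is correct and follows essentially the same approach as the paper: set up the commutative diagram with right-exact top row coming from the short exact sequence $0\to G_{q-1}H\to G_qH\to\gr^G_qH\to 0$, prove by induction on $q$ that $\varphi_q$ is an isomorphism, and then pass to the colimit using exhaustiveness and commutation of tensor product with filtered colimits. You simply spell out the diagram chase that the paper leaves implicit.
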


\begin{proof}
Since $G_qH^*=G_q{H'}^*=0$ for $q$ small enough, one shows by induction that
the morphism of exact sequences
  \[\xymatrix{& B^*\otimes_{A^*}G_{q-1} H^*\ar[r]\ar[d] & B^*\otimes_{A^*}G_q H^* \ar[r]\ar[d] & B^*\otimes_{A^*}\gr^G_q H^*\ar[r]\ar[d]^{\simeq} & 0\\
  0\ar[r] & G_{q-1} {H'}^* \ar[r] & G_q {H'}^* \ar[r] & \gr^G_q {H'}^* \ar[r] & 0}\]
is an isomorphism. Then we apply the hypotheses $\varinjlim_{q\in \Z}G_q H^*
= H^*$, $\varinjlim_{q\in \Z}G_q {H'}^* = {H'}^*$ and the fact that tensor
product commutes with colimits.
\end{proof}

\begin{construction}\label{s.ttopos}
Let
\begin{equation}\label{e.ttopos}
  \xymatrix{X'\ar[r]^{h} \ar[d]_{f'} &X\ar[d]^f\\
  Y'\ar[r]^g &Y}
\end{equation}
be a 2-commutative square of commutatively ringed topoi, $K\in D(\cO_{Y'})$,
$L\in D(\cO_{X})$. An element $s\in H^m(Y',K)$ corresponds to a morphism
$\cO_{Y'}\to K[m]$ in $D(\cO_{Y'})$, and an element $t\in H^n(X,L)$
corresponds to a morphism $\cO_{X}\to L[n]$ in $D(\cO_{X})$. Then
  \[L{f'}^*s\otimes^L_{\cO_{X'}} Lh^* t\colon \cO_{X'}\to L{f'}^*K\otimes^L_{\cO_{X'}} L{h}^*L\]
is a morphism in $D(\cO_{X'})$. This defines a graded map
  \[H^*(Y',K)\times H^*(X,L)\to H^*(X',L{f'}^*K\otimes^L_{\cO_{X'}} L{h}^*L)\]
which is $H^*(Y,\cO_{Y})$-bilinear, hence induces a homomorphism
\begin{equation}\label{e.tensor}
  H^*(Y',K)\otimes_{H^*(Y,\cO_Y)}H^*(X,L)\to H^*(X',L{f'}^*K\otimes^L_{\cO_{X'}} L{h}^*L),
\end{equation}
which is a homomorphism of $(H^*(Y',\cO_{Y'}), H^*(X,\cO_X))$-bimodules.
\end{construction}

\begin{construction}\label{c.ttopos}
Let $f\colon X\to Y$ be a morphism of commutatively ringed topoi, and let
$L\in D(\cO_Y)$, $K\in D(\cO_X)$. We consider the second spectral object
$(L,\delta)$ associated to $L$ \cite[III 4.3.1, 4.3.4]{Verdier}, with
$L(p,q)=\tau^{[p,q-1]} L$. For $s\in H^n(Y,\cO_Y)$ corresponding to
$\cO_Y\to \cO_Y[n]$, the functor $s\otimes^L_{\cO_Y}-$ induces a morphism of
spectral objects $(L,\delta)\to (L,\delta)[n]$. This endows $(L,\delta)$
with a structure of $H^*(Y,\cO_Y)$-module (Construction \ref{s.ssring}). For
$t\in H^n(X,K)$ corresponding to $\cO_X\to K[n]$, the functor
$t\otimes^L_{\cO_X}-$ induces a morphism of spectral objects
$Lf^*(L,\delta)\to K\otimes^L_{\cO_X}Lf^*(L,\delta)[n]$. This defines a
morphism
\[H^*(X,K)\otimes_{H^*(Y,\cO_Y)}Lf^*(L,\delta) \to K\otimes^L_{\cO_X}Lf^*(L,\delta).\]
Applying $Rf_*$ and composing with the adjunction $\id_{D(\cO_Y)} \to
Rf_*Lf^*$, we get a morphism
\[H^*(X,K)\otimes_{H^*(Y,\cO_Y)}(L,\delta)\to Rf_*(K\otimes^L_{\cO_X}Lf^*(L,\delta)).\]
Further applying the cohomological functor $H^0(Y,-)$, we obtain a morphism
\[H^*(X,K)\otimes_{H^*(Y,\cO_Y)}(E_2\Rightarrow H)\to (E'_2\Rightarrow
H'),\] where the two spectral sequences are
\begin{gather}
  E_2^{pq}=H^p(Y,\cH^q L)\Rightarrow H^{p+q}(Y,L),\\
  {E'}_2^{pq}=H^p(X,K\otimes^L_{\cO_X} Lf^* \cH^q L) \Rightarrow H^{p+q}(X,K\otimes_{\cO_X}^L Lf^*L).\label{e.secss}
\end{gather}
By construction, the induced morphisms on $E_2^{*q}$ and on $H^*$ coincide
with \eqref{e.tensor} for \eqref{e.ttopos} given by $\id_f$.
\end{construction}

The results of Constructions \ref{s.ttopos} and \ref{c.ttopos} have obvious
analogues for Artin stacks and complexes in $D_\cart(-,\Lambda)$, where
$\Lambda$ is a commutative ring.

\begin{prop}\label{p.ss}
Let
  \[\xymatrix{\cX'\ar[r]^{h} \ar[d]_{f'} &\cX\ar[d]^f\\
  \cY'\ar[r]^g &\cY}\]
be a 2-commutative square of Artin stacks. Let $K\in
D^+_\cart(\cY',\F_\ell)$ and $L\in D^+_\cart(\cX,\F_\ell)$. Suppose that
\begin{enumerate}
  \item The Leray spectral sequence for $(f,L)$
  \begin{equation}\label{e.ss1}
  E_2^{pq}= H^p(\cY,R^q f_* L) \Rightarrow H^{p+q}(\cX,L)
  \end{equation}
  degenerates at $E_2$.

  \item For every $q$, $R^q f_* L$ is a constant constructible
      $\F_\ell$-module on $\cY$.

  \item The base change morphism $\BC\colon g^* Rf_* L\to Rf'_* h^* L$ is
      an isomorphism.

  \item The morphism $\PF_{f'}\colon Rg_*(K\otimes Rf'_*h^*L)\to
      Rg_*Rf'_*(f'^*K\otimes h^* L)$ deduced from the projection formula
      morphism $K\otimes Rf'_*h^*L\to Rf'_*(f'^*K\otimes h^* L)$ is an
      isomorphism.
\end{enumerate}
  Then the spectral sequence (of type \eqref{e.secss})
  \begin{equation}\label{e.ss2}
  E_2^{pq}=H^p(\cY',K\otimes R^qf'_*{h}^* L )\Rightarrow H^{p+q}(\cY',K\otimes Rf'_*{h}^*L)
  \end{equation}
  degenerates at $E_2$ and the homomorphism \eqref{e.tensor}
  \begin{equation}\label{e.sstensor}
  H^*(\cY',K)\otimes_{H^*(\cY,\F_\ell)}H^*(\cX,L)\to H^*(\cX',{f'}^*K\otimes {h}^*L)
  \end{equation}
  is an isomorphism.
\end{prop}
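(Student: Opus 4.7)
The plan is to apply Construction \ref{c.ttopos}, in its stack-theoretic analogue, to the morphism $g\colon \cY'\to\cY$ with $Rf_*L\in D^+_\cart(\cY,\F_\ell)$ playing the role of ``$L$'' there and with $K$ playing the role of ``$K$''. This produces a morphism of second hypercohomology spectral sequences
\[
H^*(\cY',K)\otimes_{H^*(\cY,\F_\ell)}(E_r^{pq}\Rightarrow H^{p+q})\to({E'}_r^{pq}\Rightarrow{H'}^{p+q}),
\]
in which
\[
E_2^{pq}=H^p(\cY,R^qf_*L)\Rightarrow H^{p+q}(\cY,Rf_*L)=H^{p+q}(\cX,L),
\]
\[
{E'}_2^{pq}=H^p(\cY',K\otimes g^*R^qf_*L)\Rightarrow H^{p+q}(\cY',K\otimes g^*Rf_*L).
\]
Using hypothesis (c) to identify $g^*Rf_*L\simeq Rf'_*h^*L$ (and, after passing to cohomology sheaves, $g^*R^qf_*L\simeq R^qf'_*h^*L$), the second spectral sequence becomes exactly \eqref{e.ss2}. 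Hypothesis (d), after applying $H^*(\cY',-)$, identifies its abutment $H^{p+q}(\cY',K\otimes Rf'_*h^*L)$ with $H^{p+q}(\cX',f'^*K\otimes h^*L)$, and under this identification the induced map on abutments is precisely the Kunneth map \eqref{e.sstensor}.

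Next I would check that the morphism is an isomorphism on $E_2$. By hypothesis (b), $R^qf_*L$ is the constant sheaf on $\cY$ of some finite-dimensional $\F_\ell$-value $V_q$, and $g^*R^qf_*L$ is the constant sheaf on $\cY'$ of the same value. Hence $H^p(\cY,R^qf_*L)\simeq H^p(\cY,\F_\ell)\otimes_{\F_\ell}V_q$ and $H^p(\cY',K\otimes g^*R^qf_*L)\simeq H^p(\cY',K)\otimes_{\F_\ell}V_q$. With these identifications the $E_2$-map becomes the tautological isomorphism
\[
H^*(\cY',K)\otimes_{H^*(\cY,\F_\ell)}\bigl(H^p(\cY,\F_\ell)\otimes_{\F_\ell}V_q\bigr)\simto H^p(\cY',K)\otimes_{\F_\ell}V_q.
\]

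To conclude, I combine this with hypothesis (a). Let $\alpha_r\colon H^*(\cY',K)\otimes_{H^*(\cY,\F_\ell)}E_r\to{E'}_r$ denote the induced map on $E_r$-pages. Since the first spectral sequence degenerates at $E_2$, the compatibility $d_r\circ\alpha_r=\alpha_r\circ(\id\otimes d_r)=0$ combined with the fact that $\alpha_2$ is an isomorphism forces, by induction on $r$, that $d_r$ vanishes on ${E'}_r$ and that $\alpha_r$ remains an isomorphism for every $r\ge2$. Thus \eqref{e.ss2} degenerates at $E_2$ as well, and we obtain an isomorphism on $E_\infty$, and hence on the associated graded of the abutments. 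Since $L\in D^+$, both $Rf_*L$ and $Rf'_*h^*L$ are bounded below (the latter via (c)), so the induced filtrations on both abutments are bounded below and exhaustive. Lemma \ref{l.ss} then upgrades the isomorphism on associated graded to the desired isomorphism \eqref{e.sstensor}.

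The main obstacle I anticipate is the bookkeeping required to transport Construction \ref{c.ttopos} into the stacky setting of $D_\cart(-,\F_\ell)$ and to verify that, under the identifications supplied by (c) and (d), the $H^*(\cY,\F_\ell)$-module structures furnished by Construction \ref{s.ssring} are matched correctly—in particular that the action of $H^*(\cY,\F_\ell)$ on $H^*(\cX,L)$ via $f^*$ and on $H^*(\cY',K)$ via $g^*$ is compatible with the module structures on the two spectral sequences and that the abutment identification arising from (d) is $H^*(\cY,\F_\ell)$-linear. Once this compatibility is secured, the inductive argument on the pages of the spectral sequences and the final appeal to Lemma \ref{l.ss} are routine.
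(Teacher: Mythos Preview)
Your proof is correct and follows essentially the same route as the paper's. Both apply Construction \ref{c.ttopos} (for $g$, with $Rf_*L$ in the role of ``$L$''), use (b) to identify the $E_2$-map with a tautological isomorphism, use (a) to propagate degeneration to the second spectral sequence, invoke Lemma \ref{l.ss} to pass from $E_\infty$ to the abutment, and use (c) and (d) to identify the abutment map with \eqref{e.sstensor}; you simply make the page-by-page induction explicit where the paper leaves it implicit.
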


\begin{proof}
  Take any geometric point $t\to \cY'$.
  By (b), the $E_2$-term of \eqref{e.ss1} is
  \[E_2^{pq}=H^p(\cY,R^q f_* L)\simeq H^p(\cY,\F_\ell)\otimes (R^q f_* L)_t.\]
  By (c), \eqref{e.ss2} is isomorphic to
  \begin{equation}\label{e.ss3}
    {E'}_2^{pq}=H^p(\cY',K\otimes g^*R^q f_* L)\Rightarrow H^{p+q}(\cY',K\otimes g^* Rf_* L).
  \end{equation}
  By (b), ${E'}_2^{pq}\simeq H^p(\cY',K)\otimes (R^q f_* L)_t$. Thus the morphism \[H^*(\cY',K)\otimes_{H^*(\cY,\F_\ell)}E_2^{*q}\to{E'}_2^{*q}\]
  is an isomorphism. Il then follows from (a) and Lemma \ref{l.ss} that \eqref{e.ss3} degenerates at $E_2$ and the homomorphism
  \begin{equation}\label{e.ss4}
  H^*(\cY',K)\otimes_{H^*(\cY,\F_\ell)}H^*(\cX,L)\to H^*(\cY',K\otimes g^* Rf_* L)
  \end{equation}
  is an isomorphism. Thus \eqref{e.ss2} degenerates at $E_2$ and \eqref{e.sstensor} is an isomorphism since it is the composition of \eqref{e.ss4} with the morphism induced by the composition
  \[Rg_* (K\otimes g^*Rf_* L)\xrightarrow[\sim]{Rg_*(\id_K\otimes \BC)} Rg_*(K\otimes Rf'_* h^* L) \xrightarrow[\sim]{\PF_{f'}} Rg_* Rf'_*({f'}^* K \otimes h^* L),\]
  of the isomorphisms in (c) and (d).
\end{proof}

In the rest of this section, let $k$ be a separably closed field of
characteristic $\neq \ell$.

\begin{prop}\label{p.Kunneth}
Let $G$ be a connected algebraic group over $k$, and let $X$ be an algebraic
space of finite presentation over $k$ endowed with an action of $G$. Let
  \[\xymatrix{\cX'\ar[r]^{h} \ar[d]_{f'} &[X/G]\ar[d]^f\\
  \cY'\ar[r]^g &BG}\]
be a 2-cartesian square of quasi-compact, quasi-separated Artin stacks,
where $f$ is the canonical projection. Let $K\in D^+_\cart(\cY',\F_\ell)$.
Suppose that
  the map $e\colon H^*([X/G])\to H^*(X)$ induced by the projection $X\to [X/G]$ is surjective.
  Then $H^*([X/G])$ is a finitely generated free $H^*(BG)$-module, the spectral sequence
\[E_2^{pq}=H^p(\cY',K\otimes R^qf'_* \F_\ell)\Rightarrow H^{p+q}(\cY',K\otimes Rf'_*\F_\ell)\]
degenerates at $E_2$, and the homomorphism
  \[H^*(\cY',K)\otimes_{H^*(BG)}H^*([X/G]) \to H^*(\cX',{f'}^* K)\]
is an isomorphism.
\end{prop}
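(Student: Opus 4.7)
The plan is to apply Proposition \ref{p.ss} to the given 2-cartesian square with $L=\F_\ell$ and verify each of its four hypotheses. As a preliminary, since $G$ is connected, Corollary \ref{l.BG} gives an equivalence $\pi^*\colon \Sh(\Spec k)\simto \Sh_\cart(BG)$, which extends to a derived equivalence $D^+(\Spec k,\F_\ell)\simto D^+_\cart(BG,\F_\ell)$. Since every complex of $\F_\ell$-vector spaces on $\Spec k$ is quasi-isomorphic to the direct sum of its shifted cohomology, and since the pullback of $Rf_*\F_\ell$ along the canonical section $\Spec k\to BG$ (whose 2-fiber product with $f$ is the universal $G$-torsor $X\to [X/G]$) is $R\Gamma(X,\F_\ell)$, we obtain a (non-canonical) splitting $Rf_*\F_\ell\simeq \bigoplus_q H^q(X,\F_\ell)_{BG}[-q]$ in $D^b_c(BG,\F_\ell)$, where the subscript $BG$ denotes a constant sheaf. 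This yields hypothesis (b). Hypothesis (c) is Remark \ref{r.gbc} applied to $f\colon [X/G]\to BG$, which is quasi-compact and quasi-separated.

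For hypothesis (a), the edge map $H^n([X/G],\F_\ell)\twoheadrightarrow E^{0,n}_\infty\hookrightarrow E^{0,n}_2=H^n(X,\F_\ell)$ of the Leray spectral sequence of $(f,\F_\ell)$ is identified with $e$ via the base change along $\Spec k\to BG$ just described. The hypothesized surjectivity of $e$ forces $E^{0,*}_2=E^{0,*}_\infty$, so the whole fiber column consists of permanent cycles. The terms $E^{*,0}_2=H^*(BG)$ are permanent cycles trivially (coming from $f^*$). Multiplicativity of the Leray spectral sequence together with the Künneth identification $E^{p,q}_2\simeq H^p(BG)\otimes H^q(X)=E^{p,0}_2\cdot E^{0,q}_2$ (from (b)) and the Leibniz rule force $E_2=E_\infty$. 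Freeness of $H^*([X/G])$ over $H^*(BG)$ then follows: lift an $\F_\ell$-basis $(\eta_i)$ of $H^*(X,\F_\ell)$ to classes $(\tilde\eta_i)$ in $H^*([X/G])$ via $e$ and check that the $H^*(BG)$-linear map $\phi\colon H^*(BG)\otimes_{\F_\ell}H^*(X,\F_\ell)\to H^*([X/G])$, $c\otimes \eta_i\mapsto f^*c\cdot\tilde\eta_i$, induces the $E_\infty$-identification $H^p(BG)\otimes H^q(X)\simto \gr^p H^{p+q}([X/G])$ on associated gradeds of the Leray filtration (which is finite in each total degree), hence is an isomorphism.

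The main obstacle is hypothesis (d), the projection formula $\PF_{f'}\colon K\otimes Rf'_*\F_\ell\to Rf'_*f'^*K$: Proposition \ref{l.pf} does not apply directly, because its constancy requirement is placed on the base-level coefficient, which here is the arbitrary $K$. I plan to circumvent this by exploiting the splitting once more: combining (c) with the splitting of $Rf_*\F_\ell$ gives $Rf'_*\F_\ell\simeq g^*Rf_*\F_\ell\simeq \bigoplus_q H^q(X,\F_\ell)_{\cY'}[-q]$, a direct sum of shifted finite-dimensional constant sheaves on $\cY'$. Hence the source of the PF morphism becomes $\bigoplus_q K^{\oplus\dim H^q(X,\F_\ell)}[-q]$, a finite direct sum of shifts of copies of $K$. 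To identify it with $Rf'_*f'^*K$, one can check smooth-locally on $\cY'$ (both sides being cartesian, and the smooth topology being of effective descent by Proposition \ref{p.descent}): on a smooth atlas $U\to \cY'$ by an algebraic space, the splitting of $Rf'_*\F_\ell$ pulls back to the analogous splitting for the base-changed morphism, and PF for each direct summand reduces to tensoring $K$ with a constant finite-dimensional $\F_\ell$-vector space, which commutes trivially with pushforward. Granting (d), Proposition \ref{p.ss} yields both the $E_2$-degeneration of the spectral sequence and the desired isomorphism $H^*(\cY',K)\otimes_{H^*(BG)}H^*([X/G])\simto H^*(\cX',f'^*K)$.
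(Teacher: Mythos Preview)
Your verification of hypotheses (a), (b), (c) of Proposition \ref{p.ss} and your derivation of freeness are correct and essentially match the paper's argument.

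The gap is in (d). The splitting $Rf'_*\F_\ell\simeq \bigoplus_q V_q[-q]$ decomposes the \emph{source} of the projection-formula map $K\otimes Rf'_*\F_\ell\to Rf'_*f'^*K$, but you have no corresponding decomposition of the \emph{target} $Rf'_*f'^*K$, so there is no sense in which the map can be checked ``summand by summand.'' Passing to a smooth atlas $U\to\cY'$ changes nothing: the restriction $K\res U$ still has no reason to have constant cohomology sheaves, so Proposition \ref{l.pf} still does not apply, and your sentence ``PF for each direct summand reduces to tensoring $K$ with a constant finite-dimensional $\F_\ell$-vector space, which commutes trivially with pushforward'' does not describe any actual reduction of the map in question. (What \emph{would} work is to pull back along the specific smooth cover $P=\cY'\times_{BG}\Spec k\to\cY'$, over which $f'$ literally becomes the projection $P\times_k X\to P$, and then invoke K\"unneth/base change over $\Spec k$; but that is not what you wrote, and it requires its own justification.)

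The paper handles (d) by a different and cleaner mechanism. It writes down the commutative diagram
\[\xymatrix{Rg_*K\otimes Rf_* \F_\ell \ar[r]^-{\PF_{g}}\ar[rd]_{\PF_f} & Rg_*(K\otimes g^*Rf_*\F_\ell)\ar[r]^-{\BC} & Rg_*(K\otimes Rf'_*\F_\ell)
\ar[r]^-{\PF_{f'}} & R(gf')_*f'^*K\ar[d]^\simeq\\
& Rf_*f^*Rg_*K\ar[r]^-{\BC'} & Rf_*Rh_*f'^* K\ar[r]^-{\PF_{h}} & R(fh)_* f'^*K}
\]
and observes that $\BC,\BC'$ are isomorphisms by Remark \ref{r.gbc}, while $\PF_g,\PF_f,\PF_h$ are isomorphisms by Proposition \ref{l.pf} (d): for $\PF_g$ the constant-cohomology input is $Rf_*\F_\ell$ on $BG$; for $\PF_h$ it is $\F_\ell$ on $[X/G]$; and for $\PF_f$ it is $Rg_*K$, which lives on $BG$ and hence automatically has constant cohomology sheaves by Corollary \ref{l.BG} since $G$ is connected. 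That last point is precisely what fails on $\cY'$ and is the idea your argument is missing.
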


\begin{proof}
For the second and the third assertions, we apply Proposition \ref{p.ss}. By
Corollary \ref{l.BG} and generic base change (Remark \ref{r.gbc}),
conditions (b) and (c) of Proposition \ref{p.ss} are satisfied. For $L\in
D^+_\cart([X/G],\F_\ell)$, the diagram
\[\xymatrix{Rg_*K\otimes Rf_* L \ar[r]^{\PF_{g}}\ar[rd]_{\PF_f} & Rg_*(K\otimes g^*Rf_*L)\ar[r]^{\BC} & Rg_*(K\otimes Rf'_*h^*L)
\ar[r]^{\PF_{f'}} & R(gf')_*(f'^*K\otimes h^* L)\ar[d]^\simeq\\
& Rf_*(f^*Rg_*K\otimes L)\ar[r]^{\BC'} & Rf_*(Rh_*f'^* K\otimes L)\ar[r]^{\PF_{h}} & R(fh)_* (f'^*K\otimes h^* L)}
\]
commutes. Take $L=\F_\ell$. Then generic base change (Remark \ref{r.gbc})
and Proposition \ref{l.pf} (d) imply that $\BC$, $\BC'$, $\PF_f$, $\PF_g$,
$\PF_h$ are isomorphisms, hence $\PF_{f'}$ is an isomorphism as well, which
proves condition (d) of Proposition \ref{p.ss}. Next we check condition (a)
of Proposition \ref{p.ss}. The Leray spectral sequence for $f$ is
  \begin{equation}\label{e.Kunnethss}
  E_2^{pq}=H^p(BG,R^qf_*\F_\ell)\Rightarrow H^{p+q}([X/G]).
  \end{equation}
Since $R^qf_*\F_\ell$ is constant of value $H^q(X)$, we have $E_2^{pq}\simeq
H^p(BG)\otimes H^q(X)$. As $e$ is an edge homomorphism for
\eqref{e.Kunnethss}, its surjectivity implies $d_r^{0q}=0$ for all $r\ge 2$.
It then follows from the $H^*(BG)$-module structure of \eqref{e.Kunnethss}
that it degenerates at $E_2$. The first assertion of Proposition
\ref{p.Kunneth} then follows from the fact that $H^*(X)$ is a
finite-dimensional vector space.
\end{proof}

\begin{prop}\label{p.surj}
  Let $G=\GL_{n,k}$, $T$ be a maximal torus of $G$, $A=\Ker (-^\ell\colon T \to T)$.
  Then the map $H^*(BA,\F_\ell)\to H^*(G/A,\F_\ell)$ induced by the projection $G/A\to BA$ is surjective.
\end{prop}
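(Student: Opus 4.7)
The plan is to apply the K\"unneth-type isomorphism of Proposition~\ref{p.Kunneth} to the $2$-cartesian square
\[
\xymatrix{G/A \ar[r]^{j} \ar[d]_{p} & BA \ar[d]^{i}\\
G/T \ar[r]^{q} & BT,}
\]
in which $j$ (resp.\ $q$) classifies the $A$-torsor $G\to G/A$ (resp.\ the $T$-torsor $G\to G/T$), and then combine with the surjectivity of $q^*$ coming from Theorem~\ref{l.finite}. To put the diagram in the form required by Proposition~\ref{p.Kunneth}, I would first use Corollary~\ref{p.quot}(a) to identify $BA$ with $[(T/A)/T]$ in such a way that $i$ becomes the canonical projection $[(T/A)/T]\to BT$; the square is then $2$-cartesian because both vertical maps are $(T/A)$-torsors and the right one is universal. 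Since $\ell$ is invertible in $k$, the $\ell$-power map $T\to T$ identifies $T/A$ with $T\simeq \Gm^n$, a separated algebraic space of finite type over $k$ on which the connected group $T$ acts.

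Second, I would verify the hypothesis of Proposition~\ref{p.Kunneth}, namely that the map $H^*(BA,\F_\ell)\to H^*(T/A,\F_\ell)$ induced by the projection $T/A\to BA$ is surjective. By Remark~\ref{r.cohBA} and K\"unneth, $H^*(T/A,\F_\ell)=\Lambda_{\F_\ell}(\epsilon_1,\dots,\epsilon_n)$ is an exterior algebra on $n$ classes in degree~$1$. The projection $T/A\to BA$ classifies the $A$-torsor $T\to T/A$, and in degree~$1$ its induced map is the Kummer isomorphism $\check A\simto H^1(T/A,\F_\ell)$. Since $H^*(T/A,\F_\ell)$ is generated as an algebra in degree~$1$, surjectivity follows.

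With the hypothesis verified, Proposition~\ref{p.Kunneth} applied with $K=\F_\ell$ and $\cY'=G/T$ yields an isomorphism
\[
H^*(G/T,\F_\ell)\otimes_{H^*(BT,\F_\ell)} H^*(BA,\F_\ell) \simto H^*(G/A,\F_\ell),
\]
sending $\alpha\otimes\beta$ to $q^*\alpha\cdot j^*\beta$. By Theorem~\ref{l.finite} (the right-hand square of \eqref{e.dfinite2}), the map $q^*\colon H^*(BT,\F_\ell)\to H^*(G/T,\F_\ell)$ is identified with the surjection $\F_\ell[t_1,\dots,t_n]\twoheadrightarrow \F_\ell[t_1,\dots,t_n]/(\sigma_1,\dots,\sigma_n)$. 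Tensoring this surjection with $H^*(BA,\F_\ell)$ over $H^*(BT,\F_\ell)$ produces a surjection $H^*(BA,\F_\ell)\twoheadrightarrow H^*(G/T,\F_\ell)\otimes_{H^*(BT,\F_\ell)} H^*(BA,\F_\ell)$, which under the above isomorphism is precisely the map $j^*$. Hence $j^*$ is surjective, as desired.

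The main obstacle is the cohomological verification of the hypothesis $H^*(BA)\twoheadrightarrow H^*(T/A)$ (i.e.\ identifying the degree-$1$ map as the Kummer isomorphism), together with confirming that the square above is indeed $2$-cartesian in the form required by Proposition~\ref{p.Kunneth}; once these are in place, the conclusion is a formal consequence of the K\"unneth formula and the known surjectivity $H^*(BT)\twoheadrightarrow H^*(G/T)$.
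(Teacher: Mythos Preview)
Your proof is correct and follows essentially the same route as the paper: set up the $2$-cartesian square with $BA\simeq[(T/A)/T]\to BT$, verify the hypothesis of Proposition~\ref{p.Kunneth} that $H^*(BA)\to H^*(T/A)$ is surjective, apply the K\"unneth isomorphism, and conclude via the surjectivity $H^*(BT)\twoheadrightarrow H^*(G/T)$ from Theorem~\ref{l.finite}. The only difference is in how the surjectivity $H^*(BA)\to H^*(T/A)$ is justified: the paper reduces by K\"unneth to the rank-one case and invokes Lemma~\ref{l.H1}, whereas you argue directly that $H^*(T/A)\simeq\Lambda(\epsilon_1,\dots,\epsilon_n)$ is generated in degree~$1$ and that the degree-$1$ map is an isomorphism by Kummer theory; both arguments are valid and amount to the same computation. (A small quibble: Remark~\ref{r.cohBA} describes $H^*(BA)$, not $H^*(T/A)$; the exterior-algebra structure of $H^*(\Gm^n,\F_\ell)$ is standard but comes from elsewhere.)
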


\begin{proof}
Let us recall the proof on \cite[page~566]{Quillen1}. Consider the following
diagram with 2-cartesian squares (Proposition \ref{p.cart}):
  \[\xymatrix{G/A\ar[r]\ar[d] & G/T\ar[d]\ar[r] & \Spec k\ar[d]\\
  BA\ar[r] & BT\ar[r] & BG.}\]
Note that the arrow $BA\to BT$ can be identified with the composition
$BA\simto[X/T] \to BT$, where $X=A\backslash T$, and the first morphism is
an isomorphism by Corollary \ref{p.quot}. The map $H^*(BA)\to H^*(X)$
induced by the projection $\pi\colon X=A\backslash T\to BA$ is surjective.
Indeed, using K\"unneth formula this reduces to the case where $T$ has
dimension $1$, which follows from Lemma \ref{l.H1} below. Note that $\pi$
can be identified with the composition $X\to [X/T]\simeq BA$. Thus, by
Proposition \ref{p.Kunneth} applied to $f\colon [X/T]\to BT$, the map
  \[H^*(G/T)\otimes_{H^*(BT)}H^*(BA)\to H^*(G/A)\]
is an isomorphism. We conclude by applying the fact that $H^*(BT)\to
H^*(G/T)$ is surjective (Theorem \ref{l.finite}).
\end{proof}

\begin{lemma}\label{l.H1}
Let $A$ be an elementary abelian $\ell$-group, and let $X$ be a connected
algebraic space endowed with an $A$-action such that $X$ is the maximal
connected Galois \'etale cover of $[X/A]$ whose group is an elementary
abelian $\ell$-group. Then the homomorphism
  \begin{equation}\label{e.H1}
  H^1(BA,\F_\ell)\to H^1([X/A],\F_\ell)
  \end{equation}
induced by the projection $[X/A]\to BA$ is an isomorphism.
\end{lemma}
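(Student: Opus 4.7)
The plan is to apply the Leray spectral sequence for the canonical projection $\pi\colon [X/A]\to BA$ and reduce \eqref{e.H1} to a vanishing statement for $\F_\ell$-torsors. By Corollary~\ref{l.BG} and the discussion following it, cartesian sheaves on $BA$ are equivalent to $A$-modules and cohomology on $BA$ coincides with group cohomology of $A$; combined with smooth base change along the atlas $\Spec(k)\to BA$, this identifies $R^q\pi_*\F_\ell$ with the $A$-module $H^q(X,\F_\ell)$. The Leray spectral sequence therefore takes the Hochschild--Serre form
\[
E_2^{p,q}=H^p(A,H^q(X,\F_\ell))\Rightarrow H^{p+q}([X/A],\F_\ell),
\]
whose low-degree exact sequence reads
\[
0\to H^1(BA,\F_\ell)\to H^1([X/A],\F_\ell)\to H^1(X,\F_\ell)^A\to H^2(BA,\F_\ell);
\]
the first arrow is exactly \eqref{e.H1}. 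Injectivity being automatic, I would reduce to showing that the second arrow, i.e.\ the pullback along $X\to[X/A]$, vanishes.

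Identifying $H^1(\cY,\F_\ell)$ with the group of isomorphism classes of $\F_\ell$-torsors on $\cY$, I must show that every $\F_\ell$-torsor $T\to [X/A]$ becomes trivial when pulled back to $X$. Since $X$ is connected, so is $[X/A]$. A finite \'etale cover of the connected base $[X/A]$ of prime rank $\ell$ is either trivial or connected: if some connected component has degree~$1$ over $[X/A]$, it is a section, and the $\F_\ell$-action translates it into $\ell$ disjoint sections trivializing $T$; otherwise every component has degree $\ell$, forcing a unique connected component. In the trivial case the pullback to $X$ is visibly trivial.

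In the connected case, $T\to[X/A]$ is a connected Galois \'etale cover whose group $\F_\ell$ is elementary abelian $\ell$, so the maximality hypothesis furnishes a morphism $s\colon X\to T$ of $[X/A]$-stacks. The pair $(s,\id_X)$ then defines a section $x\mapsto(s(x),x)$ of the pullback torsor $T\times_{[X/A]}X\to X$, trivializing it. In either case the pullback to $X$ is trivial, completing the proof. The only conceptual point, and the sole place the maximality hypothesis enters, is this reduction of connected $\F_\ell$-torsors to quotients of $X$; no substantive technical obstacle seems to arise.
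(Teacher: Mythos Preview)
Your argument is correct, but it takes a more elaborate route than the paper. The paper's proof is a one-liner via the fundamental group: for a connected Deligne--Mumford stack $\cX$ one has $H^1(\cX,\F_\ell)=\Hom(\pi_1(\cX),\F_\ell)$, the map \eqref{e.H1} is induced by the surjection $\pi_1([X/A])\to\pi_1(BA)\simeq A$, and the maximality hypothesis says precisely that $A$ is the maximal elementary abelian $\ell$-quotient of $\pi_1([X/A])$, so every homomorphism $\pi_1([X/A])\to\F_\ell$ factors through $A$. Your spectral-sequence setup is correct but unnecessary scaffolding: the vanishing of the edge map $H^1([X/A],\F_\ell)\to H^1(X,\F_\ell)^A$ that you establish via torsors is exactly the statement that every homomorphism $\pi_1([X/A])\to\F_\ell$ is trivial on $\pi_1(X)$, i.e.\ factors through $A$. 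So the two proofs use the maximality hypothesis in the same way; yours just wraps it in the five-term exact sequence.

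One minor correction: your citation of Corollary~\ref{l.BG} is misplaced, since that result assumes $G$ has connected geometric fibers, which fails for a nontrivial finite group $A$. The identification of cartesian sheaves on $BA$ with $A$-modules follows instead from Corollary~\ref{c.XG} applied to $X=\Spec(k)$, and the base change along $\Spec(k)\to BA$ is \'etale (not merely smooth) since $A$ is finite.
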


\begin{proof}
For any connected Deligne-Mumford stack $\cX$, $H^1(\cX,\F_\ell)$ is
canonically identified with $\Hom(\pi_1(\cX),\F_\ell)$, and \eqref{e.H1} is
induced by the morphism
  \[\pi_1([X/A])\to \pi_1(BA)\simeq A.\]
The assumption means that $A$ is the maximal elementary abelian
$\ell$-quotient of $\pi_1([X/A])$.
\end{proof}

\begin{prop}\label{p.absurj}
Let $X$ be an abelian variety over $k$, $A=X[\ell]=\Ker(\ell\colon X\to X)$.
Then the map $H^*(BA,\F_\ell)\to H^*(X/A,\F_\ell)$ induced by the projection
$X/A\to BA$ is surjective.
\end{prop}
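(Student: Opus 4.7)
The plan is to reduce to surjectivity in degree one and then apply Lemma \ref{l.H1}.

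First I observe that the subgroup $A = X[\ell] \subset X$ acts freely on $X$ by translation, so the stack $[X/A]$ is representable by the quotient scheme $X/A$, and the multiplication-by-$\ell$ isogeny factors as $X \to X/A \xrightarrow{\sim} X$. Via this isomorphism $X/A \simeq X$, the canonical $A$-torsor $X \to X/A$ becomes the Kummer isogeny $[\ell]\colon X \to X$, and the projection $X/A \to BA$ classifies this torsor.

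Second, since $X$ is an abelian variety of dimension $g$, I would use that $H^*(X,\F_\ell) \simeq \Lambda^*_{\F_\ell} H^1(X,\F_\ell)$; this follows, for instance, from the torsion-freeness of $H^*(X,\Z_\ell) = \Lambda^* H^1(X,\Z_\ell)$ together with the universal coefficient theorem. In particular, $H^*(X/A,\F_\ell)$ is generated as an $\F_\ell$-algebra by its degree-one part, so it suffices to prove that
\[
H^1(BA,\F_\ell) \to H^1(X/A,\F_\ell)
\]
is surjective.

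Finally, I would apply Lemma \ref{l.H1}, whose hypothesis amounts to the assertion that $X$ is the maximal connected Galois \'etale cover of $X/A$ with elementary abelian $\ell$-group. Under the identification $X/A \simeq X$, this cover is the $\ell$-isogeny $X \to X$, corresponding to the surjection $\pi_1(X) \twoheadrightarrow T_\ell X / \ell T_\ell X = X[\ell] = A$. Since this is manifestly the maximal elementary abelian $\ell$-quotient of $\pi_1(X)$, the hypothesis of Lemma \ref{l.H1} is satisfied, and the lemma yields the required isomorphism (in particular, surjection) on $H^1$. No serious obstacle arises; the only point requiring care is the identification of the canonical $A$-torsor $X \to X/A$ with the Kummer isogeny $[\ell]\colon X \to X$, which is straightforward from the definition of the quotient.
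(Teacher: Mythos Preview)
Your proof is correct and follows essentially the same approach as the paper: reduce to degree one via the exterior-algebra structure of $H^*(X,\F_\ell)$, then apply Lemma \ref{l.H1} to the $\ell$-isogeny $X \to X \simeq X/A$. The only difference is cosmetic: where you appeal to the structure of $\pi_1(X)$ (implicitly using that the $\ell$-primary part is $T_\ell X$), the paper cites Serre--Lang's theorem directly to verify that $[\ell]\colon X \to X$ is the maximal elementary abelian $\ell$-cover.
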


\begin{proof}
We apply Lemma \ref{l.H1} to the morphism $\ell\colon X\to X$, which
identifies the target with $X/A$. By Serre-Lang's theorem \cite[XI
Th\'eor\`eme 2.1]{SGA1}, this morphism is the maximal \'etale Galois cover
of $X$ by an elementary abelian $\ell$-group. Thus $H^1(BA)\to H^1(X/A)$ is
an isomorphism. It then suffices to apply the fact that $H^*(X/A)$ is the
exterior algebra of $H^1(X/A)$.
\end{proof}

\section{Proof of the structure theorem}\label{s.7}
We proceed in several steps:
\begin{enumerate}[(1)]
\item  We first prove Theorem \ref{p.str} (b) when $\mathcal{X}$ is a
    Deligne-Mumford stack with finite inertia, and whose inertia groups
    are elementary abelian $\ell$-groups.

\item We prove Theorem \ref{p.str} (b) for $\mathcal{X}$ a quotient stack
    $[X/G]$.

\item For certain quotient stacks $[X/G]$ we establish estimates for the
    powers of $F$ annihilating the kernel and the cokernel of $a_G(X,K)$
    \eqref{e.aEstar}.

\item Using (3), we prove Theorem \ref{p.str} (b) for Deligne-Mumford
    stacks with finite inertia.

\item We prove Theorem \ref{p.str} (a) and the first assertion of (b) for
    Artin stacks having a stratification by global quotients.
\end{enumerate}

\begin{construction}\label{s.c2}
Let $f\colon X\to Y$ be a morphism of commutatively ringed topoi such that
$\ell \cO_Y=0$, $K\in D(X)$. The Leray spectral sequence of~$f$,
  \[E_2^{ij}= H^i(Y,R^jf_*K) \Rightarrow H^{i+j}(X,K),\]
gives rise to an edge homomorphism
\begin{equation}
  e_{f,K}\colon H^*(X,K) \to H^0(Y,R^*f_*K),
\end{equation}
which is a homomorphism of $\F_\ell$-(pseudo-)algebras if $K\in D(X)$ is a
(pseudo-)ring. The following crucial lemma is similar to Quillen's result
\cite[Proposition 3.2]{Quillen1}.
\end{construction}

\begin{lemma}\label{l.10.2}
Let $K$ be a pseudo-ring in $D(X)$. Assume that $c=\cd(Y)< \infty$. Then
$(\Ker e_{f,K})^{c+1}=0$. Moreover, if $K$ is commutative, then $e_{f,K}$ is
a uniform $F$-isomorphism; more precisely, for $b\in E_2^{0,*}$, we have
$b^{\ell^{n}} \in \Img e_{f,K}$, where $n=\max\{c-1,0\}$.
\end{lemma}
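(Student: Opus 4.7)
The plan is to exploit the multiplicative structure on the Leray spectral sequence
\[
E_2^{pq} = H^p(Y, R^q f_*K) \Rightarrow H^{p+q}(X, K),
\]
which by Example \ref{e.ssmult} applied to the pseudo-ring $Rf_*K$ in $D(Y)$ is associative and which is graded commutative when $K$ is. The associated multiplicative filtration $F^\bullet$ on $H^*(X,K)$ satisfies $F^0 = H^*(X,K)$ and $F^{c+1} = 0$, since $\cd(Y) = c$ forces $E_2^{pq} = 0$ for all $p > c$.

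For the kernel bound, the edge homomorphism factors as $e_{f,K}\colon H^n(X,K) \twoheadrightarrow E_\infty^{0,n} \hookrightarrow E_2^{0,n}$, so $\Ker e_{f,K} = F^1 H^*(X,K)$. Multiplicativity of the filtration then gives $(\Ker e_{f,K})^{c+1} = (F^1)^{c+1} \subseteq F^{c+1} = 0$.

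For the $F$-isomorphism, the image of $e_{f,K}$ in $E_2^{0,*}$ is $E_\infty^{0,*}$, the subspace of permanent cocycles, so it suffices to show that for every $b \in E_2^{0,*}$ the power $b^{\ell^n}$ is a permanent cocycle. Following Quillen, I would prove that raising to the $\ell$-th power advances one page of the spectral sequence: if $b \in E_r^{0,q}$ is any class (not necessarily a $d_r$-cocycle), then $d_r(b^\ell) = 0$. When $b$ has odd degree and $\ell$ is odd, this is trivial because graded commutativity already forces $b^2 = 0$; in every other case, graded commutativity makes $b$ commute with $d_r(b)$ (the product $|b| \cdot (|b|+1)$ of their total degrees being even), and the Leibniz rule yields $d_r(b^\ell) = \ell\, b^{\ell - 1}\, d_r(b) = 0$ in characteristic $\ell$. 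Hence $b^\ell$ defines a class in $E_{r+1}^{0, \ell q}$, and by induction $b^{\ell^k}$ defines a class in $E_{k+2}^{0, \ell^k q}$ for every $k \ge 0$.

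To conclude, since $E_r^{r,*}$ is a subquotient of $E_2^{r,*} = 0$ for $r > c$, the differential $d_r|_{E_r^{0,*}}$ vanishes for all $r \ge c+1$. Taking $n = \max\{c-1, 0\}$, the class $b^{\ell^n}$ lies in $E_{n+2}^{0,*} \subseteq E_{c+1}^{0,*}$ and is therefore permanent, so it lifts to an element of $H^*(X,K)$ with the desired image. The one delicate point is the sign bookkeeping forcing $b$ and $d_r(b)$ to commute in graded commutative characteristic $\ell$; once that observation is in place, the argument is essentially Quillen's \cite[Proposition 3.2]{Quillen1}, transported from the base-row edge map to the fiber-column edge map we have here.
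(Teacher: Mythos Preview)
Your proof is correct and follows the same approach as the paper: identify $\Ker e_{f,K}$ with $F^1H^*(X,K)$, use multiplicativity of the filtration for the nilpotence bound, and use the Leibniz rule $d_r(b^\ell)=\ell b^{\ell-1}d_r(b)=0$ to climb the pages of the spectral sequence until $E_{c+1}^{0,*}=E_\infty^{0,*}$. Your sign discussion is in fact more careful than the paper's (which simply writes the Leibniz identity without comment); the only cosmetic slip is the inclusion ``$E_{n+2}^{0,*}\subseteq E_{c+1}^{0,*}$'' in the edge case $c=0$, where $n+2=2>c+1=1$ and the $E_1$ page is not defined---but your preceding sentence already shows $d_r\res E_r^{0,*}=0$ for all $r\ge 2$ in that case, so the conclusion $E_2^{0,*}=E_\infty^{0,*}$ stands.
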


\begin{proof}
We imitate the proof of \cite[Proposition 3.2]{Quillen1} (for the case of
finite cohomological dimension). We have $E^{ij}_2 = 0$ for $i > c$.
Consider the multiplicative structure on the spectral sequence (Example
\ref{e.ssmult}). As $\Ker e_{f,K} = F^1H^*(X,K)$, where $F^{\bullet}$
denotes the filtration on the abutment, $(\Ker e_{f,K})^{c+1}\subset
F^{c+1}H^*(X,K) = 0$. If $K$ is commutative and $b \in E^{0,*}_r$, then the
formula $d_r(b^\ell) = \ell b^{\ell-1}d_r(b)=0$ implies that $b^\ell\in
E_{r+1}^{0,*}$. Thus for $b\in E_2^{0,*}$, $b^{\ell^n} \in
E^{0,*}_{2+n}=E^{0,*}_{\infty}=\Img e_{f,K}$.
\end{proof}

\begin{construction}
Let $\cX$ be a Deligne-Mumford stack of finite presentation and finite
inertia over $k$. By Keel-Mori's theorem \cite{KeelMori} (see \cite[Theorem
6.12]{Rydh} for a generalization), there exists a coarse moduli space
morphism
\[f\colon \cX\to Y,\]
which is proper and quasi-finite. Let $K\in D^+_\cart(\cX,\F_\ell)$. Then
Construction \ref{s.c2} and Lemma \ref{l.10.2} apply to $f$ and $K$ with
$\cd_\ell(Y) \le 2\dim(Y)$.

For any geometric point $t$ of $Y$, consider the following diagram of Artin
stacks with 2-cartesian squares:
\[\xymatrix{\cX_t\ar[r]\ar[d] &\cX_{(t)}\ar[r]\ar[d]& \cX\ar[d]^{f}\\
t\ar[r] & Y_{(t)} \ar[r] & Y.}
\]
We have canonical isomorphisms
\begin{equation}\label{e.10.3.1}
(R^qf_*K)_t \simto H^q(\mathcal{X}_{(t)},K)  \simto H^q(\mathcal{X}_t,K),
\end{equation}
the second one by the proper base change theorem (cf.\ \cite[Theorem
9.14]{OlssonSh}). Therefore, if we let $P_Y$ denote the category of
geometric points of $Y$ (Definition \ref{s.point}), the map
\begin{equation}\label{e.10.3.2}
H^0(Y,R^qf_*K) \to \varprojlim_{t \in P_Y}
H^q(\mathcal{X}_{(t)},K) \simto \varprojlim_{t \in P_Y} H^q(\mathcal{X}_t,K),
\end{equation}
is an isomorphism if $K\in D^+_c(\cX,\F_\ell)$, by Proposition \ref{p.sp}.
On the other hand, recall \eqref{e.R} that
\[
R^q(\mathcal{X},K) = \varprojlim_{(x \colon \mathcal{S} \to \mathcal{X}) \in \mathcal{C}_{\mathcal{X}}} H^q(\mathcal{S},K_x) = \Gamma(\widehat{\mathcal{C}_{\mathcal{X}}},H^q(K_\bullet)),
\]
where $K_x=x^* K$ and $H^q(K_\bullet)$ denotes the presheaf on
$\mathcal{C}_{\mathcal{X}}$ whose value at $x$ is $H^q(\mathcal{S},K_x)$. We
define a category $\cC_f$ and functors
\[\xymatrix{&\cC_f\ar[rd]^{\psi}\ar[ld]_{\varphi}\\
\cC_\cX & & P_Y}
\]
as follows. The category $\cC_f$ is cofibered over $P_Y$ by $\psi$. The
fiber category of $\psi$ at a geometric point $t\to Y$ is
$\mathcal{C}_{\mathcal{X}_{(t)}}$. The pushout functor
$\mathcal{C}_{\mathcal{X}_{(t)}} \to \mathcal{C}_{\mathcal{X}_{(z)}}$ for a
morphism of geometric points $t \to z$ is induced by the morphism
$\cX_{(t)}\to \cX_{(z)}$ (Remark \ref{r.Cfunc}). The functors
$\varphi_t\colon \cC_{\cX_{(t)}} \to \cC_{\cX}$ induced by the morphisms
$\mathcal{X}_{(t)} \to \mathcal{X}$ define $\varphi$. Thus we have an
inverse image map
\begin{equation}
\varphi^* \colon R^q(\mathcal{X},K) \to \Gamma(\widehat{\mathcal{C}_f},\varphi^*H^q(K_{\bullet})).
\end{equation}
By Lemma \ref{l.cofcof} we have
\[\psi_*\varphi^*H^q(K_{\bullet})_t\simeq \Gamma(\widehat{\cC_{\cX_{(t)}}},\varphi_t^*H^q(K_\bullet)).\]
Thus we have
\begin{equation}\label{e.10.3.7}
\Gamma(\widehat{\mathcal{C}_f},\varphi^*H^q(K_{\bullet})) \simeq \Gamma(\widehat{P_Y},\psi_*\varphi^*H^q(K_{\bullet}))\simto \varprojlim_{t \in P_Y} \varprojlim_{(x \colon \mathcal{S} \to \mathcal{X}_{(t)}) \in \mathcal{C}_{\mathcal{X}_{(t)}}}H^q(\mathcal{S},K_x)
\end{equation}
\end{construction}

\begin{prop}\label{p.10.4}\leavevmode
\begin{enumerate}
\item The following diagram commutes
\[
\xymatrix{H^q(\mathcal{X},K) \ar[r]^-{e^q_{f,K}} \ar[d]_{a^q_{\mathcal{X},K}} & H^0(Y,R^qf_*K) \ar[r]^-{\eqref{e.10.3.2}}  & \varprojlim_{t \in P_Y} H^q(\mathcal{X}_{(t)},K) \ar[d]^{{\varprojlim_{t \in P_Y}} a^q_{\mathcal{X}_{(t)},K}} \\
 R^q(\mathcal{X},K) \ar[r]^-{\varphi^*} &\Gamma(\widehat{\mathcal{C}_f}, \varphi^*H^q(K_{\bullet})) \ar[r]^-{\eqref{e.10.3.7}}_-\sim & \varprojlim_{t \in P_Y} \varprojlim_{(x \colon \mathcal{S} \to \mathcal{X}_{(t)}) \in \mathcal{C}_{\mathcal{X}_{(t)}}}H^q(\mathcal{S},K_x).}
\]

\item $\varphi^*$ is an isomorphism.

\item Consider the commutative square
\[\xymatrix{H^q(\cX_{(t)},K)\ar[d]_{a^q_{\cX_{(t)},K}}\ar[r]^\sim &H^q(\cX_t,K)\ar[d]^{a^q_{\cX_t,K}}\\
\varprojlim_{\cS\in\cC_{\cX_{(t)}}}
H^q(\cS,K)\ar[r]^{\iota^*}& \varprojlim_{\cS\in \cC_{\cX_t}} H^q(\cS,K)}
\]
defined by the functor $\iota\colon \cC_{\cX_t}\to \cC_{\cX_{(t)}}$
induced by the inclusion $\cX_t\to \cX_\cT$, in which the upper horizontal
map is the second isomorphism of \eqref{e.10.3.1}. The map $\iota^*$ is an
isomorphism.
\end{enumerate}
\end{prop}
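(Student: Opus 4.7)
This is a naturality check. By Proposition \ref{p.sp} applied to the scheme $Y$, the map \eqref{e.10.3.2} identifies $H^0(Y,R^qf_*K)$ with $\varprojlim_{t\in P_Y}(R^qf_*K)_t$, and via the canonical isomorphism $(R^qf_*K)_t\simeq H^q(\cX_{(t)},K)$, the edge homomorphism $e^q_{f,K}$ becomes the map whose $t$-component is the restriction $H^q(\cX,K)\to H^q(\cX_{(t)},K)$ along $\cX_{(t)}\to\cX$. On the other side, the composition $\varphi^*\circ a^q_{\cX,K}$ is, by construction and by \eqref{e.10.3.7}, the map whose $(t,\cS)$-component for $\cS\in\cC_{\cX_{(t)}}$ is the restriction $H^q(\cX,K)\to H^q(\cS,K_\cS)$ along the composite $\cS\to\cX_{(t)}\to\cX$, which is the same as first restricting to $\cX_{(t)}$ and then applying $a^q_{\cX_{(t)},K}$. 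The commutativity thus reduces to the functoriality of restriction maps.

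\textbf{Part (b).} I will show that the functor $\varphi\colon\cC_f\to\cC_\cX$ is cofinal (Definition \ref{s.cofinal}); together with Lemma \ref{l.cofcof} this immediately yields that $\varphi^*$ is an isomorphism. Given an $\ell$-elementary point $x\colon\cS=[S/A]\to\cX$ with closed point $s\to\cX$, set $t=f(s)\in P_Y$. The composition $S\to\cX\to Y$ sends the closed point of $S$ to $t$; since $S$ is strictly local, the universal property of the strict henselization $Y_{(t)}$ gives a unique factorization $S\to Y_{(t)}\to Y$, hence a canonical lift $S\to\cX_{(t)}=\cX\times_Y Y_{(t)}$ over $\cX$. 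By Proposition \ref{p.7.16}~(b) and the uniqueness of this factorization, the action of $A$ on $x$ by $\cX$-automorphisms lifts uniquely to an action by $\cX_{(t)}$-automorphisms, yielding an object $\tilde x\in\cC_{\cX_{(t)}}$ with $\varphi(\tilde x)\simeq x$ in $\cC_\cX$. Uniqueness of the lift also ensures that the comma categories $(x\downarrow\varphi)$ are connected.

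\textbf{Part (c).} This is the step I expect to require the most care. The key structural input is the following local description of DM stacks with finite inertia: since $f\colon\cX\to Y$ is proper and quasi-finite (being a coarse moduli space morphism by Keel--Mori), the base change $\cX_{(t)}\to Y_{(t)}$ is finite, and $Y_{(t)}$ is strictly henselian. Therefore $\cX_{(t)}$ decomposes into a finite disjoint union $\coprod_i \cX_i$ where each $\cX_i$ is a connected DM stack whose coarse moduli space is strictly henselian and whose geometric points above $t$ reduce to a single isomorphism class $\bar x_i\to\cX_t$. Now any $\ell$-elementary point $x\colon[S/A]\to\cX_{(t)}$ factors through a unique $\cX_i$; contracting $S$ to its closed point $s$ gives a morphism $[s/A]\to[S/A]$ in $N_{\cX_{(t)}}$, so $x$ is isomorphic in $\cC_{\cX_{(t)}}$ to an $\ell$-elementary point supported at a geometric point of $\cX_i$ above $t$, which lies in $\cX_t$. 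This proves essential surjectivity of $\iota$; connectedness of the comma categories follows from the uniqueness of the component $\cX_i$, so $\iota$ is cofinal and $\iota^*$ is an isomorphism. The main subtlety to nail down is the disjoint-union decomposition of $\cX_{(t)}$, which should be a routine consequence of the Keel--Mori theorem applied over the henselian base $Y_{(t)}$ combined with the fact that strictly henselian local rings are connected.
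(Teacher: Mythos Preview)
Parts (a) and (b) are close to the paper's approach. For (b), your lift $\tilde x$ is exactly the functor $\tau\colon\cC_\cX\to\cC_f$ that the paper constructs, but the one-line justification ``uniqueness of the lift ensures that the comma categories are connected'' is not a proof. What you need is that $(\tilde x,\id)$ is an \emph{initial} object of $(x\downarrow\varphi)$, and this requires producing, for any $(y,\alpha\colon x\to\varphi(y))$, a morphism $\tilde x\to y$ in $\cC_f$ over $\alpha$. The paper does this cleanly by observing that $\tau$ is a left adjoint of $\varphi$: the counit $\tau\varphi\to\id_{\cC_f}$ sends $\xi\in\cC_{\cX_{(t')}}$ to the cocartesian morphism above the specialization $f(s)\to t'$ in $P_Y$, and then Lemma~\ref{l.adjcof} (a right adjoint is cofinal) finishes. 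Your reference to Lemma~\ref{l.cofcof} is also misplaced; cofinality alone gives the isomorphism of limits.

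Part (c) has a genuine gap. First, the disjoint-union decomposition you propose is vacuous: coarse moduli commutes with flat base change, so $Y_{(t)}$ is the coarse moduli space of $\cX_{(t)}$, the map $\lvert\cX_{(t)}\rvert\to\lvert Y_{(t)}\rvert$ is a homeomorphism, and $Y_{(t)}$ is strictly local hence connected. (Also, $f$ is not representable, so calling $\cX_{(t)}\to Y_{(t)}$ ``finite'' is not meaningful in the way you use it.) More seriously, contracting $S$ to its closed point $s$ yields an $\ell$-elementary point $[s/A]\to\cX_{(t)}$, but the image of $s$ in $Y_{(t)}$ need \emph{not} be the closed point $t$, so $[s/A]$ need not lie in $\cX_t$. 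You still have to specialize inside $\cX_{(t)}$ down to the closed fiber, and nothing in your argument does this.

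The paper supplies the missing step by presenting $\cX_{(t)}$ as a global quotient $[X/G]$ with $X$ a \emph{strictly local} scheme finite over $Y_{(t)}$ (a connected component of an \'etale atlas) and $G=\Aut_{\cX_{(t)}}(x)$ for $x$ the closed point of $X$. Lifting $\xi\colon[S/A]\to[X/G]$ to a morphism of groupoids (Proposition~\ref{p.eqstr}) gives a map $S\to X$, hence a zigzag
\[
[S/A]\longrightarrow[X/A]\longleftarrow[x/A]
\]
in $\cC'_{[X/G]}$, the backward arrow lying in $N$ since $x$ is the closed point. Inverting it produces a morphism $\xi\to\iota([x/A])$ in $\cC_{\cX_{(t)}}$, exhibiting an initial object of $(\xi\downarrow\iota)$ and hence cofinality of $\iota$. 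The strictly local atlas is precisely what lets you pass from an arbitrary point of $\cX_{(t)}$ to one lying over $t$.
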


\begin{proof}
Assertion (a) follows from the definitions. For (b) it suffices to show that
$\varphi$ is cofinal. Let $\tau \colon \mathcal{C}_{\mathcal{X}} \to
\mathcal{C}_f$ be the functor carrying an $\ell$-elementary point $x \colon
[S/A] \to \mathcal{X}$, with $s$ the closed point of $S$, to the induced
$\ell$-elementary point $\tau(x) \colon [S/A] \to \mathcal{X}_{(f(s))}$.
Then we have $\varphi\tau \simeq \mathrm{id}_{\mathcal{C}_{\mathcal{X}}}$,
and a canonical natural transformation $\tau\varphi \to
\id_{\mathcal{C}_f}$, carrying an object $\xi\colon [S/A]\to \cX_{(t)}$ of
$\cC_f$ to the cocartesian morphism $\tau\varphi(\xi)\to \xi$ in $\cC_f$
above the morphism $f(s)\to t$ in $P_Y$. These exhibit $\tau$ as a left
adjoint to $\varphi$. Therefore, by Lemma \ref{l.adjcof} below, $\varphi$ is
cofinal. For (c), it suffices again to show that $\iota$ is cofinal. Let $X
\to \mathcal{X}_{(t)}$ be an \'etale atlas. As $f$ is quasi-finite, up to
replacing $X$ by a connected component, we may assume that $X$ is a strictly
local scheme, finite over $Y_{(t)}$. Then $\cX_{(t)}\simeq [X/G]$, where
$G=\Aut_{\cX_{(t)}}(x)$, $x$ is the closed point of $X$. Let $\xi \colon
[S/A] \to [X/G]$ be an $\ell$-elementary point of $[X/G]$. The
$\ell$-elementary point $[x/A]\to \cX_t$, endowed with the morphism in
$\cC_{[X/G]}$ given by the diagram
\[[S/A] \to [X/A] \leftarrow [x/A]\]
in $\mathcal{C}'_{[X/G]}$, defines an initial object of $(\xi \downarrow
\iota)$. Therefore, $\iota$ is cofinal.
\end{proof}

\begin{lemma}\label{l.adjcof}
Let $G \colon \mathcal{A} \to \mathcal{B}$ be a functor. If $G$ has a left
adjoint, then $G$ is cofinal.
\end{lemma}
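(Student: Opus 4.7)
The plan is to use the unit of the adjunction to produce, for every object $b$ of $\mathcal{B}$, an initial-like object in $(b\downarrow G)$, from which nonemptiness and connectedness follow immediately.

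Let $F\colon\mathcal{B}\to\mathcal{A}$ be a left adjoint to $G$, with unit $\eta\colon\mathrm{id}_{\mathcal{B}}\to GF$ and adjunction bijection $\Hom_{\mathcal{A}}(F(b),a)\simeq\Hom_{\mathcal{B}}(b,G(a))$, sending $\tilde\phi\mapsto G(\tilde\phi)\circ\eta_b$. First I would observe that for every object $b$ of $\mathcal{B}$, the pair $(F(b),\eta_b)$ is an object of $(b\downarrow G)$, so this category is nonempty.

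Next I would show connectedness. Given any object $(a,\phi\colon b\to G(a))$ of $(b\downarrow G)$, let $\tilde\phi\colon F(b)\to a$ be its adjoint transpose, so that $G(\tilde\phi)\circ\eta_b=\phi$. This equality says precisely that $\tilde\phi$ defines a morphism $(F(b),\eta_b)\to(a,\phi)$ in $(b\downarrow G)$. Hence every object of $(b\downarrow G)$ admits a morphism \emph{from} the distinguished object $(F(b),\eta_b)$, which makes $(F(b),\eta_b)$ a weakly initial object; in particular $(b\downarrow G)$ is connected.

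There is no real obstacle here: the entire argument is the standard fact that a left adjoint, viewed as a functor $\mathcal{B}\to(b\downarrow G)$ via $b\mapsto(F(b),\eta_b)$, exhibits $(F(b),\eta_b)$ as an initial object of $(b\downarrow G)$, and an initial object makes a category connected. Combined with the nonemptiness observation above, this gives cofinality of $G$ in the sense of Definition \ref{s.cofinal}.
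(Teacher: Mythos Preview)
Your proof is correct and follows exactly the same approach as the paper: both use the unit of the adjunction to exhibit $(F(b),\eta_b)$ as an initial object of $(b\downarrow G)$, from which nonemptiness and connectedness are immediate.
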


\begin{proof}
Let $F\colon \cB\to \cA$ be a left adjoint to $G$. Then, for every object
$b$ of $\mathcal{B}$, $(Fb, b \to GFb)$ is an initial object of $(b
\downarrow G)$. Thus $(b\downarrow G)$ is connected.
\end{proof}

\begin{cor}\label{c.DMl}
The assertion of Theorem \ref{p.str} (b) holds if $\mathcal{X}$ is a
Deligne-Mumford stack with finite inertia, whose inertia groups are
elementary abelian $\ell$-groups. More precisely, if $c=\cd_\ell(Y)$, where
$Y$ is the coarse moduli space of $\cX$, then $(\Ker a_{\cX,K})^{c+1}=0$ and
for $K$ commutative and $b\in E_2^{0,*}$, we have $b^{\ell^{n}} \in \Img
a_{\cX,K}$, where $n=\max\{c-1,0\}$.
\end{cor}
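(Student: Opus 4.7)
The plan is to compare $a_{\cX,K}$ with the edge homomorphism $e_{f,K} \colon H^*(\cX,K) \to H^0(Y,R^*f_*K)$ of the Leray spectral sequence for the coarse moduli space morphism $f \colon \cX \to Y$, by exploiting the commutative diagram of Proposition \ref{p.10.4}(a), and then to invoke Lemma \ref{l.10.2}.

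First, I would apply Lemma \ref{l.10.2} to $f$ and $K$: since $Y$ is of finite type over $k$, $c = \cd_\ell(Y) \le 2\dim(Y)$ is finite, yielding $(\Ker e_{f,K})^{c+1} = 0$ and, when $K$ is commutative, $b^{\ell^{\max\{c-1,0\}}} \in \Img e_{f,K}$ for every $b \in E_2^{0,*} = H^0(Y,R^*f_*K)$. Note that the multiplicative structure and the identification of $E_2^{0,*}$ with a subquotient of the abutment are provided by Example \ref{e.ssmult}.

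The heart of the argument will be to show that for every geometric point $t \in P_Y$, the map $a_{\cX_{(t)},K}$ appearing on the right of the diagram of Proposition \ref{p.10.4}(a) is an isomorphism. By Proposition \ref{p.10.4}(c) this reduces to showing the same for $a_{\cX_t,K}$. Using Keel--Mori together with the hypothesis that the inertia is elementary abelian $\ell$ (hence tame, as $\ell$ is invertible in $k$), the henselization $\cX_{(t)}$ is of the form $[X/A_t]$ with $X$ strictly local and $A_t$ an elementary abelian $\ell$-group, so $\cX_t \simeq BA_t$. I would then identify $\cC_{BA_t}$ via Proposition \ref{l.DMlpt} with $C_{BA_t}$ and verify that $(\mathrm{pt},A_t)$ is a terminal object: any representable $\ell$-elementary point $[S/B] \to BA_t$ over $k$ corresponds to an injection $B \hookrightarrow A_t$, and since $A_t$ is abelian, morphisms in $C_{BA_t}$ reduce to inclusions of subgroups. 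Hence $R^*(BA_t,K)$ is computed at the terminal object and $a_{\cX_t,K}$ is the identity.

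Combining this with Proposition \ref{p.10.4}(b) and with the isomorphism \eqref{e.10.3.2} (which applies because $K \in D^+_c$, by Proposition \ref{p.sp}), I would deduce that the right-hand vertical map in Proposition \ref{p.10.4}(a) is an isomorphism, giving a natural $\F_\ell$-algebra isomorphism $H^0(Y,R^*f_*K) \simto R^*(\cX,K)$ that intertwines $e_{f,K}$ with $a_{\cX,K}$. The claimed bounds $(\Ker a_{\cX,K})^{c+1} = 0$ and $b^{\ell^{\max\{c-1,0\}}} \in \Img a_{\cX,K}$ for $b \in E_2^{0,*}$ then follow from the first step. I expect the main obstacle to be a careful verification that $\cC_{BA_t}$ admits the expected terminal object when $A_t$ is elementary abelian $\ell$, because one must handle both the localization defining $\cC_{BA_t}$ and the quotient by the $A_t$-action by inner automorphisms; a clean path is to first identify $\bar C_{BA_t}$ with the poset of subgroups of $A_t$ (using abelianness of $A_t$) and then to transport this description across the equivalences of Propositions \ref{l.varpi} and \ref{l.DMlpt}.
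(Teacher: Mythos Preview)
Your proposal is correct and follows essentially the same route as the paper: reduce the properties of $a_{\cX,K}$ to those of the edge map $e_{f,K}$ via the commutative diagram of Proposition \ref{p.10.4}(a), using that \eqref{e.10.3.2} and $\varphi^*$ are isomorphisms and that the right vertical arrow is an isomorphism once one knows $a_{\cX_t,K}$ is an isomorphism for each geometric point $t$ of $Y$; then conclude by Lemma \ref{l.10.2}.

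Two small remarks. First, the paper does not pass through $C_{BA_t}$ via Proposition \ref{l.DMlpt}; it simply observes that $\id_{BA_k}\colon BA_k\to BA_k$ is a final object of $\cC_{BA_k}$ (for any $\ell$-elementary point $x$ there is a unique morphism $x\to\id_{BA_k}$ in $\cC'_{BA_k}$, namely $(x,\id_x)$, and this persists after localization). This short-circuits what you flagged as the main obstacle. Second, your identification $\cX_t\simeq BA_t$ is slightly too quick: in general one only gets $(\cX_{t'})_{\mathrm{red}}\simeq B\Aut_{\cX}(y')$ after a finite radicial extension $t'\to t$, but this is harmless since both $H^*$ and $\cC_{(-)}$ are invariant under schematic universal homeomorphisms (Remark \ref{r.Cfunc}).
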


\begin{proof}
It suffices to show that, for all $t \in P_Y$,
\[
a^q_{\mathcal{X}_t,K} \colon H^q(\mathcal{X}_t,K) \to \varprojlim_{(x \colon \mathcal{S} \to \mathcal{X}_t)\in \mathcal{C}_{\mathcal{X}_t}} H^q(\mathcal{S},K_x)
\]
is an isomorphism. Indeed, by Proposition \ref{p.10.4} (c) this will imply
that the right vertical arrow in the diagram of Proposition \ref{p.10.4} (a)
is an isomorphism. As \eqref{e.10.3.2} is an isomorphism, $\varphi^*$ is an
isomorphism (Proposition \ref{p.10.4} (b)), and $e_{f,K}= \bigoplus
e^q_{f,K}$ has nilpotent kernel and, if $K$ is commutative, is an
$F$-isomorphism (Lemma \ref{l.10.2}), it will follow that $a_{\mathcal{X},K}
= \bigoplus a^q_{\mathcal{X},K}$ has the same properties with the same
bounds for the exponents. As $f \colon \mathcal{X} \to Y$ is a coarse moduli
space morphism, there exists a finite radicial extension $t' \to t$ and a
geometric point $y'$ of $\mathcal{X}$ above $t'$ such that
$(\mathcal{X}_{t'})_\mathrm{red} \simeq B\mathrm{Aut}_{\mathcal{X}}(y')$.
Therefore we are reduced to showing that $a_{\mathcal{X},K}$ is an
isomorphism for $\mathcal{X} = BA_k$, where $A$ is an elementary abelian
$\ell$-group. In this case, $\id_{BA_k} \colon BA_k \to BA_k$ is a final
object of $\cC_{BA_k}$, so we can identify $R^q(BA_k,K)$ with $H^q(BA_k,K)$,
and $a^q_{BA_k,K}$ with the identity.
\end{proof}

\begin{cor}\label{c.quotl}
Suppose $\mathcal{X}=[X/G]$ is a global quotient stack (Definition
\ref{d.quot}), where the action of $G$ on $X$ satisfies the following two
properties:
\begin{enumerate}
\item The morphism $\gamma \colon G \times X \to X \times X$, $(g,x)
    \mapsto (x,xg)$ is finite and unramified.

\item All the inertia groups of $G$ are elementary abelian $\ell$-groups.
\end{enumerate}
Then the assertions of Corollary \ref{c.DMl} hold.
\end{cor}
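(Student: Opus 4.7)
The plan is to reduce Corollary \ref{c.quotl} directly to Corollary \ref{c.DMl} by checking that hypotheses (a) and (b) force $\cX = [X/G]$ to be a Deligne-Mumford stack of finite presentation with finite inertia whose geometric inertia groups are elementary abelian $\ell$-groups.

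First I would use the isomorphism \eqref{e.XGX}, $X \times G \simto X \times_{[X/G]} X$, to identify the morphism $\gamma\colon G \times X \to X \times X$ (up to swapping the two factors of $X \times X$) with the base change of the diagonal $\Delta_\cX\colon \cX \to \cX \times \cX$ along the smooth surjective morphism $X \times X \to \cX \times \cX$. Since ``finite'' and ``unramified'' are properties of morphisms that descend along smooth surjective morphisms, hypothesis (a) translates into $\Delta_\cX$ being finite and unramified. A stack with unramified diagonal is Deligne-Mumford by definition, and since $\Delta_\cX$ is finite, the inertia $I_\cX = \cX \times_{\Delta_\cX,\cX \times \cX,\Delta_\cX} \cX$ is finite over $\cX$; moreover $\cX$ is quasi-separated, hence of finite presentation over $k$ (as $X$ is of finite type).

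Next, the inertia group of $\cX$ at a geometric point $\bar x \to X$ is the fibre of $\gamma$ over the diagonal point $(\bar x,\bar x)$, that is, the stabilizer subgroup $\{g \in G \mid \bar x g = \bar x\}$ of $\bar x$ in $G$, which is an elementary abelian $\ell$-group by hypothesis (b). Hence $\cX$ satisfies all the hypotheses of Corollary \ref{c.DMl}: it is a Deligne-Mumford stack of finite presentation with finite inertia, and its geometric inertia groups are elementary abelian $\ell$-groups. By Keel-Mori, $\cX$ admits a coarse moduli space $Y$ of finite type over $k$, so that $c = \cd_\ell(Y) \le 2\dim Y < \infty$. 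Applying Corollary \ref{c.DMl} then yields both the nilpotency bound $(\Ker a_{\cX,K})^{c+1} = 0$ and, for commutative $K$, the $F$-isomorphism bound $b^{\ell^n} \in \Img a_{\cX,K}$ for $b \in E_2^{0,*}$ with $n = \max\{c-1,0\}$.

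The only step of any substance is the descent identification of $\gamma$ with a pullback of $\Delta_\cX$ in the first paragraph, which is a direct reading of \eqref{e.XGX} combined with the smooth-local nature of the relevant properties of morphisms; no genuine obstacle is expected.
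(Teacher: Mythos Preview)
Your proposal is correct and follows essentially the same route as the paper: identify $\gamma$ with the smooth pullback of $\Delta_{[X/G]}$ via \eqref{e.XGX}, descend finiteness and unramifiedness to the diagonal to conclude $[X/G]$ is Deligne--Mumford with finite inertia, then use (b) to see the inertia groups are elementary abelian $\ell$-groups and invoke Corollary~\ref{c.DMl}. One tiny phrasing issue: ``Deligne--Mumford'' is not literally defined in the paper as ``unramified diagonal'' (the paper uses the \'etale-atlas definition), so you are implicitly invoking the standard equivalence between these characterizations; this is harmless but worth making explicit.
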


\begin{proof}
As $\gamma$ in (a) can be identified with the morphism $X \times_{[X/G]} X
\to X \times X$, which is the pull-back of the diagonal morphism
$\Delta_{[X/G]} \colon [X/G] \to [X/G] \times [X/G]$ by $X \times X \to
[X/G] \times [X/G]$, (a) implies that $\Delta_{[X/G]}$ is finite and
unramified. In particular, $[X/G]$ is a Deligne-Mumford stack. Moreover, as
the inertia stack is the pull-back of $\Delta_{[X/G]}$ by $\Delta_{[X/G]}$,
$[X/G]$ has finite inertia. Taking (b) into account, we see that $[X/G]$
satisfies the assumptions of \ref{c.DMl}, and therefore \ref{p.str} (b)
holds for $[X/G]$.
\end{proof}

\begin{prop}\label{p.quotgen}
Theorem \ref{p.str} (b) for global quotient stacks $[X/G]$ (Definition
\ref{d.quot}) follows from Theorem \ref{p.str} (b) for $G$ linear.
\end{prop}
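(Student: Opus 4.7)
The plan is to reduce to the affine subgroups $G_i$ of $G$ provided by the dévissage in the proof of Theorem \ref{t.finite}, taking $n=\ell$ throughout. Concretely, write $G$ as an iterated extension $L \cdot A \cdot F$ with $L$ linear affine, $A$ an abelian variety of dimension $d$, and $F$ finite of order $m$, and set $G_i = L \cdot A[m\ell^i] \cdot F$; then $G_i$ is affine, hence linear, and $f_i \colon [X/G_i] \to [X/G]$ is representable and proper with fiber $G/G_i = A/A[m\ell^i]$. From the proof of Theorem \ref{t.finite} and Remark \ref{r.finite}, for $i \geq 2d$ the pullback $\alpha_i = f_i^* \colon H^*([X/G], K) \to H^*([X/G_i], f_i^*K)$ is injective, with $\Img \alpha_i = \Img\bigl(p_{i,i+2d}^* \colon H^*([X/G_{i+2d}], f_{i+2d}^*K) \to H^*([X/G_i], f_i^*K)\bigr)$, where $p_{ij} \colon [X/G_i] \to [X/G_j]$ is induced by the inclusion $G_i \subset G_j$. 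By hypothesis, Theorem \ref{p.str}(b) holds for each $[X/G_i]$.

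The crucial auxiliary claim is that the induced map $\beta_i \colon R^*([X/G], K) \to R^*([X/G_i], f_i^*K)$ is injective for $i$ large enough. Every $\ell$-elementary point of $[X/G]$ is isomorphic in $\cC_{[X/G]}$ to one whose source is $[\bar s/A]$ for $\bar s$ a point (a morphism in $N_{[X/G]}$ given by the inclusion of the closed point), and such a point is described by a geometric point $\bar x$ of $X$ together with an injective homomorphism $\rho \colon A \hookrightarrow G$ by Proposition \ref{p.7.16}(c). By Corollary \ref{c.Serre}, there are only finitely many $G$-conjugacy classes of elementary abelian $\ell$-subgroups of $G$, and each such subgroup lies in $G_i$ for $i$ large enough: its image in $E = G/L$ is a finite $\ell$-subgroup, hence contained in $A[\ell^r] \cdot F \subset E_i$ for $i$ sufficiently large. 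Picking $g \in G(k)$ with $g^{-1}\rho(A)g \subset G_i$, the lift $(\bar x g, c_g \rho)$ defines an $\ell$-elementary point of $[X/G_i]$ whose image under $f_i$ is isomorphic to the original via the 2-isomorphism given by $g$. Thus the functor $\cC_{[X/G_i]} \to \cC_{[X/G]}$ is essentially surjective on isomorphism classes, which forces the injectivity of $\beta_i$.

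With the commutative square of $a$-maps relating $\alpha_i$ and $\beta_i$ in hand, Theorem \ref{p.str}(b) for $[X/G]$ follows by a diagram chase. For the nilpotency of $\Ker a_{[X/G], K}$, if $M$ is a nilpotency bound for $\Ker a_{[X/G_i], f_i^*K}$, then for $\xi_1, \ldots, \xi_M \in \Ker a_{[X/G], K}$ the product $\xi_1 \cdots \xi_M$ maps under $\alpha_i$ into $(\Ker a_{[X/G_i], f_i^*K})^M = 0$, hence vanishes by injectivity of $\alpha_i$. For the uniform $F$-isomorphism assertion when $K$ is commutative, given $b \in R^*([X/G], K)$, apply the assumption at level $j = i+2d$ to write $\beta_j(b)^{\ell^n} = a_{[X/G_j]}(\delta)$ for some uniform $n$; pulling back via $p_{ij}^*$ and using naturality of $a$ gives $a_{[X/G_i]}(p_{ij}^*\delta) = \beta_i(b)^{\ell^n}$. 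Since $p_{ij}^*\delta \in \Img \alpha_i$, we may write $p_{ij}^*\delta = \alpha_i(\eta)$ with $\eta \in H^*([X/G], K)$, and then $\beta_i\bigl(a_{[X/G], K}(\eta) - b^{\ell^n}\bigr) = 0$; injectivity of $\beta_i$ yields $b^{\ell^n} = a_{[X/G], K}(\eta)$.

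The main obstacle is the injectivity of $\beta_i$, which reduces to the essential surjectivity of $\cC_{[X/G_i]} \to \cC_{[X/G]}$ on isomorphism classes and hinges on the interplay between Proposition \ref{p.7.16}(c), the reduction of the source of an $\ell$-elementary point to its closed point via morphisms of $N_{[X/G]}$, and the Serre-type finiteness of Corollary \ref{c.Serre}; once granted, the remainder is a formal diagram chase combining injectivity of $\alpha_i$ with the characterization of $\Img \alpha_i$ coming from the Leray dévissage.
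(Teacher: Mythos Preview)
Your argument is correct and follows the same d\'evissage via the affine subgroups $G_i$ as the paper's proof, using Remark~\ref{r.finite} to identify $H^*([X/G],K)$ with the image of $H^*([X/G_{i+2d}])\to H^*([X/G_i])$ and then chasing the square of $a$-maps. The only difference is bookkeeping: the paper works with the equivariant target $R^*_G(X,K)$ (equal to your $R^*([X/G],K)$ by Proposition~\ref{p.RR}) and notes that every elementary abelian $\ell$-subgroup of $G$ already lies in $G_1$, so $R^*_G(X,K)\to R^*_{G_i}(X,K)$ is an isomorphism for $i\ge 1$, whereas you establish only the injectivity of $\beta_i$ via essential surjectivity of $\cC_{[X/G_i]}\to\cC_{[X/G]}$ --- and injectivity is all the diagram chase requires.
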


\begin{proof}
Consider the system of subgroups $G_i=L\cdot A[m\ell^i]\cdot F$ of $G=L\cdot
A\cdot F$ as in the proof of Theorem \ref{t.finite} (with $\Lambda=\F_\ell$
and $n=\ell$), where $m$ is the order of $F$. Note that every elementary
abelian $\ell$-subgroup of $A\cdot F$ is contained in $A[m\ell]\cdot F$. As
a consequence, every elementary abelian $\ell$-subgroup of $G$ is contained
in $G_1$, so that the restriction map $R^*_G(X,K)\to R^*_{G_i}(X,K)$ is an
isomorphism for $i\ge 1$. Consider the commutative diagram
\[\xymatrix{H^*([X/G],K)\ar[r]\ar[d]_{a_G(X,K)} & H^*([X/G_{4d}],K)\ar[d]^{a_{G_{4d}}(X,K)}\ar[r]^\alpha & H^*([X/G_{2d}],K)\ar[d]^{a_{G_{2d}}(X,K)}\\
R^*_G(X,K)\ar[r]^\sim & R^*_{G_{4d}}(X,K)\ar[r]^\sim & R^*_{G_{2d}}(X,K),}
\]
where $d=\dim A$. By Remark \ref{r.finite}, $H^*([X/G],K)$ is the image of
$\alpha$. Thus it suffices to show that $a_{G_{2d}}(X,K)$ has nilpotent
kernel and, if $K$ is commutative, $a_{G_{4d}}(X,K)$ is a uniform
$F$-surjection.
\end{proof}

\begin{prop}\label{p.quotstr}
Theorem \ref{p.str} (b) holds for global quotient stacks of the form
$[X/G]$, where $G$ is either a linear algebraic group, or an abelian
variety.
\end{prop}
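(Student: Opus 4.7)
The plan is to combine the induction formula, the K\"unneth formula of Proposition \ref{p.Kunneth}, and a Weyl-group descent with Corollary \ref{c.quotl} applied to a finite elementary abelian $\ell$-subgroup of $G$. First, I would observe that Corollary \ref{p.wedge} reduces the linear case to $G = \GL_n$: any embedding $G \hookrightarrow \GL_n$ produces an equivalence $[X/G] \simto [(X \wedge^G \GL_n)/\GL_n]$ compatible with the coefficient $K$, with the maps $a_\bullet$, and with the categories of $\ell$-elementary points, while $X \wedge^G \GL_n$ remains a separated algebraic space of finite presentation over $k$. I may therefore assume $G$ is either $\GL_n$ or an abelian variety. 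In both cases I fix a maximal elementary abelian $\ell$-subgroup $H \subset G$ (namely $H = T[\ell]$ for a maximal torus $T \subset \GL_n$, or $H = G[\ell]$ when $G$ is abelian), and write $h \colon [X/H] \to [X/G]$. Since $H$ is a finite discrete group, $[X/H]$ is a Deligne-Mumford stack of finite presentation with finite inertia whose inertia groups are subgroups of $H$, hence elementary abelian $\ell$-groups; Corollary \ref{c.quotl} then supplies the uniform $F$-isomorphism $a_H \colon H^*([X/H], h^*K) \to R^*([X/H], h^*K)$ with explicit bounds controlled by $\cd_\ell$ of the coarse moduli space of $[X/H]$.

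Next I run the K\"unneth argument for the 2-cartesian square (2-cartesian by Proposition \ref{p.cart})
\[\xymatrix{[X/H] \ar[r]^h \ar[d] & [X/G] \ar[d] \\ BH \ar[r] & BG.}\]
Under the identification $BH \simeq [(G/H)/G]$ of Corollary \ref{p.quot}, the hypothesis of Proposition \ref{p.Kunneth} reduces to the surjectivity of $H^*(BH) \to H^*(G/H)$, which is Proposition \ref{p.surj} for $\GL_n$ and Proposition \ref{p.absurj} for abelian varieties. Proposition \ref{p.Kunneth} then yields the K\"unneth isomorphism $H^*([X/G], K) \otimes_{H^*(BG)} H^*(BH) \simto H^*([X/H], h^*K)$, exhibiting $h^*$ as injective (the unit of $H^*(BH)$ lies in a free $H^*(BG)$-basis) and $H^*([X/H], h^*K)$ as finite free over $H^*([X/G], K)$. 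Every elementary abelian $\ell$-subgroup of $G$ is conjugate to a subgroup of $H$---for $\GL_n$ because $\ell$-torsion elements are simultaneously diagonalizable over $k$, for abelian varieties tautologically---so the induced functor $\cC_{[X/H]} \to \cC_{[X/G]}$ is essentially surjective, whence $h^{*R} \colon R^*([X/G], K) \to R^*([X/H], h^*K)$ is injective.

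The remaining step is a Weyl-group descent. Letting $W = N_G(T)/T$ (trivial in the abelian case), $W$ acts on $H$, on $H^*(BH)$, on $\cC_{[X/H]}$, and on $R^*([X/H], h^*K)$, with the images of $h^*$ and $h^{*R}$ landing in the respective $W$-invariant subobjects. The K\"unneth formula combined with the reduced-ring identification $H^*(BG)_\red \simto (H^*(BH)^W)_\red$ (immediate for $\GL_n$ from Theorem \ref{l.finite} and Remark \ref{r.cohBA}, and trivial in the abelian case where $H^*(BG) \to H^*(BH)$ is itself an $F$-isomorphism because both reduced rings are the same polynomial algebra on $\check{H}$) identifies $h^*$, modulo nilpotents, with the inclusion $H^*([X/H], h^*K)^W \hookrightarrow H^*([X/H], h^*K)$; a parallel analysis of morphisms in $\cC_{[X/G]}$ identifies $h^{*R}$ analogously on $R^*$. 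The nilpotent-kernel half of Theorem \ref{p.str} (b) (including the pseudo-ring case) then follows at once from the corresponding property of $a_H$ via injectivity of $h^*$. For $F$-surjectivity when $K$ is commutative, I use the characteristic-$\ell$ Frobenius averaging: given $b \in R^*([X/G], K)$ and $\beta$ with $a_H(\beta) = (h^{*R}b)^{\ell^n}$, the right-hand side is $W$-fixed, and the identity $(\beta - w\beta)^{\ell^m} = \beta^{\ell^m} - (w\beta)^{\ell^m}$ combined with $F$-injectivity of $a_H$ forces some $\beta^{\ell^m}$ to be $W$-fixed, hence congruent modulo nilpotents to an element pulled back from $H^*([X/G], K)$; a final Frobenius iterate produces $b^{\ell^N} \in \Img a_G$ for a uniform $N$.

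The hard part will be the precise Weyl-group identification of $R^*([X/G], K)$ with $R^*([X/H], h^*K)^W$ modulo nilpotents in the $\GL_n$ case: unlike the analogous statement for $H^*$, which is handed to me by Proposition \ref{p.Kunneth} together with the classical equality $H^*(B\GL_n) = H^*(BT)^{S_n}$, this identification on $R^*$ requires carefully tracking how a morphism of $\cC_{[X/G]}$ between two $\ell$-elementary points factoring through $[X/H]$ decomposes into a morphism of $\cC_{[X/H]}$ followed by conjugation by an element of $G$, the conjugation being well-defined only modulo the inner action of $H$ (and of $T$) and producing exactly the Weyl-group action on the projective limit defining $R^*([X/H], h^*K)$. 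Once this descent is in place, the Frobenius averaging in characteristic $\ell$ is entirely routine.
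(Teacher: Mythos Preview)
Your reduction to $G=\GL_n$ (or $G$ an abelian variety), your choice of $H=T[\ell]$ (or $G[\ell]$), and your application of Corollary~\ref{c.quotl} to $[X/H]$ are all correct, and in fact your auxiliary stack $[X/H]$ coincides with the paper's $[X\times F/G]$ once one unwinds the equivalence $[F/G]\simeq BH$ from Corollary~\ref{p.quot}. Your argument for the nilpotent-kernel assertion is also fine: $h^*$ is injective (as you observe, $1$ lies in a free $H^*(BG)$-basis of $H^*(BH)$), and essential surjectivity of $\cC_{[X/H]}\to\cC_{[X/G]}$ makes $h^{*R}$ injective, so $\Ker a_G$ injects into $\Ker a_H$.

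The genuine gap is in the $F$-surjectivity step, specifically the Weyl descent on $H^*$. You claim that the K\"unneth isomorphism $H^*([X/H],h^*K)\simeq H^*([X/G],K)\otimes_{H^*(BG)}H^*(BH)$ together with $H^*(BG)_{\red}\simeq (H^*(BH)^W)_{\red}$ identifies $\Img h^*$ with $H^*([X/H],h^*K)^W$ modulo nilpotents. But taking $W$-invariants does not commute with $-\otimes_{H^*(BG)}H^*(BH)$ when $\ell$ divides $\lvert W\rvert$ (which for $\GL_n$ happens as soon as $n\ge\ell$), so from $H^*(BH)^W$ being $F$-close to $H^*(BG)$ you cannot conclude that $(M\otimes_{H^*(BG)}H^*(BH))^W$ is $F$-close to $M\cdot 1$ for an arbitrary $H^*(BG)$-module $M$. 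Your Frobenius-averaging argument produces a $W$-fixed $\beta^{\ell^m}\in H^*([X/H],h^*K)$, but without this descent you cannot push it back into $\Img h^*$. The parallel identification on $R^*$ that you flag as ``the hard part'' has the same issue (and in addition requires justifying that $T=\Cent_G(H)$ acts trivially on $R^*([X/H],h^*K)$, which is plausible but not immediate); however, as your own argument shows, you do not actually need the $R^*$ version---only injectivity of $h^{*R}$, which you have.

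The paper avoids Weyl descent entirely by replacing it with faithfully flat descent. It considers the diagram~\eqref{e.mtdiag} with columns $X$, $X\times F$, $X\times F\times F$ and proves that both rows are \emph{exact} as equalizer sequences. For the upper row this is just the standard fpqc descent sequence $M\to M\otimes_R S\rightrightarrows M\otimes_R S\otimes_R S$ with $R=H^*(BG)$, $S=H^*(BH)$, exact because $S$ is free (hence faithfully flat) over $R$; for the lower row the paper gives a direct term-by-term argument on the defining limit for $R^*_G$, using only that $F^{A'}\neq\emptyset$ for every elementary abelian $\ell$-subgroup $A'$. Exactness of both rows, together with the $F$-isomorphism property of the middle and right columns, then forces the same for the left column by an elementary diagram chase (spelled out in Corollary~\ref{c.bound}). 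This sidesteps the modular representation-theoretic obstruction entirely.
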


\begin{proof}
Although by Proposition \ref{p.quotgen} it would suffice to treat the case
where $G$ is linear, we prefer to treat both cases simultaneously, in order
to later get better bounds for the power of $F$ annihilating the kernel and
the cokernel of the map $a_{\mathcal{X},K}$ (Corollary \ref{c.bound}). We
follow closely the arguments of Quillen for the proof of \cite[Theorem
6.2]{Quillen1}. If $G$ is linear, choose an embedding of $G$ into a linear
group $L = \GL_n$ over $k$ \cite[Corollaire II.2.3.4]{DG}, and a maximal
torus $T$ of $L$. If $G$ is an abelian variety, let $L=T=G$. In both cases,
denote by $S$ the kernel of $\ell \colon T \rightarrow T$, which is an
elementary abelian $\ell$-group of order $n$. We let $L$ act on $F =
S\backslash L$ by right multiplication. If $g \in L(k)$, and if $\{S\}$
denotes the rational point of $F$ defined by the coset $S$, the inertia
group of $L$ at $\{S\}g$ is $g^{-1}Sg$. Let us show that the diagonal action
of $G$ on $X \times F$ (resp.\ $X \times F \times F$) satisfies assumptions
(a) and (b) of Corollary \ref{c.quotl}. It suffices to show this for
$X\times F$. Consider the commutative square
\[\xymatrix{L\times L \ar[d]\ar[r]^\sim &L\times L\ar[d]\\
F\times L \ar[r] &F\times F}
\]
where the horizontal morphisms are the morphisms $\gamma \colon (x,g)\mapsto
(x,xg)$. As the vertical morphisms are finite and surjective, so is the
lower horizontal morphism. Moreover, the latter is unramified. Hence the
morphism $\gamma\colon F\times G\to F\times F$ is finite and unramified. The
same holds for the morphism $\gamma\colon (X\times F)\times G \to (X\times
F)\times (X\times F)$, $(x,y,g)\mapsto (x,y, xg,yg)$, because it is the
composite $X\times F\times G\to X\times X\times F\times G \to X\times
F\times X\times F$, where the first morphism $(x,y,g)\mapsto (x,xg,y,g)$ is
a closed immersion by the assumption that $X$ is separated and the second
morphism $(x,x',y,g)\mapsto (x, y,x',yg)$ is a base change of $F\times G\to
F\times F$. So (a) is satisfied for $X\times F$. Moreover, the inertia
groups of $G$ on $X \times F$ are conjugate in $L$ to subgroups of $S$, so
(b) is satisfied for $X \times F$.

As in \cite[6.2]{Quillen1}, consider the following commutative diagram
\begin{equation}\label{e.mtdiag}
\xymatrix{H^*([X/G],K) \ar[r] \ar[d]_{a_G(X,K)} & H^*([X \times F/G],[\pr_1/\id_G]^* K) \ar@<.5ex>[r]
\ar@<-.5ex>[r]
   \ar[d]^{a_G(X \times F, [\pr_1/\id_G]^* K)} & H^*([X \times F \times F/G],[\pr_1/\id_G]^* K)
\ar[d]^{a_G(X \times F \times F, [\pr_1/\id_G]^* K)} \\
R^*_G(X,K) \ar[r] & R^*_G(X \times F, [\pr_1/\id_G]^* K) \ar@<.5ex>[r] \ar@<-.5ex>[r] & R^*_G(X \times F \times F, [\pr_1/\id_G]^*K),}
\end{equation}
in which the double horizontal arrows are defined by $\pr_{12}$ and
$\pr_{13}$. By Corollary \ref{c.quotl}, $a_G(X \times F, [\pr_1/\id_G]^*K)$
and $a_G(X \times F \times F, [\pr_1/\id_G]^*K)$ have nilpotent kernels and,
if $K$ is commutative, are uniform $F$-surjections. To show that $a_G(X,K)$
has the same properties it thus suffices to show that the rows of
\eqref{e.mtdiag} are exact.

First consider the lower row. The component of degree $q$ is isomorphic by
definition \eqref{e.RE} to the projective limit over $(A,A',g)\in
\cA_G(k)^\natural$ of
\begin{multline}\label{e.mtseq}
  \Gamma(X^{A'},R^q\pi_* r^* K) \to \Gamma(X^{A'}\times F^{A'},R^q\pi_* r^* [\pr_1/\id_G]^* K) \\
  \rightrightarrows \Gamma(X^{A'}\times F^{A'}\times F^{A'},R^q\pi_* r^* [\pr_1/\id_G]^* K),
\end{multline}
where we have put $r \coloneqq  [1/c_g]$. In order to identify the second
and third terms of \eqref{e.mtseq}, consider the following commutative
diagram, where the middle and right squares are cartesian:
\[
\xymatrix{[X\times F/G] \ar[d]_{[\mathrm{pr}_1/\id_G]} & BA \times X^{A'} \times F^{A'} \ar[d]^{\id \times \mathrm{pr_1}} \ar[l]_-r \ar[r]^-{\pi} & X^{A'} \times F^{A'} \ar[d]^{\mathrm{pr}_1} \ar[r]^-{\mathrm{pr}_2} & F^{A'} \ar[d] \\
[X/G] & BA \times X^{A'} \ar[l]_r \ar[r]^{\pi} & X^{A'} \ar[r] & \Spec k}.
\]
We have (by base change for the middle square)
\[
\mathrm{pr}_1^*R^q\pi_*(r^*K)\simto R^q\pi_*(\id \times \mathrm{pr}_1)^*r^*K\simeq R^q\pi_*r^*[\pr_1/\id_G]^*K .
\]
By the K\"unneth formula for the right square, we have
\[
\Gamma(X^{A'} \times F^{A'}, \mathrm{pr}_1^*R^q\pi_*r^*K) \simto \Gamma(X^{A'},R^q\pi_*r^*K) \otimes \Gamma(F^{A'},\F_{\ell}).
\]
Therefore we get a canonical isomorphism
\[
\Gamma(X^{A'} \times F^{A'},R^q\pi_*r^*[\pr_1/\id_G]^*K) \simto \Gamma(X^{A'},R^q\pi_*r^*K) \otimes \Gamma(F^{A'},\F_{\ell}).
\]
We have a similar identification for $X^{A'} \times F^{A'} \times F^{A'}$,
and these identifications produce an isomorphism between \eqref{e.mtseq} and
the tensor product of $\Gamma(X^{A'},R^q\pi_*r^*K)$ with
\begin{equation}\label{e.mtseq2}
  \Gamma(\Spec k,\F_\ell)\to \Gamma(F^{A'},\F_\ell) \rightrightarrows \Gamma(F^{A'}\times F^{A'},\F_\ell).
\end{equation}
As $A'$ is an elementary abelian $\ell$-subgroup of $G$, $A'$ is conjugate
in $L$ to a subgroup of~$S$, hence $F^{A'} \ne \emptyset$. It follows that
\eqref{e.mtseq2}, \eqref{e.mtseq} and hence the lower row of
\eqref{e.mtdiag} are exact.

In order to prove the exactness of the upper row of \eqref{e.mtdiag},
consider the square of Artin stacks with representable morphisms,
\begin{equation}\label{e.mtsq}
\xymatrix{[(Y \times F)/G] \ar[r] \ar[d] & [Y/G] \ar[d] \\
[F/L] \ar[r] & BL,}
\end{equation}
where $Y$ is an algebraic space of finite presentation over $k$ endowed with
an action of $G$, the horizontal morphisms are induced by projection from
$F$ and the vertical morphisms are induced by the embedding $G\to L$. The
square is 2-cartesian by Proposition \ref{p.cart} and $BS\simeq[(S\backslash
L)/L]=[F/L]$. By Propositions \ref{p.Kunneth}, \ref{p.surj} and
\ref{p.absurj}, $H^*([F/L])$ is a finitely generated free $H^*(BL)$-module
and the homomorphism
\[
H^*([Y/G],K) \otimes_{H^*(BL)} H^*([F/L]) \rightarrow H^*([Y \times F/G],[\pr_1/\id_G]^* K)
\]
defined by \eqref{e.mtsq} is an isomorphism. Applying the above to $Y=X$ and
$Y=X\times F$, we obtain an identification of the upper row of
\eqref{e.mtdiag} with the sequence
\begin{multline*}
H^*([Y/G],K)\to H^*([Y/G],K) \otimes_{H^*(BL)} H^*([F/L])\\
\rightrightarrows H^*([Y/G],K) \otimes_{H^*(BL)} H^*([F/L])\otimes_{H^*(BL)} H^*([F/L]),
\end{multline*}
which is exact by the usual argument of faithfully flat descent.
\end{proof}

\begin{cor}\label{c.bound}
Let $\cX=[X/G]$ be a global quotient stack, and assume that either (a) $G$
is embedded in $L=\GL_n$, $n \ge 1$, or (b) $G$ is an abelian variety. Let
$K\in D^+_c([X/G],\F_\ell)$ be a pseudo-ring. Let $d = \dim X$. In case (a),
let $e=\dim L/G$, $f=2\dim L-\dim G$. In case (b), let $e=0$, $f=\dim G$.
Then
\begin{enumerate}[(i)]
\item $(\Ker a_G(X,K))^m=0$, where $m = 2d + 2e+1$,

\item for $K$ commutative and $y \in R^*_G(X,K)$, we have $y^{\ell^N} \in
    \Img a_G(X,K)$ for $N \ge \max\{2d +2e-1,0\} + \log_{\ell}(2d+2f+1)$.
\end{enumerate}
\end{cor}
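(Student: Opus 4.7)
The plan is to revisit the proof of Proposition \ref{p.quotstr} and keep quantitative track of the bounds provided by Corollary \ref{c.DMl}. Recall that in that proof one forms $F = S\backslash L$ and uses the diagram \eqref{e.mtdiag}, whose two rows are exact and whose middle and right vertical arrows are $a_G(X\times F,[\pr_1/\id_G]^*K)$ and $a_G(X\times F\times F,[\pr_1/\id_G]^*K)$. To these two vertical arrows Corollary \ref{c.quotl} applies, and hence so does Corollary \ref{c.DMl}: the relevant bounds are governed by $\cd_\ell(Y')$ and $\cd_\ell(Y'')$, where $Y'$ and $Y''$ denote the coarse moduli spaces of $[X\times F/G]$ and $[X\times F\times F/G]$ respectively.

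Next I will compute these cohomological dimensions. Since the quotient stacks $[X\times F/G]$ and $[X\times F\times F/G]$ have finite inertia (Corollary \ref{c.quotl}), the dimensions of their coarse moduli spaces equal the dimensions of the total spaces minus $\dim G$. In case (a), $S \subset T$ is finite so $\dim F = \dim L$, giving $\dim Y' = d + \dim L - \dim G = d+e$ and $\dim Y'' = d + 2\dim L - \dim G = d+f$. In case (b), $L = T = G$ and $\dim F = \dim G$, giving $\dim Y' = d = d+e$ and $\dim Y'' = d + \dim G = d+f$. Hence in both cases
\[
\cd_\ell(Y') \le 2d+2e, \qquad \cd_\ell(Y'') \le 2d+2f.
\]

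For part (i), the top row of \eqref{e.mtdiag} is exact, so the map $\pr_1^*\colon H^*([X/G],K)\to H^*([X\times F/G],K)$ is injective and carries $\Ker a_G(X,K)$ into $\Ker a_G(X\times F,[\pr_1/\id_G]^*K)$. By Corollary \ref{c.DMl} the latter kernel has $(2d+2e+1)$-st power zero, so the same holds for the former. For part (ii), let $y \in R^*_G(X,K)$ be homogeneous. If $\ell$ is odd and $\deg y$ is odd, graded commutativity forces $y^2=0$ and there is nothing to prove. Otherwise, by Corollary \ref{c.DMl} applied to $[X\times F/G]$ with $n_1 \coloneqq \max\{2d+2e-1,0\}$, the image $y_F \coloneqq \pr_1^* y$ satisfies $y_F^{\ell^{n_1}} = a_G(X\times F,[\pr_1/\id_G]^*K)(z)$ for some $z$. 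Since $p_{12}^*y_F = p_{13}^*y_F$, the element $p_{12}^*z - p_{13}^*z$ lies in $\Ker a_G(X\times F\times F,[\pr_1/\id_G]^*K)$, whose $(2d+2f+1)$-st power vanishes by part (i) applied to the second stack. Because $p_{12}^*z$ and $p_{13}^*z$ commute in the graded commutative algebra $H^*([X\times F\times F/G], [\pr_1/\id_G]^*K)$ (we are in the even-degree or $\ell=2$ case), the Frobenius is additive on them, so choosing $N_0$ with $\ell^{N_0}\ge 2d+2f+1$ gives $p_{12}^*z^{\ell^{N_0}} = p_{13}^*z^{\ell^{N_0}}$. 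Exactness of the top row of \eqref{e.mtdiag} then yields $w \in H^*([X/G],K)$ with $\pr_1^* w = z^{\ell^{N_0}}$. Applying $a_G(X\times F,[\pr_1/\id_G]^*K)$ and using injectivity of $\pr_1^*\colon R^*_G(X,K)\to R^*_G(X\times F,[\pr_1/\id_G]^*K)$ (exactness of the bottom row of \eqref{e.mtdiag}) gives $a_G(X,K)(w) = y^{\ell^{n_1+N_0}}$, proving (ii).

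The only delicate point is the Frobenius step in the argument for (ii): one must handle the graded commutative structure on $H^*$ carefully, isolating the parity obstruction (odd-degree elements of odd characteristic square to zero) before invoking the identity $(a-b)^{\ell^{N_0}} = a^{\ell^{N_0}} - b^{\ell^{N_0}}$. Everything else is a bookkeeping exercise on dimensions and on the commutativity of \eqref{e.mtdiag}.
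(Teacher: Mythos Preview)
Your proof is correct and follows essentially the same approach as the paper: both revisit the diagram \eqref{e.mtdiag} from the proof of Proposition \ref{p.quotstr}, compute $\cd_\ell$ of the coarse moduli spaces of $[X\times F/G]$ and $[X\times F\times F/G]$ as $2d+2e$ and $2d+2f$, deduce (i) from injectivity of the top-left horizontal map, and deduce (ii) by lifting $y$ to $H^*([X\times F/G])$, applying the kernel bound on $[X\times F\times F/G]$ to the difference $d_0(z)-d_1(z)$, and descending via exactness. The paper phrases the Frobenius step as ``$d_0$ and $d_1$ are compatible with raising to the $\ell$-th power, so is $u_1=d_0-d_1$''; your explicit separation of the odd-degree/odd-$\ell$ case (where $y^2=0$ trivially) is a harmless clarification of the same point.
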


\begin{proof}
As in the proof of Proposition \ref{p.quotstr}, let $F = S\backslash L$. We
have $\cd_{\ell}((X \times F)/G) \le 2\dim ((X\times F)/G)=2d + 2e$. As all
inertia groups of $G$ acting on $X \times F$ are elementary abelian
$\ell$-groups, by Corollary \ref{c.quotl} we have $(\Ker a_G(X\times F,
\pr_1^* K))^m=0$, hence (i) by \eqref{e.mtdiag}. For (ii), set $a_G(X,K) =
a_0$, $a_G(X \times F,\pr_1^*K) = a_1$, $a_G(X \times F \times F,\pr_1^*K) =
a_2$. Denote by $u_0 \colon H^*([X/G],K) \rightarrow H^*([X \times
F/G],[\pr_1/\id_G]^* K)$ (resp.\ $v_0 \colon R^*_G(X,K) \rightarrow R^*_G(X
\times F, [\pr_1/\id_G]^* K)$) the left horizontal map in \eqref{e.mtdiag},
and $u_1 = d_0 - d_1 \colon H^*([X \times F/G],[\pr_1/\id_G]^* K)
\rightarrow H^*([X \times F \times F/G], [\pr_1/\id_G]^* K)$ (resp.\ $v_1 =
d_0 - d_1 \colon R^*_G(X \times F, [\pr_1/\id_G]^* K) \rightarrow R^*_G(X
\times F \times F, \pr_1^* K)$), the map deduced from the double map
$(d_0,d_1)$ in \eqref{e.mtdiag}. As $d_0$ and $d_1$ are compatible with
raising to the $\ell$-th power, so is $u_1$ (resp.\ $v_1$). Let $N_1 =
\max\{2d +2e-1,0\}$. By Corollary \ref{c.quotl} we have $v_0(y)^{\ell^{N_1}}
= a_1(x_1)$ for some $x_1 \in H^*([X \times F/G], [\pr_1/\id_G]^* K)$. By
\eqref{e.mtdiag} we have $a_2u_1(x_1) = v_1a_1(x_1)=0$. Let $h$ be the least
integer $\ge \log_{\ell}(2d  +2f+1)$. As above we have $\cd_{\ell}((X \times
F \times F)/G) \le 2d +2f$, so by Corollary  \ref{c.quotl} we get
$u_1(x_1)^{\ell^{h}} = 0$, hence by \eqref{e.mtdiag} $x_1^{\ell^{h}} =
u_0(x_0)$ for some $x_0 \in H^*([X/G],K)$, and finally $y^{\ell^{N_1 +h}} =
a_0(x_0)$.
\end{proof}

\begin{remark}\label{r.bound}\leavevmode
\begin{enumerate}
\item If in case (a) of Corollary \ref{c.bound}, we assume moreover that
    $X$ is affine, then $\cd_\ell((X\times F)/G)\le d+e$ and
    $\cd_\ell((X\times F\times F)/G)\le d+f$ by the affine Lefschetz
    theorem \cite[XIV Corollaire 3.2]{SGA4}. Thus in this case (i) holds
    for $m=d+e+1$ and (ii) holds for $N \ge \max\{d +e-1,0\} +
    \log_{\ell}(d+f+1)$.
\item Let $f\colon \cY\to \cX$ be a finite \'etale morphism of Artin
    stacks of constant degree $d$. As the composite
    $H^*(\cX,K)\xrightarrow{f^*}H^*(\cY,f^*K)\xrightarrow{\tr_{f,K}}
    H^*(\cX,K)$ is multiplication by $d$, $f^*$ is injective if $d$ is
    prime to $\ell$. Thus, in this case, if $\Ker a_{\cY,f^*K}$ is a
    nilpotent ideal, then $\Ker a_{\cX,K}$ is a nilpotent ideal with the
    same bound for the exponent. This applies in particular to the
    morphism $[X/H]\to [X/G]$, where $H<G$ is an open subgroup of index
    prime to $\ell$.
\end{enumerate}
\end{remark}

\begin{prop}\label{s.c2cont}
Theorem \ref{p.str} (b) holds if $\cX$ is a Deligne-Mumford stack of finite
inertia. More precisely, if $c=\cd_\ell(Y)$, where $Y$ is the coarse moduli
space of $\cX$, and if $r$ (resp.\ $s$) is the maximal number of elements of
the inertia groups (resp.\ $\ell$-Sylow subgroups of the inertia groups) of
$\cX$, then $(\Ker a(\cX,K))^{(c+1)((s-1)^2+1)}=0$, and for $K$ commutative
and $b\in R^*(\cX,K)$, we have $b^{\ell^N}\in \Img a(\cX,K)$ for $N\ge
\max\{c-1,0\}+\max\{r^2-2r,0\}+\lceil \log_\ell(2(r-1)^2+1)\rceil +\lceil
\log_\ell((s-1)^2+1)\rceil$. Here $\lceil x \rceil$ for a real number $x$
denotes the least integer $\ge x$.
\end{prop}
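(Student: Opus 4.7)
The plan is to follow the coarse moduli space reduction of Corollary \ref{c.DMl} and then apply the quotient-stack bounds of Corollary \ref{c.bound} at each stacky fibre $BG_y$, using a Sylow transfer via Remark \ref{r.bound}(b) to sharpen the kernel exponent from $r$ to $s$. Let $f\colon\cX\to Y$ be the coarse moduli space morphism (which exists by Keel--Mori since $\cX$ has finite inertia; it is proper and quasi-finite). The sheaves $R^qf_*K$ are constructible, and by proper base change \eqref{e.10.3.1} together with Proposition \ref{p.sp}, $H^0(Y,R^qf_*K)\simeq \varprojlim_{t\in P_Y}H^q(\cX_t,K)$. Proposition \ref{p.10.4} then gives, up to canonical isomorphisms (via (b) and (c)), a factorization
\[
a_{\cX,K}\colon H^*(\cX,K)\xrightarrow{e_{f,K}} \varprojlim_{t\in P_Y}H^*(\cX_t,K)\xrightarrow{\varprojlim a_{\cX_t,K}}\varprojlim_{t\in P_Y}R^*(\cX_t,K)\xleftarrow{\sim}R^*(\cX,K),
\]
and Lemma \ref{l.10.2} applied to $e_{f,K}$ yields $(\Ker e_{f,K})^{c+1}=0$ and (for commutative $K$) $b^{\ell^{\max\{c-1,0\}}}\in\Img e_{f,K}$ for every $b\in E_2^{0,*}=H^0(Y,R^*f_*K)$.

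Next, the schematic universal homeomorphism $(\cX_t)_{\red}\simeq BG_y$, with $G_y$ the inertia at a geometric point $y$ above $t$ and $|G_y|\le r$, identifies $a_{\cX_t,K}$ with $a_{BG_y,K|_{BG_y}}$ (Remark \ref{s.homeo}(a), Remark \ref{r.Cfunc}). Embedding $G_y\hookrightarrow L=\GL_{r-1,k}$ faithfully (Cayley into $\SG_r$ composed with the standard $(r-1)$-dimensional representation of $\SG_r$, faithful in every characteristic for $r\ge 2$; the case $r=1$ is trivial) and applying Corollary \ref{c.bound}(a) with Remark \ref{r.bound}(a) (valid since $\Spec k$ is affine, so $d=0$, $e=(r-1)^2$, $f=2(r-1)^2$) yields
\[
(\Ker a_{BG_y,K})^{(r-1)^2+1}=0,\quad b^{\ell^{N_1}}\in\Img a_{BG_y,K}\ \text{for}\ N_1=\max\{r^2-2r,0\}+\lceil\log_\ell(2(r-1)^2+1)\rceil.
\]
To replace the exponent $(r-1)^2+1$ by the sharper $(s-1)^2+1$ in the kernel bound, let $P_y\subset G_y$ be an $\ell$-Sylow subgroup; $BP_y\to BG_y$ is finite étale of degree $[G_y:P_y]$ prime to $\ell$, so Remark \ref{r.bound}(b) gives the injection $H^*(BG_y,K)\hookrightarrow H^*(BP_y,K)$, whence $\Ker a_{BG_y,K}\hookrightarrow\Ker a_{BP_y,K}$, and the same Corollary \ref{c.bound} applied to $P_y\hookrightarrow\GL_{s-1,k}$ gives nilpotence exponent $(s-1)^2+1$.

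Combining through the factorization: if $x\in\Ker a_{\cX,K}$ then $e_{f,K}(x^{(s-1)^2+1})$ lies in $\varprojlim_t\Ker a_{\cX_t,K}$, which vanishes by the uniform fibrewise bound, so $x^{(s-1)^2+1}\in\Ker e_{f,K}$ and $x^{(c+1)((s-1)^2+1)}=0$. For the cokernel, a class $y=(y_t)\in\varprojlim_t R^*(\cX_t,K)$ admits fibrewise lifts of $y_t^{\ell^{N_1}}$; the assembly of a compatible global family is obstructed by $R^1\varprojlim_t\Ker a_{\cX_t,K}$, but since this projective system of $\F_\ell$-algebras is uniformly nilpotent of exponent $(s-1)^2+1$, raising to the $\ell^{\lceil\log_\ell((s-1)^2+1)\rceil}$-th power kills the obstruction. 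A final application of Lemma \ref{l.10.2} pulls the resulting class in $H^0(Y,R^*f_*K)$ back to $H^*(\cX,K)$ at the cost of another $\ell^{\max\{c-1,0\}}$-th power, giving the asserted total exponent $\max\{c-1,0\}+\max\{r^2-2r,0\}+\lceil\log_\ell(2(r-1)^2+1)\rceil+\lceil\log_\ell((s-1)^2+1)\rceil$.

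The principal difficulty is the rigorous treatment of the globalization, namely controlling $R^1\varprojlim_t\Ker a_{\cX_t,K}$ after Frobenius. I plan to tackle it by exhibiting $\Ker a_{\cX_t,K}$ as the stalks of a constructible subsheaf of $R^*f_*K$ on $Y$, so that $\varprojlim$ and $R^1\varprojlim$ over $P_Y$ reduce to $H^0$ and $H^1$ on the finite-cohomological-dimension topos $Y_{\et}$, and exploiting the uniform nilpotence of exponent $(s-1)^2+1$ to force obstruction cocycles to become coboundaries after the $\ell^{\lceil\log_\ell((s-1)^2+1)\rceil}$-fold $F$-power.
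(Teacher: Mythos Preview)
Your overall architecture matches the paper's proof: factor $a_{\cX,K}$ through the edge map $e_{f,K}$ and the limit $\varprojlim_{t\in P_Y}a_{\cX_t,K}$ via Proposition \ref{p.10.4}, control $e_{f,K}$ by Lemma \ref{l.10.2}, and bound each fibre map $a_{\cX_t,K}\simeq a_{BG_y,K}$ using Corollary \ref{c.bound} in its affine form (Remark \ref{r.bound}(a)) for the embedding $G_y\hookrightarrow\GL_{r-1}$, together with the Sylow transfer of Remark \ref{r.bound}(b) to replace $r$ by $s$ in the kernel exponent. The kernel bound then follows exactly as you indicate.

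Where you depart from the paper is in the $F$-surjection step for $\varprojlim_t a_{\cX_t,K}$, which you treat as the principal difficulty and propose to attack via $R^1\varprojlim$ obstructions and a constructible-sheaf realization of $t\mapsto\Ker a_{\cX_t,K}$. This is unnecessary, and your proposed route has its own hazards (it is not obvious that these kernels assemble into a constructible sheaf, and $R^1\varprojlim$ over $P_Y$ is not literally $H^1(Y_{\et},-)$). The paper instead uses the elementary Lemma \ref{p.limit}, whose content is this: given $y=(y_t)\in\varprojlim_t S_t$ and fibrewise lifts $a_t\in R_t$ with $u_t(a_t)=y_t^{\ell^n}$, the incompatibilities $R_\alpha(a_{t'})-a_t$ lie in $\Ker u_t$, hence have $m$-th power zero with $m=(s-1)^2+1$; since Frobenius is additive over $\F_\ell$, raising to the $\ell^h$-th power with $\ell^h\ge m$ gives $R_\alpha(a_{t'}^{\ell^h})=a_t^{\ell^h}$, so $(a_t^{\ell^h})_t$ is already a compatible element of $\varprojlim_t R_t$ mapping to $y^{\ell^{n+h}}$. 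No derived limits, no sheaf theory: uniform nilpotence of the kernel plus additivity of the $\ell$-th power map does all the work directly at the level of projective systems of $\F_\ell$-pseudo-algebras.
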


\begin{proof}
Consider the coarse moduli space morphism $f\colon \cX\to Y$. For every
geometric point $t$ of $Y$, there exists a finite radicial extension $t' \to
t$ and a geometric point $y'$ of $\mathcal{X}$ above $t'$ such that
$(\mathcal{X}_{t'})_\mathrm{red} \simeq B\mathrm{Aut}_{\mathcal{X}}(y')$.
Note that for any field $E$, a finite group $G$ of order $m$ can be embedded
into $\GL_{m}(E)$, given for example by the regular representation $E[G]$ of
$G$. Moreover, if $m\neq 2$ or the characteristic of $E$ is not $2$, then
$G$ can be embedded into $\GL_{m-1}(E)$, because the subrepresentation of
$E[G]$ generated by $g-h$, where $g,h\in G$, is faithful. Thus, by Remark
\ref{r.bound}, the map $a_{\cX_t,K}$ in Proposition \ref{p.10.4} (c)
satisfies $(\Ker a_{\cX_t,K})^{(s-1)^2+1}=0$, and, for $K$ commutative,
$a_{\cX_t,K}$ is a uniform $F$-surjection for all geometric points $t\to Y$
with bound for the exponent given by $\max\{r^2-2r,0\}+\lceil
\log_\ell(2(r-1)^2+1) \rceil$, independent of $t$. Thus $(\Ker
\varprojlim_{t\in P_Y} a_{\cX_t,K})^{(s-1)^2+1}=0$, and Lemma \ref{p.limit}
below implies that $\varprojlim_{t\in P_Y} a_{\cX_t,K}$ is a uniform
$F$-surjection, with bound for the exponent given by
$\max\{r^2-2r,0\}+\lceil \log_\ell(2(r-1)^2+1) \rceil +\lceil
\log_\ell((s-1)^2+1) \rceil$. Hence, by Lemma \ref{l.10.2} and Proposition
\ref{p.10.4}, $a_{\cX,K}$ has the stated properties.
\end{proof}

\begin{lemma}\label{p.limit}
Let $\cC$ be a category, and let $u\colon R\to S$ be a homomorphism of
pseudo-rings in $\grvec^\cC$. If $u$ is a uniform $F$-injection (resp.\
uniform $F$-isomorphism) (Definition \ref{s.grvec}), then $\varprojlim_\cC
u$ is also a uniform $F$-injection (resp.\ uniform $F$-isomorphism). More
precisely, if $m\ge 0$ is an integer such that for every object $i$ of $\cC$
and every $a\in \Ker u_i$, $a^m=0$ (resp.\ and if $n\ge 0$ is an integer
such that for every object $i$ of $\cC$ and every $b\in S_i$, $b^{\ell^n}\in
\Img u_i$), then for every $x\in \Ker \varprojlim_\cC u$, $x^m=0$ (resp.\
for every $y\in \varprojlim_\cC S$ and every integer $N\ge n+\log_\ell(m)$,
$y^N\in \Img \varprojlim_\cC u$).
\end{lemma}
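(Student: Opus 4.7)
The plan is to treat the two cases separately. For the uniform $F$-injection part the argument is formal: $\varprojlim_{\cC} R$ embeds as a sub-pseudo-ring of $\prod_i R_i$ with operations computed componentwise, and the projection $\varprojlim_{\cC} R \to R_i$ intertwines $\varprojlim_{\cC} u$ with $u_i$. Any $x \in \Ker \varprojlim_{\cC} u$ therefore has each component $x_i \in \Ker u_i$, hence $x_i^m = 0$ and consequently $x^m = 0$.

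For the uniform $F$-surjection part, let $y = (y_i) \in \varprojlim_{\cC} S$. I would first reduce to the case where $y$ is homogeneous of even degree: in an anti-commutative graded $\F_\ell$-algebra every odd-degree element $z$ satisfies $z^2 = -z^2$, hence $z^2 = 0$ when $\ell > 2$ (the case $\ell = 2$ having no signs), so $y^{\ell^N}$ vanishes for $N \ge 1$ and is trivially in the image. On the even part, $R$ and $S$ restrict to sheaves of strictly commutative $\F_\ell$-algebras, so the Frobenius $F \colon a \mapsto a^\ell$ is both additive and multiplicative. Invoking the $F$-surjection hypothesis together with the axiom of choice, for each object $i$ of $\cC$ I pick an element $x_i \in R_i$ with $u_i(x_i) = y_i^{\ell^n}$; these choices will typically not be compatible with the transition morphisms $R_\phi$ of $R$.

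The key step, which is also the main obstacle, is to show that raising to a sufficiently high $\ell$-th power enforces compatibility. For a morphism $\phi \colon i \to j$ of $\cC$, set $d = R_\phi(x_j) - x_i \in R_i$; then $u_i(d) = S_\phi(y_j^{\ell^n}) - y_i^{\ell^n} = 0$, so $d \in \Ker u_i$, whence $d^m = 0$ and a fortiori $d^{\ell^M} = 0$ as soon as $\ell^M \ge m$. Additivity of Frobenius in the commutative setting then yields
\[
R_\phi(x_j^{\ell^M}) - x_i^{\ell^M} = R_\phi(x_j)^{\ell^M} - x_i^{\ell^M} = (R_\phi(x_j) - x_i)^{\ell^M} = d^{\ell^M} = 0,
\]
so $(x_i^{\ell^M})_i$ is a compatible family defining an element $x \in \varprojlim_{\cC} R$ with $(\varprojlim_{\cC} u)(x) = (y_i^{\ell^{n+M}})_i = y^{\ell^{n+M}}$. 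Choosing $M = \lceil \log_\ell(m) \rceil$ gives the stated bound $N \ge n + \log_\ell(m)$. The nontrivial content is precisely that the hypothesis $d^m = 0$ is converted via Frobenius into $d^{\ell^M} = 0$, which is exactly the slack needed for the $\ell^M$-th powers of the naively chosen preimages to glue into an element of the projective limit.
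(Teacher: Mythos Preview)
Your proof is correct and follows essentially the same approach as the paper's: for the $F$-injection part both argue componentwise, and for the $F$-surjection part both lift $y_i^{\ell^n}$ to $x_i\in R_i$, observe that $R_\phi(x_j)-x_i$ lies in $\Ker u_i$ and hence is killed by the $m$-th power, and then use additivity of the $\ell$-th power map to conclude that $(x_i^{\ell^M})_i$ is compatible for $\ell^M\ge m$. Your explicit reduction to even degree (handling the odd-degree case for $\ell>2$ separately via $z^2=0$) is a point the paper glosses over when it writes $(R_\alpha(a_j)-a_i)^{\ell^h}=R_\alpha(a_j)^{\ell^h}-a_i^{\ell^h}$; both arguments tacitly rely on graded-commutativity for this Frobenius step.
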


\begin{proof}
Let $x =(x_i)$ be an element in the kernel of $\varprojlim_\cC u$. Since
$x_i$ is in $\Ker u_i$, $x^{m}=(x_i^{m})=0$.

Assume now that $u$ is a uniform $F$-isomorphism with bounds for the
exponents given by $m$ and $n$, and let $y=(y_i)$ be an element of
$\varprojlim_\cC S$. For every object $i$ of $\cC$, take $a_i$ in $R_i$ such
that $u_i(a_i)=y_i^{\ell^n}$. For every morphism $\alpha\colon i\to j$
in~$\cC$, the following diagram commutes
\[\xymatrix{R_j\ar[r]^{u_j}\ar[d]_{R_\alpha} &S_j\ar[d]^{S_\alpha}\\
R_i\ar[r]^{u_i} & S_i.}\]
It follows that
\[u_i(R_\alpha(a_j)-a_i)=S_\alpha(u_j(a_j))-u_i(a_i)=S_\alpha(y_j^{\ell^n})-y_i^{\ell^n}=0.\]
Let $h$ be the least integer $\ge \log_\ell(m)$. Then
$0=(R_\alpha(a_j)-a_i)^{\ell^h} = R_\alpha(a_j)^{\ell^h} - a_i^{\ell^h}$, so
that $w=(a_i^{\ell^h})$ is an element of $\varprojlim_\cC R$. By definition,
$u(w)=y^{\ell^{n+h}}$.
\end{proof}

In order to deal with the general case, we need the following lemma.

\begin{lemma}\label{p.shr}
Let $u\colon R\to S$ be a homomorphism of pseudo-rings  in $\grvec^\cC$
endowed with a splitting (Definition \ref{r.ringhom}). Then $(\Ker u)R=0$.
In particular, $(\Ker u)^2=0$.
\end{lemma}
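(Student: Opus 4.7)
The plan is to unwind the splitting property and observe that the conclusion is immediate from the commutativity of the left triangle in the diagram of Definition \ref{r.ringhom}. Recall that a splitting of $u\colon (R,m)\to (S,m')$ consists of a morphism $n\colon S\otimes R\to R$ satisfying, among other things, the identity
\[ m \;=\; n \circ (u\otimes \id_R) \colon R\otimes R \to R. \]
(This is the commutativity of the triangle on the left side of the diagram in Definition \ref{r.ringhom}.) All of the discussion can be carried out objectwise in $\grvec^\cC$, since the structure maps $m$, $n$, $u$ are natural transformations and the tensor product in $\grvec^\cC$ is computed pointwise.

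Working at an object $i$ of $\cC$, let $a \in (\Ker u_i)$ be a homogeneous element and $r \in R_i$ any homogeneous element. Then by the splitting identity applied at $i$,
\[ a \cdot r \;=\; m_i(a\otimes r) \;=\; n_i\bigl((u_i\otimes \id_{R_i})(a\otimes r)\bigr) \;=\; n_i\bigl(u_i(a)\otimes r\bigr) \;=\; n_i(0\otimes r) \;=\; 0. \]
Hence the multiplication of $R$ kills $(\Ker u)\otimes R$, i.e.\ $(\Ker u)\,R = 0$. Since $\Ker u$ is a subobject of $R$, the inclusion $(\Ker u)^2 \subset (\Ker u)\,R$ gives $(\Ker u)^2 = 0$, yielding the ``in particular'' assertion.

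There is essentially no obstacle here; the content of the lemma is entirely concentrated in the definition of a splitting, which forces the multiplication on $R$ to factor through $u\otimes \id_R$. Once this factorization is observed, the vanishing of $(\Ker u)\,R$ is automatic. The only mild care needed is to note that the argument, though phrased elementwise, is valid in the functor category $\grvec^\cC$ because all constructions (tensor product, kernels, ideals generated by subobjects) are computed objectwise and the splitting condition is a natural identity in $\cC$.
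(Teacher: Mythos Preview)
Your proof is correct and takes essentially the same approach as the paper's: both use the left triangle of the splitting diagram to rewrite $ab$ as $n(u(a)\otimes b)$ and conclude from $u(a)=0$. The paper compresses this into the single line ``$ab=u(a)b=0$'', where $u(a)b$ denotes the $S$-pseudomodule action on $R$ given by the splitting; your version simply unpacks that notation.
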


\begin{proof}
Let $a\in \Ker u$, $b\in R$.  Since $u(a)=0$, $ab=u(a)b=0$.
\end{proof}

\begin{prop}\label{p.strb1}
The first assertion of Theorem \ref{p.str} (b) holds.
\end{prop}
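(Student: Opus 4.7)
The plan is to work with $\cX$ reduced (by Remarks \ref{s.homeo}(a) and \ref{r.Cfunc}, both $H^*(\cX,K)$ and the category $\cC_\cX$ are invariant under the schematic universal homeomorphism $\cX_\red \to \cX$) and induct on the number $n$ of strata in a chosen stratification by global quotients. The base case $n=1$ says $\cX$ is itself a global quotient, and is handled by Proposition \ref{p.quotstr} combined with Proposition \ref{p.quotgen}, which together give nilpotency of $\Ker a_{\cX,K}$ for any global quotient stack. For the inductive step, I pick an open stratum $j\colon \cU \hookrightarrow \cX$ (a global quotient) and let $i\colon \cZ \hookrightarrow \cX$ be the complement with its reduced structure; the restriction of the stratification endows $\cZ$ with a stratification by at most $n-1$ global quotients, so the induction hypothesis yields integers $m_1, m_2 \ge 1$ with $(\Ker a_{\cU, j^*K})^{m_1} = 0$ and $(\Ker a_{\cZ, i^*K})^{m_2} = 0$.

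Next, applying $R\Gamma(\cX,-)$ to the distinguished triangle $i_*Ri^!K \to K \to Rj_*j^*K \to$ yields a long exact sequence relating $H^*(\cX, K)$ to $H^*(\cZ, Ri^!K)$ and $H^*(\cU, j^*K)$. Since every $\ell$-elementary point of $\cU$ (resp.\ $\cZ$) composes to one of $\cX$, restriction carries $\Ker a_{\cX,K}$ into $\Ker a_{\cU, j^*K}$ and into $\Ker a_{\cZ, i^*K}$. Thus, for any $m_1$ elements $\alpha_{1,1}, \ldots, \alpha_{1,m_1}$ of $\Ker a_{\cX,K}$, the product restricts to zero on $\cU$, so $\alpha_{1,1} \cdots \alpha_{1,m_1} = i_*\beta_1$ for some $\beta_1 \in H^*(\cZ, Ri^!K)$; its image $\tau(\beta_1)$ under the natural pseudo-ring homomorphism $\tau\colon H^*(\cZ, Ri^!K) \to H^*(\cZ, i^*K)$ (induced by $Ri^!K \to i^*K$) equals $i^*\alpha_{1,1} \cdots i^*\alpha_{1,m_1}$. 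Repeating this for $k = \lceil m_2/m_1\rceil$ disjoint blocks of $m_1$ elements in $\Ker a_{\cX,K}$, I get $\beta_1, \ldots, \beta_k \in H^*(\cZ, Ri^!K)$ with $\tau(\beta_1 \cdots \beta_k) = \prod_{i,j} i^*\alpha_{i,j} = 0$, so $\beta_1 \cdots \beta_k \in \Ker\tau$.

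The hard part will be to transport the splitting of $\tau$ through cohomology. By Construction \ref{c.Dring}, the pseudo-ring homomorphism $Ri^!K \to i^*K$ in $D^+(\cZ, \F_\ell)$ admits a splitting (Definition \ref{r.ringhom}) given by the left $i^*K$-pseudomodule structure on $Ri^!K$; since $H^*(\cZ,-)$ carries a right-lax unital $\otimes$-structure (Constructions \ref{s.tensor} and \ref{s.rtopos}) and right-lax $\otimes$-functors preserve splittings by naturality, $\tau$ itself admits a splitting in $\grvec$. Lemma \ref{p.shr} then gives $(\Ker\tau) \cdot H^*(\cZ, Ri^!K) = 0$, so for any additional block of $m_1$ kernel elements $\alpha_{k+1,1},\ldots,\alpha_{k+1,m_1}$ producing $\beta_{k+1}$, we have $\beta_1 \cdots \beta_{k+1} = 0$. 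Finally, by Constructions \ref{s.adj} and \ref{c.Dring}, the counit $i_*Ri^!K \to K$ preserves the right-lax $\otimes$-structures, so the induced map $i_*\colon H^*(\cZ, Ri^!K) \to H^*(\cX, K)$ is a pseudo-ring homomorphism, and therefore
\[
\prod_{i=1}^{k+1}\prod_{j=1}^{m_1}\alpha_{i,j} = \prod_{i=1}^{k+1} i_*\beta_i = i_*(\beta_1 \cdots \beta_{k+1}) = 0,
\]
yielding $(\Ker a_{\cX,K})^{m_1(k+1)} = 0$ and completing the induction.
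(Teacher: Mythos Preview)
Your proof is correct and follows the same inductive skeleton as the paper: reduce to a closed/open decomposition, use exactness of $H^*(\cZ,Ri^!K)\to H^*(\cX,K)\to H^*(\cU,j^*K)$ to lift products of kernel elements, and invoke the splitting of $Ri^!K\to i^*K$ together with Lemma \ref{p.shr} to force a further product to vanish.

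The one genuine difference is \emph{where} the splitting lemma is applied. The paper works with the full commutative diagram
\[
\xymatrix{H^*(\cZ,Ri^! K)\ar[r]\ar[d]_{a_{\cZ,Ri^! K}} & H^*(\cX,K)\ar[d]^{a_{\cX,K}}\ar[r] & H^*(\cU,j^*K)\ar[d]^{a_{\cU,j^*K}}\\
R^*(\cZ,Ri^! K)\ar[r]^u & R^*(\cX,K) \ar[r] & R^*(\cU,j^*K)}
\]
and applies Lemma \ref{p.shr} to the composite $R^*(\cZ,Ri^!K)\to R^*(\cZ,i^*K)$ on the bottom row, concluding $(\Ker u)^2=0$; the induction hypothesis is then invoked for the pseudo-ring $Ri^!K$. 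You instead apply Lemma \ref{p.shr} directly to $\tau\colon H^*(\cZ,Ri^!K)\to H^*(\cZ,i^*K)$ on the top row and use the induction hypothesis for $i^*K$. Your route is slightly more economical (no need to note that $Ri^!K\in D^+_c$ for the induction) and gives a marginally better nilpotency exponent; the paper's route keeps the bookkeeping more symmetric by staying on the $R^*$ side throughout.
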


\begin{proof}
If $i\colon\cY\to \cX$ is a closed immersion, $j\colon \cU\to\cX$ is the
complement, then the following diagram of graded rings commutes:
  \[\xymatrix{H^*(\cY,Ri^! K)\ar[r]\ar[d]_{a_{\cY,Ri^! K}} & H^*(\cX,K)\ar[d]^{a_{\cX,K}}\ar[r] & H^*(\cU,j^*K)\ar[d]^{a_{\cU,j^*K}}\\
  R^*(\cY,Ri^! K)\ar[r]^u & R^*(\cX,K) \ar[r] & R^*(\cU,K).}\]
The first row is exact and $u$ is the composition of the inverse of the
isomorphism $R^*(\cX,i_* Ri^! K)\simto R^*(\cY, Ri^! K)$ and the map
$R^*(\cX,i_* Ri^! K) \to R^*(\cX,K)$ induced by adjunction $i_*Ri^!K\to K$.
The composition
  \[R^*(\cY,Ri^! K)\xto{u}  R^*(\cX,K)\to R^*(\cY,i^* K)\]
is induced by $Ri^! K \to i^* K$, hence has square-zero kernel by Lemma
\ref{p.shr}. Thus $(\Ker u)^2=0$. It follows that if both $a_{\cY,Ri^!K}$
and $a_{\cU,j^*K}$ have nilpotent kernels, then $a_{\cX,K}$ has nilpotent
kernel. Using this, we reduce by induction to the global quotient case. In
this case, the assertion follows from Propositions \ref{p.quotgen} and
\ref{p.quotstr}.
\end{proof}

This finishes the proof of the structure theorem (Theorem \ref{p.str} (b)).

\begin{lemma}\label{l.fgalg}
Let $\cC$ be a category having finitely many isomorphism classes of objects.
Let $\cA$ be the category whose objects are the elementary abelian
$\ell$-groups and whose morphisms are the monomorphisms. Let $F\colon \cC\to
\cA$ be a functor. Let $\cF$ be the presheaf of $\F_\ell$-algebras on $\cA$
given by $\cF(A)=\Sym(A\spcheck)$. Let $\cG$ be a presheaf of
$F^*\cF$-modules on $\cC$. Assume that, for every object $x$ of $\cC$,
$\cG(x)$ is a finitely generated $\cF(F(x))$-module. Then
$R=\varprojlim_{x\in \cC}\cF(F(x))$ is a finitely generated
$\F_\ell$-algebra and $S=\varprojlim_{x\in \cC}\cG(x)$ is a finitely
generated $R$-module.
\end{lemma}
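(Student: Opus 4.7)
The plan is to reduce to a finite category setting and then combine classical invariant theory for finite group actions with noetherianity of finitely generated commutative algebras.

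First, by replacing $\cC$ with a finite skeleton (preserving both limits since equivalences of categories preserve limits), I may assume $\Ob(\cC)$ is finite. Since each $F(x)$ is a finite group, the set $\Hom_{\cA}(F(x), F(y))$ is finite, so the presheaf $F^{*}\cF$ factors through the finite category $\cC'$ having the same objects as $\cC$ and $\Hom_{\cC'}(x, y)$ equal to the image of $\Hom_{\cC}(x,y)$ in $\Hom_{\cA}(F(x), F(y))$. Consequently $R = \varprojlim_{\cC'} \cF|_{\cC'}$ is a finite inverse limit.

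Next, I would set $P = \prod_{x\in \Ob(\cC)} \cF(F(x))$, a finitely generated commutative $\F_\ell$-algebra (hence noetherian). The essential claim is that $P$ is finitely generated as an $R$-module, equivalently that $P$ is integral over $R \subset P$. Granted this, the Artin--Tate lemma will yield that $R$ is itself a finitely generated $\F_\ell$-algebra, and hence noetherian. To prove integrality, it suffices to exhibit a monic polynomial over $R$ satisfied by each element of a finite set of $\F_\ell$-algebra generators of $P$: the idempotents $e_x \in P$ projecting to each factor satisfy $e_x^2 - e_x = 0$ (with $0, 1 \in R$), and for the coordinate functions $t_{x,i} \in P$ supported on the $x$-factor, the surjectivity of each transition map $\cF(F(\bar\phi))\colon \cF(F(y)) \twoheadrightarrow \cF(F(x))$ (dual to the monomorphism $F(x) \hookrightarrow F(y)$) allows the construction of an element $r \in R$ whose $x$-component is $t_{x,i}$, producing the relation $t_{x,i}^2 = r \cdot t_{x,i}$ in $P$. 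Combining this with Hilbert--Noether for the finite group $G_x = \operatorname{Image}(\End_{\cC'}(x) \to \Aut_{\cA}(F(x)))$ acting on $\cF(F(x))$, possibly together with an induction on $|\Ob(\cC)|$, settles the integrality.

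For $S$, set $Q = \prod_{x} \cG(x)$. Since each $\cG(x)$ is a finitely generated $\cF(F(x))$-module and $\Ob(\cC)$ is finite, $Q$ is finitely generated as a $P$-module, and hence as an $R$-module by the previous step. The submodule $S \subset Q$ is cut out by the equalizer conditions $\cG(\phi)(s_y) = s_x$ for all $\phi\colon x \to y$ in $\cC$. Although $\Hom_{\cC}(x,y)$ may be infinite, any two lifts $\phi_1, \phi_2$ of the same $\bar\phi \in \Hom_{\cA}(F(x), F(y))$ give a difference $\cG(\phi_1) - \cG(\phi_2)\colon \cG(y) \to \cG(x)$ that is $\cF(F(y))$-linear, so the intersection of the kernels of all such differences is a finitely generated submodule of the noetherian $\cF(F(y))$-module $\cG(y)$. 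This reduces the conditions to a finite family indexed by $\cC'$, realizing $S$ as a finite intersection of submodules of the noetherian $R$-module $Q$, and therefore finitely generated over $R$.

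The hard part will be establishing the integrality of $P$ over $R$. Geometrically, this asserts that $\Spec R$ is the amalgamation of the affine spaces $\Spec \cF(F(x))$ glued along the linear subspaces corresponding to morphisms in $\cC'$ (in the spirit of Corollary \ref{c.PS}), which is plausible but demands a careful algebraic implementation to produce the necessary integral relations.
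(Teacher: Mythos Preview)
Your overall architecture is right, and in fact it is essentially the paper's: reduce to finitely many objects, show that $P=\prod_x\cF(F(x))$ is module-finite over a suitable finitely generated subalgebra of $R$, conclude by Artin--Tate that $R$ is finitely generated and noetherian, and then get $S$ for free as an $R$-submodule of the finite $R$-module $\prod_x\cG(x)$. (Once $R$ is noetherian your extra work on reducing the $\cG$-compatibilities to finitely many is unnecessary: any $R$-submodule of a finite $R$-module is finite.)

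The gap is in the one step you flag as ``the hard part''. Your argument for integrality of $P$ over $R$ claims that for each generator $t_{x,i}$ one can find $r\in R$ with $x$-component $t_{x,i}$, using surjectivity of the transition maps. This is false in general: the projection $R\to\cF(F(x))$ is typically not surjective. Already with a single object $x$ and a nontrivial image $G_x\subset\Aut(F(x))$ one has $R=\cF(F(x))^{G_x}\subsetneq\cF(F(x))$. Without automorphisms it still fails: take two objects $a,b$ with $F(a)=\Z/\ell$, $F(b)=(\Z/\ell)^2$ and two morphisms $a\rightrightarrows b$ mapping to the two coordinate embeddings; then $(r_a,r_b)\in R$ forces $r_b(t,0)=r_b(0,t)$, so $s_1\in\cF(F(b))$ has no lift to $R$, and your relation $t_{x,i}^2=r\,t_{x,i}$ cannot be written down. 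Integrality still holds in this example (e.g.\ $(0,s_1)$ satisfies $X^2-(0,s_1{+}s_2)X+(0,s_1s_2)=0$ with coefficients in $R$), but producing such relations in general requires a new idea, not just surjectivity of individual transition maps plus a vague appeal to Hilbert--Noether and induction.

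The paper supplies exactly that missing idea. It passes to the \emph{full} $\GL(A)$-invariants $\cE(A)=\Sym(A\spcheck)^{\GL(A)}$ (Dickson invariants), which form a sub-presheaf over which $\cF(A)$ is finite. The point of going all the way to $\GL(A)$ rather than to your $G_x$ is that $\GL(B)$ acts transitively on monomorphisms $A\hookrightarrow B$, so the restriction $\cE(B)\to\cE(A)$ is independent of the chosen monomorphism and $\cE$ factors through the rank functor $\cA\to\N$. This collapses the combinatorics of $\cC'$ to that of a functor $\cC\to\N$, whose limit is then controlled by an explicit rooted-forest argument (Lemma~\ref{l.tree}) showing each $\cE(F(x))$ is finite over $Q=\varprojlim_\cC\cE(F(-))$. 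Since $Q\subset R\subset P$ and $P$ is finite over a finitely generated $Q_0\subset Q$, Artin--Tate finishes as you outlined. Your sketch is missing precisely this reduction to an $\N$-indexed system; without it the integrality step does not go through.
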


\begin{proof}
We may assume that $\cC$ has finitely many objects. For any monomorphism
$u\colon A\to B$ of elementary abelian $\ell$-groups, $\cF(u)\colon
\cF(B)\to \cF(A)$ carries $\Sym(B\spcheck)^{\GL(B)}$ into
$\Sym(A\spcheck)^{\GL(A)}$. Thus
$A\mapsto\cE(A)=\Sym(A\spcheck)^{\GL(A)}\subset \cF(A)$ defines a
subpresheaf $\cE$ of $\F_\ell$-algebras of $\cF$. As $\GL(A)$ is a finite
group, by \cite[V Corollaire 1.5]{SGA1} $\cF(A)$ is finite over $\cE(A)$ and
$\cE(A)$ is a finitely generated $\F_\ell$-algebra. For given $A$ and $B$,
since $\GL(B)$ acts transitively on the set of monomorphisms $u\colon A\to
B$, the map $\Sym(B\spcheck)^{\GL(B)}\to \Sym(A\spcheck)$, restriction of
$\cF(u)$, does not depend on $u$. Thus $\cE(u)$ only depends on $A$ and $B$.
Therefore, via the functor $\mathrm{rk}\colon \cA\to \N$ carrying $A$ to its
rank, $\cE$ factorizes through a presheaf $\cR$ on the totally ordered set
$\N$: we have a 2-commutative diagram
\[\xymatrix{\cC\ar[r]^F \ar[d]_f&\cA\ar[d]^{\cE}\ar[dl]_{\mathrm{rk}}\\
\N\ar[r]^{\cR} & \cB^{\op},}
\]
where $\cB$ denotes the category of $\F_\ell$-algebras of finite type, and
$\cR(n)=\Sym((\F_\ell^n)\spcheck)^{\GL_n(\F_\ell)}$, with, for $m\le n$,
$\F_\ell^m$ included in $\F_\ell^n$ by any monomorphism. For a morphism $u
\colon A \to B$ of $\cA$, $\cF(u)\colon \cF(B)\to \cF(A)$ is surjective,
hence, as $\cF(B)$ is finite over $\cE(B)$, $\cF(A)$ is finite over
$\cE(B)$, and $\cE(A)\subset \cF(A)$ is finite over $\cE(B)$. By Lemma
\ref{l.tree} below, for each $x$ in $\cC$, $\cE(F(x))$ is finite over
\[Q=\varprojlim_{y\in \cC} \cE(F(y))\simeq \varprojlim_{y\in \cC} \cR(f(y)).\]
The rest of the proof is similar to the proof of the last assertion of
\emph{loc.\ cit.} As $\cC$ has finitely many objects, there exists a
finitely generated $\F_\ell$-subalgebra $Q_0$ of $Q$ such that, for each $x$
in $\cC$, $\cE(F(x))$ is integral, hence finite over $Q_0$. Note that $R$ is
a $Q$-submodule, \emph{a fortiori} a $Q_0$-submodule, of $\prod_{x\in
\cC}\cF(F(x))$. For each $x$ in $\cC$, $\cF(F(x))$ is finite over
$\cE(F(x))$, hence finite over $Q_0$. It follows that $\prod_{x\in
\cC}\cF(F(x))$ is finite over $Q_0$. As $Q_0$ is a noetherian ring, $R$ is
finite over $Q_0$, hence a finitely generated $\F_\ell$-algebra. Similarly,
$S$ is a finitely generated $Q_0$-module, hence a finitely generated
$R$-module. Note that $Q$ is also finite over $Q_0$, hence a finitely
generated $\F_\ell$-algebra, though we do not need this fact.
\end{proof}

The first step of the proof of Lemma \ref{l.tree} consists of simplifying
the limit $Q$ using cofinality. Among the functors
\[f_1\colon
\begin{xy}
(11,0)*{\bullet}
\ar(-4,0)*{(\bullet};(2,0)*{\bullet)}
\ar(5,0);(9,0)
\end{xy}\qquad\qquad f_2\colon
\begin{xy}
\ar@<0.5mm> (0,-2)*{\bullet}="a";(0,2)*{\bullet}="b"
\ar@<-0.5mm> "a";"b"
\ar (3,0);(7,0)
\ar (9,-2)*{\bullet};(9,2)*{\bullet}
\end{xy}\qquad\qquad f_3\colon
\begin{xy}
\ar (0,-2)*{\bullet};(2,2)*{\bullet}="top"
\ar (4,-2)*{\bullet};"top"
\ar (6,0);(10,0)
\ar (12,-2)*{\bullet};(12,2)*{\bullet}
\end{xy}\qquad\qquad f_4\colon
\begin{xy}
\ar (2,-2)*{\bullet}="bottom";(0,2)*{\bullet}
\ar "bottom";(4,2)*{\bullet}
\ar (6,0);(10,0)
\ar (12,-2)*{\bullet};(12,2)*{\bullet}
\end{xy}
\]
$f_1$, $f_2$, and $f_3$ are cofinal, while $f_4$ is not cofinal. It turns
out that after making contractions of types $f_1$, $f_2$, and $f_3$, we
obtain a rooted forest, of which the source of $f_4$ is a prototype.

For convenience we adopt the following order-theoretic definitions. We
define a \emph{rooted forest} to be a partially ordered set $\cP$ such that
$\cP_{\le x}=\{y\in \cP\mid y\le x\}$ is a finite chain for all $x\in \cP$.
We define a \emph{rooted tree} to be a nonempty connected rooted forest. Let
$\cP$ be a rooted tree. For $x,y\in \cP$, we say that $y$ is a \emph{child}
of $x$ if $x<y$ and there exists no $z\in \cP$ such that $x<z<y$. By the
connectedness of $\cP$, $m(x)=\min\cP_{\le x}$ is independent of $x\in \cP$,
hence $\cP$ has a least element $r$, equal to $m(x)$ for all $x$. We call
$r$ the \emph{root} of $\cP$.

\begin{remark}
Although we do not need it, let us recall the comparison with
graph-theoretic definitions. A \emph{graph-theoretic rooted tree} $\cT$ is a
connected acyclic (undirected) graph with one vertex designated as the root
\cite[page~30]{SerreArbres}. For a graph-theoretic rooted tree $\cT$, we let
$V(\cT)$ denote the set of vertices of $\cT$ equipped with the tree-order,
with $x\le y$ if and only if the unique path from the root $r$ to $y$ passes
through $x$. For any $x\in V(\cT)$, $V(\cT)_{\le x}$ consists of vertices on
the path from $r$ to $x$, so that $V(\cT)_{\le x}$ is a finite chain. Thus
$V(\cT)$ is a rooted tree. Conversely, for any rooted tree $\cP$, we
construct a graph-theoretic rooted tree $\Gamma(\cP)$ as follows. Let $G$ be
the graph whose set of vertices is $\cP$ and such that two vertices $x$ and
$y$ are adjacent if and only if $y$ is a child of $x$ or $x$ is a child of
$y$. Note that each $x\le x'$ in $\cP$ can be decomposed into a sequence
$x=x_0<x_1<\dots<x_n=x'$, $n\ge 0$, each $x_{i+1}$ being a child of $x_{i}$,
which defines a path from $x$ to $x'$ in $G$. Thus the connectedness of
$\cP$ implies the connectedness of $G$. If $G$ admits a cycle, then there
exists $y\in \cP$ that is a child of distinct elements $x$ and $x'$ of
$\cP$, which contradicts the assumption that $\cP_{\le y}$ is a chain. Let
$r$ be the root of $\cP$. Then $\Gamma(\cP)=(G,r)$ is a graph-theoretic
rooted tree. We have $\cP=V(\Gamma(\cP))$ and $\cT=\Gamma(V(\cT))$.
\end{remark}

The next lemma is probably standard but we could not find an adequate
reference.

\begin{lemma}\label{l.pretree}
Let $\cC$ be a category and let $f \colon \cC \to \N$ be a functor. Let
$\cP$ be the set of full subcategories of $\cC$ that are connected
components of $f^{-1}(\N_{\ge n})$ for some $n \in \N$. Order $\cP$ by
inverse inclusion: for elements $S$ and $T$ of $\cP$, we write $S \le T$ if
$S \supset T$. Let $\psi\colon \cC \to \cP$ be the functor carrying an
object $x$ to the connected component $\psi(x)$ of $f^{-1}(\N_{\ge f(x)})$
containing $x$, and let $\phi\colon \cP \to \N$ be the functor carrying $S$
to $\min f(S)$. Then:
\begin{enumerate}
\item $f=\phi \psi$.
\item $\psi \colon \cC\to \cP$ is cofinal (Definition \ref{s.cofinal}) and
    $\phi\colon \cP\to \N$ is strictly increasing.
\item $\cP$ is a rooted forest. Moreover, if $\cC$ has finitely many
    isomorphism classes of objects, then $\cP$ is a finite set.
\end{enumerate}
\end{lemma}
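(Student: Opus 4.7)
The plan is to exploit the fact that $\N$ carries its natural poset structure, so $f\colon \cC\to\N$ being a functor means $f$ is monotone: any morphism $\alpha\colon x\to y$ in $\cC$ forces $f(x)\le f(y)$. Two elementary observations then underlie everything: first, for $x\in f^{-1}(\N_{\ge n})$, the connected component of $x$ in a smaller preimage $f^{-1}(\N_{\ge n'})$ with $n'\ge n$ is contained in its component in $f^{-1}(\N_{\ge n})$; second, two distinct connected components of the same $f^{-1}(\N_{\ge n})$ are disjoint. Before establishing (a)--(c), I first verify that $\psi$ is a well-defined functor: given $\alpha\colon x\to y$, monotonicity places $y$ in $f^{-1}(\N_{\ge f(x)})$ and the morphism $\alpha$ puts $y$ in the same component $\psi(x)$ as $x$; then $\psi(y)$ is the component of $y$ in $f^{-1}(\N_{\ge f(y)})\subset f^{-1}(\N_{\ge f(x)})$, hence $\psi(y)\subset\psi(x)$, i.e.\ $\psi(x)\le\psi(y)$ in the inverse-inclusion order.

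Assertion (a) is then immediate from $x\in\psi(x)\subset f^{-1}(\N_{\ge f(x)})$, which forces $\min f(\psi(x))=f(x)$. For (b), the cofinality of $\psi$ rests on the identity $\{x\in\cC\mid\psi(x)\subset S\}=\Ob(S)$: if $S$ is a component of $f^{-1}(\N_{\ge n})$ and $x\in S$, then $f(x)\ge n$ and the first observation above gives $\psi(x)\subset S$; the converse is trivial since $x\in\psi(x)$. Because $\cP$ is a poset, the comma category $(S\downarrow\psi)$ reduces to the full subcategory $S$ of $\cC$, which is nonempty and connected by construction. For the strict monotonicity of $\phi$, one first notes that any $S\in\cP$ is automatically a component of $f^{-1}(\N_{\ge\phi(S)})$ by maximality; then $S\supsetneq T$ with $\phi(S)=\phi(T)$ would make $S$ and $T$ two components of the same preimage sharing the nonempty subset $T$, contradicting the second observation.

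The nontrivial part is (c). I expect the main obstacle to be showing that $\cP_{\le S}=\{T\in\cP\mid T\supset S\}$ is totally ordered. Given $T_1,T_2\supset S$ with, say, $\phi(T_1)\le\phi(T_2)$, pick $x\in S\subset T_1\cap T_2$; then $T_2\subset f^{-1}(\N_{\ge\phi(T_2)})\subset f^{-1}(\N_{\ge\phi(T_1)})$ is connected and contains $x$, so $T_2$ lies in the component of $x$ in $f^{-1}(\N_{\ge\phi(T_1)})$, which is $T_1$. Hence $T_1\supset T_2$, proving totality. The strict increase of $\phi$ then embeds $\cP_{\le S}$ injectively into $\{0,1,\dots,\phi(S)\}$, so $\cP_{\le S}$ is a finite chain, giving the forest property. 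For the final assertion, if $\cC$ has finitely many isomorphism classes then $f$ takes finitely many values on $\cC$, so only finitely many distinct subcategories $f^{-1}(\N_{\ge n})$ appear, and each has at most as many connected components as $\cC$ has isomorphism classes; hence $\cP$ is finite.
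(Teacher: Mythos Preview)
Your proof is correct and follows essentially the same approach as the paper's: the identification of $(S\downarrow\psi)$ with the connected subcategory $S$, the contradiction argument for strict monotonicity of $\phi$, and the chain argument for $\cP_{\le S}$ all match. The only minor differences are that you explicitly verify $\psi$ is a functor (the paper leaves this implicit), and your finiteness argument for $\cP$ counts values of $f$ and components of each $f^{-1}(\N_{\ge n})$, whereas the paper observes directly that each $S\in\cP$ is a union of isomorphism classes and hence determined by a subset of a finite set.
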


\begin{proof}
(a) Let $x$ be an object of $\cC$. As $x\in\psi(x)$, $\phi(\psi(x))=\min
f(\psi(x))\le f(x)$. Conversely, as $\psi(x)\subset f^{-1}(\N_{\ge f(x)})$,
$f(\psi(x))\subset \N_{\ge f(x)}$, so that $\phi(\psi(x))\ge f(x)$. Thus
$\phi(\psi(x))=f(x)$.

(b) Let $S\in\cP$. Note that $S$ is a connected component of $f^{-1}(\N_{\ge
\phi(S)})$. By definition, $(S\downarrow \psi)$ is the category of pairs
$(x,S\le \psi(x))$. Note that $S\supset \psi (x)$ implies that $x$ is in
$S$. Conversely, for $x$ in $S$, $S$ is a connected component of
$f^{-1}(\N_{\ge n})$ for $n\le f(x)$, hence $S\supset \psi(x)$. Thus
$(S\downarrow \psi)$ can be identified with $S$, hence is connected. This
shows that $\psi$ is cofinal. Now let $S<T$ be elements of $\cP$. We have
$\phi(S)\le \phi(T)$. If $\phi(S)=\phi(T)=n$, then $S$ and $T$ are both
connected components of $f^{-1}(\N_{\ge n})$, which contradicts with the
assumption $S\supsetneq T$. Thus $\phi(S)<\phi(T)$.

(c) Let $S\in \cP$. Let $T,T'\in \cP_{\le S}$. Then $T$ (resp.\ $T'$) is a
connected components of $f^{-1}(\N_{\ge n})$ (resp.\ $f^{-1}(\N_{\ge n'})$),
and $T$ and $T'$ both contain $S$. Thus $T\supset T'$ if $n\le n'$ and
$T\subset T'$ if $n\ge n'$. Therefore, $\cP_{\le S}$ is a chain. As $\phi$
is strictly increasing, $\phi$ induces an injection $\cP_{\le S}\to \N_{\le
\phi(S)}$, hence $\cP_{\le S}$ is a finite set. Therefore, $\cP$ is a rooted
forest. Note that for $S\in \cP$ and $x$ in $S$, every object $y$ of $\cC$
isomorphic to $x$ is also in $S$. Thus, if $\cC$ has finitely many
isomorphism classes of objects, then $\cP$ is a finite set.
\end{proof}

\begin{lemma}\label{l.tree}
Let $\cC$ be a category having finitely many isomorphism classes of objects
and let $f\colon \cC\to \N$ be a functor. Let $\cR$ be a presheaf of
commutative rings on $\N$ such that, for each $m\le n$, $\cR(m)$ is finite
over $\cR(n)$. Let $Q=\varprojlim_{x\in \cC} \cR(f(x))$. Then:
\begin{enumerate}
\item For each object $x$ of $\cC$, $\cR(f(x))$ is finite over $Q$.
\item For each connected component $S$ of $\cC$ and each $r$ in $S$
    satisfying $f(r)=\min f(S)$, we have
    \[\Img(Q\to\cR(f(r)))=\Img(\cR(\max f(S))\to \cR(f(r))).\]
\end{enumerate}
\end{lemma}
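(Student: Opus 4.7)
The plan is to reduce $Q$ to a limit over the rooted forest $\cP$ provided by Lemma \ref{l.pretree} and then exploit the tree structure explicitly. Since $\cC$ has finitely many isomorphism classes, Lemma \ref{l.pretree} yields a finite rooted forest $\cP$, a cofinal functor $\psi \colon \cC \to \cP$, and a strictly increasing $\phi \colon \cP \to \N$ with $f = \phi \psi$. The rooted trees in $\cP$ correspond bijectively to the connected components $C$ of $\cC$: the tree $T_C$ has root $r_C = C \in \cP$, with $\phi(r_C) = \min f(C)$. Cofinality of $\psi$ in the sense of Definition \ref{s.cofinal} means $\psi_!(*_\cC) \simeq *_\cP$ in $\widehat{\cP}$; combined with the adjunction $\psi_! \dashv \psi^*$ between presheaf categories, this gives an isomorphism $\varprojlim_\cP (\cR \circ \phi) \simto \varprojlim_\cC \psi^*(\cR \circ \phi) = Q$. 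Decomposing the forest into its trees yields $Q \simeq \prod_C Q_C$ with $Q_C \coloneqq \varprojlim_{T_C}(\cR \circ \phi)$. A direct check shows $\max_{S \in T_C}\phi(S) = \max f(C)$: for $x \in C$ with $f(x) = \max f(C)$ one has $\phi(\psi(x)) = f(x)$, giving ``$\ge$''; and $\phi(S) = \min f(S) \le \max f(C)$ for any $S \subseteq C$ gives ``$\le$''.

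To prove assertion (b), fix $r \in C$ with $f(r) = \min f(C) = \phi(r_C)$ and note that $Q \to \cR(f(r))$ factors as $Q \twoheadrightarrow Q_C \to \cR(\phi(r_C))$. For the inclusion $\Img(\cR(\max f(C)) \to \cR(f(r))) \subseteq \Img(Q \to \cR(f(r)))$, given $t \in \cR(\max f(C))$ define $s_S$ to be the image of $t$ under the transition $\cR(\max f(C)) \to \cR(\phi(S))$ for each $S \in T_C$; the components are iterated images of a single element and hence automatically satisfy the tree compatibilities, yielding an element of $Q_C$ whose value at $r_C$ is the image of $t$. For the reverse inclusion, take $(s_S) \in Q_C$, pick an element $S_{\max} \in T_C$ with $\phi(S_{\max}) = \max f(C)$ (necessarily a leaf, since $\phi$ is strictly increasing), and trace the unique chain from $r_C$ to $S_{\max}$ in $T_C$; compatibility of $(s_S)$ along this chain forces $s_{r_C}$ to equal the image of $s_{S_{\max}} \in \cR(\max f(C))$ under the telescoped transition $\cR(\max f(C)) \to \cR(\phi(r_C))$.

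For assertion (a), apply the same diagonal construction to an arbitrary object $x$ in a connected component $C$: the image of $Q \twoheadrightarrow Q_C \to \cR(f(x))$ contains $\Img(\cR(\max f(C)) \to \cR(f(x)))$, and $\cR(f(x))$ is finite over $\cR(\max f(C))$ by the finiteness hypothesis applied to $f(x) \le \max f(C)$, hence finite over this image, hence over $Q$. The main subtlety is the cofinality step: the result recalled in the excerpt is stated for colimits, whereas we apply it to a limit of presheaves. I would spell this out via the adjunction $\psi_! \dashv \psi^*$ on presheaf categories, using that $\psi_!(*_\cC)(S) = \pi_0(S \downarrow \psi)$ is a singleton precisely when $\psi$ satisfies the cofinality property proved in Lemma \ref{l.pretree}(b). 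Once this is in place, the tree-level computations for (a) and (b) are straightforward.
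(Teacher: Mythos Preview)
Your proof is correct, and your handling of the cofinality step is actually more careful than the paper's: you spell out, via $\psi_!\dashv\psi^*$ and the identification $(\psi_!*_\cC)(S)=\pi_0(S\downarrow\psi)$, why cofinality of $\psi$ (stated in the paper for colimits) yields $\varprojlim_\cP(\cR\circ\phi)\simeq\varprojlim_\cC(\cR\circ f)$. The paper simply says ``by Lemma \ref{l.pretree} we may assume $\cC$ is a finite rooted tree'' and leaves this verification implicit.

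After this common reduction, your argument diverges from the paper's. The paper proceeds by induction on $\#\cC$: it writes the tree limit as a fiber product $Q=Q_{c_0}\times_{\cR(f(r))}Q'$, where $c_0$ is a child of the root with smallest $\max f(\cC_{\ge c_0})$ and $\cC'=\cC\setminus\cC_{\ge c_0}$, and then uses surjectivity and finiteness of the legs of this Milnor square to carry the induction. You instead argue directly on each tree $T_C$: the diagonal family $S\mapsto(\text{image of }t)$ for $t\in\cR(\max f(C))$ furnishes a section showing $\Img\bigl(\cR(\max f(C))\to\cR(f(x))\bigr)\subseteq\Img\bigl(Q_C\to\cR(f(x))\bigr)$ for every $x$, and tracing the unique chain to a leaf of maximal $\phi$-value gives the reverse inclusion at the root. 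Your route avoids the induction entirely and is shorter; the paper's fiber-product decomposition is more structural and would be the natural starting point if one wanted finer control over $Q$ itself, but for the lemma as stated your direct argument is cleaner.
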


\begin{proof}
By Lemma \ref{l.pretree}, we may assume that $\cC$ is a finite rooted tree
with root $r$. We prove this case by induction on $\#\cC$. Let $B\subset
\cC$ be the set of children of $r$.  For each $c\in B$, $\cC_{\ge c}$ is a
rooted tree with root $c$ and $Q$ is the fiber product over $\cR(f(r))$ of
the rings $Q_c=\varprojlim_{x\in \cC_{\ge c}} \cR(f(x))$ for $c\in B$. If
$B$ is empty, then $\cC=\{r\}$ and the assertions are trivial. If $B=\{c\}$,
then $Q\simeq Q_c$ and it suffices to apply the induction hypothesis to
$Q_c$. Assume $\#B>1$. Let $n=\max f(\cC)$, $n_c=\max f(\cC_{\ge c})$, and
let $c_0\in B$ be such that $n_{c_0}=\min_{c\in B} n_c$. The complement
$\cC'$ of $\cC_{\ge c_0}$ in $\cC$ is a rooted tree with root $r$, and $Q$
is the fiber product over $\cR(f(r))$ of the rings $Q_{c_0}$ and
$Q'=\varprojlim_{x\in \cC'}\cR(f(x))$. By the induction hypothesis,
$A=\Img(Q_{c_0}\to \cR(f(r)))=\Img(\cR(n_{c_0})\to \cR(f(r)))$ and
$\Img(Q'\to \cR(f(r)))=\Img(\cR(n)\to \cR(f(r)))$, so that we have a
cartesian square of commutative rings
\[\xymatrix{Q\ar[r]^{\alpha'}\ar[d]_{\beta'} &Q'\ar[d]^{\beta}\\
Q_{c_0}\ar[r]^{\alpha} & A.}
\]
As $\alpha$ is surjective, we have $\Ker(\alpha')\simeq \Ker(\alpha)$ and
$\alpha'$ is surjective (cf.\ \cite[Lemme 1.3]{Ferrand}), which implies (b).
Moreover, as $\beta$ is finite, $\beta'$ is finite. Indeed, if $A=\sum_i
a_i\beta(Q')$, then for liftings $a'_i$ of $a_i$, $Q_{c_0}=\sum_i
a'_i\beta'(Q)$. The assertion (a) then follows from the induction hypothesis
applied to $Q_{c_0}$ and $Q'$.
\end{proof}

\begin{proof}[Proof of Theorem \ref{p.str} (a)]
Let $(j_i\colon \cX_i\to \cX)_i$ be a finite stratification of $\cX$ by
locally closed substacks. The system of functors $(\cC_{\cX_i}\to
\cC_{\cX})_i$ is essentially surjective. Thus the map
  \[R^*(\cX,K)\to \prod_{i} R^*(\cX_i,j_i^* K)\]
is an injection. Thus, for the first assertion of Theorem \ref{p.str} (a),
we may assume that $\cX$ is a global quotient stack, in which case the
assertion follows from Theorem \ref{t.main2} (a) and Proposition \ref{p.RR}.

Let $H^q(K_\bullet)$ denote the presheaf on $\mathcal{C}_{\mathcal{X}}$
whose value at $x\colon \cS\to \cX$ is $H^q(\mathcal{S},K_x)$, where
$K_x=x^* K$, so that $R^q(\cX,K)=\varprojlim_{\cC_\cX} H^q(K_\bullet)$. Let
$N$ be the set of morphisms $f$ in $\cC_\cX$ such that
$(H^*(\F_{\ell\bullet}))(f)$ and $(H^*(K_\bullet))(f)$ are isomorphisms. By
Lemma \ref{l.loclim}, $\varprojlim_{\cC_\cX} H^q(K_\bullet)\simeq
\varprojlim_{N^{-1}\cC_\cX}H^q(K_{\bullet})$ and similarly for
$H^*(\F_{\ell\bullet})$. We claim that $N^{-1}\cC_\cX$ has finitely many
isomorphism classes of objects. Then $\varprojlim_{N^{-1}\cC_\cX}$ commutes
with direct sums, and, by Lemma \ref{l.fgalg}, $R^*(\cX,\F_\ell)$ and
$R^*(\cX,K)$ are finitely generated $R$-modules for a finitely-generated
$\F_\ell$-algebra $R$, hence the second assertion of Theorem \ref{p.str}
(a). Using again the fact that the system of functors $(\cC_{\cX_i}\to
\cC_{\cX})_i$ is essentially surjective, we may assume in the above claim
that $\cX=[X/G]$ is a global quotient. Consider the diagram \eqref{e.8.4.2}.
Note that the functor $\cA_G(k)^\natural\to \cE_G(\pi_0)$ induces a
bijection between the sets of isomorphism classes of objects, and
$\cE_G(\pi_0)$ is essentially finite by Lemma \ref{l.6.18} (a), thus
$\cA_G(k)^\natural$ has finitely many isomorphism classes of objects.
Moreover, as $E$ is essentially surjective, it suffices to show that, for
every object $(A,A',g)$ of $\cA_G(k)^\natural$, the category
$M^{-1}P_{X^{A'}}$ has finitely many isomorphism classes of objects. Here
$M$ is the set of morphisms $f$ in $P_{X^{A'}}$ such that
$(E_{(A,A',g)}^*H^*(K_\bullet))(f)$ is an isomorphism. Let $(X_i)$ be a
finite stratification of $X^{A'}$ into locally closed subschemes such that
$K\res X_i$ has locally constant cohomology sheaves. For a given $i$, all
objects in the image of $P_{X_i}\to M^{-1}P_{X^{A'}}$ are isomorphic.
Moreover, the system of functors $(P_{X_i}\to P_{X^{A'}})_i$ is essentially
surjective. Therefore, $M^{-1}P_{X^{A'}}$ has finitely many isomorphism
classes of objects.
\end{proof}

\section{Stratification of the spectrum}\label{s.strat}
In this section we fix an algebraically closed field $k$  and a prime number
$\ell$ invertible in $k$.

\begin{construction}\label{c.amalg}
Let $X$ be a separated algebraic space of finite type over $k$, and let $G$
be an algebraic group over $k$ acting on $X$. Define
\begin{equation}
\underline{(G,X)} \coloneqq \Spec H^{\varepsilon *}([X/G])_{\mathrm{red}},
\end{equation}
where $\varepsilon = 1$ if $\ell = 2$, and $\varepsilon = 2$ otherwise. In
particular, for an elementary abelian $\ell$-group $A$,
\[
\underline{A} \coloneqq \underline{(A,\Spec k)} = \Spec (H^{\varepsilon*}_A)_{\mathrm{red}}
\]
is a standard affine space of dimension equal to the rank of $A$. The map
$(A,C)^*$ \eqref{e.Ac*} induces a morphism of schemes
\begin{equation}\label{e.Acl}
(A,C)_* \colon \underline{A} \to \underline{(G,X)},
\end{equation}
hence $a(G,X)$ \eqref{e.aGX} induces a morphism of schemes
\begin{equation}\label{e.amalg}
Y \coloneqq \varinjlim_{(A,C) \in \mathcal{A}^{\flat}_{(G,X)}} \underline{A} \to \underline{(G,X)}.
\end{equation}
It follows from Theorem \ref{c.main} that \eqref{e.amalg} is a universal
homeomorphism.

By Corollary \ref{c.finite}, $(A,C)_*$ is finite. Moreover,
$\cA_{(G,X)}^\flat$ is essentially finite by Lemma \ref{l.Afincat}. It
follows that $Y\simeq \Spec(\varprojlim_{(A,C)\in \cA^\flat_{(G,X)}}
(H^{\varepsilon*}_A)_{\red})$ is finite over $\underline{(G,X)}$ and is a
colimit of $\underline{A}$ in the category of locally ringed spaces and in
particular a colimit of $\underline{A}$ in the category of schemes. This
remark gives another proof of the second assertion of Corollary \ref{c.PS},
as promised. Moreover, the $\F_\ell$-algebras
$H^{\varepsilon*}([X/G])_{\red}$ and $\varprojlim_{(A,C)\in
\cA^\flat_{(G,X)}} (H^{\varepsilon*}_A)_{\red}$ are equipped with Steenrod
operations (see Construction \ref{r.Steenrod} below), compatible with the
ring homomorphism $H^{\varepsilon*}([X/G])_{\red}\to \varprojlim_{(A,C)\in
\cA^\flat_{(G,X)}} (H^{\varepsilon*}_A)_{\red}$.

The structure of $Y$ is described more precisely by the following
\textit{stratification theorem}, similar to \cite[Theorems 10.2,
12.1]{Quillen2}.
\end{construction}

\begin{theorem}\label{t.strat}
Denote by $V_{(A,C)}$ the reduced subscheme of $Y$ that is the image of the
(finite) morphism $\underline{A}\to Y$ induced by $(A,C)$. Let
\begin{gather*}
\underline{A}^+ \coloneqq \underline{A} - \bigcup_{A' < A} \underline{A}',\\
V_{(A,C)}^+ \coloneqq V_{(A,C)} - \bigcup_{A' < A}V_{(A',C\res A')},
\end{gather*}
where $A' < A$ means $A' \subset A$ and $A' \ne A$, and $C\res A'$ denotes
the component of $X^{A'}$ containing~$C$. Then
\begin{enumerate}
\item The Weyl group $W_G(A,C)$ \eqref{e.Weyl} acts freely on
    $\underline{A}^+$ and the morphism $\underline{A}^+ \to Y$ given by
    $(A,C)$ induces a homeomorphism
\begin{equation}\label{e.strat}
\underline{A}^+/W_G(A,C) \to V_{(A,C)}^+.
\end{equation}

\item The subschemes $V_{(A,C)}$ of $Y$ are the integral closed subcones
    of $Y$ that are stable under the Steenrod operations on
    $\varprojlim_{(A,C)\in \cA^\flat_{(G,X)}}
    (H^{\varepsilon*}_A)_{\red}$.

\item Let $(A_i,C_i)_{i \in I}$ be a finite set of representatives of
    isomorphism classes of objects of $\mathcal{A}^{\flat}_{(G,X)}$. Then
    the $V_{(A_i,C_i)}$ form a finite stratification of $Y$, namely $Y$ is
    the disjoint union of the $V_{(A_i,C_i)}^+$, and $V_{(A_i,C_i)}$ is
    the closure of $V_{(A_i,C_i)}^+$.
\end{enumerate}
\end{theorem}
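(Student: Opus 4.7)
I plan to follow Quillen's strategy in \cite[Theorems 10.2, 12.1]{Quillen2}, adapted to the present algebraic setting. The essential input is that every morphism $(A,C) \to (A',C')$ of $\cA^\flat_{G,X}$ is induced by a monomorphism $A \hookrightarrow A'$, hence the transition $\underline{A} \to \underline{A'}$ is a \emph{closed immersion} of affine spaces; combined with the finiteness of each $\underline{A} \to Y$ (Corollary \ref{c.finite}) and the essential finiteness of $\cA^\flat_{G,X}$ (Lemma \ref{l.Afincat}), $Y$ is identified set-theoretically with the colimit of the $\underline{A}$.

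The partition aspect of (c) comes first. Given $y \in Y$, the set of isomorphism classes of objects $(A,C) \in \cA^\flat_{G,X}$ such that $y$ lifts to $\underline{A}$ is nonempty; I pick $(A,C)$ with $\rk(A)$ minimal. The lift necessarily lies in $\underline{A}^+$, for otherwise it would lie in some $\underline{A'} \subset \underline{A}$ with $A' < A$, contradicting minimality. This shows $y \in V_{(A,C)}^+$ and gives the existence half of the stratification; uniqueness of the isomorphism class of $(A,C)$ will follow from the injectivity in (a).

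For (a), freeness is straightforward: if $w \in W_G(A,C) \subset \Aut(A)$ fixes $v \in \underline{A}(k)$ with $w \ne 1$, then $v \in \underline{\Ker(w-1)} \subset \bigcup_{A' < A} \underline{A'}$, so $v \notin \underline{A}^+$. For the bijectivity of $\underline{A}^+/W_G(A,C) \to V_{(A,C)}^+$, I would replace $\cA^\flat_{G,X}$ by the cofinal $\cA_{G,X}(k)$ and trace a zig-zag through morphisms each induced by a $g_i \in G(k)$, identifying two generic points $v, v' \in \underline{A}^+$; the crucial invariant is that the smallest subgroup $B_i \subset A_i$ with $v_i \in \underline{B_i}$ is transported isomorphically at each step. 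Since $B_0 = B_n = A$ by genericness, an alternating composition of the $g_i$ yields a single element of $\Trans_G((A,C),(A,C))$ sending $v$ to $v'$, whose class lies in $W_G(A,C)$. Upgrading to a homeomorphism then uses that $\underline{A} \to V_{(A,C)}$ is finite, hence a closed quotient map.

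For (b), each $V_{(A,C)}$ is integral by (a), closed by finiteness of $\underline{A} \to V_{(A,C)}$, and a subcone because all maps respect gradings; stability under Steenrod operations is automatic because the restriction maps entering $a(G,X)$ commute with them. Conversely, given an integral closed Steenrod-stable subcone $Z \subset Y$, I would choose $(A,C)$ so that $Z \cap V_{(A,C)}^+ \neq \emptyset$, lift via the finite surjection $\underline{A} \to V_{(A,C)}$, and invoke Serre's classical theorem that the Steenrod-stable closed subvarieties of $\underline{A}$ are precisely the linear subspaces $\underline{A'}$ for subgroups $A' \le A$; this forces $Z = V_{(A', C\res A')}$. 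Finally, for (c), the partition by the $V_{(A_i,C_i)}^+$ follows from the existence and uniqueness just established, and $V_{(A,C)} = \overline{V_{(A,C)}^+}$ because $V_{(A,C)}^+$ is a nonempty open dense subset of the irreducible $V_{(A,C)}$. The hardest step will be the injectivity in (a), which requires careful bookkeeping of components and group identifications along zig-zags; the invocation of Serre's theorem in (b) also demands care, to ensure that a Steenrod-stable subcone can actually be lifted to the appropriate $\underline{A}$.
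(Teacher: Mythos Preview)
Your proposal is correct and, like the paper, follows Quillen's strategy in \cite[Theorems 10.2, 12.1]{Quillen2}; the paper in fact says no more than that the proof is ``entirely analogous'' to Quillen's. The one organizational difference worth noting is that the paper singles out, as the key step, the analogue of \cite[Proposition 9.6]{Quillen2}: the square
\[
\xymatrix{\underline{A}^+\times \Hom_{\cA_{G,X}^\flat}((A,C),(A',C'))\ar[d]_{\pr_1}\ar[r] & \underline{A'}\ar[d]^{(A',C')_*}\\
\underline{A}^+\ar[r]^{(A,C)_*} & Y}
\]
is cartesian as topological spaces, and deduces this from the existence of fiber products in $\cA^\flat_{G,X}$ (the analogue of \cite[Lemma 9.1]{Quillen2}). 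Your zig-zag argument for injectivity in (a) is a direct attack on the special case $(A',C')=(A,C)$ of this square; the ``careful bookkeeping of components'' you anticipate is exactly what the fiber-product construction in $\cA^\flat_{G,X}$ encapsulates. Isolating the fiber-product lemma first would both streamline your argument and give you the full cartesian square, which is also what cleanly controls the lifting step in (b).
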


The proof is entirely analogous to that of \cite[Theorems 10.2,
12.1]{Quillen2}. One key step in the proof is the following analogue of
\cite[Proposition 9.6]{Quillen2}.

\begin{prop}
Let $(A,C)$, $(A',C')$ be objects of $\cA_{(G,X)}^\flat$. The square of
topological spaces
\begin{equation}\label{e.stratsq}
\xymatrix{\underline{A}^+\times \Hom_{\cA_{G,X}^\flat}((A,C),(A',C'))\ar[d]_{\pr_1}\ar[r] & \underline{A'}\ar[d]^{(A',C')}\\
\underline{A}^+\ar[r]^{(A,C)} & Y}
\end{equation}
is cartesian. Here the upper horizontal arrow is induced by
\[\Hom_{\cA_{G,X}^\flat}((A,C),(A',C'))\to
\Hom(\underline{A},\underline{A'}),\quad
u\mapsto \Spec (Bu)^*_{\red},
\]
where $(Bu)^*\colon H^{\varepsilon*}_{A'}\to H^{\varepsilon*}_{A}$.
\end{prop}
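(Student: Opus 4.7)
The commutativity of the square \eqref{e.stratsq} is built into the construction: for each $u \in \Hom_{\cA_{G,X}^\flat}((A,C),(A',C'))$, represented by $g \in \Trans_G((A,C),(A',C'))$, the 2-commutative diagram in Construction \ref{s.HAC} relating $(A,C)^*$ and $(A',C')^*$ via $\theta_g$ yields, after taking $\Spec(-)_{\red}$, the identity $(A',C')_* \circ \Spec((Bu)^*_\red) = (A,C)_*$ of maps $\underline A \to Y$. Hence there is a canonical continuous map
\[
\Phi \colon \underline A^+ \times \Hom_{\cA_{G,X}^\flat}((A,C),(A',C')) \to \underline A^+ \times_Y \underline{A'}.
\]
Since the indexing category $\cA_{G,X}^\flat$ is essentially finite (Lemma \ref{l.Afincat}), the hom-set on the left is finite and discrete, so $\Phi$ is a map from a finite disjoint union of copies of $\underline A^+$. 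The goal is to show $\Phi$ is a homeomorphism.

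For injectivity I would argue as follows. Points of $\underline B$ over $\bar\F_\ell$ are, canonically, elements of $B \otimes_{\F_\ell} \bar\F_\ell$, and a morphism $u \colon (A,C) \to (A',C')$ in $\cA_{G,X}^\flat$ induces, via its underlying group homomorphism $c_u \colon A \to A'$, the $\F_\ell$-linear map $\underline A \to \underline{A'}$, $x \mapsto c_u(x) \otimes 1$. The condition $x \in \underline A^+$ is equivalent to saying that the smallest $\F_\ell$-subspace $A_0 \subset A$ with $x \in A_0 \otimes \bar\F_\ell$ equals $A$. Thus if $u_1,u_2$ give the same image of $x$ in $\underline{A'}$, then $x$ lies in the kernel of $c_{u_1}-c_{u_2}\colon A\otimes\bar\F_\ell \to A'\otimes\bar\F_\ell$, which is $\Ker(c_{u_1}-c_{u_2}) \otimes \bar\F_\ell$, so by genericity $\Ker(c_{u_1}-c_{u_2})=A$, i.e.\ $c_{u_1}=c_{u_2}$. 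By the description of morphisms in $\cA_{G,X}^\flat$ as the image of $\Trans_G((A,C),(A',C'))$ in $\Hom(A,A')$ modulo $\Cent_G(A,C)$ (Construction \ref{s.GX}), this forces $u_1=u_2$.

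For surjectivity — which I expect to be the main obstacle — I would unwind the colimit structure of $Y$. Since $\cA_{G,X}^\flat$ is essentially finite, the scheme $Y$ is the finite colimit $\varinjlim \underline B$; consequently, two points $x \in \underline A, y \in \underline{A'}$ with the same image in $Y$ are connected by a zig-zag
\[
(A,C)=(B_0,D_0) \xrightarrow{h_1} (B_1,D_1) \xleftarrow{h_2} \cdots \xleftarrow{h_{2n}} (B_{2n},D_{2n})=(A',C')
\]
together with compatible points $z_i \in \underline{B_i}$, $z_0=x$, $z_{2n}=y$. The key point is to straighten this zig-zag using the genericity of $x$. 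Working inductively, at each backward arrow $(B_{2k-1},D_{2k-1}) \xleftarrow{h_{2k}} (B_{2k},D_{2k})$, the point $z_{2k-1}$ lies in the image $\underline{c_{h_{2k}}(B_{2k})} \subset \underline{B_{2k-1}}$; tracing this back through $h_1,\dots,h_{2k-1}$ shows $x$ lies in a linear subspace $\underline{A_0} \subset \underline A$ determined by a subgroup $A_0 \subset A$. The hypothesis $x\in\underline A^+$ forces $A_0=A$, which means $c_{h_{2k}}$ is surjective onto the relevant image, so that $c_{h_{2k}}$ admits a section on the image under the forward composition, permitting replacement of the reversed arrow by a direct one. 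Iterating yields a single morphism $u\colon (A,C)\to(A',C')$ in $\cA_{G,X}^\flat$ with $\Spec((Bu)^*_\red)(x)=y$. This argument is the algebraic analogue of the zig-zag analysis in \cite[Proposition 9.6]{Quillen2}, and the handling of zig-zags (rather than a single arrow) is precisely where the hypothesis $x\in\underline A^+$ is used.

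Finally I would upgrade the bijection $\Phi$ to a homeomorphism. Each component $\underline A^+\times\{u\}$ of the source maps to the locally closed subset $\Spec((Bu)^*_\red)(\underline A^+) \subset \underline{A'}$ homeomorphically onto its image — it is the intersection of the closed immersion $\Spec((Bu)^*_\red)$ with the preimage of $\underline A^+$, which is open in the source — and the images for distinct $u$ are disjoint in $\underline A^+ \times_Y \underline{A'}$ by the injectivity already established. Combined with the subspace topology inherited from $\underline A^+ \times \underline{A'}$, this exhibits $\Phi$ as a homeomorphism onto its image, which by the surjectivity step is all of $\underline A^+\times_Y\underline{A'}$.
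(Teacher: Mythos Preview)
Your commutativity and injectivity arguments are fine. The surjectivity argument, however, contains a real gap.

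You assert that two points with the same image in $Y$ are connected by a zig-zag of diagram morphisms, justifying this by ``$Y$ is the finite colimit $\varinjlim \underline{B}$''. This would be automatic if $Y$ were the colimit of the $\underline{B}$'s in the category of sets or topological spaces. But $Y$ is the colimit in \emph{schemes} (equivalently, $\Spec$ of the inverse limit $\varprojlim (H^{\varepsilon*}_B)_{\red}$), and for such colimits the underlying set is not in general the set-theoretic colimit of the underlying sets. So the zig-zag claim does not follow from the colimit property of $Y$ and requires a separate argument --- one that is already close in strength to the proposition itself. Your argument as written proves the proposition only for the topological colimit, which is the easy half.

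A second, related gap: even granting a zig-zag, your straightening step argues only on the level of group homomorphisms (``$c_{h_{2k}}$ admits a section on the image''). A morphism in $\cA_{G,X}^\flat$ is the class of a transporter element and must also satisfy the component condition $Cg \supset C'$; you do not check this.

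The paper's proof is a one-line reference to the key structural fact you are missing: the category $\cA_{G,X}^\flat$ admits fiber products, the analogue of \cite[Lemma 9.1]{Quillen2}. Concretely, given a cospan $(A,C) \xrightarrow{g_1} (B,D) \xleftarrow{g_2} (A',C')$, the subgroup $A'' = g_1^{-1}Ag_1 \cap g_2^{-1}A'g_2$ of $B$, together with the component of $X^{A''}$ containing $D$, furnishes the pullback $(A'',C'')$; verifying the component conditions for the two projections is exactly the check you omitted. With fiber products in hand, Quillen's argument for \cite[Proposition 9.6]{Quillen2} goes through verbatim. Your zig-zag reduction is one ingredient of that argument, but the existence of fiber products is what makes the straightening land inside $\cA_{G,X}^\flat$ and what ultimately controls identification of points in $Y$.
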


As in \cite[Proposition 9.6]{Quillen2}, this follows from the fact that
$\cA_{(G,X)}^\flat$ admits fiber products, whose proof is very similar to
that of \cite[Lemma 9.1]{Quillen2}.

\begin{remark}
The morphism \eqref{e.strat} is not an isomorphism of schemes in general. In
particular, the square \eqref{e.stratsq} is not cartesian in the category of
schemes. This is already shown by the example $G=\GL_\ell$, $X=\Spec(k)$,
$A=\mu_\ell$ embedded diagonally in $G$. Let $T$ be the standard maximal
torus, and let $\{e_1,\dots, e_\ell\}$ be the standard basis of $T[\ell]$.
Then $W_G(T)\simeq W_G(T[\ell])\simeq \SG_\ell$ acts on $T[\ell]$ by
permuting this basis, and $W_G(A)=\{1\}$. Note that
$\underline{T[\ell]}\simeq \Spec(\Sym(T[\ell]\spcheck))\simeq
\Spec(\F_\ell[t_1,\dots, t_\ell])$, and $V_{T[\ell]}=Y\simeq
\underline{T[\ell]}/W_G(T[\ell])$ can be identified with the spectrum of the
symmetric polynomials in $t_1,\dots, t_\ell$. As the image of the $d$-th
fundamental symmetric polynomial in $t_1,\dots,t_\ell$ under homomorphism
$\phi\colon \F_\ell[t_1,\dots,t_\ell]\to \F_\ell[t]$ carrying $t_i$ to $t$
is $0$ for $1\le d\le \ell-1$ and $t^\ell$ for $d=\ell$, the diagram of
schemes
\[\xymatrix{\underline{A}\ar[r]\ar[d] &\underline{T[\ell]}\ar[d]\\
V_A\ar[r] &V_{T[\ell]}}
\]
is given by the diagram of rings
\[\xymatrix{\F_\ell[t] &\F_\ell[t_1,\dots,t_\ell]\ar[l]_-{\phi}\\
\F_\ell[t^\ell]\ar@{^{(}->}[u] & \F_\ell[t_1,\dots,t_\ell]^{\SG_\ell}.\ar@{^{(}->}[u]\ar[l]}
\]
Therefore, \eqref{e.strat} is given by the Frobenius map on $\F_\ell[t,
t^{-1}]$.
\end{remark}

Let $(f,u)\colon (X,G)\to (Y,H)$ be an equivariant morphism with $(X,G)$ and
$(Y,H)$ as before. The induced morphism of quotient stacks $[f/u]\colon
[X/G]\to [Y/H]$ induces maps
\[[f/u]^*\colon H^{\varepsilon*}([Y/H])\to H^{\varepsilon*}([X/G]), \quad \underline{(f,u)}\colon \underline{(Y,H)}\to \underline{(X,G)}.\]
Moreover, $(f,u)$ induces a functor $\cA_{(u,f)}^\flat\colon
\cA_{(G,X)}^\flat\to \cA_{(H,Y)}^\flat$ sending $(A,C)$ to $(uA,C')$, where
$uA$ is the image of $A$ under $u$ and $C'$ is the component of $Y^{uA}$
containing $fC$, the image of $C$ under~$f$. We have the following analogue
of \cite[Proposition 10.9]{Quillen2}, with essentially the same proof.

\begin{prop}
The following conditions are equivalent.
\begin{enumerate}
\item $\cA_{(u,f)}^\flat$ is an equivalence of categories.
\item $[f/u]^*$ is a uniform $F$-isomorphism.
\item $\underline{(f,u)}$ is a universal homeomorphism.
\end{enumerate}
\end{prop}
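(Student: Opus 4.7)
The plan is to prove (a) $\Rightarrow$ (b), (b) $\Leftrightarrow$ (c), and (c) $\Rightarrow$ (a), following the scheme of Quillen's argument. The implication (a) $\Rightarrow$ (b) is immediate from the commutative square
\[\xymatrix{H^*([Y/H],\F_\ell)\ar[r]^-{a(H,Y)}\ar[d]_-{[f/u]^*} & \varprojlim_{\cA_{(H,Y)}^\flat} H^*_B\ar[d]\\
H^*([X/G],\F_\ell)\ar[r]^-{a(G,X)} & \varprojlim_{\cA_{(G,X)}^\flat} H^*_A,}\]
in which the horizontal arrows are uniform $F$-isomorphisms by Theorem \ref{c.main}. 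The right vertical arrow is induced by the functor $\cA_{(u,f)}^\flat$ together with the homomorphisms $\theta_{u\res A}\colon H^*_{uA}\to H^*_A$, and under hypothesis (a) it is an isomorphism (the restrictions $u\res A$ on the elementary abelian $\ell$-subgroups appearing in $\cA_{(G,X)}^\flat$ being forced to be isomorphisms by the equivalence of categories). The 2-out-of-3 property of uniform $F$-isomorphisms, readily verified from Definition \ref{s.grvec}, then yields (b).

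The equivalence (b) $\Leftrightarrow$ (c) is a standard fact applied to the finitely generated graded $\F_\ell$-algebras $H^{\varepsilon*}([X/G])$ and $H^{\varepsilon*}([Y/H])$ furnished by Theorem \ref{t.finite}: for such algebras, a homomorphism $\phi$ is a uniform $F$-isomorphism precisely when $\Spec(\phi)_{\red}$ is a universal homeomorphism, since on reduced spectra every uniform $F$-isomorphism factors through an iterate of Frobenius (which is itself a universal homeomorphism in characteristic $\ell$), and conversely a universal homeomorphism between such $\F_\ell$-schemes yields such a factorization.

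For (c) $\Rightarrow$ (a) we invoke the stratification theorem (Theorem \ref{t.strat}). The functor $\cA_{(u,f)}^\flat$ is geometrically realized by $\underline{(f,u)}$, which sends each stratum $V_{(uA,D)}^+$ of $\underline{(Y,H)}$ into the stratum $V_{(A,C)}^+$ of $\underline{(X,G)}$, where $(uA,D) = \cA_{(u,f)}^\flat(A,C)$. By Theorem \ref{t.strat}(b), the $V_{(A,C)}$ are intrinsically characterized as the integral closed subcones stable under Steenrod operations, so a universal homeomorphism $\underline{(f,u)}$ induces a bijection on strata, yielding essential surjectivity of $\cA_{(u,f)}^\flat$ together with a bijection on isomorphism classes. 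Fullness and faithfulness on Hom-sets then follow from the free action of the Weyl group $W_G(A,C)$ on $\underline{A}^+$ given by Theorem \ref{t.strat}(a), combined with the compatibility of $\underline{(f,u)}$ with these Weyl-group actions and with the closure relations $V_{(A',C')} \subset V_{(A,C)}$. The main obstacle is precisely this last step: extracting the morphism sets $\Cent_G(A,C)\backslash \Trans_G((A,C),(A',C'))$ from the geometry of the stratification and its Weyl-group data, and verifying that they are matched bijectively by the universal homeomorphism $\underline{(f,u)}$.
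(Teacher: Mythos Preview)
Your approach is exactly the one the paper invokes (it simply cites Quillen's Proposition 10.9 and says ``essentially the same proof''), and the argument for (a) $\Rightarrow$ (b) via the commutative square and two-out-of-three, as well as (b) $\Rightarrow$ (c), is correct.

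There is, however, a genuine gap in your (c) $\Rightarrow$ (a), and you yourself flag it as ``the main obstacle'' without resolving it. Theorem~\ref{t.strat}(a) only identifies the \emph{automorphism} groups $W_G(A,C)$ geometrically; it does not by itself recover the Hom sets between non-isomorphic objects. The missing ingredient is the cartesian-square proposition stated immediately after Theorem~\ref{t.strat} (the analogue of Quillen's Proposition 9.6): the square
\[
\xymatrix{\underline{A}^+\times \Hom_{\cA_{G,X}^\flat}((A,C),(A',C'))\ar[d]_{\pr_1}\ar[r] & \underline{A'}\ar[d]^{(A',C')_*}\\
\underline{A}^+\ar[r]^{(A,C)_*} & Y}
\]
is cartesian in topological spaces. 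This identifies $\Hom_{\cA^\flat_{(G,X)}}((A,C),(A',C'))$ with the fibre of $\underline{A'}\to Y$ over any point of $\underline{A}^+$. Once you have this on both sides and observe that the iso $\underline{A'}\simto\underline{uA'}$ (forced, as you note, by the equality of ranks coming from the bijection on strata) and the universal homeomorphism $\underline{(f,u)}$ identify these fibres compatibly with the map on Hom sets induced by $\cA^\flat_{(u,f)}$, full faithfulness follows.

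Two minor points. First, the map $\underline{(f,u)}$ goes from $\underline{(G,X)}$ to $\underline{(H,Y)}$ (Spec is contravariant), so it sends $V_{(A,C)}$ into $V_{(uA,D)}$, not the other way round as you wrote; this is only a labeling slip. Second, your direct argument for (c) $\Rightarrow$ (b) (``a universal homeomorphism between such $\F_\ell$-schemes yields such a factorization'') is imprecise as stated; the clean route is via perfection (a universal homeomorphism of reduced finite-type $\F_\ell$-schemes becomes an isomorphism on perfections, whence uniform $F$-surjectivity on finitely many algebra generators). But since you already have (c) $\Rightarrow$ (a) $\Rightarrow$ (b) once the gap above is closed, this direct implication is redundant.
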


\begin{construction}\label{r.Steenrod}
As Mich\`ele Raynaud observed in \cite[Section~4]{Raynaud}, the formalism of
\emph{Steenrod operations} \cite{Epstein} applies to the
$\F_\ell$-cohomology of any topos. Let us review the construction of the
operations in this case.

Let $X$ be a topos, let $K$ be a commutative ring in $D^+(X,\F_\ell)$, and
let $i$ be an integer. The Steenrod operations are $\F_\ell$-linear maps
\begin{equation*}
P^i \colon H^*(X,K) \to \begin{cases}
H^{*+i}(X,K) & \text{if $\ell = 2$,} \\
H^{*+2(\ell -1)i}(X,K) & \text{if $\ell > 2$.}
\end{cases}
\end{equation*}
For $\ell = 2$, $P^i$ is sometimes denoted $\mathrm{Sq}^i$.

First note that for every complex $M\in C(X,\F_\ell)$, $\SG_\ell$ acts on
$L^{\otimes \ell}$ by permutation of factors (with the usual sign rule).
This induces a (non triangulated) functor
\begin{equation}\label{e.SG}
(-)^{\otimes \ell}\colon D(X,\F_\ell)\to D([X/\SG_\ell],\F_\ell).
\end{equation}
Here we used the notation $[X/G]$ for the topos of sheaves in $X$ endowed
with an action of a finite group $G$. For a morphism $c\colon \F_\ell \to
M[q]$ in $D(X,\F_\ell)$, applying \eqref{e.SG}, we obtain a morphism
\[c^{\otimes \ell}\colon \F_\ell \simeq \F_\ell^{\otimes
\ell}\to (M[q])^{\otimes \ell}\simeq M^{\otimes \ell}\otimes S^{\otimes q}[q\ell]
\]
in $D([X/\SG_\ell],\F_\ell)$, where $S\in \Mod([X/\SG_\ell],\F_\ell)$ is the
pullback of the sheaf on $B\SG_\ell$ given by the sign representation
$\sgn\colon \SG_\ell\to \F_\ell^\times$. This defines a map
\begin{equation}\label{e.Steenrod}
H^q(X,M)\to H^{q\ell}([X/\SG_\ell],M^{\otimes \ell}\otimes S^{\otimes q}), \quad c\mapsto c^{\otimes \ell}.
\end{equation}

Now choose a cyclic subgroup $C$ of $\SG_\ell$ of order $\ell$ and a basis
$x$ of $H^1(BC,\F_\ell)\simeq \Hom(C,\F_\ell)$. Note that $\sgn\res C$ is
constant of value $1$. Consider the composite map
\begin{multline*}
D_{C,x}\colon H^q(X,K)\xrightarrow{\eqref{e.Steenrod}} H^{q\ell}([X/\SG_\ell],K^{\otimes \ell}\otimes S^{\otimes q})
\xrightarrow{\pi} H^{q\ell}([X/\SG_\ell],K\otimes S^{\otimes q})\\
\to H^{q\ell}([X/C],K)\simeq \bigoplus_{k}H^{k}(BC,\F_\ell)\otimes H^{q\ell-k}(X,K),
\end{multline*}
which turns out to be $\F_\ell$-linear. Here $\pi$ is given by
multiplication $K^{\otimes \ell}\to K$. Recall (Remark \ref{r.cohBA}) that
for $k\ge 0$, $H^k(BC,\F_\ell)=\F_\ell w_k$, where $w_k=x(\beta
x)^{(k-1)/2}$ for $k$ odd and $w_k=(\beta x)^{k/2}$ for $k$ even. We define
\[D^k_{C,x}\colon H^q (X, K) \to H^{q\ell-k}(X,K) \]
by the formula $D_{C,x}u=\sum_k w_k\otimes D^k_{C,x} u$. Let
$m=\frac{\ell-1}{2}$. We define
\[P^i_{C,x}=\begin{cases}
  (-1)^{i+m(q^2-q)/2}(m!)^{-q}D^{(q-2i)(\ell-1)}_{C,x}& \text{if $\ell> 2$, $q\ge 2i$,}\\
  D^{q-i}_{C,x}& \text{if $\ell=2$, $q\ge i$,}\\
  0& \text{otherwise.}
\end{cases}
\]

For $\ell=2$, we have $C=\SG_2$ and $x$ is unique. For $\ell>2$, $\sigma\in
\SG_\ell$ and $a\in \F_\ell^\times$, $D^{2k}_{\sigma C\sigma^{-1},a(x\circ
c_\sigma)}=\sgn(\sigma)^q a^{-k}D^{2k}_{C,x}$, where $c_\sigma\colon \sigma
C\sigma^{-1} \to C$ is the homomorphism $g\mapsto \sigma^{-1} g \sigma$.
Thus
\[P^i_{\sigma C\sigma^{-1},a(x\circ
c_\sigma)} = \sgn(\sigma)^{q} (a^m)^q P^i_{C,x},
\]
where $a^m=\pm 1$. In particular, up to a sign, $P^i_{C,x}$ is independent
of the choices of $C$ and $x$. Let $T\in \SG_\ell$ be the permutation
defined by $T(n)=n+1$ for $n\in \Z/\ell\Z$. In the following we will take
$C$ to be the subgroup generated by $T$ and take $x$ to be the dual basis of
$T$, and omit them from the indices.

For a homomorphism of commutative rings $K\to K'$, the induced homomorphism
$H^*(X,K)\to H^*(X,K')$ is compatible with Steenrod operations on $H^*(X,K)$
and $H^*(X,K')$. Moreover, for a morphism of topoi $f\colon X\to Y$,
Steenrod operations are compatible with the isomorphism $H^*(X,K)\simeq
H^*(Y,Rf_* K)$.

It is easy to check the following properties of Steenrod operations, where
we write $H^*$ for $H^*(X,K)$:
\begin{itemize}
\item For $x \in H^i$ (resp.\ $x \in H^{2i}$), $P^ix = x^{\ell}$ if $\ell
    = 2$ (resp.\ $\ell> 2$);

\item If one defines
\[
P_t \colon H^* \to H^*[t, t^{-1}]
\]
by $P_t(x) = \sum_{i \in \Z} P^i(x)t^i$, then $P_t$ is a ring homomorphism
(Cartan's formula).
\end{itemize}

In the case where $X$ has enough points and $K\in \Mod(X,\F_\ell)$, Epstein
showed the following additional properties \cite[Theorem 8.3.4]{Epstein}:
\begin{itemize}
\item $P_t\colon H^*\to H^*[t]$. In other words, $P^i=0$ for $i<0$.

\item $P^0=\id$ for $K=\F_\ell$ (this depends on the choices of $C$ and
    $x$ above).
\end{itemize}
In particular, $P_t(x) = x +x^{\ell}t$ for $x \in H^1(X,\F_\ell)$, $\ell =
2$ (resp.\ $x \in H^2(X,\F_\ell)$, $\ell > 2)$.

The above can be easily adapted to $D^+_\cart$ of Artin stacks. For a
morphism of Artin stacks $f\colon \cX\to \cY$ and  a commutative ring $K\in
D^+_{\cart}(\cX,\F_\ell) $, Steenrod operations are compatible with the
isomorphism $H^*(X,K)\simeq H^*(Y,Rf_* K)$. Therefore, for $K'\in
D^+_{\cart}(Y,\F_\ell)$, Steenrod operations are compatible with the
restriction homomorphism $H^*(Y,K')\to H^*(X,f^*K')$.
\end{construction}

For related results on the Chow rings of classifying spaces and much more,
we refer the reader to Totaro's book \cite{Totaro} and the bibliography
thereof.

\bibliographystyle{abbrv}
{\small\bibliography{quillen}}

Luc Illusie, Laboratoire de Math\'ematiques d'Orsay, Univ.\ Paris-Sud, CNRS,
Universit\'e Paris-Saclay, 91405 Orsay, France; email:
\texttt{Luc.Illusie@math.u-psud.fr}

Weizhe Zheng, Morningside Center of Mathematics, Academy of Mathematics and
Systems Science, Chinese Academy of Sciences, Beijing 100190, China; email:
\texttt{wzheng@math.ac.cn}

\end{document}